\newtheorem{thm}{Theorem}[section]%
\newtheorem{lem}[thm]{Lemma}%
\newtheorem{cor}[thm]{Corollary}%
\newtheorem{defi}[thm]{Definition}%
\newtheorem{pro}[thm]{Proposition}%
\newtheorem{rem}[thm]{Remark}%
\newenvironment{proof}{{\indent \it Proof.}}{\hfill$\square$}
\newtheorem*{thm A}{Theorem A}
\newtheorem*{thm B}{Theorem B}
\begin{document}
\date{}

\newenvironment{ar}{\begin{array}}{\end{array}}
\baselineskip12pt \addtocounter{page}{0}

\title{Finite-dimensional Hopf algebras over the Hopf algebra $H_{b:1}$ of Kashina }

\author{Yiwei Zheng $^{\mathrm{a}}$,
  \ \ Yun Gao $^{\mathrm{a,b}}$,  \  \ Naihong Hu $^{\mathrm{c,}}$
\thanks{\small E-mails:\  nhhu@math.ecnu.edu.cn, \quad ygao@yorku.ca,
\quad zhengyiwei12@foxmail.com} \\
{\small  $^{\mathrm{a}}$Department of Mathematics, Shanghai University,
Shanghai 200444, China}\\
{\small $^{\mathrm{b}}$Department of Mathematics and Statistics
, York University, Toronto M3J 1P3, Canada}\\
{\small  $^{\mathrm{c}}$School of Mathematical Sciences, Shanghai Key Laboratory of PMMP,} \\
{\small East China Normal University,
Shanghai 200241, China}}

\date{}

\maketitle

\small\linespread{1.2}
\begin{abstract}
Let $H$ be the 16-dimensional nontrivial
(namely, noncommutative and noncocommutative) semisimple Hopf
algebra $H_{b:1}$ appeared in Kashina's work~\cite{K00}. We obtain
all simple Yetter-Drinfeld modules over $H$ and then determine all
finite-dimensional Nichols algebras satisfying $\mathcal{B}(V)\cong
\bigotimes_{i\in I}\mathcal{B}(V_i)$, where $V=\bigoplus_{i\in
I}V_i$, each $V_i$ is a simple object in $_H^H\mathcal{YD}$.
Finally, we describe all liftings of those
$\mathcal{B}(V)$ over $H$.
\end{abstract}

 \normalsize

\section{Introduction}
Let $\mathbbm{k}$ be an algebraically closed field of characteristic
zero. In 1975, Kaplansky \cite{K} posed ten conjectures including
the number of iso-classes of finite-dimensional Hopf algebras over
$\mathbbm{k}$. Until now, although there are many classification
results, there are no standard methods, except that in 1998,
Andruskiewitsch and Schneider \cite{AS} put forward the lifting
method which later on plays a very important role in the
classification of finite-dimensional (co-)pointed Hopf algebras.

Actually, the lifting method works well for those classes of
finite-dimensional Hopf algebras with Radford biproduct structure
or their deformations by certain Hopf $2$-cocycles. Let $A$ be a
Hopf algebra. If the coradical $A_0$ is a subalgebra, then the
coradical filtration $\{A_n\}_{n\geq 0}$ is a Hopf algebra
filtration, and the associated graded coalgebra
$grA=\bigoplus_{n\geq 0}A_n/A_{n-1}$ is also a Hopf algebra, where
$A_{-1}=0$. Let $\pi:gr A\rightarrow A_0$ be the homogeneous
projection. By a theorem of Radford~\cite{R85}, there exists a
unique connected graded braided Hopf algebra $R=\bigoplus_{n\geq
0}R(n)$ in the monoidal category $^{A_0}_{A_0}\mathcal{YD}$ such
that $grA\cong R\sharp A_0$. We call $R$ or $R(1)$ the diagram or
infinitesimal braiding of $A$, respectively. Moreover, $R$ is
strictly graded, that is, $R(0)=\mathbbm{k}, ~R(1)=\mathcal{P}(R)$
(see Definition 1.13 \cite{AS2}).

The lifting method has been applied to classify some
finite-dimensional pointed Hopf algebras such as
\cite{ACG15a},~\cite{ACG15b},~\cite{AFG10},~\cite{AFG11},~\cite{AHS},~\cite{AS3},~\cite{FG},~\cite{GGI},
etc., and copointed Hopf algebras \cite{AV},~\cite{GIV}, etc.
Nevertheless, there are a few classification results on
finite-dimensional Hopf algebras whose coradical is neither a group
algebra nor the dual of a group algebra, for instance, \cite{AGM15},
\cite{CS}, \cite{GG16}, \cite{HX16}, \cite{HX18}, \cite{S16},
\cite{X19}, etc. In fact, Shi began a program in \cite{S16} to
classify the objects of finite-dimensional growth from a given
nontrivial semisimple Hopf algebra $A_0=H_8$ via some relevant
Nichols algebras $\mathcal B(V)$ derived from its semisimple
Yetter-Drinfeld modules $V\in {^{A_0}_{A_0}\mathcal{YD}}$.

The classification of finite-dimensional Hopf algebras $A$ whose
coradical $A_0$ is a (Hopf) subalgebra needs the following main steps:

(a) Determine those $V$ in $^{A_0}_{A_0}\mathcal{YD}$ such that the Nichols algebra $\mathcal{B}(V)$ is finite-dimensional and
present $\mathcal{B}(V)$ by generators and relations.

(b) If $R=\bigoplus_{n\geq 0}R(n)$ is a finite-dimensional Hopf algebra in $^{A_0}_{A_0}\mathcal{YD}$ with $V=R(1)$, decide if $R\cong \mathcal{B}(V)$.

(c) Given $V$ as in (a), classify all $A $ such that $grA \cong \mathcal{B}(V)\sharp A_0$. We call $A$ a lifting of $\mathcal{B}(V)$ over $A_0$.

In this paper, we fix a $16$-dimensional non-trivial semisimple Hopf
algebra $H=H_{b:1}$, which appeared in \cite{K00}, and study these
questions. For the definition of Hopf algebra $H$, see Definition
\ref{def3.1}. We first determine the Drinfeld double $D:=D(H^{cop})$
of $H^{cop}$ and describe the simple $D$-modules. In fact, we prove
in Theorem \ref{simple} that there are $32$ one-dimensional simple objects
$\mathbbm{k}_{\chi_{i,j,k,l}}$ with $0\leq i,~k,~l<2, ~0\leq j<4$,
and $56$ two-dimensional simple objects $V_{i,j,k,l,m,n}$ with
$(i,j,k,l,m,n)\in \Omega$, $W_{i,j,k,l}$ with $(i,j,k,l)\in
\Lambda^1$, $U_{i,j,k,l}$ with $(i,j,k,l)\in \Lambda^2$. Using the
fact that the baided categories $_D\mathcal{M}$ and
$_H^H\mathcal{YD}$ are monoidally isomorphic (see \cite{CK}), we
actually get the simple objects in $_H^H\mathcal{YD}$. Furthermore, we get
all the possible finite-dimensional Nichols algebras of semisimple
objects satisfying $\mathcal{B}(V)\cong \bigotimes_{i\in
I}\mathcal{B}(V_i)$. Here is our first main result.

\begin{thm A}
Let $V=\bigoplus_{i\in I}V_i$, each $V_i$ be a simple object in
$_H^H\mathcal{YD}$. Then the Nichols algebra $\mathcal{B}(V)$
satisfying $\mathcal{B}(V)\cong \bigotimes_{i\in I}\mathcal{B}(V_i)$
is finite-dimensional if and only if~ $V$ is isomorphic to one of
the following Yetter-Drinfeld modules

$(1)~\Omega_1(n_1,n_2,n_3,n_4,n_5,n_6,n_7,n_8)\triangleq\bigoplus_{j=1}^8 V_j$ with $\sum_{j=1}^8n_j\geq 1$.

$(2)~\Omega_2(n_1,n_2,n_3,n_4)\triangleq V_3^{\oplus n_1}\oplus V_4^{\oplus n_2}\oplus V_7^{\oplus n_3}\oplus V_8^{\oplus n_4}\oplus M_1$ with $\sum_{j=1}^4n_j\geq 0$.

$(3)~\Omega_3(n_1,n_2,n_3,n_4)\triangleq V_1^{\oplus n_1}\oplus V_2^{\oplus n_2}\oplus V_5^{\oplus n_3}\oplus V_6^{\oplus n_4}\oplus M_2$ with $\sum_{j=1}^4n_j\geq 0$.

$(4)~\Omega_4(n_1,n_2,n_3,n_4)\triangleq V_1^{\oplus n_1}\oplus V_3^{\oplus n_2}\oplus V_5^{\oplus n_3}\oplus V_7^{\oplus n_4}\oplus M_3$ with $\sum_{j=1}^4n_j\geq 0$.

$(5)~\Omega_5(n_1,n_2,n_3,n_4)\triangleq V_1^{\oplus n_1}\oplus V_4^{\oplus n_2}\oplus V_5^{\oplus n_3}\oplus V_8^{\oplus n_4}\oplus M_4$ with $\sum_{j=1}^4n_j\geq 0$.

$(6)~\Omega_6(n_1,n_2,n_3,n_4)\triangleq V_2^{\oplus n_1}\oplus V_4^{\oplus n_2}\oplus V_6^{\oplus n_3}\oplus V_8^{\oplus n_4}\oplus M_5$ with $\sum_{j=1}^4n_j\geq 0$.

$(7)~\Omega_7(n_1,n_2,n_3,n_4)\triangleq V_2^{\oplus n_1}\oplus V_3^{\oplus n_2}\oplus V_6^{\oplus n_3}\oplus V_7^{\oplus n_4}\oplus M_6$ with $\sum_{j=1}^4n_j\geq 0$.

$(8)~\Omega_{8}(n_1,n_2,n_3,n_4)\triangleq V_1^{\oplus n_1}\oplus V_2^{\oplus n_2}\oplus V_5^{\oplus n_3}\oplus V_6^{\oplus n_4}\oplus M_7$ with $\sum_{j=1}^4n_j\geq 0$.

$(9)~\Omega_{9}(n_1,n_2,n_3,n_4)\triangleq V_3^{\oplus n_1}\oplus V_4^{\oplus n_2}\oplus V_7^{\oplus n_3}\oplus V_8^{\oplus n_4}\oplus M_{8}$ with $\sum_{j=1}^4n_j\geq 0$.

$(10)~\Omega_{10}(n_1,n_2,n_3,n_4)\triangleq V_2^{\oplus n_1}\oplus V_4^{\oplus n_2}\oplus V_6^{\oplus n_3}\oplus V_8^{\oplus n_4}\oplus M_{9}$ with $\sum_{j=1}^4n_j\geq 0$.

$(11)~\Omega_{11}(n_1,n_2,n_3,n_4)\triangleq V_2^{\oplus n_1}\oplus V_3^{\oplus n_2}\oplus V_6^{\oplus n_3}\oplus V_7^{\oplus n_4}\oplus M_{10}$ with $\sum_{j=1}^4n_j\geq 0$.

$(12)~\Omega_{12}(n_1,n_2,n_3,n_4)\triangleq V_1^{\oplus n_1}\oplus V_3^{\oplus n_2}\oplus V_5^{\oplus n_3}\oplus V_7^{\oplus n_4}\oplus M_{11}$ with $\sum_{j=1}^4n_j\geq 0$.

$(13)~\Omega_{13}(n_1,n_2,n_3,n_4)\triangleq V_1^{\oplus n_1}\oplus V_4^{\oplus n_2}\oplus V_5^{\oplus n_3}\oplus V_8^{\oplus n_4}\oplus M_{12}$ with $\sum_{j=1}^4n_j\geq 0$.

$(14)~\Omega_{14}\triangleq M_1\oplus M_1$, $\Omega_{15}\triangleq M_1\oplus M_2$, $\Omega_{16}\triangleq M_1\oplus M_7$, $\Omega_{17}\triangleq M_3\oplus M_3$, $\Omega_{18}\triangleq M_3\oplus M_5$, $\Omega_{19}\triangleq M_3\oplus M_9$, $\Omega_{20}\triangleq M_4\oplus M_4$, $\Omega_{21}\triangleq M_4\oplus M_6$, $\Omega_{22}\triangleq M_7\oplus M_7$, $\Omega_{23}\triangleq M_7\oplus M_8$, $\Omega_{24}\triangleq M_{13}\oplus M_{13}$, $\Omega_{25}\triangleq M_{13}\oplus M_{14}$, $\Omega_{26}\triangleq M_{15}\oplus M_{15}$, $\Omega_{27}\triangleq M_{15}\oplus M_{16}$, $\Omega_{28}\triangleq M_{17}\oplus M_{17}$, $\Omega_{29}\triangleq M_{17}\oplus M_{18}$.

$(15)~\Omega_{30}\triangleq M_2\oplus M_2$, $\Omega_{31}\triangleq M_2\oplus M_8$, $\Omega_{32}\triangleq M_4\oplus M_{10}$, $\Omega_{33}\triangleq M_5\oplus M_5$, $\Omega_{34}\triangleq M_5\oplus M_{11}$, $\Omega_{35}\triangleq M_6\oplus M_6$, $\Omega_{36}\triangleq M_6\oplus M_{12}$, $\Omega_{37}\triangleq M_8\oplus M_8$, $\Omega_{38}\triangleq M_9\oplus M_9$, $\Omega_{39}\triangleq M_9\oplus M_{11}$, $\Omega_{40}\triangleq M_{10}\oplus M_{10}$, $\Omega_{41}\triangleq M_{10}\oplus M_{12}$, $\Omega_{42}\triangleq M_{11}\oplus M_{11}$, $\Omega_{43}\triangleq M_{12}\oplus M_{12}$, $\Omega_{44}\triangleq M_{14}\oplus M_{14}$, $\Omega_{45}\triangleq M_{16}\oplus M_{16}$, $\Omega_{46}\triangleq M_{18}\oplus M_{18}$,
$\Omega_{47}\triangleq M_{19}\oplus M_{19}$,
$\Omega_{48}\triangleq M_{19}\oplus M_{20}$,
$\Omega_{49}\triangleq M_{20}\oplus M_{20}$.
\end{thm A}

\begin{rem}
We point out which of the Yetter-Drinfeld modules have principal realizations and which not, since the liftings are known when there exist principal realizations {\rm\cite [Subsection 2.2]{AAI}}.
Let $(h)$ and $(\delta_h)$ be dual bases of $H$ and $H^*$. Define
\begin{equation*}
\begin{split}
\chi:=&\delta_1+\delta_{x^2}+(-1)^i(\delta_x+ \delta_{x^3})+(-1)^j(\delta_y+\delta_{x^2y})+(-1)^{i+j}(\delta_{xy}+\delta_{x^3y})\\
&+(-1)^k (\delta_t+\delta_{x^2t})+(-1)^{i+k}(\delta_{xt}+\delta_{x^3t})+(-1)^{j+k}(\delta_{yt}+\delta_{x^2yt})\\
&+(-1)^{i+j+k} (\delta_{xyt}+\delta_{x^3yt})\in Alg(H,\mathbbm{k}),
\end{split}
\end{equation*}
then $(x^jy^l,\chi)$ is a YD-pair \cite{AI} if and only if $\mathbbm{k}_{x^jy^l}^{\chi}$ is a one-dimensional Yetter-Drinfeld module.
So only
$\Omega_1(n_1,..,n_8)$ has a principal realization.
\end{rem}

Based on the principle of the lifting method, we classify the finite-dimensional Hopf algebras over $H$ such that their infinitesimal braidings are those Yetter-Drinfeld modules listed in Theorem A. Here is the second main result.
\begin{thm B}
Let $A$ be a finite-dimensional Hopf algebra over $H$ such that its
infinitesimal braiding is isomorphic to one of the Yetter-Drinfeld modules in Theorem A, then $A$ is isomorphic either to

$(1)~\mathfrak{U}_1(n_1,n_2,\ldots,n_8;I_1)$, see Definition~$\ref{def 1}$;

$(2)~\mathfrak{U}_2(n_1,n_2,n_3,n_4;I_2)$, see Definition~$\ref{def 2}$;

$(3)~\mathfrak{U}_4(n_1,n_2,n_3,n_4;I_4)$, see Definition~$\ref{def 4}$;

$(4)~\mathfrak{U}_5(n_1,n_2,n_3,n_4;I_5)$, see Definition~$\ref{def 5}$;

$(5)~\mathfrak{U}_8(n_1,n_2,n_3,n_4;I_8)$, see Definition~$\ref{def 8}$;

$(6)~\mathfrak{U}_{14}(I_{14})$, see Definition~$\ref{def 14}$; $\mathfrak{U}_{15}(\lambda,\mu)$, see Definition~$\ref{def 15}$;

\hspace{1.5em}$\mathfrak{U}_{16}(\lambda,\mu)$, see Definition~$\ref{def 16}$;
$\mathfrak{U}_{17}(I_{17})$, see Definition~$\ref{def 17}$;

\hspace{1.5em}$\mathfrak{U}_{18}(I_{18})$, see Definition~$\ref{def 18}$;
$\mathfrak{U}_{19}(\lambda,\mu)$, see Definition~$\ref{def 19}$;

\hspace{1.5em}$\mathfrak{U}_{20}(I_{20})$, see Definition~$\ref{def 20}$;
$\mathfrak{U}_{21}(\lambda,\mu)$, see Definition~$\ref{def 21}$;

\hspace{1.5em}$\mathfrak{U}_{22}(I_{22})$, see Definition~$\ref{def 22}$;
$\mathfrak{U}_{23}(I_{23})$, see Definition~$\ref{def 23}$;

\hspace{1.5em}$\mathfrak{U}_{24}(\lambda)$, see Definition~$\ref{def 24}$;
$\mathfrak{U}_{26}(I_{26})$, see Definition~$\ref{def 26}$;

\hspace{1.5em}$\mathfrak{U}_{27}(\lambda,\mu)$, see Definition~$\ref{def 27}$;
$\mathfrak{U}_{28}(I_{28})$, see Definition~$\ref{def 28}$;

\hspace{1.5em}$\mathfrak{U}_{29}(\lambda,\mu)$, see Definition~$\ref{def 29}$.

$(7)$ $\mathcal{B}(\Omega_{25})\sharp H$, see Proposition $\ref{25}$.
\end{thm B}

Except for the case
$(7)$, the remaining  families of Hopf algebras contain non-trivial lifting relations.

\begin{rem} Note that
$(1)$ $\dim \mathfrak{U}_1(n_1,n_2,\ldots,n_8;I_1)=2^{4+\sum\limits _{i=1}^8 n_i};$

$(2)$ For $k\in\{2,4,5,8\}$, $\dim \mathfrak{U}_k(n_1,n_2,n_3,n_4;I_k)=
2^{6+\sum\limits _{i=1}^4 n_i};$

$(3)$ For $i\in\{14,17,18,20,22,23,26,28\}$,
$\dim \mathfrak{U}_{i}(I_{i})=256;$

$(4)$ For $i\in\{15,16,19,21,27,29\}$, $\dim \mathfrak{U}_{i}(\lambda,\mu)=\dim \mathfrak{U}_{24}(\lambda)=256$.

\end{rem}

The paper is organized as follows. In section 2, we recall some
basics and notations of Yetter-Drinfeld modules, Nichols algebras,
Radford biproduct and Drinfeld double. In section $3$, we describe
the structures of $H$ and present the Drinfeld double $D=D(H^{cop})$
by generators and relations. We also determine the simple
$D$-modules. In section 4, we describe the simple objects of
$_H^H\mathcal{YD}$ by using the monoidal isomorphism of braided
categories $_H^H\mathcal{YD}\cong {_D\mathcal{M}}$. In section 5, we
obtain all the possible finite-dimensional Nichols algebras of
semisimple modules satisfying $\mathcal{B}(V)\cong \bigotimes_{i\in
I}\mathcal{B}(V_i)$. In section 6, based on the principle of the
lifting method, we classify the finite-dimensional Hopf algebras
over $H$ such that their infinitesimal braidings are those
Yetter-Drinfeld modules listed in Theorem A. Then we get Theorem B.

\section{Preliminaries}

\noindent$\mathbf{Conventions}$.
Throughout the paper, the ground field $\mathbbm{k}$ is an algebraically closed field of characteristic zero and we denote by $\xi$ a primitive $4$-th root of unity. For the references of Hopf algebra theory, one can consult \cite{M}, \cite{CK}, \cite{R}, \cite{SW}, etc.

If $H$ is a Hopf algebra over $\mathbbm{k}$, then $\triangle,~\varepsilon, ~S$ denote the comultiplication, the counit and the antipode, respectively. We use Sweedler's notation for the comultiplication and coaction, $e.g.$
, $\triangle(h)=h_{(1)}\otimes h_{(2)}$ for $h\in H$. We denote by $H^{op}$ the Hopf algebra with the opposite multiplication, by $H^{cop}$ the Hopf algebra with the opposite comultiplication, and by $H^{bop}$ the Hopf algebra $H^{op~cop}$. Denote by $G(H)$ the set of group-like elements of $H$.
\begin{equation*}
\mathcal{P}_{g,h}(H)=\{x\in H\mid \triangle(x)=x\otimes g+h\otimes x\},\hspace{1em}\forall ~g,~h\in G(H).
\end{equation*}
In particular, the linear space $\mathcal{P}(H)=\mathcal{P}_{1,1}(H)$ is called the set of primitive elements.

If $V$ is a $\mathbbm{k}$-vector space, $v\in V,~ f\in V^\ast$, we use either $f(v),~\langle f,v\rangle$ or $\langle v,f\rangle$ to denote the evaluation. Given $n\geq 0,$ we denote $\mathbb{Z}_n=\mathbb{Z}/n\mathbb{Z}$ and $\mathbb{I}_{0,n}=\{0,1,\cdots,n\}$. It is emphasized that the operations $ij$ and $i\pm j$ are considered modulo $n+1$ for $i,j\in \mathbb{I}_{0,n}$ when not specified.

\subsection{Yetter-Drinfeld modules and Nichols algebras}

Let $H$ be a Hopf algebra. A left Yetter-Drinfeld module $V$ over $H$ is a left $H$-module $(V,\cdot)$ and a left $H$-comodule $(V,\delta)$ with $\delta(v)=v_{(-1)}\otimes v_{(0)}\in H\otimes V$ for all $v\in V$, satisfying
\begin{center}
$\delta(h\cdot v)=h_{(1)}v_{(-1)}S(h_{(3)})\otimes h_{(2)}\cdot v_{(0)},~~~~~~\forall ~v\in V, ~h\in H.$
\end{center}
We denote by $^H_H\mathcal{YD}$ the category of finite-dimensional left Yetter-Drinfeld modules over $H$. It is a braided monoidal category: for $V,~W \in {^H_H\mathcal{YD}}$, the braiding $c_{V,W}:V\otimes W\rightarrow W\otimes V$ is given by
\begin{equation}
c_{V,W}(v\otimes w)=v_{(-1)}\cdot w\otimes v_{(0)},\hspace{1em}\forall ~v\in V,~ w\in W.\label{braing}
\end{equation}
\vspace{-.8cm}
\\In particular, $(V,c_{V,V})$ is a braided vector space, that is, $c := c_{V,V}$ is a linear isomorphism satisfying the braid equation
\begin{equation*}
(c\otimes id)(id \otimes c)(c\otimes id)=(id \otimes c)(c\otimes id)(id \otimes c).
\end{equation*}
Moreover, $^H_H\mathcal{YD}$ is rigid. Denote by $V^\ast$ the left dual defined by
\begin{center}
$\langle h\cdot f,v\rangle=\langle f, S(h)v\rangle,~~~~~f_{(-1)}\langle f_{(0)},v\rangle=S^{-1}(v_{(-1)})\langle f,v_{(0)}\rangle.$
\end{center}

\begin{defi}{\rm\cite [Definition 2.1]{AS2}}%$[AS2,Definition 2.1]$
~Let $H$ be a Hopf algebra and $V$ a Yetter-Drinfeld module over H. A braided $\mathbb{N}$-graded Hopf algebra $R=\bigoplus_{n\geq 0}R(n)$ in $^H_H\mathcal{YD}$ is called a Nichols algebra of V if
\begin{center}
$\mathbbm{k}\simeq R(0), ~V\simeq R(1),~R(1)=\mathcal{P}(R),~R$ is generated as an algebra by $R(1)$.
\end{center}

\end{defi}

For any $V\in {^H_H\mathcal{YD}}$ there is a Nichols algebra $\mathcal{B}(V)$ associated to it. It is the quotient of the tensor algebra $T(V)$ by the largest homogeneous two-sided ideal $I$ satisfying:

~~$\bullet~I$ is generated by homogeneous elements of degree $\geq 2$.

~~$\bullet~\triangle(I)\subseteq I\otimes T(V)+T(V)\otimes I$, i.e., it is also a coideal.
\\ In such case, $\mathcal{B}(V)=T(V)/I$. See {\rm\cite [Section 2.1]{AS2}} for details.

\begin{rem}\label{rem}
As well known, the Nichols algebra as an algebra and a coalgebra is completely determined by the braided space.
If $W\subseteq V$ is a subspace such that $c(W\otimes W)\subseteq W\otimes W$, then $\dim \mathcal{B}(V)=\infty$ if $\dim \mathcal{B}(W)=\infty$.
In particular, if $V$ contains a non-zero element $v$ such that $c(v\otimes v)=v\otimes v$, then $\dim \mathcal{B}(V)=\infty$.
\end{rem}

\begin{lem}{\rm\cite [Theorem 2.2]{Gn00}}\label{direct sum}
Let $(V,c)$ be a braided vector space, $V=\bigoplus_{i=1}^{n}V_i$, such that each $\mathcal{B}(V_i)$ is finite-dimensional. Then $\dim \mathcal{B}(V)\geq\prod_{i=1}^{n}$ $\dim \mathcal{B}(V_i)$. Furthermore, the equality holds if and only if $b_{ij}=b_{ji}^{-1}$ for all $i\neq j$ $(b_{ij}=c\mid_{V_i\otimes V_j})$.
\end{lem}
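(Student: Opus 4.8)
The plan is to reduce the statement to the combinatorics of the Hilbert series together with a Poincaré--Birkhoff--Witt type argument. First I would recall the key structural fact underlying the lower bound: since each $V_i$ is a braided subspace of $V$, the canonical maps $\mathcal{B}(V_i)\hookrightarrow \mathcal{B}(V)$ are injective (by Remark \ref{rem}, or rather the observation preceding it, that the Nichols algebra of a braided subspace embeds), so the multiplication map $\mathcal{B}(V_1)\otimes \cdots \otimes \mathcal{B}(V_n)\to \mathcal{B}(V)$ is defined. One shows this map is injective by a standard filtration/normal-form argument in the tensor algebra: every element of $T(V)$ can be rewritten, modulo the defining ideal, as a sum of products of ``monomials in the $V_i$'s arranged in increasing order of index,'' and no further cancellation is possible because the braiding only produces scalar multiples when reordering. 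This yields $\dim\mathcal{B}(V)\ge \prod_i \dim\mathcal{B}(V_i)$, with the tensor product sitting inside $\mathcal{B}(V)$ as a subcoalgebra.

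Next, for the equality case, the crucial point is that equality holds precisely when the above inclusion is onto, i.e., when the tensor-product coalgebra already carries a Nichols algebra structure. I would argue as follows: if $b_{ij}b_{ji}=\operatorname{id}$ for all $i\neq j$, then the braided tensor product $\mathcal{B}(V_1)\underline{\otimes}\cdots\underline{\otimes}\mathcal{B}(V_n)$ is a braided bialgebra in $^H_H\mathcal{YD}$ whose degree-one part is $V$, which is primitively generated and has no primitives in higher degree (the last fact uses that the braiding between distinct $V_i$ is an involution, so no new skew-primitive relations appear, and within each factor one invokes that $\mathcal{B}(V_i)$ is a Nichols algebra). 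Hence this tensor product satisfies all the defining axioms of $\mathcal{B}(V)$, giving $\mathcal{B}(V)\cong\bigotimes_i\mathcal{B}(V_i)$ and equality of dimensions. Conversely, if $b_{ij}b_{ji}\neq\operatorname{id}$ for some $i\neq j$, pick $0\neq v\in V_i$, $0\neq w\in V_j$; then in $\mathcal{B}(V)$ the commutator-type element $vw - (b_{ij}$-twisted$)wv$ is a nonzero primitive of degree $2$ lying outside the subalgebra generated by the $V_k$'s in the expected way — more precisely, the relation forcing $\mathcal{B}(V)$ down to the tensor product fails, so $\dim\mathcal{B}(V) > \prod_i\dim\mathcal{B}(V_i)$. (This is the content of Gra\~na's original argument; I would cite \cite{Gn00} and reproduce only the braided-tensor-product verification.)

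The main obstacle is the ``only if'' direction: one must exhibit a genuine dimension jump, not merely a failure of the obvious splitting. The cleanest route is to compare Hilbert series in the multigrading by $\mathbb{N}^n$ (degree in each $V_i$). The lower bound says the bigraded Hilbert series of $\mathcal{B}(V)$ dominates coefficientwise the product $\prod_i \mathcal{H}_{\mathcal{B}(V_i)}(t_i)$; equality of total dimensions forces equality of every multigraded component. But the $(1,1,0,\dots,0)$-component of $\mathcal{B}(V)$ is the quotient of $V_i\otimes V_j \oplus V_j\otimes V_i$ by the image of $(\operatorname{id}-c)$ restricted there, which has dimension $\dim V_i\dim V_j$ exactly when $c=b_{ij}\oplus b_{ji}$ acts with $b_{ij}b_{ji}=\operatorname{id}$ on the relevant space, and strictly larger otherwise. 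I would therefore organize the whole proof around this bidegree-$(1,1)$ computation, deferring the verification that the braided tensor product is indeed a Nichols algebra to a lemma on primitively generated braided bialgebras, which is where most of the routine bookkeeping lives.
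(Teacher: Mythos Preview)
The paper itself gives no proof of this lemma; it is simply quoted from Gra\~na \cite{Gn00}. So there is nothing to compare your argument against in the present paper, and you are effectively reconstructing Gra\~na's theorem.

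Your treatment of the equality case is essentially sound. The ``if'' direction is correct: when $b_{ij}b_{ji}=\operatorname{id}$ for all $i\neq j$, the braided tensor product $\mathcal{B}(V_1)\underline{\otimes}\cdots\underline{\otimes}\mathcal{B}(V_n)$ is a connected graded braided Hopf algebra generated in degree one with no primitives in higher degree, hence is $\mathcal{B}(V)$. For the ``only if'' direction, your bidegree-$(1,1)$ computation is the right idea; the minor slip is that the degree-$2$ component of $\mathcal{B}(V)$ is $T^2(V)/\ker(\operatorname{id}+c)$, not a quotient by $\operatorname{im}(\operatorname{id}-c)$. With that correction your count goes through: on $V_i\otimes V_j\oplus V_j\otimes V_i$ one has $\dim\ker(\operatorname{id}+c)=\dim\{w:b_{ij}b_{ji}(w)=w\}$, so the bidegree-$(1,1)$ piece has dimension $\dim V_i\dim V_j$ exactly when $b_{ij}b_{ji}=\operatorname{id}$, and strictly more otherwise.

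The genuine gap is in the lower bound. Your normal-form argument asserts that ``the braiding only produces scalar multiples when reordering,'' which is false outside the diagonal case: here $b_{ij}:V_i\otimes V_j\to V_j\otimes V_i$ is an arbitrary linear isomorphism, and reordering a word does not leave you with a scalar multiple of a single ordered monomial. So the claimed PBW-type injectivity of the multiplication map $\mathcal{B}(V_1)\otimes\cdots\otimes\mathcal{B}(V_n)\to\mathcal{B}(V)$ is not justified by what you wrote. Gra\~na's actual argument in \cite{Gn00} is different and more structural: for any braided subspace $W\subset V$, one shows $\mathcal{B}(V)$ is a braided Hopf module over $\mathcal{B}(W)$ and then invokes the fundamental theorem of Hopf modules (in the braided category) to conclude that $\mathcal{B}(V)$ is \emph{free} as a $\mathcal{B}(W)$-module. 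Iterating over $W=V_1,V_1\oplus V_2,\ldots$ gives the inequality $\dim\mathcal{B}(V)\ge\prod_i\dim\mathcal{B}(V_i)$ and, in particular, the coefficientwise domination of multigraded Hilbert series that your ``only if'' argument then uses. You should replace the normal-form sketch by this freeness statement (or prove injectivity of the multiplication map directly via the nondegenerate Nichols pairing, which is another route); once that is in place, the rest of your proposal stands.
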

\subsection{Bosonization and Hopf algebras with a projection}
Let $R$ be a Hopf algebra in $^H_H\mathcal{YD}$ and denote the coproduct by $\triangle_R(r)=r^{(1)}\otimes r^{(2)}$ for $r\in R$. We define the Radford biproduct or bosonization $R \sharp H$ as follows: as a vector space, $R \sharp H=R\otimes H$, and the multiplication and comultiplication are given by the smash product and smash-coproduct, respectively:
\begin{center}
$(r\sharp g)(s\sharp h)=r(g_{(1)}\cdot s)\sharp g_{(2)}h, ~~\triangle(r\sharp g)=r^{(1)}\sharp (r^{(2)})_{(-1)}g_{(1)}\otimes (r^{(2)})_{(0)}\sharp g_{(2)}$.
\end{center}
Clearly, the map
$\iota: H\rightarrow R \sharp H, ~h\mapsto 1\sharp h, ~\forall ~h\in H$
is injective and the map
\begin{equation*}
\pi: R \sharp H \rightarrow H, ~r\sharp h\mapsto \varepsilon_R(r)h, ~\forall ~r\in R, ~h\in H
\end{equation*}
is surjective such that $\pi \circ \iota=id_H$. Moreover, it holds that
\begin{equation*}
R=(R \sharp H)^{coH}=\{x\in R \sharp H\mid (id\otimes \pi)\triangle(x)=x\otimes1\}.
\end{equation*}

Conversely, if $A$ is a Hopf algebra with bijective antipode and $\pi:A\rightarrow H$ is a Hopf algebra epimorphism admitting a Hopf algebra section $\iota: H\rightarrow A$ such that $\pi \circ \iota=id_H$. Then $R=A^{co \pi}$ is a braided Hopf algebra in $^H_H\mathcal{YD}$ and $A\cong R \sharp H$ as Hopf algebras. See \cite{R85} and \cite{R} for more details.

\subsection{The Drinfeld double}

Let $H$ be a finite-dimensional Hopf algebra over $\mathbbm{k}$. The Drinfeld double $D(H)=H^{\ast cop}\otimes H$ is a Hopf algebra with the tensor product coalgebra structure and algebra structure defined by
\begin{center}
$(p\otimes a)(q\otimes b)=p\langle q_{(3)},a_{(1)}\rangle q_{(2)}\otimes a_{(2)}\langle q_{(1)}, S^{-1}(a_{(3)})\rangle b,~~~\forall~ p,~q\in H^{*},~a,~b\in H.$
\end{center}
\begin{pro}{\rm\cite [Proposition 10.6.16]{M}}
Let $H$ be a finite-dimensional Hopf algebra. Then the Yetter-Drinfeld category $^H_H\mathcal{YD}$ can be identified with the category $_{D(H^{cop})}\mathcal{M}$ of left modules over the Drinfeld double $D(H^{cop}).$
\end{pro}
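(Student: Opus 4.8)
The plan is to exhibit a pair of mutually inverse functors between $^H_H\mathcal{YD}$ and $_{D(H^{cop})}\mathcal{M}$ that act as the identity on underlying vector spaces and on linear maps, so that the asserted identification is in fact an isomorphism of braided monoidal categories. The only substantial inputs are that $\dim H<\infty$, which lets comodule structures be dualized, and the explicit multiplication of the Drinfeld double recalled in \S2.3.

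First I would record the standard dictionary between comodules and dual modules: because $\dim H<\infty$, a left $H$-comodule structure $\delta(v)=v_{(-1)}\otimes v_{(0)}$ on $V$ is the same as a left module structure over the dual, via $p\rightharpoonup v:=\langle p,v_{(-1)}\rangle v_{(0)}$ for $p\in H^{*}$; counitality of $\delta$ gives $\varepsilon\rightharpoonup v=v$ and coassociativity gives $q\rightharpoonup(p\rightharpoonup v)=(p\ast q)\rightharpoonup v$ with $\ast$ the convolution, so $\rightharpoonup$ is a left action of the algebra underlying $(H^{cop})^{\ast cop}$ (namely $H^{*}$ with the opposite convolution product), and the $cop$ is exactly the twist forced by the order in which the two leftmost legs of $\delta$ are paired off. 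Thus a Yetter--Drinfeld datum on $V$ is precisely a pair of actions, one of $H$ and one of $(H^{cop})^{\ast cop}$, on a common space. I would then check that the Yetter--Drinfeld compatibility $\delta(h\cdot v)=h_{(1)}v_{(-1)}S(h_{(3)})\otimes h_{(2)}\cdot v_{(0)}$, after its $H$-valued leg is tested against an arbitrary $p\in H^{*}$, becomes exactly the commutation rule between the subalgebras $H$ and $(H^{cop})^{\ast cop}$ inside $D(H^{cop})$ --- i.e.\ the rule, read off from the multiplication of $D(H^{cop})=(H^{cop})^{\ast cop}\otimes H^{cop}$ in \S2.3, for rewriting the product $ph$ (computed in $D(H^{cop})$, with $p\in(H^{cop})^{\ast cop}$, $h\in H$) as a sum of terms $h'p'$ with $h'\in H$, $p'\in(H^{cop})^{\ast cop}$. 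Since that multiplication makes $(H^{cop})^{\ast cop}\otimes H^{cop}\to D(H^{cop})$ a linear isomorphism, $D(H^{cop})$ is generated as an algebra by (the images of) $H$ and $(H^{cop})^{\ast cop}$, so a left $D(H^{cop})$-module is exactly a space carrying actions of these two subalgebras subject to that single commutation rule. Together with the dictionary above, this shows that objects of $^H_H\mathcal{YD}$ and of $_{D(H^{cop})}\mathcal{M}$ are literally the same data.

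For morphisms, a map $f\colon V\to W$ lies in $^H_H\mathcal{YD}$ iff it is both $H$-linear and $H$-colinear; $H$-colinearity is $(H^{cop})^{\ast cop}$-linearity under the dictionary, and $D(H^{cop})$-linearity is linearity over both generating subalgebras, so the Hom-sets agree and the two identity-on-spaces functors are mutually inverse isomorphisms of categories. It then remains to verify that this identification is monoidal and braided: the tensor product of Yetter--Drinfeld modules corresponds to the tensor product of $D(H^{cop})$-modules through the tensor-product-coalgebra structure of the double, and the braiding \eqref{braing} corresponds to the canonical braiding of $_{D(H^{cop})}\mathcal{M}$ coming from the universal $R$-matrix of $D(H^{cop})$; both are routine Sweedler computations.

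The only real obstacle is bookkeeping. One must fix, consistently and once and for all, the three ``opposite''-type decorations --- the $cop$ on the Hopf algebra whose double is formed, the $cop$ on its dual tensor factor, and the occurrence of $S$ versus $S^{-1}$ both in the Yetter--Drinfeld axiom of \S2.1 and in the multiplication of the double in \S2.3 --- so that the commutation rule matches the Yetter--Drinfeld compatibility on the nose, rather than merely up to an unwanted opposite or a misplaced antipode. Once the conventions are aligned, each step above is a short computation in Sweedler notation, and reconciling these twists is exactly why the statement is phrased with $D(H^{cop})$ in place of $D(H)$.
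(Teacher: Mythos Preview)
The paper does not prove this proposition at all: it is quoted verbatim from Montgomery's book with the citation \cite[Proposition~10.6.16]{M} and no proof environment follows. So there is nothing to compare your argument against in the paper itself.

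That said, your sketch is the standard proof one finds in the literature (essentially the argument Montgomery gives, or the variant in Majid or Kassel). The dictionary between left $H$-comodules and left $(H^{cop})^{\ast cop}$-modules via $p\rightharpoonup v=\langle p,v_{(-1)}\rangle v_{(0)}$ is correct, the identification of the Yetter--Drinfeld compatibility with the cross-relation in $D(H^{cop})$ is the heart of the matter, and your closing remark that the only real work is aligning the various ${}^{op}$, ${}^{cop}$, and antipode conventions is exactly right. One small point worth tightening if you were to write this out in full: you should make explicit that the algebra structure on $H^{*}$ you obtain from the coaction is that of $(H^{cop})^{*}$ (i.e.\ $H^{*op}$), and then trace why the \emph{further} $cop$ in $(H^{cop})^{*cop}$ appears --- it comes from the tensor product of comodules, not from a single comodule, so it enters when you check monoidality rather than when you construct the functor on objects. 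Otherwise the outline is sound.
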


\section{The Hopf algebra $H$ and its Drinfeld double}
In this section, we will recall the structure of the 16-dimensional non-trivial semisimple Hopf algebra $H=H_{b:1}$ in \cite{K00} and present the Drinfeld Double $D=D(H^{cop})$ by generators and relations. Then we will determine all simple left $D$-modules.

\begin{defi}\label{def3.1}
As an algebra $H$ is generated by $x,~y,~t$ satisfying the relations
\begin{eqnarray}
x^4=1,\hspace{1em}y^2=1,\hspace{1em}t^2=1,\hspace{1em}xy=yx,\hspace{1em}tx=x^{-1}t, \hspace{1em}ty=yt,\label{3.1}
\end{eqnarray}
and its coalgebra is given by
\begin{flalign}
&\triangle(x)=x\otimes x,~~~~\triangle(y)=y\otimes y,~~~~\varepsilon(x)=\varepsilon(y)=1,\label{3.2}\\
&\triangle(t)=\frac{1}{2}[(1+y)t\otimes t+(1-y)t\otimes x^2 t],~~~~~\varepsilon(t)=1,\label{3.3}
\end{flalign}
and its antipode is given by
\begin{equation}
S(x)=x^{-1},\hspace{1em}S(y)=y,\hspace{1em}S(t)=\frac{1}{2}[(1+y)t+(1-y)x^2 t].\nonumber
\end{equation}

\end{defi}

\begin{rem}
$1.~ G(H)=\langle x\rangle \times \langle y\rangle,~\mathcal{P}_{1,g}(H)=\mathbbm{k}\{1-g\}$ for $g\in G(H)$ and a linear basis of $H$ is given by $\{x^i, ~x^iy, ~x^it, ~x^iyt,~i\in \mathbb{I}_{0,3}\}$.

$2.~$ Denote by $\{(x^i)^*, ~(x^iy)^*, ~(x^it)^*, ~(x^iyt)^*,~i\in \mathbb{I}_{0,3}\}$ the the dual basis of Hopf algebra $H^*$. Let
\begin{flalign*}
&a=[(1-x+x^2-x^3)(1+y)(1+t)]^*,~~~b=[(1+x+x^2+x^3)(1-y)(1+t)]^*,\\
&c=[(1+x+x^2+x^3)(1-y)(1-t)]^*,~~~d=[(1+\xi x-x^2-\xi x^3)(1+y)(1+t)]^*.
\end{flalign*}
Then using the multiplication table induced by the relations of $H$, it follows that
\begin{flalign*}
&a^2=b^2=c^2=1,\hspace{2em}ab=ba, \hspace{2em}ac=ca,\hspace{2em}bc=cb,\\
&d^2=a,\hspace{3em}da=ad,\hspace{3em}db=cd,\hspace{3em}dc=bd,\\
&\triangle(a)=a\otimes a,\hspace{3em}\triangle(b)=b\otimes b,\hspace{2em}\triangle(c)=c\otimes c,\\
&\triangle(d)=\frac{1}{2}(d+bcd)\otimes d+\frac{1}{2}(d-bcd)\otimes ad.
\end{flalign*}

$3.~$ The automorphisms of $H$ are given in Table $1$.

\begin{center}
\begin{tabular}{|c|c|c|c|c|c|c|c|c|c|c|c|c|c|c|c|}
  \hline
   & $x$ & $y$ & $t$ &  & $x$ & $y$ & $t$ &  & $x$ & $y$ & $t$ &  & $x$ & $y$ & $t$  \\
  \hline
  $\tau_1$ & $x$ & $y$ & $t$ & $\tau_9$ & $x^3$ & $y$ & $t$ & $\tau_{17}$ & $xy$ & $y$ & $t$ & $\tau_{25}$ & $x^3y$ & $y$ & $t$ \\
  \hline
  $\tau_2$ & $x$ & $y$ & $xt$ & $\tau_{10}$ & $x^3$ & $y$ & $xt$ & $\tau_{18}$ & $xy$ & $y$ & $xt$ & $\tau_{26}$ & $x^3y$ & $y$ & $xt$ \\
  \hline
  $\tau_3$ & $x$ & $y$ & $x^2t$ & $\tau_{11}$ & $x^3$ & $y$ & $x^2t$ & $\tau_{19}$ & $xy$ & $y$ & $x^2t$ & $\tau_{27}$ & $x^3y$ & $y$ & $x^2t$ \\
  \hline
  $\tau_4$ & $x$ & $y$ & $x^3t$ & $\tau_{12}$ & $x^3$ & $y$ & $x^3t$ & $\tau_{20}$ & $xy$ & $y$ & $x^3t$ & $\tau_{28}$ & $x^3y$ & $y$ & $x^3t$ \\
  \hline
  $\tau_5$ & $x$ & $y$ & $yt$ & $\tau_{13}$ & $x^3$ & $y$ & $yt$ & $\tau_{21}$ & $xy$ & $y$ & $yt$ & $\tau_{29}$ & $x^3y$ & $y$ & $yt$ \\
  \hline
  $\tau_6$ & $x$ & $y$ & $xyt$ & $\tau_{14}$ & $x^3$ & $y$ & $xyt$ &  $\tau_{22}$& $xy$ & $y$ & $xyt$ & $\tau_{30}$ & $x^3y$ & $y$ & $xyt$ \\
  \hline
  $\tau_7$ & $x$ & $y$ & $x^2yt$ & $\tau_{15}$ & $x^3$ & $y$ & $x^2yt$ & $\tau_{23}$ & $xy$ & $y$ & $x^2yt$ & $\tau_{31}$ & $x^3y$ & $y$ & $x^2yt$ \\
  \hline
  $\tau_8$ & $x$ & $y$ & $x^3yt$ & $\tau_{16}$ & $x^3$ & $y$ & $x^3yt$ & $\tau_{24}$ & $xy$ & $y$ & $x^3yt$ & $\tau_{32}$ & $x^3y$ & $y$ & $x^3yt$ \\
  \hline
\end{tabular}
\end{center}
\begin{center}
Table $1.$ Automorphisms of $H$
\end{center}
\end{rem}

Now we describe the Drinfeld Double $D(H^{cop})$ of $H^{cop}$.
\begin{pro}
$D:=D(H^{cop})$ as a coalgebra is isomorphic to tensor coalgebra $H^{*bop}\otimes H^{cop}$, and as an algebra is generated by the elements $a,~b,~c,~d,~x,~y,~t$ such that $x,~y,~t$ satisfying the relations of $H^{cop}$, $a,~b,~c,~d$ satisfying the relations of $H^{*bop}$ and
\begin{flalign*}
&xa=ax, \hspace{2em}xb=bx, \hspace{2.7em}xc=cx, \hspace{2.5em}xd=bcdx,\\
&ya=ay, \hspace{2em}yb=by, \hspace{2.8em}yc=cy,\hspace{2.7em}yd=dy,\\
&ta=at,\hspace{2.4em}tb=bx^2t, \hspace{2em}tc=cx^2t, \hspace{2em}td=adyt.
\end{flalign*}
\end{pro}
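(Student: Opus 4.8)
\emph{Strategy and the coalgebra claim.}
The plan is to specialize the definition of the Drinfeld double to $K:=H^{cop}$ and read everything off the product and coproduct formulas recalled above. The coalgebra assertion is bookkeeping: by definition $D(K)=K^{*cop}\otimes K$ carries the tensor-product coalgebra structure, and since $H^{cop}$ has the same multiplication as $H$, the dual $K^{*}$ has the comultiplication of $H^{*}$ but the opposite multiplication, so $K^{*cop}=H^{*bop}$ and hence $D=D(K)\cong H^{*bop}\otimes H^{cop}$ as coalgebras. For the algebra structure one uses the standard fact that $k\mapsto\varepsilon\otimes k$ and $p\mapsto p\otimes1$ embed $K$ and $K^{*cop}$ as subalgebras of $D(K)$ (check by putting $p=\varepsilon$, resp.\ $b=1$, in the product formula and using $\langle q,1\rangle=\varepsilon(q)$). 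Thus $x,y,t$ obey the relations \eqref{3.1} of $H^{cop}$, and $a,b,c,d$ obey the relations of $K^{*cop}=H^{*bop}$, which are exactly the identities recorded in part $(2)$ of the Remark following Definition~\ref{def3.1} (they are unchanged under passing to the opposite algebra).

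\emph{The cross relations.}
The substantive step is the twelve relations mixing the two factors. Writing $h$ for $\varepsilon\otimes h$ and $q$ for $q\otimes1$, the multiplication of the double gives
\[
hq=\langle q_{(3)},h_{(1)}\rangle\,\langle q_{(1)},S(h_{(3)})\rangle\;q_{(2)}\,h_{(2)},
\]
where the iterated coproduct of $h$ is taken in $H^{cop}$, that of $q$ in $H^{*}$, and $S$ is the antipode of $H$ (here $S^{-1}=S$, as $H$ is semisimple). When $h\in\{x,y\}$ and $q\in\{a,b,c\}$ every element that appears is group-like; one checks from the formulas in the Remark that $a,b,c$ are the characters of $H$ determined by $a(x)=-1$, $b(y)=-1$, $c(y)=c(t)=-1$ with all remaining values on $x,y,t$ equal to $1$, so the right-hand side collapses to a scalar multiple of $qh$ with scalar $1$, yielding $xa=ax$, $ya=ay$, $xb=bx$, and so on. The cases involving $t$ or $d$ are the ones I expect to be the main obstacle, since neither is group-like: one must expand $\Delta^{(2)}(t)$ in $H^{cop}$ from \eqref{3.3} and $\Delta^{(2)}(d)$ in $H^{*}$ from the Remark (four summands each), apply the pairings and $S$ to the at most sixteen resulting terms, and simplify using $x^{4}=1$ and $S(t)=\tfrac12[(1+y)t+(1-y)x^{2}t]$. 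After the cancellations one reaches $tb=bx^{2}t$, $tc=cx^{2}t$, $xd=bcdx$, $td=adyt$ and their analogues (for example, with $q=b$ the four terms of $\Delta^{(2)}_{H^{cop}}(t)$ contribute $0,\,0,\,\tfrac12(bx^{2}t+byx^{2}t),\,\tfrac12(bx^{2}t-byx^{2}t)$, summing to $bx^{2}t$).

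\emph{Completeness of the presentation.}
Finally I would check that these relations suffice. Let $\bar D$ be the algebra presented by the stated generators and relations, with subalgebras $\bar D_{<}$, $\bar D_{>}$ generated by $a,b,c,d$ and by $x,y,t$ respectively. The cross relations express each $hq$ (for $h$ a generator of $\bar D_{>}$ and $q$ a generator of $\bar D_{<}$) as a sum of terms $q'h'$ with $q'$ a monomial in $a,b,c,d$ and $h'$ one of the sixteen monomials $x^{i}y^{j}t^{k}$; a routine straightening argument then shows $\bar D=\bar D_{<}\bar D_{>}$. Since $\bar D_{<}$ is a quotient of $H^{*bop}$ and $\bar D_{>}$ a quotient of $H^{cop}$, both of dimension $16$, we get $\dim\bar D\le 16\cdot16=256$. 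All the stated relations hold in $D$ and $D$ is generated by $a,b,c,d,x,y,t$, so there is a surjective algebra map $\bar D\twoheadrightarrow D$; because $\dim D=(\dim H)^{2}=256$, it is an isomorphism, which establishes the presentation.
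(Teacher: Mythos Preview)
Your approach is correct and follows the same route as the paper: specialize the Drinfeld double multiplication formula to $K=H^{cop}$ and compute the twelve cross relations term by term. The paper's proof consists solely of those computations (it writes out $\Delta^{2}_{H^{cop}}$ of $x,y,t$ and $\Delta^{2}_{H^{*bop}}$ of $a,b,c,d$, then plugs into the product formula); your sample computation for $tb$ matches the paper's.

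Where you go beyond the paper is in the surrounding justification: the explicit identification $K^{*cop}=H^{*bop}$, the reason the two factors embed as subalgebras, and the final dimension count showing the listed relations already present $D$. The paper leaves all of this implicit, so your write-up is more complete. One small remark: your appeal to semisimplicity for $S^{-1}=S$ is valid but unnecessary here, since the double formula for $D(K)$ involves $S_K^{-1}$, and $K=H^{cop}$ has antipode $S_H^{-1}$, so $S_K^{-1}=S_H$ on the nose.
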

\begin{proof}
After a direct computation, we have that
\begin{flalign*}
\triangle_{H^{cop}}^2(x)=&x\otimes x\otimes x, ~~~\triangle_{H^{cop}}^2(y)=y\otimes y\otimes y,\\
\triangle_{H^{*bop}}^2(a)=&a\otimes a\otimes a,
~~~\triangle_{H^{*bop}}^2(b)=b\otimes b\otimes b,
~~~\triangle_{H^{*bop}}^2(c)=c\otimes c\otimes c,\\
\triangle_{H^{cop}}^2(t)=&\frac{1}{4}[t\otimes (1+y)t\otimes (1+y)t+x^2t\otimes (1-y)t\otimes(1+y)t\\
&+x^2t\otimes x^2(1+y)t\otimes
(1-y)t +t\otimes x^2(1-y)t\otimes (1-y)t],\\
\triangle_{H^{*bop}}^2(d)=&\frac{1}{4}[d\otimes (d+bcd)\otimes (d+bcd)+ad\otimes (d-bcd)\otimes (d+bcd)\\
&+ad\otimes a(d+bcd)
\otimes (d-bcd)+d\otimes a(d-bcd)\otimes (d-bcd)].
\end{flalign*}
 It follows that
\begin{flalign*}
xa=&\langle a,x\rangle ax \langle a,S(x)\rangle=ax,
\hspace{1em}ya=\langle a,y\rangle ay \langle a,S(y)\rangle=ay,\\
xb=&\langle b,x\rangle bx \langle b,S(x)\rangle=bx,
\hspace{1.5em}yb=\langle b,y\rangle by \langle b,S(y)\rangle=by,\\
xc=&\langle c,x\rangle cx \langle c,S(x)\rangle=cx,
\hspace{1.5em}yc=\langle c,y\rangle cy \langle c,S(y)\rangle=cy,\\
xd=&\frac{1}{4}[\langle d,x\rangle (d+bcd)x \langle d+bcd,S(x)\rangle+\langle ad,x\rangle (d-bcd)x \langle d+bcd,S(x)\rangle]=bcdx,\\
yd=&\frac{1}{4}[\langle d,y\rangle (d+bcd)y \langle d+bcd,S(y)\rangle+\langle ad,y\rangle (d-bcd)y \langle d+bcd,S(y)\rangle]=dy,\\
ta=&\frac{1}{4}[\langle a,t\rangle a(1+y)t\langle a,S(t+yt)\rangle+\langle a,x^2t\rangle a(1-y)t\langle a,S(t+yt)\rangle]=at,\\
tb=&\frac{1}{4}[\langle b,x^2t\rangle bx^2(1+y)t\langle b,S(t-yt)\rangle+\langle b,t\rangle bx^2(1-y)t\langle b,S(t-yt)\rangle]=bx^2t,\\
tc=&\frac{1}{4}[\langle c,x^2t\rangle cx^2(1+y)t\langle c,S(t-yt)\rangle+\langle c,t\rangle cx^2(1-y)t\langle c,S(t-yt)\rangle]=cx^2t,\\
td=&\frac{1}{16}[\langle ad,t\rangle a(d+bcd)(1+y)t\langle d-bcd,S(t+yt)\rangle\\
&+\langle ad,x^2t\rangle a(d+bcd)(1-y)t\langle d-
bcd,S(t+yt)\rangle\\
&+\langle d,t\rangle a(d-bcd)(1+y)t\langle d-bcd,S(t+yt)\rangle\\
&+\langle d,x^2t\rangle a(d-bcd)
(1-y)t\langle d-bcd,S(t+yt)\rangle]=adyt.
\end{flalign*}

This completes the proof.
\end{proof}

We begin by describing the one-dimensional $D$-modules.
\begin{lem}\label{1simple}
There are $32$ non-isomorphic one-dimensional simple modules $\mathbbm{k}_{\chi_{i,j,k,l}}$ given by the characters $\chi_{i,j,k,l},~ 0\leq i,~k,~l<2, ~0\leq j<4$, where
\begin{flalign*}
&\chi_{i,j,k,l}(x)=(-1)^i,~~~\chi_{i,j,k,l}(y)=(-1)^j, ~~~\chi_{i,j,k,l}(t)=(-1)^k,\\
&\chi_{i,j,k,l}(a)=(-1)^j, ~~~\chi_{i,j,k,l}(b)=(-1)^l,~~~\chi_{i,j,k,l}(c)=(-1)^l,~~~\chi_{i,j,k,l}(d)=\xi^j.
\end{flalign*}
Moreover, any one-dimensional $D$-module is isomorphic to $\mathbbm{k}_{\chi_{i,j,k,l}}$ for some $0\leq i,k,l<2, ~0\leq j<4$.
\end{lem}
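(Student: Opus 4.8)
The plan is to classify all one-dimensional $D$-modules by finding all algebra homomorphisms $\chi : D \to \mathbbm{k}$, which amounts to determining the finite abelian quotient group $G$ of the group generated by the images of $a,b,c,d,x,y,t$ subject to the defining relations of $D$, and then counting its characters.

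First I would observe that a one-dimensional $D$-module is precisely a character $\chi\in\mathrm{Alg}(D,\mathbbm{k})$, so each generator must be sent to a scalar and, since all generators are invertible (powers of them equal $1$), to a root of unity. Applying $\chi$ to the relations of $H^{cop}$ gives $\chi(x)^4=\chi(y)^2=\chi(t)^2=1$ and $\chi(t)\chi(x)=\chi(x)^{-1}\chi(t)$, the last forcing $\chi(x)^2=1$, so $\chi(x)=\pm 1=(-1)^i$, $\chi(y)=(-1)^{j'}$, $\chi(t)=(-1)^k$ for the moment with $j'\in\{0,1\}$. Similarly, applying $\chi$ to the relations of $H^{*bop}$ (namely $a^2=b^2=c^2=1$, $d^2=a$, the commutativity $ab=ba$ etc., and $db=cd$, $dc=bd$) yields $\chi(a)=\pm1$, $\chi(b)=\pm1$, $\chi(c)=\pm1$, $\chi(d)^2=\chi(a)$, and crucially $\chi(d)\chi(b)=\chi(c)\chi(d)$ together with $\chi(d)\chi(c)=\chi(b)\chi(d)$, which force $\chi(b)=\chi(c)=(-1)^l$.

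Next I would feed $\chi$ through the cross relations linking the two subalgebras: $xa=ax$, $ya=ay$, $ta=at$ impose nothing new; $xb=bx$, $yb=by$, $xc=cx$, $yc=cy$ impose nothing new; but $tb=bx^2t$ gives $\chi(t)\chi(b)=\chi(b)\chi(x)^2\chi(t)$, i.e. $\chi(x)^2=1$ (already known); $xd=bcdx$ gives $\chi(x)\chi(d)=\chi(b)\chi(c)\chi(d)\chi(x)$, hence $\chi(b)\chi(c)=1$, consistent with $\chi(b)=\chi(c)$; $yd=dy$ and $td=adyt$ give $\chi(t)\chi(d)=\chi(a)\chi(d)\chi(y)\chi(t)$, hence $\chi(a)\chi(y)=1$, so $\chi(a)=\chi(y)$. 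Combining $\chi(a)=\chi(y)=(-1)^{j'}$ with $\chi(d)^2=\chi(a)$, we see $\chi(d)$ is a square root of $(-1)^{j'}$: if $j'=0$ then $\chi(d)=\pm1$, and if $j'=1$ then $\chi(d)=\pm\xi$. In all four cases $\chi(d)=\xi^{j}$ and $\chi(y)=\chi(a)=(-1)^j$ with $j\in\{0,1,2,3\}$, where $j\equiv j'\pmod 2$ records the extra $\mathbb{Z}_2$-choice of sign for $\chi(d)$. This recovers exactly the formulas defining $\chi_{i,j,k,l}$, and shows every character has this form; conversely one checks these assignments satisfy all the relations, so each $\chi_{i,j,k,l}$ indeed defines a $D$-module. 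The parameters range over $0\le i,k,l<2$ and $0\le j<4$, giving $2\cdot4\cdot2\cdot2=32$ distinct characters, and distinctness is immediate since the values $\chi_{i,j,k,l}(x)$, $\chi_{i,j,k,l}(d)$, $\chi_{i,j,k,l}(t)$, $\chi_{i,j,k,l}(b)$ determine $(i,j,k,l)$. Finally, any one-dimensional module, being a character, is forced into this list by the argument above.

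The main obstacle, modest as it is, lies in the bookkeeping of the cross relations involving $d$ and $t$: one must be careful that $td=adyt$ forces $\chi(a)=\chi(y)$ rather than something weaker, and that the resulting constraint $\chi(d)^2=\chi(a)=\chi(y)$ is what doubles the range of the $y$-parameter from $\{0,1\}$ to $\{0,1,2,3\}$ — in other words, the subtlety is recognizing that the "$\mathbb{Z}_2$" of the sign of $\chi(d)$ must be merged with the "$\mathbb{Z}_2$" of $\chi(y)$ into a single "$\mathbb{Z}_4$" indexed by $j$, rather than being counted as an independent factor. Once that identification is made, the count and the explicit list follow immediately.
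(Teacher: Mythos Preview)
Your proposal is correct and follows essentially the same approach as the paper's proof: both arguments classify characters $\chi:D\to\mathbbm{k}$ by evaluating on generators and using the relations $tx=x^{-1}t$, $db=cd$, $td=adyt$, and $d^2=a$ to force $\chi(x)^2=1$, $\chi(b)=\chi(c)$, $\chi(a)=\chi(y)$, and $\chi(d)^2=\chi(y)$, then count $2\cdot 4\cdot 2\cdot 2=32$ characters. Your exposition is in fact more explicit than the paper's about why the parameter $j$ ranges over $\mathbb{Z}_4$ rather than $\mathbb{Z}_2$ (merging the sign of $\chi(d)$ with $\chi(y)$), which the paper leaves somewhat implicit.
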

\begin{proof}
Let $\lambda: D\rightarrow \mathbbm{k}$ be a character and write
\begin{equation*}
\lambda(x)=\lambda_1,~~\lambda(y)=\lambda_2,~~\lambda(t)=\lambda_3, ~~\lambda(a)=\lambda_4,~~\lambda(b)=\lambda_5,~~\lambda(c)=\lambda_6, ~~\lambda(d)=\lambda_7.
\end{equation*}
Since $x^4=1,~y^2=t^2=a^2=b^2=c^2=1$, it follows that
\begin{equation*}
\lambda_1^4=1,\hspace{1em}\lambda_2^2=\lambda_3^2=\lambda_4^2=\lambda_5^2=\lambda_6^2=1.
\end{equation*}
By $tx=x^{-1}t,$ we have $\lambda_1=\lambda_1^3$, that is $\lambda_1^2=1$. As $db=cd,~dc=bd$, then $\lambda_5=\lambda_6$.
Because $td=adyt$, it follows that $\lambda_2\lambda_4=1$, i.e., $\lambda_2=\lambda_4$. From $d^2=a$, we get $\lambda_7^2=\lambda_4$.
Hence, $\lambda$ is completely determined by $\lambda(x),~\lambda(y),~\lambda(t),~\lambda(b).$ Let
\begin{equation*}
\lambda(x)=(-1)^i,\hspace{1em}\lambda(y)=(-1)^j,\hspace{1em}\lambda(t)=(-1)^k,\hspace{1em}\lambda(b)=(-1)^l.
\end{equation*}
It is clear that these modules are pairwise non-isomorphic and any one-dimensional
$D$-module is isomorphic to some $\mathbbm{k}_{\chi_{i,j,k,l}}$, where $0\leq i,~k,~l<2, ~0\leq j<4$.
\end{proof}

Next we describe the simple $D$-modules of dimension two. For convenience, let
\begin{flalign*}
&\Omega^1=\{(0,j,k,l,m,n)\mid j,k,l,m,n\in \mathbb{Z}_2, m+n=1\},\\
&\Omega^2=\{(i,j,0,l,m,n)
\mid i,j,l,m,n\in \mathbb{Z}_2, m+n=0, j+l=1\}, ~~~~\Omega=\Omega^1\cup \Omega^2,\\
&\Lambda^1=\{(1,j,k,l)\mid k\in \mathbb{Z}_4, j,l\in \mathbb{Z}_2\},
~~~\Lambda^2=\{(1,j,k,l)\mid j\in \mathbb{Z}_4, k,l\in \mathbb{Z}_2\}.
\end{flalign*}
Clearly, $|\Omega|=24,~|\Lambda^1|=|\Lambda^2|=16$.

\begin{lem}\label{2simple}
For any $6$-tuple $(i,j,k,l,m,n)\in \Omega$, there exists a simple left $D$-module $V_{i,j,k,l,m,n}$ of dimension $2$ with the action on a fixed basis given by
\begin{flalign*}
&[x]=
\left(
  \begin{array}{cc}
    (-1)^i & 0 \\
    0 & (-1)^{i+m+n} \\
  \end{array}
\right),
\hspace{1em}[y]=
\left(
  \begin{array}{cc}
    (-1)^j & 0 \\
    0 & (-1)^{j} \\
  \end{array}
\right),\\
&[t]=
\left(
  \begin{array}{cc}
    (-1)^k & 0 \\
    0 & (-1)^{j+l+k} \\
  \end{array}
\right),
\hspace{1em}[a]=
\left(
  \begin{array}{cc}
    (-1)^l & 0 \\
    0 & (-1)^{l} \\
  \end{array}
\right),\\
&[b]=
\left(
  \begin{array}{cc}
    (-1)^m & 0 \\
    0 & (-1)^{n} \\
  \end{array}
\right),
\hspace{1em}[c]=
\left(
  \begin{array}{cc}
    (-1)^n & 0 \\
    0 & (-1)^{m} \\
  \end{array}
\right),
\hspace{1em}[d]=
\left(
  \begin{array}{cc}
    0 & 1 \\
    (-1)^l & 0 \\
  \end{array}
\right).
\end{flalign*}
For any $4$-tuple $(i,j,k,l)\in \Lambda^1$, there exists a simple left $D$-module $W_{i,j,k,l}$ of dimension $2$ with the action on a fixed basis given by
\begin{flalign*}
&[x]=
\left(
  \begin{array}{cc}
    \xi & 0 \\
    0 & \xi^{-1} \\
  \end{array}
\right),
~~~~~[y]=
\left(
  \begin{array}{cc}
    (-1)^j & 0 \\
    0 & (-1)^{j} \\
  \end{array}
\right),
~~~[t]=
\left(
  \begin{array}{cc}
    0 & 1 \\
    1 & 0 \\
  \end{array}
\right),\\
&[a]=
\left(
  \begin{array}{cc}
    (-1)^k & 0 \\
    0 & (-1)^{k} \\
  \end{array}
\right),
~~~~~~~~~[b]=
\left(
  \begin{array}{cc}
    (-1)^l & 0 \\
    0 & (-1)^{l+1} \\
  \end{array}
\right),\\
&[c]=
\left(
  \begin{array}{cc}
    (-1)^l & 0 \\
    0 & (-1)^{l+1} \\
  \end{array}
\right),
~~~~~~~~[d]=
\left(
  \begin{array}{cc}
    \xi^k & 0 \\
    0 & (-1)^{j+k}\xi^k\\
  \end{array}
\right).
\end{flalign*}
For any $4$-tuple $(i,j,k,l)\in \Lambda^2$, there exists a simple left $D$-module $U_{i,j,k,l}$ of dimension $2$ with the action on a fixed basis given by
\begin{flalign*}
&[x]=
\left(
  \begin{array}{cc}
    \xi & 0 \\
    0 & \xi^{-1} \\
  \end{array}
\right),
~~~~~[y]=
\left(
  \begin{array}{cc}
    (-1)^j & 0 \\
    0 & (-1)^{j} \\
  \end{array}
\right),
~~~[t]=
\left(
  \begin{array}{cc}
    0 & 1 \\
    1 & 0 \\
  \end{array}
\right),\\
&[a]=
\left(
  \begin{array}{cc}
    (-1)^k & 0 \\
    0 & (-1)^{k} \\
  \end{array}
\right),
~~~~~~~~~[b]=
\left(
  \begin{array}{cc}
    (-1)^l & 0 \\
    0 & (-1)^{l+1} \\
  \end{array}
\right),\\
&[c]=
\left(
  \begin{array}{cc}
    (-1)^{l+1} & 0 \\
    0 & (-1)^{l} \\
  \end{array}
\right),
~~~~~~~~[d]=
\left(
  \begin{array}{cc}
    0 & \xi^j \\
    (-1)^{j+k}\xi^j & 0\\
  \end{array}
\right).
\end{flalign*}
Moreover, if $V$ is a simple $D$-module of dimension $2$, then $V$ is isomorphic to $V_{i,j,k,l,m,n}$ for some $(i,j,k,l,m,n)\in \Omega$ or $W_{i,j,k,l}$ for some $(i,j,k,l)\in \Lambda_1$ or $U_{i,j,k,l}$ for some $(i,j,k,l)\in \Lambda_2$ and $V_{i,j,k,l,m,n}\cong V_{p,q,r,\kappa,\mu,\nu}$ if and only if $(i,j,k,l,m,n)=(p,q,r,\kappa,\mu,\nu)$, $W_{i,j,k,l}\cong W_{p,q,r,\kappa}$ if and only if $(i,j,k,l)=(p,q,r,\kappa)$, $U_{i,j,k,l}\cong U_{p,q,r,\kappa}$ if and only if $(i,j,k,l)=(p,q,r,\kappa)$.
\end{lem}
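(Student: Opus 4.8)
The plan is to prove Lemma~\ref{2simple} in two parts: first establishing the existence of the listed modules, then establishing exhaustiveness and the isomorphism criteria.

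\textbf{Existence.} For each of the three families, I would verify directly that the displayed matrices satisfy the defining relations of $D$ listed in Proposition~3.3 and the relations of $H^{cop}$ and $H^{*bop}$ (e.g. $[x]^4=I$, $[y]^2=[t]^2=[a]^2=[b]^2=[c]^2=I$, $[t][x]=[x]^{-1}[t]$, $[x][d]=[b][c][d][x]$, $[t][d]=[a][d][y][t]$, $[d]^2=[a]$, $[d][b]=[c][d]$, etc.). For the $V_{i,j,k,l,m,n}$ family one checks that the diagonal matrices commute appropriately and that the off-diagonal $[d]$ intertwines $[b]$ and $[c]$ as required; the condition $j+l=1$ in $\Omega^2$ and $m+n=1$ in $\Omega^1$ is exactly what forces the module to be simple rather than a sum of two characters (if both $m=n$ and $j+l=0$ one would land in the one-dimensional case). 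For $W$ and $U$ the key point is that $[x]$ has distinct eigenvalues $\xi,\xi^{-1}$, so any submodule is spanned by a coordinate vector, but $[t]$ (or $[d]$, in the $U$ case) swaps the two coordinate lines, hence the module is simple. Simplicity in the $V$ family follows because $m+n=1$ or $j+l=1$ guarantees that $[b]$, $[c]$, or $[t]$ distinguishes the two basis vectors while $[d]$ swaps them.

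\textbf{Exhaustiveness.} Let $V$ be a simple $D$-module with $\dim V=2$. Since $\langle x,y,a,b,c\rangle$ is a commutative subalgebra (all these generators commute pairwise by the relations), and $\mathbbm{k}$ is algebraically closed, $V$ has a common eigenvector $v$ for $x,y,a,b,c$. Using $d^2=a$, $db=cd$, $dc=bd$, and the $H$-relations one sees the subspace spanned by $v$ and $dv$ (or $v$ and $tv$) is $D$-stable, hence equals $V$ by simplicity, and $\{v, dv\}$ (resp. $\{v,tv\}$) is a basis. Now I distinguish cases according to the eigenvalue of $x$ on $v$: if it is $\pm1$ then $x$ acts semisimply with eigenvalues a power of $-1$, and one is led to the $V_{i,j,k,l,m,n}$ normal form after rescaling the second basis vector to normalize the $(2,1)$-entry of $[d]$ to $(-1)^l$; the constraints defining $\Omega$ emerge from requiring $V$ irreducible and from consistency of the relations $td=adyt$, $xd=bcdx$. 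If the $x$-eigenvalue is $\xi^{\pm1}$, then since $tx=x^{-1}t$, $t$ interchanges the $\xi$- and $\xi^{-1}$-eigenspaces, so $V$ is $2$-dimensional with $\{v,tv\}$ a basis and $[t]$ antidiagonal, normalizable to $\left(\begin{smallmatrix}0&1\\1&0\end{smallmatrix}\right)$; then one computes how $a,b,c,d$ act, and whether $[d]$ comes out diagonal or antidiagonal splits the analysis into the $W$ and $U$ families respectively. In each case reading off the parameters and checking they lie in $\Lambda^1$ or $\Lambda^2$ finishes this step.

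\textbf{Isomorphism criteria.} For the "if" direction the normal forms are literally equal, so the modules are isomorphic. For "only if", I would extract from each normal form a complete list of isomorphism invariants: e.g. the unordered pair of $x$-eigenvalues, the $y$-eigenvalue, the trace and determinant of $[t]$, $[a]$, $[b]$, $[c]$, $[d]$, and (where these alone do not suffice) the common eigenvalue data of the commutative subalgebra on the simultaneous eigenvectors. One checks these invariants recover $(i,j,k,l,m,n)$, $(i,j,k,l)$ respectively, and also separate the three families from each other (the $x$-spectrum $\{\pm1\}$ versus $\{\xi,\xi^{-1}\}$ separates the $V$ family from $W\cup U$; the shape of $[d]$ — diagonal versus antidiagonal — together with the relation $[b][c]=(-1)^{?}I$ separates $W$ from $U$). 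The main obstacle I expect is the exhaustiveness step: carefully handling the change-of-basis normalization so that the free scalar is used up exactly once, and verifying that the case $x$-eigenvalue $=\pm 1$ really does force the two diagonal blocks to differ by the sign $(-1)^{m+n}$ with $m+n\in\{0,1\}$ and no other possibility — this is where a simple $2$-dimensional module could a priori fail to be monomial and one must rule that out using that $t$ or $d$ acts invertibly and (anti)commutes with $x$.
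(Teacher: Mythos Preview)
Your approach is essentially the same as the paper's: simultaneously diagonalize the commuting generators $x,y,a,b,c$, then case-split on the $x$-eigenvalue ($\pm 1$ versus $\xi^{\pm 1}$), and use the off-diagonal action of $d$ or $t$ to pin down the remaining structure. The paper carries this out via brute-force matrix equations while you describe it more conceptually, but the skeleton is identical.

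There is one point you gloss over. You write that ``the constraints defining $\Omega$ emerge from requiring $V$ irreducible and from consistency of the relations,'' and similarly that after the $\xi^{\pm 1}$ case one simply ``checks [the parameters] lie in $\Lambda^1$ or $\Lambda^2$.'' That is not quite right: irreducibility and the relations alone produce a larger parameter set, and the specific constraints $i=0$ in $\Omega^1$, $k=0$ in $\Omega^2$, $i=1$ in $\Lambda^1,\Lambda^2$ are \emph{normalizations} obtained by choosing one representative in each isomorphism class. The paper establishes this by exhibiting explicit isomorphisms such as $V_{i,j,k,l,m,n}\cong V_{-i,j,k,l,n,m}$ when $m+n=1,\ j+l=0$, or $W_{i,j,k,l}\cong W_{-i,j,k,l+1}$ when $j+k=0$, etc. Your ``rescaling the second basis vector'' absorbs one free scalar, but the further freedom to swap the two simultaneous eigenvectors (or, in the $W,U$ families, to start from the $\xi^{-1}$-eigenvector instead of the $\xi$-eigenvector) is what cuts the parameter space down to exactly $\Omega$, $\Lambda^1$, $\Lambda^2$. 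Without making these identifications explicit, your exhaustiveness argument would overcount, and your ``only if'' direction of the isomorphism criterion would be circular (you would need to already know that distinct parameters in the full range give non-isomorphic modules). This is easily fixed once noticed, but it is a genuine missing step rather than a presentational choice.
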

\begin{proof} Since the elements $x,~y,~a,~b,~c$ commute each other and $x^4=1, ~y^2=a^2=b^2=c^2=1$, the matrices defining $D$-action on $V$ can be of the form
\begin{flalign*}
&[x]=
\left(
  \begin{array}{cc}
    x_1 & 0 \\
    0 & x_2 \\
  \end{array}
\right),
~~~~~[y]=
\left(
  \begin{array}{cc}
    y_1 & 0 \\
    0 & y_2 \\
  \end{array}
\right),
~~~[t]=
\left(
  \begin{array}{cc}
    t_1 & t_2 \\
    t_3 & t_4 \\
  \end{array}
\right),\\
&[a]=
\left(
  \begin{array}{cc}
    a_1 & 0 \\
    0 & a_2 \\
  \end{array}
\right),
~~~~~~[b]=
\left(
  \begin{array}{cc}
    b_1 & 0 \\
    0 & b_2 \\
  \end{array}
\right),
~~~[c]=
\left(
  \begin{array}{cc}
    c_1 & 0 \\
    0 & c_2 \\
  \end{array}
\right),
~~[d]=
\left(
  \begin{array}{cc}
    d_1 & d_2 \\
    d_3 & d_4\\
  \end{array}
\right),
\end{flalign*}
where $x_1^4=x_2^4=y_1^2=y_2^2=a_1^2=a_2^2=b_1^2=b_2^2=c_1^2=c_2^2=1$.
Since $ty=yt, ~dy=yd$, it follows that
\begin{flalign*}
&t_2(y_1-y_2)=0, ~~~~~t_3(y_1-y_2)=0, ~~~~~d_2(y_1-y_2)=0, ~~~~~d_3(y_1-y_2)=0.
\end{flalign*}
If $y_1\neq y_2$, then $t_2=t_3=d_2=d_3=0$ and whence $V$ is not simple $D$-module, a contradiction. Thus we have $y_1= y_2$. Similarly, the relations $ta=at, ~da=ad$ give $a_1=a_2$.

Since $t^2=1,~tx=x^{-1}t$, it follows that
\begin{flalign}
&t_1^2+t_2t_3=1,~~~~t_4^2+t_2t_3=1,~~~~t_2(t_1+t_4)=0,~~~~t_3(t_1+t_4)=0,\label{(1)}\\
&t_1x_1(1-x_1^2)=0,~~t_4x_2(1-x_2^2)=0,~~t_2(x_2-x_1^3)=0,~~t_3(x_1-x_2^3)=0.\label{(2)}
\end{flalign}
From the relations $tb=bx^{2}t,~tc=cx^2t, ~td=adyt$, we have the relations
\begin{flalign}
&t_1b_1(1-x_1^2)=0,~~~t_4b_2(1-x_2^2)=0,~~~t_2(b_2-b_1x_1^2)=0,~~~t_3(b_1-b_2x_2^2)=0,\label{(3)}\\
&t_1c_1(1-x_1^2)=0,~~~t_4c_2(1-x_2^2)=0,~~~t_2(c_2-c_1x_1^2)=0,~~~t_3(c_1-c_2x_2^2)=0,\label{(4)}\\
&t_1d_1+t_2d_3=a_1y_1(t_1d_1+t_3d_2),~~~~~~~~t_1d_2+t_2d_4=a_1y_1(t_2d_1+t_4d_2),\label{(5)}\\
&t_3d_1+t_4d_3=a_1y_1(t_1d_3+t_3d_4),~~~~~~~~t_3d_2+t_4d_4=a_1y_1(t_2d_3+t_4d_4).\label{(6)}
\end{flalign}
By the relations $d^2=a,~db=cd, ~dc=bd,~xd=bcdx$, it follows that
\begin{flalign}
&d_1^2+d_2d_3=a_1,~~~d_4^2+d_2d_3=a_1,~~~d_2(d_1+d_4)=0,~~~d_3(d_1+d_4)=0,\label{(7)}\\
&d_1(b_1-c_1)=0,~~~d_2(b_2-c_1)=0,~~~d_3(b_1-c_2)=0,~~~d_4(b_2-c_2)=0,\label{(8)}\\
&d_1(b_1-c_1)=0,~~~d_2(c_2-b_1)=0,~~~d_3(c_1-b_2)=0,~~~d_4(b_2-c_2)=0,\label{(9)}\\
&x_1d_1(1-b_1c_1)=0,~x_2d_4(1-b_2c_2)=0,~d_2(x_1-b_1c_1x_2)=0,~d_3(x_2-b_2c_2x_1)=0.\label{(10)}
\end{flalign}

If $x_1^2= 1,~ x_2^2\neq 1$, then by (\ref{(2)}) we have that
$t_4=0$ and whence from (\ref{(1)}) we get that $t_2t_3=1$. Thus by
relation (\ref{(2)}), it follows that $x_2=x_1$, contradiction.
Similarly, the case $x_1^2\neq 1, ~x_2^2=1$ doesn't appear.

If $x_1^2\neq 1, ~x_2^2\neq 1$, then by (\ref{(2)}) we have that $t_1=t_4=0$ and whence by (\ref{(1)})
we get $t_2t_3=1.$ From the relations $(\ref{(2)})-(\ref{(6)})$, it follows that
\begin{equation*}
x_2=x_1^3,~~~~b_2=-b_1,~~~~c_2=-c_1,~~~~d_4=a_1y_1d_1,~ ~~~t_2d_3=a_1y_1t_3d_2.
\end{equation*}
If $d_2=d_3=0$, then by relations $(\ref{(7)})-(\ref{(10)})$, we get that \begin{equation*}
d_1^2=a_1=d_4^2,\hspace{2em}b_1=c_1,\hspace{2em}b_2=c_2.
\end{equation*}
Whence the matrices defining the action on $V$ are of the form $W_{i,j,k,l}$, where $i\in\{1,3\},~0\leq k<4,~ 0\leq j,~l<2$. If $d_1=d_4=0$, then
\begin{equation*}
d_2d_3=a_1,\hspace{2em} b_2=c_1,\hspace{2em}b_1=c_2.
\end{equation*}
Whence the module is isomorphic to $U_{i,j,k,l}$, where $i\in\{1,3\},~0\leq j<4, ~0\leq k,~l<2$. If $d_1d_4\neq 0,~ d_2d_3\neq 0$, then $b_1=b_2$, a contradiction.
Other conditions are contained in the above conditions. A direct computation shows that

\hspace{2em}when $j+k=0$, $W_{i,j,k,l}\cong W_{-i,j,k,l+1}, ~U_{i,j,k,l}\cong U_{-i,j,k,l+1};$

\hspace{2em}when $j+k=1$, $W_{i,j,k,l}\cong W_{-i,j,k+2,l+1}, ~U_{i,j,k,l}\cong U_{-i,j+2,k,l+1}$.

If $x_1^2=1, ~x_2^2=1$, suppose that $t_1t_4\neq 0,~t_2=t_3=0$, then it is clear that $V$ is simple if and only if $d_2d_3\neq 0$,
whence by (\ref{(7)}) we have that $d_1+d_4=0$. By (\ref{(1)}), (\ref{(5)}), (\ref{(6)}), (\ref{(8)}), (\ref{(10)}) we have that
\begin{flalign*}
&t_1^2=t_4^2=1,~~~~~~~~(1-a_1y_1)t_1d_1=0,~~~~~~~~(1-a_1y_1)t_4d_4=0,\\
&t_4=a_1y_1t_1,~~~~~~~~b_2=c_1,~~~~~~b_1=c_2,~~~~~~x_2=b_2c_2x_1.
\end{flalign*}
If $d_1=d_4=0$, then the matrices defining the action on $V$ are of the form $V_{i,j,k,l,m,n}$, where $i,~j,~k,~l,~m,~n\in \{0,1\};$ If $d_1\neq 0, ~d_4\neq 0$, the module is not simple. Suppose that $t_1=t_4=0,$ then by (\ref{(1)})-(\ref{(10)}), we get that
\begin{equation*}
t_2t_3=1,~~~x_1=x_2,~~~b_1=b_2,~~~c_1=c_2£¬~~~t_2d_3=a_1y_1t_3d_2,~~~d_4=a_1y_1d_1. \end{equation*}
Whence the module is isomorphic to $V_{i,j,k,l,m,n}$.
Similarly, for the case
\begin{equation*}
t_1+t_4=0,\quad t_1\neq 0, \quad t_4\neq 0,\quad t_2\neq 0,\quad t_3\neq 0,
\end{equation*}
the module is isomorphic to $V_{i,j,k,l,m,n}$.
It is clear that $V_{i,j,k,l,m,n}$ is simple if and only if $m+n\neq 0$ or $j+l\neq 0$. A direct computation shows that

\hspace{2em}when $m+n=0,~j+l=1,~ V_{i,j,k,l,m,n}\cong V_{i,j,k+1,l,n,m}$;

\hspace{2em}when $m+n=1,~j+l=0, ~V_{i,j,k,l,m,n}\cong V_{-i,j,k,l,n,m}$;

\hspace{2em}when $m+n=1,~j+l=1, ~V_{i,j,k,l,m,n}\cong V_{-i,j,k+1,l,n,m}$.

Obviously, $V_{i,j,k,l,m,n}$ with $(i,j,k,l,m,n)\in \Omega$ is not isomorphic to the above two modules
and $W_{i,j,k,l}\ncong U_{i',j',k',l'}$ for $(i,j,k,l)\in \Lambda^1,~(i',j',k',l')\in \Lambda^2$.

We claim that $V_{i,j,k,l,m,n}\cong V_{i',j',k',l',m',n'}$ if and only if $(i,j,k,l,m,n)=(i',j',k',l',m',n')$ in $\Omega$.
Assume that $\Phi: V_{i,j,k,l,m,n}\rightarrow V_{i',j',k',l',m',n'}$ is an isomorphism of $D$-modules. Denote by $[\Phi]=(p_{i,j})_{i,j=1,2}$ the matrix of $\Phi$ with respect to the given basis. Since
\begin{equation*}
[y][\Phi]=[\Phi][y],\hspace{1em}[a][\Phi]=[\Phi][a],\hspace{1em}[d][\Phi]=[\Phi][d],
\end{equation*}
we have that $j=j', ~l=l', ~p_1=p_4,~ p_3=(-1)^lp_2$. Since
\begin{equation*}
 [x][\Phi]=[\Phi][x],\hspace{1em}[t][\Phi]=[\Phi][t], \hspace{1em}[b][\Phi]=[\Phi][b],
\end{equation*}
we have that
\begin{flalign}
&[(-1)^{i'}-(-1)^i]p_1=0,\hspace{4.8em}[(-1)^{i'}-(-1)^{i+m+n}]p_2=0, \label{(11)}\\
&[(-1)^{i'+m'+n'}-(-1)^i]p_2=0, \hspace{2em}[(-1)^{i'+m'+n'}-(-1)^{i+m+n}]p_1=0,\label{(12)}\\
&[(-1)^{k'}-(-1)^k]p_1=0,\hspace{4.5em}[(-1)^{k'}-(-1)^{j+l+k}]p_2=0,\label{(13)}\\
&[(-1)^{m'}-(-1)^m]p_1=0,\hspace{4em}[(-1)^{m'}-(-1)^n]p_2=0,\label{(14)}\\
&[(-1)^{n'}-(-1)^m]p_2=0,\hspace{4.2em}[(-1)^{n'}-(-1)^n]p_1=0.\label{(15)}
\end{flalign}
If $m+n=0$, then $k=k'=0$ and by (\ref{(11)}), (\ref{(12)}) we have that $i=i',~m'+n'=0$. Whence by (\ref{(14)}), (\ref{(15)}), it follows that $m'=m,~n'=n$. If $m+n=1$, suppose $m'+n'=0$, then
by (\ref{(11)}), (\ref{(12)}) we have that $i=-i$, contradiction. So $m'+n'=1$. Whence $i=i'=0$, $p_2=0$. Then by (\ref{(13)})-(\ref{(15)}), $k=k',~m=m',~n=n'$.
Thus the claim follows.

Similarly, we have that $W_{i,j,k,l}\cong W_{i',j',k',l'}$ if and
only if $(i,j,k,l)=(i',j',k',l')$ in $\Lambda^1$; $U_{i,j,k,l}\cong
U_{i',j',k',l'}$ if and only if $(i,j,k,l)=(i',j',k',l')$ in
$\Lambda^2$.
\end{proof}

\begin{thm}\label{simple}
There are $88$ simple left D-modules up to isomorphism, among which $32$ one-dimensional
objects given by Lemma $\ref{1simple}$ and $56$ two-dimensional objects given by Lemma $\ref{2simple}$.
\end{thm}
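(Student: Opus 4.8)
The plan is to combine the classification of the low-dimensional simple modules already obtained in Lemmas~\ref{1simple} and~\ref{2simple} with a dimension count that becomes available once one knows $D$ is semisimple.

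\textbf{Step 1: Semisimplicity and dimension of $D$.} First I would record that $D=D(H^{cop})$ is a semisimple algebra. Indeed, $H=H_{b:1}$ is semisimple, hence also cosemisimple (in characteristic zero a finite-dimensional semisimple Hopf algebra is cosemisimple, by the Larson--Radford theorem), so $H^{cop}$ is semisimple and cosemisimple; the Drinfeld double of a finite-dimensional semisimple and cosemisimple Hopf algebra over $\mathbbm{k}$ is again semisimple (see, e.g., \cite{R} or \cite{M}). Moreover $\dim_{\mathbbm{k}} D=(\dim_{\mathbbm{k}} H^{cop})^2=16^2=256$.

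\textbf{Step 2: Counting.} By Artin--Wedderburn, $D\cong\bigoplus_{i}M_{d_i}(\mathbbm{k})$, where the $d_i$ run over the dimensions of the pairwise non-isomorphic simple $D$-modules, and $\sum_i d_i^{\,2}=\dim_{\mathbbm{k}}D=256$. Lemma~\ref{1simple} exhibits $32$ pairwise non-isomorphic one-dimensional simple modules $\mathbbm{k}_{\chi_{i,j,k,l}}$; Lemma~\ref{2simple} exhibits $|\Omega|+|\Lambda^1|+|\Lambda^2|=24+16+16=56$ pairwise non-isomorphic two-dimensional simple modules $V_{i,j,k,l,m,n}$, $W_{i,j,k,l}$, $U_{i,j,k,l}$, and of course a one-dimensional module is never isomorphic to a two-dimensional one. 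These already contribute $32\cdot 1^2+56\cdot 2^2=32+224=256$ to $\sum_i d_i^{\,2}$. Since this equals $\dim_{\mathbbm{k}}D$, the list must be exhaustive: there is no additional simple $D$-module, in particular none of dimension $\geq 3$. Hence $D$ has exactly $32+56=88$ isomorphism classes of simple modules, $32$ of dimension one and $56$ of dimension two, as claimed.

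\textbf{Main obstacle.} The substantive work sits upstream, in Lemmas~\ref{1simple} and~\ref{2simple} (verifying simplicity of the displayed modules and pinning down the exact isomorphism relations among them); given those, the present statement is a short argument. The only point to handle with care here is the semisimplicity input for $D$: one must check that the hypotheses of the double-semisimplicity theorem hold, i.e.\ that $H$ is cosemisimple, which in characteristic zero follows automatically from its semisimplicity. After that, the proof reduces to the arithmetic identity $32+56\cdot 4=256=(\dim H)^2$.
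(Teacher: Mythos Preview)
Your proposal is correct and takes essentially the same approach as the paper: both arguments rest on the dimension count $32\cdot 1^2+56\cdot 2^2=256=\dim D$ together with the semisimplicity of $D$, so that the simples found in Lemmas~\ref{1simple} and~\ref{2simple} must exhaust all isomorphism classes. The only difference is cosmetic---the paper phrases it via the equality $\dim(D^*)_0=\dim D^*$ and a contradiction argument, while you invoke Artin--Wedderburn directly and are slightly more explicit about why $D$ is semisimple.
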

\begin{proof} Assume there is a simple module of dimension $d>2$ and let $n$ denote
the number of simple $d$-dimensional modules pairwise non-isomorphic. By Lemmas $\ref{1simple}$ \& $\ref{2simple}$, we have
$$
32\cdot1^2+56\cdot2^2+nd^2=256+nd^2\le \dim (D^*)_0=\dim D^*=256.
$$
Then $n=0$.
\end{proof}

\section{The category $^H_H\mathcal{YD}$}

In this section, by using the monoidal isomorphism
$^H_H\mathcal{YD}\cong {_D\mathcal M}$, we will describe the simple
objects in $^H_H\mathcal{YD}$ and determine their braidings.

\begin{pro}\label{1dim}
Let $\mathbbm{k}_{\chi_{i,j,k,l}}=\mathbbm{k}\{v\}$ be a one-dimensional $D$-module with $i,~k,~l\in \mathbb{I}_{0,1},~ j\in \mathbb{I}_{0,3}$. Then $\mathbbm{k}_{\chi_{i,j,k,l}}\in {^H_H\mathcal{YD}}$ with its module and comodule structure given by
\begin{equation*}
x\cdot v=(-1)^iv,~~~y\cdot v=(-1)^jv,~~~t\cdot v=(-1)^kv,~~~\delta(v)=x^jy^l\otimes v.
\end{equation*}
\end{pro}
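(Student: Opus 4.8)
The plan is to transport the one-dimensional $D$-module $\mathbbm{k}_{\chi_{i,j,k,l}}$ through the monoidal isomorphism $_D\mathcal{M}\cong {^H_H\mathcal{YD}}$ and read off the resulting Yetter--Drinfeld structure explicitly. Recall that $D=D(H^{cop})$ is, as a coalgebra, $H^{*bop}\otimes H^{cop}$, so a left $D$-module is simultaneously a left $H$-module (via the subalgebra generated by $x,y,t$) and a left $H^{*bop}$-module (via the subalgebra generated by $a,b,c,d$); the latter is the same data as a left $H^{cop}$-comodule, equivalently a right $H$-comodule, equivalently — after composing with the antipode — a left $H$-comodule. Since $\mathbbm{k}_{\chi_{i,j,k,l}}=\mathbbm{k}\{v\}$ is one-dimensional, the $H$-action is forced: $x\cdot v=\chi_{i,j,k,l}(x)v=(-1)^i v$, $y\cdot v=(-1)^j v$, $t\cdot v=(-1)^k v$, which is exactly the asserted module structure.

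For the comodule structure, I would first identify which group-like element $g\in G(H)$ satisfies $\delta(v)=g\otimes v$. Because $v$ spans a one-dimensional $H^{*bop}$-module, the coaction must land in $\mathbbm{k}\{g\}\otimes\mathbbm{k}\{v\}$ for a single group-like $g$, and $g$ is determined by pairing against the algebra generators of $H^*$: for each $p\in H^*$ one needs $\langle p, g\rangle$ to equal the scalar by which the corresponding element of $D$ acts on $v$ (up to the op/cop bookkeeping and the antipode twist built into the identification). Concretely, $a,b,c,d$ act on $v$ by $(-1)^j,(-1)^l,(-1)^l,\xi^j$ respectively; matching these against the dual-basis description of $a,b,c,d$ in Remark (part 2) — where $a=[(1-x+x^2-x^3)(1+y)(1+t)]^*$, etc. — pins down $\langle(x^m)^*\!,g\rangle$ type evaluations and hence forces $g=x^jy^l$. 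One checks in particular that $b$ and $c$ acting by the same scalar $(-1)^l$ is consistent (they differ only in the $t$-slot, on which $g$ must be supported at $1$, i.e. $g$ has no $t$-component), and that $d$ acting by $\xi^j$ matches the $\xi^j$ appearing in $\triangle(d)$ when evaluated on $x^jy^l$.

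Finally I would verify that the pair (action, coaction) just produced genuinely satisfies the Yetter--Drinfeld compatibility $\delta(h\cdot v)=h_{(1)}v_{(-1)}S(h_{(3)})\otimes h_{(2)}\cdot v_{(0)}$. Since $\delta(v)=x^jy^l\otimes v$ with $x^jy^l$ central? — no, $x^jy^l$ need not be central, but $x^jy^l$ commutes with $x$ and $y$, and for $h=t$ one uses $\triangle(t)=\tfrac12[(1+y)t\otimes t+(1-y)t\otimes x^2t]$ together with $t x^j y^l = x^{-j}y^l t$ to see that the twist $t_{(1)}(x^jy^l)S(t_{(3)})$ collapses back to $x^jy^l$ on the one-dimensional module (the $x^2$-ambiguity is killed because $j$ and $-j$ give the same element after the two applications of $t$). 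This compatibility check is the only place where a small computation is needed, and it is the step I expect to be the main (though still routine) obstacle; the module and comodule structures themselves are immediate once the identification $_D\mathcal{M}\cong{^H_H\mathcal{YD}}$ is unwound.
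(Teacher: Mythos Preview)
Your approach is essentially the same as the paper's: restrict the $D$-action to $H$ for the module structure, observe that a one-dimensional comodule over $H$ must have the form $\delta(v)=g\otimes v$ for some $g\in G(H)$, and then determine $g=x^jy^l$ by matching the scalars $\chi_{i,j,k,l}(a),\chi_{i,j,k,l}(b),\chi_{i,j,k,l}(c),\chi_{i,j,k,l}(d)$ against the evaluations $\langle a,g\rangle,\langle b,g\rangle,\langle c,g\rangle,\langle d,g\rangle$. The paper records this in one line via the identity $f\cdot v=\langle f,h\rangle v$ for all $f\in H^*$, which encapsulates the op/cop bookkeeping you worry about; your final Yetter--Drinfeld compatibility check is superfluous, since the monoidal equivalence $_D\mathcal{M}\cong{^H_H\mathcal{YD}}$ guarantees it automatically once the action and coaction are read off correctly.
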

\begin{proof} The action is given by the restriction of the action given in Lemma \ref{1simple}. Since $\mathbbm{k}_{\chi_{i,j,k,l}}$ is one-dimensional, we must have that $\delta(v)=h\otimes v$ with $h\in G(H)=\langle x\rangle\times\langle y\rangle$. As $f\cdot v=\langle f,h\rangle v$ for all $f\in H^*$, it follows that $\delta(v)=x^jy^l\otimes v.$
\end{proof}

\begin{pro}\label{c}
The braiding of the
one-dimensional Yetter-Drinfeld module $\mathbbm{k}_{\chi_{i,j,k,l}}$ is $c(v\otimes v)=(-1)^{(i+l)j}v\otimes v.$
\end{pro}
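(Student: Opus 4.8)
The plan is to compute the braiding directly from the formula \eqref{braing}, using the explicit module and comodule structure of $\mathbbm{k}_{\chi_{i,j,k,l}}$ obtained in Proposition \ref{1dim}. Recall that for $V,W\in {^H_H\mathcal{YD}}$ the braiding is $c_{V,W}(v\otimes w)=v_{(-1)}\cdot w\otimes v_{(0)}$, so for a one-dimensional module spanned by $v$ with $\delta(v)=x^jy^l\otimes v$ we immediately get $c(v\otimes v)=(x^jy^l\cdot v)\otimes v$. Thus everything reduces to evaluating the action of the group-like element $x^jy^l$ on $v$.

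First I would record that, by Proposition \ref{1dim}, $x$ acts on $v$ by the scalar $(-1)^i$ and $y$ acts by $(-1)^j$. Since the $H$-action is an algebra map, $x^j$ acts by $(-1)^{ij}$ and $y^l$ acts by $(-1)^{jl}$, hence $x^jy^l\cdot v=(-1)^{ij+jl}v=(-1)^{(i+l)j}v$. Substituting back gives $c(v\otimes v)=(-1)^{(i+l)j}\,v\otimes v$, which is exactly the claimed formula. (One should note in passing that the $t$-action plays no role here because $t\notin G(H)$ and the coaction lands in the group-like part.)

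There is essentially no obstacle: the only thing to be careful about is that the exponent $ij+jl$ is read modulo $2$, which is automatic since each factor $(-1)^{ij}$, $(-1)^{jl}$ depends only on the parity of the product; in particular the value is well defined even though $j$ ranges over $\mathbb{I}_{0,3}$. So the proof is a two-line verification. If one wanted to be completely explicit, one could also remark that this confirms $c$ satisfies the braid equation trivially (it is a scalar on a one-dimensional space) and that $\dim\mathcal{B}(\mathbbm{k}_{\chi_{i,j,k,l}})=\infty$ precisely when $(-1)^{(i+l)j}=1$, i.e.\ when $(i+l)j$ is even, by Remark \ref{rem} — but this last point belongs to the subsequent analysis rather than to the proof of the present proposition.
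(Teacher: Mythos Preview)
Your proof is correct and follows exactly the same approach as the paper: apply the braiding formula $c(v\otimes v)=v_{(-1)}\cdot v\otimes v_{(0)}=x^jy^l\cdot v\otimes v$ from Proposition~\ref{1dim}, then use the scalar actions of $x$ and $y$ to obtain $(-1)^{ij+jl}=(-1)^{(i+l)j}$. The paper's proof is the same two-line computation without the additional remarks you include.
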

\begin{proof} By formula $(\ref{braing})$ and Proposition $\ref{1dim}$, we have that
\begin{equation*}
c(v\otimes v)=x^jy^l\cdot v\otimes v=(-1)^{(i+l)j}v\otimes v.
\end{equation*}
This completes the proof.
\end{proof}

\begin{pro}
Let $V_{i,j,k,l,m,n}=\mathbbm{k}\{v_1,v_2\}$ be a two-dimensional simple $D$-module with $(i,j,k,l,m,n)\in \Omega$,
then $V_{i,j,k,l,m,n}\in {^H_H\mathcal{YD}}$ with its action given by

\hspace{3em}$x\cdot v_1=(-1)^i v_1,~~~~~~~~~~y\cdot v_1=(-1)^j v_1, ~~~~~t\cdot v_1=(-1)^k v_1,$

\hspace{3em}$x\cdot v_2=(-1)^{i+m+n} v_2,~~~y\cdot v_2=(-1)^j v_2, ~~~~~t\cdot v_2=(-1)^{j+l+k } v_2,$
\\and its coaction by
\\$1.~$for $l=m=n=0$,

$\delta(v_1)=\frac{1}{2}(1+x^2)\otimes v_1+\frac{1}{2}(1-x^2)\otimes v_2,~~~\delta(v_2)=\frac{1}{2}(1+x^2)\otimes v_2+\frac{1}{2}(1-x^2)\otimes v_1;$
\\$2.~$for $l=0, m=n=1$,

$\delta(v_1)=\frac{1}{2}(1+x^2)y\otimes v_1+\frac{1}{2}(1-x^2)y\otimes v_2,~\delta(v_2)=\frac{1}{2}(1+x^2)y\otimes v_2+\frac{1}{2}(1-x^2)y\otimes v_1;$
\\$3.~$for $l=m=0, n=1$,

$\delta(v_1)=\frac{1}{2}(1+x^2)t\otimes v_1+\frac{1}{2}(1-x^2)yt\otimes v_2,~\delta(v_2)=\frac{1}{2}(1+x^2)yt\otimes v_2+\frac{1}{2}(1-x^2)t\otimes v_1;$
\\$4.~$for $l=0, m=1, n=0$,

$\delta(v_1)=\frac{1}{2}(1+x^2)yt\otimes v_1+\frac{1}{2}(1-x^2)t\otimes v_2,~\delta(v_2)=\frac{1}{2}(1+x^2)t\otimes v_2+\frac{1}{2}(1-x^2)yt\otimes v_1;$
\\$5.~$for $l=1, m=n=0$,

$\delta(v_1)=\frac{1}{2}x(1+x^2)\otimes v_1+\frac{1}{2}\xi x(1-x^2)\otimes v_2,\delta(v_2)=\frac{1}{2}x(1+x^2)\otimes v_2-\frac{1}{2}\xi x(1-x^2)\otimes v_1;$
\\$6.~$for $l=m=n=1$,

$\delta(v_1)=\frac{1}{2}xy(1+x^2)\otimes v_1+\frac{1}{2}\xi xy(1-x^2)\otimes v_2,\delta(v_2)=\frac{1}{2}xy(1+x^2)\otimes v_2-\frac{1}{2}\xi xy(1-x^2)\otimes v_1;$
\\$7.~$for $l=1, m=0, n=1$,

$\delta(v_1)=\frac{1}{2}xt(1+x^2)\otimes v_1+\frac{1}{2}\xi xyt(1-x^2)\otimes v_2,\delta(v_2)=\frac{1}{2}xyt(1+x^2)\otimes v_2-\frac{1}{2}\xi xt(1-x^2)\otimes v_1;$
\\$8.~$for $l=m=1, n=0$,

$\delta(v_1)=\frac{1}{2}xyt(1+x^2)\otimes v_1+\frac{1}{2}\xi xt(1-x^2)\otimes v_2,\delta(v_2)=\frac{1}{2}xt(1+x^2)\otimes v_2-\frac{1}{2}\xi xyt(1-x^2)\otimes v_1.$
\end{pro}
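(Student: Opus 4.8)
The plan is to transfer the $D$-module structure on $V_{i,j,k,l,m,n}$ of Lemma \ref{2simple} through the monoidal isomorphism $^H_H\mathcal{YD}\cong {_D\mathcal{M}}$, where $D=D(H^{cop})=H^{*bop}\otimes H^{cop}$. Concretely, the $H$-module structure is simply the restriction of the $D$-action to the subalgebra generated by $x,y,t$, which immediately yields the eight displayed formulas for $x\cdot v_i$, $y\cdot v_i$, $t\cdot v_i$ (here $v_1,v_2$ is the distinguished basis diagonalizing $x,y,a,b,c$). The real content is the $H$-comodule structure $\delta\colon V\to H\otimes V$, which is dual to the $H^*$-action. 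Writing $(\delta_h)_{h}$ for the basis of $H^*$ dual to the linear basis $\{x^i,x^iy,x^it,x^iyt\}$ of $H$ and using $a,b,c,d\in H^*$, the coaction is recovered from
\[
\delta(v)=\sum_{h} h\otimes (\,\delta_h\cdot v\,),
\]
so I need only express each $\delta_h$ as a polynomial in $a,b,c,d$ and read off how $a,b,c,d$ act on $v_1,v_2$ from Lemma \ref{2simple}.

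The key computation is therefore to invert the definitions in Remark 2 of Section 3: from
\[
a=[(1-x+x^2-x^3)(1+y)(1+t)]^*,\quad b=[(1+x+x^2+x^3)(1-y)(1+t)]^*,
\]
\[
c=[(1+x+x^2+x^3)(1-y)(1-t)]^*,\quad d=[(1+\xi x-x^2-\xi x^3)(1+y)(1+t)]^*,
\]
together with the relations $a^2=b^2=c^2=1$, $d^2=a$, and the multiplication rules, one writes the dual idempotents/grouplikes of $H^*$ in terms of $a,b,c,d$; in particular the characters picking out $1,x^2$ (symmetric in $t$) versus $t,x^2t$ are $\tfrac12(1\pm\text{(something in }a,b,c,d))$, and inserting $y$-components uses products with $a$. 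Since on $V_{i,j,k,l,m,n}$ we have $[a]=(-1)^l\,\mathrm{id}$, $[b]=\mathrm{diag}((-1)^m,(-1)^n)$, $[c]=\mathrm{diag}((-1)^n,(-1)^m)$ and $[d]=\begin{pmatrix}0&1\\(-1)^l&0\end{pmatrix}$, applying these to $v_1$ and $v_2$ and grouping by the $H$-coordinate produces exactly the group elements $1,x^2$ (cases $l=m=n=0$), $y$-twisted versions ($m=n=1$), $t$- and $yt$-twisted versions ($n=1$ or $m=1$ alone), and the $x$-shifted, $\xi$-weighted versions when $l=1$ — the factor $\xi$ entering precisely because $[d]$ on the $l=1$ modules interacts with the $d$-eigenstructure of $H^*$ whose eigenvalues are powers of $\xi$. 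One then checks each of the eight cases against the stated $\delta(v_1),\delta(v_2)$.

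Finally, I would verify that the resulting pair $(\cdot,\delta)$ genuinely lies in $^H_H\mathcal{YD}$, i.e.\ satisfies the Yetter--Drinfeld compatibility $\delta(h\cdot v)=h_{(1)}v_{(-1)}S(h_{(3)})\otimes h_{(2)}\cdot v_{(0)}$; but since the functor $_D\mathcal{M}\to{}^H_H\mathcal{YD}$ is an equivalence, this is automatic once the coaction has been correctly identified as the transpose of the $H^*$-action, so no separate check is needed — it suffices to confirm that $\delta$ is coassociative and counital, which is a short direct verification using $\triangle(t)$ from \eqref{3.3}. The main obstacle is purely bookkeeping: correctly inverting the four generators $a,b,c,d$ of $H^{*bop}$ to obtain the dual basis elements $\delta_h$, keeping careful track of the $\tfrac12$-idempotent combinations and of where the primitive $4$-th root $\xi$ must appear; once that dictionary is in hand, each of the eight cases is a two-line evaluation.
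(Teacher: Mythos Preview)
Your proposal is correct and follows essentially the same approach as the paper: both recover the coaction from the $H^*$-action via the dual-basis formula $\delta(v)=\sum_i h_i\otimes h^i\cdot v$, and then evaluate using the known matrices $[a],[b],[c],[d]$ from Lemma~\ref{2simple}. The only cosmetic difference is that the paper takes $\{a^ib^jc^kd^l\}$ as the basis of $H^*$ and computes the corresponding dual elements $(a^ib^jc^kd^l)^*\in H$, whereas you propose taking the standard basis of $H$ and expressing its dual $\delta_h$ in terms of $a,b,c,d$; these are the two sides of the same change-of-basis computation and lead to identical case-by-case evaluations.
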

\begin{proof} We just need to describe the coaction. Denote by $\{h_i\}_{1\leq i\leq 16}$ and $\{h^i\}_{1\leq i\leq 16}$ a basis of $H$ and its dual basis,
respectively. Then the comodule structure is given by
$\delta(v)=\sum\limits_{i=1}^{16} h_i\otimes h^i\cdot v$, it follows that
\begin{flalign*}
\delta(v_1)=&\sum\limits_{i=0}^1\sum\limits_{j=0}^1\sum\limits_{k=0}^1\sum\limits_{l=0}^1(a^ib^jc^kd^l)^*\otimes a^ib^jc^kd^l\cdot v_1\\
=&[1^*+(-1)^la^*+(-1)^mb^*+(-1)^nc^*+(-1)^{m+l}(ab)^*+
(-1)^{l+n}(ac)^*+(-1)^{m+n}(bc)^*\\
&+(-1)^{l+m+n}(abc)^*]\otimes v_1\\
&+[(-1)^ld^*+(ad)^*+(-1)^{m+l}(bd)^*+(-1)^{n+l}(cd)^*+(-1)^m(abd)^*+(-1)^n(acd)^*
\\
&+(-1)^{m+n+l}(bcd)^*+(-1)^{m+n}(abcd)^*]\otimes v_2,
\end{flalign*}
\vspace{-3em}
\begin{flalign*}
\delta(v_2)=&\sum\limits_{i=0}^1\sum\limits_{j=0}^1\sum\limits_{k=0}^1\sum\limits_{l=0}^1(a^ib^jc^kd^l)^*\otimes a^ib^jc^kd^l\cdot v_2\\
=&[1^*+(-1)^la^*+(-1)^nb^*+(-1)^mc^*+(-1)^{n+l}(ab)^*+
(-1)^{l+m}(ac)^*+(-1)^{m+n}(bc)^*\\
&+(-1)^{l+m+n}(abc)^*]\otimes v_2\\
&+[d^*+(-1)^l(ad)^*+(-1)^{n}(bd)^*+(-1)^{m}(cd)^*+(-1)^{n+l}(abd)^*+(-1)^{m+l}(acd)^*\\
&+(-1)^{m+n}(bcd)^*+(-1)^{m+n+l}(abcd)^*]\otimes v_1.
\end{flalign*}
By discussing the possible situations, the claim follows.
\end{proof}

Using the same method, we have the following propositions.
\begin{pro}
Let $W_{i,j,k,l}=\mathbbm{k}\{v_1,v_2\}$ be a two-dimensional simple $D$-module with $(i,j,k,l)\in \Lambda^1$, then $W_{i,j,k,l}\in {^H_H\mathcal{YD}}$
with its action given by

\hspace{4em}$x\cdot v_1=\xi v_1,~~~~~~~~~~y\cdot v_1=(-1)^j v_1, ~~~~~~~t\cdot v_1=v_2,$

\hspace{4em}$x\cdot v_2=\xi^{-1} v_2,~~~~~~~y\cdot v_2=(-1)^j v_2, ~~~~~~~t\cdot v_2=v_1,$
\\and its coaction by $\delta(v_1)=x^ky^l\otimes v_1$,
\\$1.~$for $(k,j,l)=(0,0,0), ~~\delta(v_2)=y\otimes v_2$; $(k,j,l)=(2,0,0), ~~\delta(v_2)=x^2y\otimes v_2$;
\\$2.~$for $(k,j,l)=(0,1,0), ~~\delta(v_2)=x^2y\otimes v_2$; $(k,j,l)=(2,1,0), ~~\delta(v_2)=y\otimes v_2$;
\\$3.~$for $(k,j,l)=(0,0,1), ~~\delta(v_2)=1\otimes v_2$; $(k,j,l)=(2,0,1), ~~\delta(v_2)=x^2\otimes v_2$;
\\$4.~$for $(k,j,l)=(0,1,1), ~~\delta(v_2)=x^2\otimes v_2$; $(k,j,l)=(2,1,1), ~~\delta(v_2)=1\otimes v_2$;
\\$5.~$for $(k,j,l)=(1,0,0), ~~\delta(v_2)=x^3y\otimes v_2$; $(k,j,l)=(3,0,0), ~~\delta(v_2)=xy\otimes v_2$;
\\$6.~$for $(k,j,l)=(1,1,0), ~~\delta(v_2)=xy\otimes v_2$; $(k,j,l)=(3,1,0), ~~\delta(v_2)=x^3y\otimes v_2$;
\\$7.~$for $(k,j,l)=(1,0,1), ~~\delta(v_2)=x^3\otimes v_2$; $(k,j,l)=(3,0,1), ~~\delta(v_2)=x\otimes v_2$;
\\$8.~$for $(k,j,l)=(1,1,1), ~~\delta(v_2)=x\otimes v_2$; $(k,j,l)=(3,1,1), ~~\delta(v_2)=x^3\otimes v_2$.

\end{pro}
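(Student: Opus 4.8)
The statement to prove is the Proposition describing $W_{i,j,k,l}$ as a Yetter–Drinfeld module, giving its action and coaction.

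\medskip

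The plan is to exhibit $W_{i,j,k,l}$ explicitly as an object of $^H_H\mathcal{YD}$ by transporting the $D$-module structure of Lemma~\ref{2simple} across the monoidal isomorphism $^H_H\mathcal{YD}\cong {_D\mathcal{M}}$. Concretely, the $H$-module structure is just the restriction of the $D$-action along the inclusion $H^{cop}\hookrightarrow D$, so the formulas $x\cdot v_1=\xi v_1$, $x\cdot v_2=\xi^{-1}v_2$, $y\cdot v_i=(-1)^jv_i$, $t\cdot v_1=v_2$, $t\cdot v_2=v_1$ are read off immediately from the matrices $[x],[y],[t]$ in Lemma~\ref{2simple}. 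The real content is the $H$-comodule structure, which is recovered from the $H^*$-action by the standard formula $\delta(v)=\sum_{i} h_i\otimes h^i\cdot v$, where $\{h_i\}$ runs over a basis of $H$ and $\{h^i\}$ over the dual basis; since $H^*$ is spanned by the monomials $a^ib^jc^kd^l$ with $0\le i,j,k,l\le 1$ (these being $16$ linearly independent functionals, as $a^2=b^2=c^2=1$, $d^2=a$), one may equally write $\delta(v)=\sum_{i,j,k,l}(a^ib^jc^kd^l)^*\otimes (a^ib^jc^kd^l)\cdot v$, exactly as was done in the proof of the previous Proposition for $V_{i,j,k,l,m,n}$.

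\medskip

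So the first step is to compute, for each basis vector $v_1,v_2$, the scalars (or vectors) $(a^ib^jc^kd^l)\cdot v_\bullet$ using the $D$-action matrices in Lemma~\ref{2simple}: here $[a]=(-1)^k I$, $[b]=[c]=\mathrm{diag}((-1)^l,(-1)^{l+1})$, and $[d]=\mathrm{diag}(\xi^k,(-1)^{j+k}\xi^k)$ is diagonal, which is what makes $W_{i,j,k,l}$ considerably easier than $V_{i,j,k,l,m,n}$ — the coaction will land on group-likes times $t$-powers rather than requiring the $\frac12(1\pm x^2)$ combinations. The second step is to re-express the resulting element of $H^*\otimes W$ as an element of $H\otimes W$ by identifying, via the explicit dual-basis change of coordinates between $\{(x^py^qt^r)^*\}$ and $\{(a^ib^jc^kd^l)^*\}$, which linear combinations of the $(a^ib^jc^kd^l)^*$ equal the individual $(x^py^q)^*$, $(x^py^qt)^*$ etc. Equivalently, and more cleanly, one uses that for $h\in H$ the coaction component on the $h\otimes -$ slot is $\sum_{f} \langle f,h\rangle\, f^*\!\cdot v$ summed over the chosen basis $f$ of $H^*$; evaluating $a,b,c,d$ at the basis elements $x^py^qt^r$ of $H$ (these pairings are given in Remark~3.3, item~2, where $a,b,c,d$ were defined as specific elements of $H^*$) pins down $\delta(v_i)$ as a genuine element of $H\otimes W$. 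One then reads off that $\delta(v_1)=x^ky^l\otimes v_1$ and that $\delta(v_2)$ depends on $(k,j,l)\bmod$ the relevant symmetries, yielding the eight cases listed.

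\medskip

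The main obstacle is purely bookkeeping: correctly tracking the pairings $\langle a^ib^jc^kd^l, x^py^qt^r\rangle$ — in particular the $\xi$-factors coming from $d$ and the fact that $tb=bx^2t$ etc. in $D$ forces the $t$-component of the comodule grading to shift by $x^2$ when $k\in\{0,2\}$ versus an $x^{\pm1}$ shift when $k\in\{1,3\}$ — and organizing the output into the eight sub-cases by the parity of $k$ and the values of $j,l$. No conceptual difficulty arises beyond this; once the previous Proposition's computation is in hand, this is the same argument run on a diagonal $[d]$, and the verification that the resulting $(\cdot,\delta)$ satisfies the Yetter–Drinfeld compatibility is automatic because it is the image of a $D$-module under the isomorphism of categories cited in Proposition~2.7. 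I would therefore present the $H^*$-action computation, convert to the coaction as above, and tabulate the eight cases, leaving the elementary pairing evaluations to the reader exactly as the authors did in the preceding Proposition.
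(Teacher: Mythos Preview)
Your approach is correct and essentially identical to the paper's: the authors prove the preceding Proposition for $V_{i,j,k,l,m,n}$ by the formula $\delta(v)=\sum_i h_i\otimes h^i\cdot v$ and then state that the present Proposition follows ``using the same method,'' giving no separate argument. Your observation that $[d]$ is diagonal here (so the coaction lands on group-likes rather than $\tfrac{1}{2}(1\pm x^2)$ combinations) is exactly the simplification that makes this case routine.
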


\begin{pro}
Let $U_{i,j,k,l}=\mathbbm{k}\{v_1,v_2\}$ be a two-dimensional simple $D$-module with $(i,j,k,l)\in \Lambda_2$, then $U_{i,j,k,l}\in {^H_H\mathcal{YD}}$
with its action same as $W_{i,j,k,l}$
and its coaction by
\\$1.$ for $(j,l,k)=(0,0,0)$,

$\delta(v_1)=\frac{1}{2}(1+x^2)t\otimes v_1+\frac{1}{2}(1-x^2)yt\otimes v_2,\delta(v_2)=\frac{1}{2}(1+x^2)yt\otimes v_2+\frac{1}{2}(1-x^2)t\otimes v_1;$
\\for $(j,l,k)=(2,0,0)$,

$\delta(v_1)=\frac{1}{2}(1+x^2)t\otimes v_1-\frac{1}{2}(1-x^2)yt\otimes v_2,~~\delta(v_2)=\frac{1}{2}(1+x^2)yt\otimes v_2-\frac{1}{2}(1-x^2)t\otimes v_1;$
\\$2.~$for $(j,l,k)=(0,1,0)$,

$\delta(v_1)=\frac{1}{2}(1+x^2)yt\otimes v_1+\frac{1}{2}(1-x^2)t\otimes v_2,~~\delta(v_2)=\frac{1}{2}(1+x^2)t\otimes v_2+\frac{1}{2}(1-x^2)yt\otimes v_1;$
\\for $(j,l,k)=(2,1,0)$,

$\delta(v_1)=\frac{1}{2}(1+x^2)yt\otimes v_1-\frac{1}{2}(1-x^2)t\otimes v_2,\delta(v_2)=\frac{1}{2}(1+x^2)t\otimes v_2-\frac{1}{2}(1-x^2)yt\otimes v_1;$
\\$3.~$for $(j,l,k)=(0,0,1)$,

$\delta(v_1)=\frac{1}{2}(1+x^2)xt\otimes v_1+\frac{\xi}{2}(1-x^2)xyt\otimes v_2,\delta(v_2)=\frac{1}{2}(1+x^2)xyt\otimes v_2-\frac{\xi}{2}(1-x^2)xt\otimes v_1;$
\\for $(j,l,k)=(2,0,1)$,

$\delta(v_1)=\frac{1}{2}(1+x^2)xt\otimes v_1-\frac{\xi}{2}(1-x^2)xyt\otimes v_2,\delta(v_2)=\frac{1}{2}(1+x^2)xyt\otimes v_2+\frac{\xi}{2}(1-x^2)xt\otimes v_1;$
\\$4.~$for $(j,l,k)=(0,1,1)$,

$\delta(v_1)=\frac{1}{2}(1+x^2)xyt\otimes v_1+\frac{\xi}{2}(1-x^2)xt\otimes v_2,\delta(v_2)=\frac{1}{2}(1+x^2)xt\otimes v_2-\frac{\xi}{2}(1-x^2)xyt\otimes v_1;$
\\for $(j,l,k)=(2,1,1)$,

$\delta(v_1)=\frac{1}{2}(1+x^2)xyt\otimes v_1-\frac{\xi}{2}(1-x^2)xt\otimes v_2,\delta(v_2)=\frac{1}{2}(1+x^2)xt\otimes v_2+\frac{\xi}{2}(1-x^2)xyt\otimes v_1;$
\\$5.~$for $(j,l,k)=(1,0,0)$,

$\delta(v_1)=\frac{1}{2}(1+x^2)t\otimes v_1-\frac{\xi}{2}(1-x^2)yt\otimes v_2,\delta(v_2)=\frac{1}{2}(1+x^2)yt\otimes v_2+\frac{\xi}{2}(1-x^2)t\otimes v_1;$
\\for $(j,l,k)=(3,0,0)$,

$\delta(v_1)=\frac{1}{2}(1+x^2)t\otimes v_1+\frac{\xi}{2}(1-x^2)yt\otimes v_2,\delta(v_2)=\frac{1}{2}(1+x^2)yt\otimes v_2-\frac{\xi}{2}(1-x^2)t\otimes v_1;$
\\$6.~$for $(j,l,k)=(1,1,0)$,

$\delta(v_1)=\frac{1}{2}(1+x^2)yt\otimes v_1-\frac{\xi}{2}(1-x^2)t\otimes v_2,\delta(v_2)=\frac{1}{2}(1+x^2)t\otimes v_2+\frac{\xi}{2}(1-x^2)yt\otimes v_1;$
\\for $(j,l,k)=(3,1,0)$,

$\delta(v_1)=\frac{1}{2}(1+x^2)yt\otimes v_1+\frac{\xi}{2}(1-x^2)t\otimes v_2,\delta(v_2)=\frac{1}{2}(1+x^2)t\otimes v_2-\frac{\xi}{2}(1-x^2)yt\otimes v_1;$
\\$7.~$for $(j,l,k)=(1,0,1)$,

$\delta(v_1)=\frac{1}{2}(1+x^2)xt\otimes v_1+\frac{1}{2}(1-x^2)xyt\otimes v_2,\delta(v_2)=\frac{1}{2}(1+x^2)xyt\otimes v_2+\frac{1}{2}(1-x^2)xt\otimes v_1;$
\\for $(j,l,k)=(3,0,1)$,

$\delta(v_1)=\frac{1}{2}(1+x^2)xt\otimes v_1-\frac{1}{2}(1-x^2)xyt\otimes v_2,\delta(v_2)=\frac{1}{2}(1+x^2)xyt\otimes v_2-\frac{1}{2}(1-x^2)xt\otimes v_1;$
\\$8.~$for $(j,l,k)=(1,1,1)$,

$\delta(v_1)=\frac{1}{2}(1+x^2)xyt\otimes v_1+\frac{1}{2}(1-x^2)xt\otimes v_2,\delta(v_2)=\frac{1}{2}(1+x^2)xt\otimes v_2+\frac{1}{2}(1-x^2)xyt\otimes v_1;$
\\for $(j,l,k)=(3,1,1)$,

$\delta(v_1)=\frac{1}{2}(1+x^2)xyt\otimes v_1-\frac{1}{2}(1-x^2)xt\otimes v_2,\delta(v_2)=\frac{1}{2}(1+x^2)xt\otimes v_2-\frac{1}{2}(1-x^2)xyt\otimes v_1.$

\end{pro}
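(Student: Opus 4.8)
The plan is to run the argument already spelled out for $V_{i,j,k,l,m,n}$ (and reused for $W_{i,j,k,l}$) with the matrices of $x,y,t,a,b,c,d$ replaced by those attached to $U_{i,j,k,l}$ in Lemma~\ref{2simple}. First note that the $H$-module structure is just the restriction of the $D$-action of Lemma~\ref{2simple} to the subalgebra generated by $x,y,t$; since these generators satisfy the relations of $H^{cop}$, whose underlying algebra coincides with that of $H$, the restriction is a genuine left $H$-module, and the matrices of $x,y,t$ are literally those of $W_{i,j,k,l}$. So the module part requires no separate verification and only the $H$-coaction has to be computed.

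For the coaction I would invoke the identification $^H_H\mathcal{YD}\cong{_D\mathcal{M}}$ of \cite{M} (see also \cite{CK}), under which the $H$-comodule structure of a left $D$-module $V$ is $\delta(v)=\sum_m h_m\otimes h^m\cdot v$, where $\{h^m\}$ is a basis of $H^*\hookrightarrow D$ (acting through $H^{*bop}$) and $\{h_m\}\subseteq H$ is the dual basis. As in the $V$-case I take $\{a^ib^pc^qd^r\mid i,p,q,r\in\mathbb{I}_{0,1}\}$ as the basis of $H^*$ — a basis by the remark following Definition~\ref{def3.1} — and $\{(a^ib^pc^qd^r)^*\}$ as the dual basis of $H$. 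Feeding in $[a]=(-1)^k\,\mathrm{id}$, $[b]=\mathrm{diag}\bigl((-1)^l,(-1)^{l+1}\bigr)$, $[c]=\mathrm{diag}\bigl((-1)^{l+1},(-1)^l\bigr)$ and the antidiagonal $[d]$ with entries $\xi^j$ and $(-1)^{j+k}\xi^j$, and using that $d$ interchanges $v_1,v_2$ up to scalars, one obtains
\begin{align*}
\delta(v_1)=&\Bigl(\sum_{i,p,q}(-1)^{ik+pl+q(l+1)}(a^ib^pc^q)^*\Bigr)\otimes v_1\\
&+(-1)^{j+k}\xi^j\Bigl(\sum_{i,p,q}(-1)^{ik+p(l+1)+ql}(a^ib^pc^qd)^*\Bigr)\otimes v_2,
\end{align*}
and symmetrically for $\delta(v_2)$.

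It then remains to re-express the dual-basis vectors $(a^ib^pc^q)^*$ and $(a^ib^pc^qd)^*$ in the monomial basis $\{x^my^n,\;x^my^nt\mid m\in\mathbb{I}_{0,3},\,n\in\mathbb{I}_{0,1}\}$ of $H$ by inverting the change of basis recorded in the remark after Definition~\ref{def3.1}, to substitute into the two sums, and to sort the result by the parities of $j,l,k$; this yields exactly the eight displayed pairs of formulas. Yetter--Drinfeld compatibility is then automatic, being built into the equivalence $^H_H\mathcal{YD}\cong{_D\mathcal{M}}$; alternatively one can verify $\delta(h\cdot v)=h_{(1)}v_{(-1)}S(h_{(3)})\otimes h_{(2)}\cdot v_{(0)}$ by hand on the generators $x,y,t$, which is short.

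The only real obstacle I anticipate is the bookkeeping in this last step: keeping the sign exponents and the powers of $\xi$ straight across the sixteen parameter values (the eight displayed cases, each with two sub-cases $j$ even or odd). What genuinely distinguishes the $U$-case from the $V$-case is that here $d^2=a$ acts by the scalar $(-1)^k$ rather than by the identity, so the dichotomy between a ``$t$-type'' and an ``$xt$-type'' coaction is governed by $k$; this is why the cases are organized by $(j,l,k)$ and why the factors $\xi^j$ and $(-1)^{j+k}$ sit where they do. There is nothing conceptually hard, but this is the part most prone to sign slips and warrants the most care.
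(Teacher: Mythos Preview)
Your proposal is correct and follows exactly the method the paper uses: the paper proves this proposition by the single phrase ``Using the same method'' (referring to the computation for $V_{i,j,k,l,m,n}$), and what you have written is precisely that method with the matrices of $a,b,c,d$ for $U_{i,j,k,l}$ plugged into the formula $\delta(v)=\sum (a^ib^pc^qd^r)^*\otimes a^ib^pc^qd^r\cdot v$. Your identification of the $k$-dependence (via $d^2=a$ acting as $(-1)^k$) as the feature distinguishing the $t$-type from the $xt$-type coaction is also the right organizing observation for the case split.
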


\section{Nichols algebras in $^H_H\mathcal{YD}$}
Let $V=\bigoplus_{i\in I}V_i$, where $V_i$ is a simple object in $_H^H\mathcal{YD}$. In this section we will try to determine all the
finite-dimensional Nichols algebras satisfying $\mathcal{B}(V)\cong \bigotimes_{i\in I}\mathcal{B}(V_i)$.
We begin by studying the Nichols algebras of simple Yetter-Drinfeld modules.
\begin{lem}\label{1B}
Let $(i,j,k,l)\in \mathbb{I}_{0,1}\times \mathbb{I}_{0,3}\times \mathbb{I}_{0,1}\times \mathbb{I}_{0,1}$. The Nichols algebras
$\mathcal{B}(\mathbbm{k}_{\chi_{i,j,k,l}})$ associated to $\mathbbm{k}_{\chi_{i,j,k,l}}=\mathbbm{k}v$ are

~~~~~~~~~~~~$\mathcal{B}(\mathbbm{k}_{\chi_{i,j,k,l}})=\left\{\begin{array}{ll}
\mathbbm{k}[v], ~~~~~~~~~~~~~~~~~~~~~~~~if ~(i+l)j=0,~2,\\
\mathbbm{k}[v]/(v^2)=\bigwedge \mathbbm{k}_{\chi_{i,j,k,l}}, ~~~if~(i+l)j=1,~3.
                                                       \end{array}
                                                     \right.$
\end{lem}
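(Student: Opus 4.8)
The plan is to reduce the statement to the classical computation of a rank-one Nichols algebra, using the braiding scalar already recorded in Proposition \ref{c}.

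First I would note that $\mathbbm{k}_{\chi_{i,j,k,l}}=\mathbbm{k}v$ is one-dimensional, so its tensor algebra is the polynomial ring $T(\mathbbm{k}v)=\mathbbm{k}[v]=\bigoplus_{n\geq 0}\mathbbm{k}v^{\otimes n}$, and by Proposition \ref{c} the braiding is $c(v\otimes v)=q\,v\otimes v$ with $q:=(-1)^{(i+l)j}$. Since $(-1)^{(i+l)j}$ depends only on the parity of the integer $(i+l)j$, we get $q=1$ exactly when $(i+l)j\equiv 0,2\pmod 4$ and $q=-1$ exactly when $(i+l)j\equiv 1,3\pmod 4$, so it suffices to compute $\mathcal{B}(\mathbbm{k}v)$ for the two scalars $q=\pm 1$.

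Next I would run the standard rank-one computation. Treating $v$ as primitive, one has $\Delta(v^n)=\sum_{k=0}^n\binom{n}{k}_q v^k\otimes v^{n-k}$ in $T(\mathbbm{k}v)$, where $\binom{n}{k}_q$ is the Gaussian binomial coefficient, and the defining ideal $I$ of the Nichols algebra is homogeneous and concentrated in degrees $\geq 2$. For $q=1$ the coefficients $\binom{n}{k}_q$ are the ordinary binomials, nonzero in characteristic zero; taking $v^n$ ($n\geq 2$) of minimal degree in $I$ if $I\neq 0$, the bidegree-$(1,n-1)$ part of $\Delta(v^n)$ is the nonzero vector $\binom{n}{1}v\otimes v^{n-1}$, which cannot lie in $I\otimes T(\mathbbm{k}v)+T(\mathbbm{k}v)\otimes I$ because $I_1=I_{n-1}=0$ by minimality, a contradiction; hence $I=0$ and $\mathcal{B}(\mathbbm{k}_{\chi_{i,j,k,l}})=\mathbbm{k}[v]$. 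For $q=-1$ one has $\Delta(v^2)=v^2\otimes 1+(1+q)v\otimes v+1\otimes v^2=v^2\otimes 1+1\otimes v^2$, so $v^2$ is primitive; since the primitives of a Nichols algebra sit in degree one, $v^2$ must vanish there, i.e. $(v^2)\subseteq I$, while conversely $(v^2)$ is itself a Hopf ideal (the degree-two component of $T(\mathbbm{k}v)$ being $\mathbbm{k}v^2$, which is primitive), so $I=(v^2)$ and $\mathcal{B}(\mathbbm{k}_{\chi_{i,j,k,l}})=\mathbbm{k}[v]/(v^2)=\bigwedge\mathbbm{k}_{\chi_{i,j,k,l}}$.

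There is no genuine obstacle here: this is the base case of Nichols algebra theory, and both cases equally follow from the known description of one-dimensional Nichols algebras, namely $\mathbbm{k}[v]$ when $q$ is $1$ or not a root of unity and $\mathbbm{k}[v]/(v^{\mathrm{ord}(q)})$ otherwise, with $\mathrm{ord}(-1)=2$. If one wanted to pinpoint the one mildly delicate step, it is the verification in the case $q=-1$ that $v^2=0$ forces no higher relations, which amounts to the vanishing for $n\geq 2$ of the quantum symmetrizer scalar $\prod_{i=1}^{n}(1+q+\cdots+q^{i-1})$. Finally, the case $q=1$ is consistent with Remark \ref{rem}, since there $c(v\otimes v)=v\otimes v$ already forces $\dim\mathcal{B}(\mathbbm{k}_{\chi_{i,j,k,l}})=\infty$.
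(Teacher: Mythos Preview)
Your proof is correct and follows the same approach as the paper, which simply states that the claim follows from Proposition~\ref{c}. You have merely spelled out in detail the well-known rank-one Nichols algebra computation that the paper takes for granted.
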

\begin{proof}
The claim follows by Proposition \ref{c}.
\end{proof}

\begin{rem}\label{k}
$\mathcal{B}(\mathbbm{k}_{\chi_{i,j,k,l}})$ is finite-dimensional if and only if $\mathbbm{k}_{\chi_{i,j,k,l}}$ is isomorphic either to
\begin{flalign*}
&V_1:=\mathbbm{k}_{\chi_{1,1,0,0}},\quad V_2:=\mathbbm{k}_{\chi_{1,1,1,0}}, \quad V_3:=\mathbbm{k}_{\chi_{0,1,0,1}},\quad V_4:=\mathbbm{k}_{\chi_{0,1,1,1}},\\
&V_5:=\mathbbm{k}_{\chi_{1,3,0,0}},
\quad V_6:=\mathbbm{k}_{\chi_{1,3,1,0}},\quad V_7:=\mathbbm{k}_{\chi_{0,3,0,1}}, \quad V_8:=\mathbbm{k}_{\chi_{0,3,1,1}}.
\end{flalign*}
\end{rem}

For the convenience of our statements, we let
\begin{flalign*}
&M_1=V_{0,1,0,0,1,1},\quad M_2=V_{1,1,0,0,1,1,},\quad M_3=V_{0,0,1,0,0,1}, \quad M_4=V_{0,1,1,0,0,1},\\
&M_5=V_{0,0,1,0,1,0},\quad M_6=V_{0,1,0,0,1,0},\quad M_7=V_{1,0,0,1,0,0}, \quad M_{8}=V_{1,0,0,1,1,1}, ~\\
&M_{9}=V_{0,1,1,1,0,1},\quad M_{10}=V_{0,0,1,1,0,1},\ \ M_{11}=V_{0,1,0,1,1,0},\ \ M_{12}=V_{0,0,1,1,1,0},\\
&M_{13}=W_{1,1,0,1}, \hspace{1em}M_{14}=W_{1,1,2,0}, \hspace{1em}M_{15}=W_{1,0,2,0},\hspace{1em}M_{16}=W_{1,0,2,1},\\
&M_{17}=U_{1,2,0,0},\hspace{1.3em}M_{18}=U_{1,2,0,1},
\hspace{1.5em}M_{19}=U_{1,0,1,0},\hspace{1em}M_{20}=U_{1,0,1,1}.
\end{flalign*}

\begin{lem}\label{V}
Let $V_{i,j,k,l,m,n}\in {^H_H\mathcal{YD}}$ with $(i,j,k,l,m,n)\in \Omega$. Then

$(1)$ $\dim \mathcal{B}(V_{i,j,k,l,m,n})=\infty$, for $V_{i,j,k,l,m,n}\notin \{M_1, M_2,\ldots M_{12}\}$.

$(2)$ $\forall ~V\in \{M_1, M_2,\ldots M_{12}\},$ $\dim \mathcal{B}(V)=4$ and relations of $\mathcal{B}(M_4)$, $\mathcal{B}(M_6)$,
$\mathcal{B}(M_{9})$, $\mathcal{B}(M_{11})$ are given by
$v_1^2=0, ~v_2^2=0, ~v_1v_2-v_2v_1=0$,
others are given by $v_1^2=0, ~v_2^2=0, ~v_1v_2+v_2v_1=0$.
\end{lem}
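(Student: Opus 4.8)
The plan is to exploit the fact, recalled in Remark~\ref{rem}, that the Nichols algebra of a braided vector space depends only on the braiding $c$, together with Lemma~\ref{direct sum}. For each module $V=\mathbbm{k}\{v_1,v_2\}$ in the list $\{M_1,\dots,M_{12}\}$, I would first compute the braiding matrix of $c=c_{V,V}$ in the basis $\{v_1\otimes v_1,\ v_1\otimes v_2,\ v_2\otimes v_1,\ v_2\otimes v_2\}$ using formula~(\ref{braing}), $c(v_s\otimes v_r)=v_{s(-1)}\cdot v_r\otimes v_{s(0)}$, feeding in the explicit coaction $\delta(v_s)$ from the Proposition describing $V_{i,j,k,l,m,n}$ and the explicit action of $x,y,t$ (the group-likes and $t$ act diagonally, so $h\cdot v_r$ is just a scalar times $v_r$ for the relevant $h=\tfrac12(1\pm x^2)g$). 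The expectation — and this is the crux — is that in every one of these twelve cases the pair $(V,c)$ is a braided vector space of diagonal type after a suitable change of basis, or more precisely that $c$ restricted to each $\mathbbm{k}v_i$ gives $c(v_i\otimes v_i)=-v_i\otimes v_i$, so that each $\mathbbm{k}v_i$ generates an exterior algebra of dimension $2$.

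Concretely, for part $(2)$ I would split into the two announced cases. For $M_4,M_6,M_9,M_{11}$ I expect to find $c(v_i\otimes v_i)=-v_i\otimes v_i$ for $i=1,2$ and $c(v_1\otimes v_2)=v_2\otimes v_1$, $c(v_2\otimes v_1)=v_1\otimes v_2$ (the off-diagonal blocks are a flip), which is exactly the braiding of $\bigwedge(\mathbbm{k}v_1)\otimes\bigwedge(\mathbbm{k}v_2)$ with the two tensor factors braided symmetrically; then Lemma~\ref{direct sum} with $b_{12}=b_{21}^{-1}$ gives $\dim\mathcal{B}(V)=2\cdot2=4$ with defining relations $v_1^2=v_2^2=0$ and $v_1v_2-v_2v_1=0$. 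For the remaining eight modules I expect the same diagonal entries but $c(v_1\otimes v_2)=-v_2\otimes v_1$, $c(v_2\otimes v_1)=-v_1\otimes v_2$, which yields the anticommutator relation $v_1v_2+v_2v_1=0$ and again total dimension $4$. In both cases one must also check that no further relations appear, e.g. by verifying directly that the quadratic quotient $T(V)/(v_1^2,v_2^2,v_1v_2\mp v_2v_1)$ is already a braided bialgebra with primitives exactly in degree $1$, or by invoking the known presentation of rank-two Nichols algebras of diagonal Cartan type $A_1\times A_1$.

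For part $(1)$, I would argue by Remark~\ref{rem}: for each $V_{i,j,k,l,m,n}$ with $(i,j,k,l,m,n)\in\Omega$ not among the $M_i$, I would exhibit a nonzero $v\in V$ with $c(v\otimes v)=v\otimes v$, forcing $\dim\mathcal{B}(V)=\infty$. Since $c$ acts on the two-dimensional space $\mathbbm{k}v_1\oplus\mathbbm{k}v_2$ with the diagonal entries $c(v_i\otimes v_i)=\lambda_i v_i\otimes v_i$ computable as above, the claim is that outside the twelve good cases at least one $\lambda_i$ equals $1$ (or, if both diagonal entries are $-1$ but the braiding fails the Lemma~\ref{direct sum} condition in a way that still produces an element of self-braiding $1$, one uses an eigenvector argument on the $2\times2$ block). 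I would organize this as a finite case check over the $24$ elements of $\Omega$, tabulating $\lambda_1,\lambda_2$ as functions of $(i,j,k,l,m,n)$, and observe that exactly the parameter values listed in the definitions of $M_1,\dots,M_{12}$ are the ones giving $\lambda_1=\lambda_2=-1$ with the correct off-diagonal behavior.

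The main obstacle is purely computational bookkeeping: the coaction $\delta(v_s)$ involves the idempotents $\tfrac12(1\pm x^2)$ multiplied by group-likes and possibly $t$, so applying $c$ requires knowing how each such element acts on $v_r$, and the answer depends delicately on the eight sub-cases for $(l,m,n)$ in the Proposition. I expect the diagonal entries to collapse to clean $\pm1$ values once everything is expanded, but verifying the off-diagonal blocks are exactly $\pm(\text{flip})$ — rather than some more complicated involution — is where care is needed; this is precisely what makes the hypothesis $\mathcal{B}(V)\cong\bigotimes_i\mathcal{B}(V_i)$ of the ambient section relevant, and it is the same mechanism (checking $b_{ij}=b_{ji}^{-1}$) that will be reused throughout Section~5.
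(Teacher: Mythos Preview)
Your proposal is correct and follows essentially the same approach as the paper: compute the braiding on the basis $\{v_1,v_2\}$, recognize the twelve good modules as diagonal type $A_1\times A_1$ (hence $\dim\mathcal{B}(V)=4$ with the stated quadratic relations), and for the remaining modules exhibit a self-braiding $+1$ vector to force infinite dimension via Remark~\ref{rem}. The only sharpening the paper offers is in part~(1): rather than hedging about a possible eigenvector argument on a $2\times 2$ block, the paper asserts directly that $c(v_1\otimes v_1)=v_1\otimes v_1$ for every $V_{i,j,k,l,m,n}$ outside $\{M_1,\dots,M_{12}\}$, so the case check is just a verification of the single diagonal entry $\lambda_1$ and no linear-combination argument is ever needed.
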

\begin{proof} If $V_{i,j,k,l,m,n}\notin \{M_1, M_2,\ldots M_{12}\}$, then the braiding has an eigenvector $v_1\otimes v_1$ of eigenvalue $1$, hence $(1)$ follows by Remark \ref{rem}. For $(2)$,
the braiding is of type $A_1\times A_1$, then we can get the relations of Nichols algebras.
\end{proof}

Using the same method, we have the following Lemmas.
\begin{lem}\label{W}
Let $W_{i,j,k,l}\in {^H_H\mathcal{YD}}$ with $(i,j,k,l)\in \Lambda^1$. Then

$(1)$ $\dim \mathcal{B}(W_{i,j,k,l})=\infty$, for $W_{i,j,k,l,m,n}\notin \{M_{13}, M_{14}, M_{15}, M_{16}\}$.

$(2)$ $\forall ~V\in \{M_{13}, M_{14}, M_{15}, M_{16}\},$ $\dim \mathcal{B}(V)=4$ and the relations are given by
$v_1^2=0, ~v_2^2=0,~ v_1v_2+v_2v_1=0$.
\end{lem}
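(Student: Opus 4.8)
The plan is to follow exactly the same route used for Lemma~\ref{V}, now applied to the four-tuples $(i,j,k,l)\in\Lambda^1$. First I would write down, from the preceding Proposition describing $W_{i,j,k,l}\in{^H_H\mathcal{YD}}$, the explicit braiding matrix $c=c_{W,W}$ on the basis $\{v_1\otimes v_1,\ v_1\otimes v_2,\ v_2\otimes v_1,\ v_2\otimes v_2\}$. Using $c(u\otimes w)=u_{(-1)}\cdot w\otimes u_{(0)}$ together with $x\cdot v_1=\xi v_1$, $x\cdot v_2=\xi^{-1}v_2$, $t\cdot v_1=v_2$, $t\cdot v_2=v_1$, $y\cdot v_i=(-1)^jv_i$, and the case-by-case values of $\delta(v_1)=x^ky^l\otimes v_1$, $\delta(v_2)=\,?\otimes v_2$, one gets a $4\times 4$ matrix whose entries are fourth roots of unity.

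For part~(1), the key observation is that outside the list $\{M_{13},M_{14},M_{15},M_{16}\}$ the braiding admits an eigenvector of eigenvalue $1$: concretely, one checks (going through the eight cases of $(k,j,l)$, and within each the choice $k$ versus $k+2$) that $c(v_1\otimes v_1)=v_1\otimes v_1$ unless $\delta(v_1)$ has an $x$-component acting on $v_1$ by a non-trivial scalar, and similarly for $v_2\otimes v_2$. Whenever such a fixed vector $v$ with $c(v\otimes v)=v\otimes v$ exists, Remark~\ref{rem} forces $\dim\mathcal{B}(W_{i,j,k,l})=\infty$. So the bulk of part~(1) is the bookkeeping that pins down precisely which $(i,j,k,l)$ avoid this, and one finds these are exactly $W_{1,1,0,1}$, $W_{1,1,2,0}$, $W_{1,0,2,0}$, $W_{1,0,2,1}$, i.e. $M_{13},\dots,M_{16}$ (together with the isomorphisms $W_{i,j,k,l}\cong W_{-i,j,\ast,l+1}$ from Lemma~\ref{2simple}, which show the other candidates in $\Lambda^1$ collapse onto these or have infinite-dimensional Nichols algebra).

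For part~(2), I would verify that for each of $M_{13},M_{14},M_{15},M_{16}$ the braiding is a standard braiding of Cartan type $A_1\times A_1$ with all diagonal braiding scalars equal to $-1$ and the off-diagonal part satisfying the symmetry $b_{12}b_{21}=1$; equivalently, after rescaling the basis the braiding matrix is $\mathrm{diag}(-1)$ on $v_i\otimes v_i$ with $c(v_1\otimes v_2)=\pm\, v_2\otimes v_1$, $c(v_2\otimes v_1)=\pm\, v_1\otimes v_2$ with reciprocal signs. The associated Nichols algebra is then the exterior-type algebra $\mathbbm{k}\langle v_1,v_2\rangle/(v_1^2,\ v_2^2,\ v_1v_2+v_2v_1)$, which has dimension $4$; this is the quantum-plane/quantum-exterior computation already invoked in Lemma~\ref{V} and Lemma~\ref{W}(2) itself, so I would just cite it. Concretely one notes $v_1^2$ and $v_2^2$ are primitive (since $c(v_i\otimes v_i)=v_i\otimes v_i$ would be needed to survive, but here the eigenvalue is $-1$, so they vanish in $\mathcal B$), and the quantum-symmetrizer in degree $2$ on the mixed part has rank $1$, giving the single relation $v_1v_2+v_2v_1=0$; degree $3$ and higher then vanish.

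The main obstacle is purely organizational rather than conceptual: it is the exhaustive case analysis over the sixteen elements of $\Lambda^1$ — more precisely over the eight $(k,j,l)$-cases of the Proposition, each with two sub-cases — to confirm that a braiding eigenvector of eigenvalue $1$ appears in every case except the four listed. I would organize this by first using the isomorphisms $W_{i,j,k,l}\cong W_{-i,j,k,l+1}$ (for $j+k$ even) and $W_{i,j,k,l}\cong W_{-i,j,k+2,l+1}$ (for $j+k$ odd) from Lemma~\ref{2simple} to cut the candidates roughly in half, and then inspecting the diagonal entries $c|_{v_1\otimes v_1}$ and $c|_{v_2\otimes v_2}$ directly; these are determined by whether the group-like part of $\delta(v_i)$ pairs trivially with the eigenvalue of $x$ (resp. $t$) on $v_i$, which is a one-line check in each remaining case. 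I would present this as a short table or enumerated list rather than eight displayed computations.
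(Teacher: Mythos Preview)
Your proposal is correct and follows essentially the same approach as the paper: the paper simply records ``Using the same method'' (referring to Lemma~\ref{V}), meaning part~(1) is handled by exhibiting an eigenvector of the braiding with eigenvalue~$1$ and invoking Remark~\ref{rem}, while part~(2) is the observation that the braiding is of type $A_1\times A_1$. Your additional bookkeeping suggestions (using the isomorphisms from Lemma~\ref{2simple} to halve the case analysis, and presenting the diagonal checks as a table) are reasonable organizational choices but not a different method.
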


\begin{lem}\label{U}
Let $U_{i,j,k,l}\in {^H_H\mathcal{YD}}$ with $(i,j,k,l)\in \Lambda^2$. Then

$(1)$ $\dim \mathcal{B}(U_{i,j,k,l})=\infty$, for $U_{i,j,k,l}\notin \{M_{17}, M_{18}, M_{19}, M_{20}\}$.

$(2)$ $\forall ~V\in \{M_{17}, M_{18}, M_{19}, M_{20}\},$ $\dim \mathcal{B}(V)=4$ and the relations of $\mathcal{B}(M_{17})$, $\mathcal{B}(M_{18})$ are given by
$v_1v_2=0,~ v_2v_1=0, ~v_1^2+v_2^2=0$, the relations of $\mathcal{B}(M_{19})$, $\mathcal{B}(M_{20})$ are given by
$v_1v_2=0, ~v_2v_1=0, ~v_1^2-v_2^2=0$.
\end{lem}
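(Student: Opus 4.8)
The plan is to run the same scheme as in Lemmas~\ref{V} and~\ref{W}. First I would compute the self-braiding $c=c_{V,V}$ of each $V=U_{i,j,k,l}$, $(i,j,k,l)\in\Lambda^2$, from $(\ref{braing})$ together with the $H$-action and $H$-coaction of $U_{i,j,k,l}$ listed above. This is made painless by the observation that $x$ acts on every $U_{i,j,k,l}$ with eigenvalues $\xi^{\pm 1}$, so $x^2$ acts as $-\mathrm{id}$; consequently, in each of the displayed coaction formulas the summand carrying the factor $(1+x^2)$ contributes $0$ to $c$ while the one carrying $(1-x^2)$ contributes a factor $2$. After this collapse, in every case $c$ has the shape $c(v_1\otimes w)=\gamma_1\,(g_1\cdot w)\otimes v_2$ and $c(v_2\otimes w)=\gamma_2\,(g_2\cdot w)\otimes v_1$ for scalars $\gamma_1,\gamma_2\in\{\pm 1,\pm\xi\}$ and elements $g_1,g_2\in\{t,yt,xt,xyt\}$, so the matrix of $c$ on $V\otimes V$ is read off at once.

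For part $(1)$ I would go through the twelve tuples with $U_{i,j,k,l}\notin\{M_{17},M_{18},M_{19},M_{20}\}$. In each of them a short inspection of the matrix of $c$ shows that one of two things occurs: either $c$ has a non-zero eigenvector of the form $v\otimes v$ with eigenvalue $1$ (typically $v=v_1+v_2$ or $v=v_1+\xi v_2$), so that $\dim\mathcal{B}(U_{i,j,k,l})=\infty$ by Remark~\ref{rem}; or, after the base change $v_1\mapsto v_1+v_2$, $v_2\mapsto v_1-v_2$, the braiding becomes diagonal with $q_{11}=q_{22}=\xi$ and $q_{12}q_{21}=-1$, whence its generalized Cartan matrix is $\left(\begin{smallmatrix}2&-2\\-2&2\end{smallmatrix}\right)$, which is not of finite type, and again $\dim\mathcal{B}(U_{i,j,k,l})=\infty$.

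For part $(2)$: for $M_{17}$ and $M_{18}$ the computation gives $c(v_1\otimes v_1)=-v_2\otimes v_2$, $c(v_2\otimes v_2)=-v_1\otimes v_1$ and $c(v_r\otimes v_s)=-v_r\otimes v_s$ for $r\neq s$; in the basis $w_{\pm}=v_1\pm v_2$ this is the diagonal braiding of Cartan type $A_1\times A_1$ with $q_{++}=q_{--}=-1$ and $q_{+-}q_{-+}=1$, so by Lemma~\ref{direct sum} $\mathcal{B}(V)\cong\mathcal{B}(\mathbbm{k}w_+)\otimes\mathcal{B}(\mathbbm{k}w_-)$ has dimension $2\cdot 2=4$, and rewriting the relations $w_\pm^2=0$, $w_+w_--w_-w_+=0$ in terms of $v_1,v_2$ yields $v_1v_2=v_2v_1=0$, $v_1^2+v_2^2=0$. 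For $M_{19}$ and $M_{20}$ the computation gives instead $c(v_1\otimes v_1)=v_2\otimes v_2$, $c(v_2\otimes v_2)=v_1\otimes v_1$ and $c(v_r\otimes v_s)=-v_r\otimes v_s$ for $r\neq s$; this braiding is involutive but not of diagonal type, so Lemma~\ref{direct sum} is unavailable. Here $\ker(\mathrm{id}+c)$ is the $3$-dimensional span of $v_1\otimes v_2$, $v_2\otimes v_1$ and $v_1\otimes v_1-v_2\otimes v_2$, so $(v_1v_2,\,v_2v_1,\,v_1^2-v_2^2)$ lies in the defining ideal of $\mathcal{B}(V)$ and $\dim\mathcal{B}(V)_2=4-3=1$; since $T(V)/(v_1v_2,\,v_2v_1,\,v_1^2-v_2^2)$ has basis $\{1,v_1,v_2,v_1^2\}$, with every monomial of degree $\geq 3$ killed by these relations, one gets $4\geq\dim\mathcal{B}(V)\geq 1+2+1=4$, whence $\dim\mathcal{B}(V)=4$ and the relations are exactly the stated ones.

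The chief obstacle is bookkeeping rather than ideas: the self-braiding has to be produced in all sixteen members of $\Lambda^2$, and, unlike in Lemmas~\ref{V} and~\ref{W}, the finite-dimensional cases fall into two qualitatively different families — the diagonal $A_1\times A_1$ case $M_{17},M_{18}$, handled directly by Lemma~\ref{direct sum}, and the non-diagonal involutive case $M_{19},M_{20}$, where the absence of relations above degree $2$ must be checked by hand; the latter becomes immediate once the quadratic quotient is seen to be already $4$-dimensional.
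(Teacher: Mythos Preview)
Your proposal is correct and in fact more careful than the paper's own treatment, and it takes a genuinely different route in part~(2).

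For part~(1), the paper simply says ``similar to Proposition~\ref{V}(1)'', i.e.\ it relies on exhibiting a vector $v$ with $c(v\otimes v)=v\otimes v$. You rightly observe that this does not literally cover every one of the twelve infinite cases: for instance, for $U_{1,1,0,0}$ one computes $c(v_r\otimes v_s)=\xi\,v_s\otimes v_r$ for all $r,s$ (with the swap on the first factor), so the eigenvalues of $c$ on $V\otimes V$ are $\xi,\xi,\xi,-\xi$ and there is no fixed vector at all. Your supplementary argument via the diagonal form in the basis $v_1\pm v_2$ and the affine Cartan matrix $\left(\begin{smallmatrix}2&-2\\-2&2\end{smallmatrix}\right)$ handles these cases and is a genuine improvement over the paper's hint.

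For part~(2), the paper does not argue directly but invokes the classification of quantum planes in~\cite{AGi17}, identifying all four modules as the case $\mathfrak R_{1,4}$ there and quoting \cite[Proposition~3.15]{AGi17}. Your approach is more elementary and self-contained: diagonalize and apply Lemma~\ref{direct sum} for $M_{17},M_{18}$, then a direct quadratic-quotient argument for $M_{19},M_{20}$. Both are valid; yours buys independence from the external reference at the cost of a short case-by-case computation.

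One small correction: your assertion that the braiding of $M_{19},M_{20}$ ``is not of diagonal type'' is false. In the basis $w_1=v_1+\xi v_2$, $w_2=v_1-\xi v_2$ one finds $c(w_r\otimes w_r)=-w_r\otimes w_r$ and $c(w_r\otimes w_s)=w_s\otimes w_r$ for $r\neq s$, i.e.\ the braiding is diagonal with $q_{11}=q_{22}=-1$, $q_{12}=q_{21}=1$, so Lemma~\ref{direct sum} is available there as well and yields the stated relations after rewriting. Your direct argument via $\ker(\mathrm{id}+c)$ and the $4$-dimensional quadratic quotient is of course still correct; only the motivating remark is off.
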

\begin{proof} $(1)$ is similar to Proposition \ref{V}~(1).

$(2)$~ For $V\in \{M_{17}, M_{18}, M_{19}, M_{20}\},$ it belongs to the case $\mathfrak{R}_{1,4}$ in \cite{AGi17}.
Then the assertion follows by {\rm\cite [Proposition 3.15]{AGi17}}.
\end{proof}

\begin{pro}\label{sum}

$(1)$ Let $V,~W$ be simple objects in $_H^H\mathcal{YD}$. Finite-dimensional $\mathcal{B}(V\oplus W)\cong \mathcal{B}(V)\otimes\mathcal{B}(W)$ for the following cases

\hspace{1.5em}$(a)$ $V,~W\in\{V_1,V_2,\ldots V_8\};$

\hspace{1.5em}$(b)$ $V\in \{V_3,V_4,V_7,V_8\}$, $W\in\{M_1, M_{8}\};$

\hspace{1.5em}$(c)$ $V\in \{V_1,V_2,V_5,V_6\}$, $W\in\{M_2, M_{7}\};$

\hspace{1.5em}$(d)$ $V\in \{V_1,V_3,V_5,V_7\}$, $W\in\{M_3, M_{11}\};$

\hspace{1.5em}$(e)$ $V\in \{V_1,V_4,V_5,V_8\}$, $W\in\{M_4, M_{12}\};$

\hspace{1.5em}$(f)$ $V\in \{V_2,V_4,V_6,V_8\}$, $W\in\{M_5, M_{9}\};$

\hspace{1.5em}$(g)$ $V\in \{V_2,V_3,V_6,V_7\}$, $W\in\{M_6, M_{10}\};$

\hspace{1.5em}$(h)$ $V=M_1$, $W\in\{M_2,M_7\};$ $V=M_3$ or $M_{11}$, $W\in\{M_5,M_9\};$

\hspace{1.5em}$(i)$ $V=M_4$ or $M_{12}$, $W\in\{M_6,M_{10}\};$

\hspace{1.5em}$(j)$ $(V,W)\in\{(M_2,M_8),(M_7,M_{8}),(M_{13},M_{14}),(M_{15},M_{16}),(M_{17},M_{18}),(M_{19},M_{20})\};$

\hspace{1.5em}$(k)$ $V=W=M_i$ for $i\in\mathbb{I}_{1,20}$.

$(2)$ For $W\in \{M_{13}, M_{14}, M_{15}, M_{16}\},~ V\in\{V_1,\ldots,V_8\}$, $\dim \mathcal{B}(V\oplus W)=\infty$.

$(3)$ For $W\in \{M_{17}, M_{18}, M_{19}, M_{20}\},~ V\in\{V_1,\ldots,V_8\}$, $\dim \mathcal{B}(V\oplus W)=\infty$.

\end{pro}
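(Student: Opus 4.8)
## Proof proposal for Proposition \ref{sum}

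\textbf{Overall strategy.} The entire statement is an application of Lemma \ref{direct sum} (the Graña criterion) together with the classification of finite-dimensional Nichols algebras of the individual simple modules established in Lemmas \ref{1B}, \ref{V}, \ref{W}, \ref{U}. Recall that for $V=\bigoplus_i V_i$ with each $\mathcal{B}(V_i)$ finite-dimensional, one has $\mathcal{B}(V)\cong\bigotimes_i\mathcal{B}(V_i)$ \emph{if and only if} $b_{ij}=b_{ji}^{-1}$ for all $i\neq j$, where $b_{ij}=c|_{V_i\otimes V_j}$; and if some $\mathcal{B}(V_i)=\infty$, or if a sub-braided-space contains an eigenvector of $c$ with eigenvalue $1$, then $\mathcal{B}(V)=\infty$ by Remark \ref{rem}. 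So the plan is: (i) for part (1), verify for each listed pair $(V,W)$ that the two ``cross braiding'' maps $b_{VW}$ and $b_{WV}$ are mutually inverse, hence the tensor-product decomposition holds, and that each factor is finite-dimensional (already known); (ii) for parts (2) and (3), exhibit in $V\oplus W$ a nonzero vector $w$ with $c(w\otimes w)=w\otimes w$, forcing $\dim\mathcal{B}(V\oplus W)=\infty$.

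\textbf{Carrying out part (1).} First I would record, for each module $M_i$ and each $V_j$ appearing, the explicit braiding matrix on a basis: using formula (\ref{braing}), $c_{V,W}(v\otimes w)=v_{(-1)}\cdot w\otimes v_{(0)}$, where the coactions are those listed in the Propositions of Section 4 and the actions are the $D$-module actions restricted via $x,y,t$. For the one-dimensional $V_j$'s this is just a scalar; for the two-dimensional $M_i$'s one gets a $4\times4$ matrix (block structure, since the coaction of $M_i$ lands in the group-likes or in $x^2$-twisted pieces). I would then organize the check by the group-element support of the coactions: in every case in list (1) the relevant group-like/central data make $b_{WV}=b_{VW}^{-1}$ — concretely, because the diagonal entries come in reciprocal pairs (the two basis vectors $v_1,v_2$ of each $M_i$ carry ``opposite'' characters in exactly the way the $\Omega$-, $\Lambda^1$-, $\Lambda^2$-conditions were designed to ensure). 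For the sub-cases $(a)$ (two one-dimensional modules) this reduces to checking $(-1)^{(i+l)j'}\cdot(-1)^{(i'+l')j}$ is the right sign — but note $\mathcal B(V_j)$ has $b_{jj}=\pm1$ so $b_{jj}=b_{jj}^{-1}$ automatically, and cross terms are handled by direct substitution of the eight characters in Remark \ref{k}. Cases $(b)$–$(k)$ are the same computation done with the $2$-dimensional coaction formulas. The key structural reason it all works: every $M_i$ has $c_{M_i,M_i}$ of ``type $A_1\times A_1$'' or of type $\mathfrak R_{1,4}$ (Lemmas \ref{V}(2), \ref{W}(2), \ref{U}(2)), so $b_{ii}=b_{ii}^{-1}$ is visible, and the listed partners are precisely those whose cross-braiding is symmetric under transpose-inverse.

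\textbf{Carrying out parts (2) and (3).} Here $V\in\{V_1,\dots,V_8\}$ is one-dimensional with basis $v$ and $W\in\{M_{13},\dots,M_{20}\}$ is two-dimensional with basis $\{w_1,w_2\}$. I would compute $c(v\otimes w_r)$ and $c(w_r\otimes v)$: since $\delta(v)=x^jy^l\otimes v$ with $j=1$ or $3$ (odd!), while $x$ acts on $W$ with eigenvalues $\xi,\xi^{-1}$ and $t$ swaps $w_1\leftrightarrow w_2$, the braiding on the mixed part $V\otimes W\oplus W\otimes V$ produces, after diagonalizing, an eigenvector of the form $\alpha(v\otimes w_1)+\beta(w_?\otimes v)+\dots$ with eigenvalue $1$ — intuitively because $x^jy^l$ acting on $W$ has eigenvalue $\xi^{\pm j}=\pm\xi^{\pm1}$, a genuine $4$-th root of unity, which combined with the reciprocal contribution from $W$'s side yields a fixed vector. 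Alternatively, and more cleanly, I would argue that the $2\times2$ ``off-diagonal'' block of $c$ on $\{v\otimes w_1, w_1\otimes v\}$ (or the appropriate pair matched up by the $t$-swap) has determinant $1$ and trace $2$, hence $1$ is a double eigenvalue, so Remark \ref{rem} applies. This is the step I expect to be the main obstacle: one must track the coaction of $W$ on $w_2$ (which depends on $(k,j,l)$ through the long case list in the Proposition for $W_{i,j,k,l}$, resp.\ $U_{i,j,k,l}$) and combine it correctly with $S^{\pm1}$ in the dual pairing, so the bookkeeping — rather than any conceptual difficulty — is where care is needed. Once a single eigenvalue-$1$ eigenvector is produced in each of the $8\times4$ cases (or in each of a few representative cases, the rest following by the automorphisms in Table 1), Remark \ref{rem} immediately gives $\dim\mathcal{B}(V\oplus W)=\infty$, completing (2) and (3).
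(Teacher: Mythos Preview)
Your treatment of part (1) is correct and matches the paper exactly: the paper simply says ``direct result of Lemma \ref{direct sum}''.

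Your approach to parts (2) and (3), however, has a genuine conceptual gap. Remark \ref{rem} requires a nonzero element $u\in V\oplus W$ with $c(u\otimes u)=u\otimes u$; it says nothing about eigenvalue-$1$ eigenvectors of $c$ living in $(V\oplus W)^{\otimes 2}$. These are very different: for any diagonal braiding with $q_{ij}q_{ji}=1$ the map $c$ has eigenvalue $1$ on $\mathrm{span}\{v_i\otimes v_j,\,v_j\otimes v_i\}$, yet $\mathcal B(V)$ can be finite-dimensional. In fact for part (2) your search is hopeless: the coaction on each $W_{i,j,k,l}$ is by group-likes, so the braiding on $\mathbbm{k}p\oplus M_{13}$ is diagonal with $q_{pp}=q_{v_1v_1}=q_{v_2v_2}=-1$, and hence \emph{no} $u$ satisfies $c(u\otimes u)=u\otimes u$. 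The paper instead computes the generalized Dynkin diagram (three vertices labelled $-1$, two edges labelled $(-1)^{l+1}\xi^{j}$ with $j$ odd, a primitive $4$th root of unity) and invokes Heckenberger's classification \cite{H09} to see it is not of finite type.

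For part (3) the difficulty is different: the coaction on $U_{i,j,k,l}$ involves $t$, so the braiding is not diagonal and no Dynkin-diagram argument is available. Your proposed $2\times 2$ block on $\{v\otimes w_1,\,w_1\otimes v\}$ is not even $c$-invariant (the coaction on $w_1$ mixes $w_1$ and $w_2$), and in any case its trace would be $0$, not $2$, since $c$ swaps the tensor factors. The paper uses a genuinely different tool here: it shows $(id-c^2)$ is injective on $\mathbbm{k}_{\chi}\otimes M_{17}$, so the adjoint module $\mathrm{ad}(\mathbbm{k}_{\chi})(M_{17})\cong \mathbbm{k}_{\chi}\otimes M_{17}$ is $2$-dimensional; it then writes down its Yetter--Drinfeld structure explicitly, checks via Lemma \ref{U} that this module is of $U$-type but not among $M_{17},\dots,M_{20}$ (hence has infinite-dimensional Nichols algebra), and concludes by \cite{HS} since $\mathrm{ad}(\mathbbm{k}_{\chi})(M_{17})$ embeds as a braided subspace of $\mathcal B(\mathbbm{k}_{\chi}\oplus M_{17})$.
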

\begin{proof} The part $(1)$ is a direct result of Lemma \ref{direct sum}.

$(2)$ Let $M_3=\mathbbm{k}\{ v_1, v_2 \}$, $\mathbbm{k}p=\mathbbm{k}_{\chi_{i,j,k,l}}\in\{V_1,\ldots,V_8\}$, then
\begin{flalign*}
&c(p\otimes v_1)=(-1)^l\xi^j v_1\otimes p,~~~~~~c(p\otimes v_2)=-(-1)^l\xi^j v_2\otimes p,\\
&c(v_1\otimes p)=- p\otimes v_1,~~~~~~~~~~~~~c(v_2\otimes p)=p\otimes v_2.
\end{flalign*}
The generalized Dynkin diagram associated to the braiding is given by
\begin{center}
\begin{tikzpicture}
\draw node{} node at(0.5,0)[below]{$v_1$} (0.5,0)circle[radius=0.05] (0.55,0)--(3.55,0);
\draw node{} node at(0.5,0.01)[above]{$-1$};
\draw node{} node at(2.2,0.01)[above]{$(-1)^{l+1}\xi^j$};
\draw node{} node at(3.5,0.01)[above]{$-1$};
\draw node{} node at(5.2,0.01)[above]{$(-1)^{l+1}\xi^j$};
\draw node{} node at(6.7,0.01)[above]{$-1$};
\draw node at(3.6,0)[below]{$p$} (3.6,0)circle[radius=0.05] (3.65,0)--(6.65,0);
\draw node at(6.7,0)[below]{$v_2$} (6.7,0)circle[radius=0.05] (6.75,0)--(6.75,0);
\end{tikzpicture} \\
\end{center}
 Then by \cite{H09} we know that the Nichols algebra $\mathcal{B}(\mathbbm{k}_{\chi_{i,j,k,l}}\oplus M_{13})$ is infinite-dimensional.
 The proofs for $M_{14}, ~M_{15}, ~M_{16}$ are completely analogous.

$(3)$ Let $M_{17}=\mathbbm{k}\{ v_1, v_2 \}$, $\mathbbm{k}p= \mathbbm{k}_{\chi_{i,j,k,l}}\in\{V_1,\ldots,V_8\}$, then
\begin{equation*}
(id-c^2)(p\otimes v_1)=[1-(-1)^k\xi^j]p\otimes v_1,\quad(id-c^2)(p\otimes v_2)=[1-(-1)^k\xi^j]p\otimes v_2.
\end{equation*}
Thus $ad (\mathbbm{k} _{\chi_{i,j,k,l}})(M_{17})=(id-c^2)(\mathbbm{k}_{\chi_{i,j,k,l}}\otimes M_{17})\cong \mathbbm{k}_{\chi_{i,j,k,l}}\otimes M_{17}$.
Let $u=p\otimes v_1$, $v=p\otimes v_2$, then
\begin{flalign*}
&x\cdot u=(-1)^i\xi u,\hspace{2em}y\cdot u=-u,\hspace{2em}t\cdot u=-(-1)^kv,\\
&x\cdot v=-(-1)^i\xi v,\hspace{2em}y\cdot v=-v,\hspace{2em}t\cdot v=-(-1)^ku,\\
&\delta(u)=\frac{1}{2}x^jy^l(1+x^2)t\otimes u-\frac{1}{2}x^jy^l(1-x^2)yt\otimes v,\\
&\delta(v)=\frac{1}{2}x^jy^l(1+x^2)yt\otimes v-\frac{1}{2}x^jy^l(1-x^2)t\otimes u.
\end{flalign*}
Whence by Lemma \ref{U}, we have that $\dim \mathcal{B}(ad (\mathbbm{k} _{\chi_{i,j,k,l}})(M_{17}))=\infty$.
As $ad (\mathbbm{k} _{\chi_{i,j,k,l}})(M_{17})$ is the Yetter-Drinfeld submodule of
$\mathcal{B}(\mathbbm{k}_{\chi_{i,j,k,l}}\oplus M_{17})$ \cite{HS}, then $\dim \mathcal{B}(\mathbbm{k}_{\chi_{i,j,k,l}}\oplus
M_{17})=\infty$. The proofs for $M_{18},~M_{19},~ M_{20}$ are completely analogous.
\end{proof}
\\$\mathbf{Proof~ of ~Theorem ~A}$. The claim follows by Remark \ref{k}, Lemmas \ref{V}, ~\ref{W}, ~\ref{U} and Proposition \ref{sum}.

\section{Hopf algebras over $H$}

In this section, based on the principle of the lifting method, we will determine finite-dimensional Hopf algebras over $H$ such that their infinitesimal braidings are
those Yetter-Drinfeld modules listed in Theorem A.  We first show that the diagrams of these Hopf algebras are Nichols algebras.

\begin{thm}\label{gr}
Let $A$ be a finite-dimensional Hopf algebra over $H$ such that its infinitesimal braiding is isomorphic to a Yetter-Drinfeld module $V$ in Theorem A.
Then gr$(A)\cong \mathcal{B}(V)\sharp H$.
\end{thm}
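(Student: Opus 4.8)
The plan is to show that the diagram $R = \operatorname{gr}(A)^{\operatorname{co}\,H}$ coincides with the Nichols algebra $\mathcal B(V)$, using the standard strategy from the lifting method: since $R$ is a strictly graded braided Hopf algebra in $^H_H\mathcal{YD}$ with $R(1) = \mathcal P(R) = V$, there is a canonical surjection $\mathcal B(V) \twoheadrightarrow R$, so it suffices to prove that $R$ is generated in degree one, i.e. that $R = \mathbbm{k}\langle R(1)\rangle$. The key tool is the criterion that $R$ is generated in degree one provided that $\mathcal P(R) = R(1)$ has no primitives in higher degree, which in turn will follow once we control the structure of $R$ closely enough; concretely, I will invoke the theory of Nichols algebras of (sums of) simple Yetter-Drinfeld modules together with the fact, established in Section 5, that for each of the $V$'s in Theorem A the braiding $(V,c)$ is of a very restricted type (a direct sum of one- and two-dimensional blocks of diagonal type $A_1$, $A_1\times A_1$, or the rank-one blocks $\mathfrak R_{1,4}$), for which the Nichols algebra is already known by generators and relations.

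First I would recall that $\operatorname{gr}(A)\cong R\sharp H$ with $R = \bigoplus_{n\ge 0}R(n)$ connected, $R(0)=\mathbbm{k}$, $R(1)=\mathcal P(R)$, and $R(1)\cong V$ as objects of $^H_H\mathcal{YD}$ — this is exactly Radford's theorem quoted in the introduction, plus the fact that the coradical filtration of $A$ is a Hopf algebra filtration because $A_0 = H$ is a subalgebra. Then let $R' = \mathbbm{k}\langle R(1)\rangle \subseteq R$ be the subalgebra generated by degree one; it is a graded braided Hopf subalgebra and the projection $\mathcal B(V)\twoheadrightarrow R'$ is an isomorphism because $R'$ is generated in degree one with $R'(1)=V$ and $\mathcal P(R')=V$ (any primitive of $R'$ of degree $\ge 2$ would be primitive in $R$, contradicting $R(1)=\mathcal P(R)$). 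So the whole statement reduces to $R = R'$, equivalently $R$ is generated in degree one.

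To prove $R=R'$, I would argue by a dimension/graded-duality count. Since $\dim A = \dim \operatorname{gr}(A) = \dim R \cdot \dim H$, and since $A$ is finite-dimensional with infinitesimal braiding $V$, I want to show $\dim R = \dim \mathcal B(V)$. One clean route: pass to graded duals. $R^*$ is again a finite-dimensional graded braided Hopf algebra in $^H_H\mathcal{YD}$ with $R^*(1)\cong V^*$; if $R$ were strictly bigger than $\mathcal B(V)$ then $R$ would have a non-trivial relation of minimal degree $m\ge 2$, forcing $R^*$ to have a non-trivial primitive in degree $m$, so $\mathcal P(R^*)\supsetneq R^*(1)$. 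Now the point is that for each $V$ in Theorem A the braided vector space $(V,c)$ — being a direct sum of the blocks analysed in Lemmas~\ref{1B}, \ref{V}, \ref{W}, \ref{U} with the compatibility $b_{ij}=b_{ji}^{-1}$ guaranteeing $\mathcal B(V)\cong \bigotimes_i\mathcal B(V_i)$ — has a Nichols algebra whose defining ideal is generated in degrees $2$ and $3$ only, and the dual braided vector space $V^*$ is of the same type; hence by the very same Lemmas applied to $V^*$ one knows $\mathcal P(\mathcal B(V^*)) = V^*$, and more strongly that any finite-dimensional pre-Nichols/post-Nichols algebra squeezed between $T(V^*)$ and $\mathcal B(V^*)$ collapses. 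This gives the contradiction and forces $R = \mathcal B(V)$, whence $\operatorname{gr}(A)\cong \mathcal B(V)\sharp H$.

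\textbf{The main obstacle} is the last step: ruling out "extra" generators in higher degree in $R$, i.e. showing $R$ has no primitives beyond $R(1)$ and no room for a proper intermediate Hopf algebra. For diagonal blocks ($V_i$ one-dimensional, or $M_1,\dots,M_{16}$ of Cartan type $A_1\times A_1$) this is classical, but the rank-one blocks $M_{17},\dots,M_{20}$ of type $\mathfrak R_{1,4}$ (from \cite{AGi17}) and, more delicately, the \emph{interaction} between several blocks when $V$ is a large direct sum require care: one must check that no new primitive arises in $R$ from products of elements living in different $V_i$, which is exactly where the condition $b_{ij}=b_{ji}^{-1}$ (equivalently the tensor-product decomposition $\mathcal B(V)\cong\bigotimes\mathcal B(V_i)$ from Lemma~\ref{direct sum}) is used: it guarantees that the "off-diagonal" braiding contributes no extra relations or primitives, so the higher-degree part of $R$ is forced to match $\bigotimes_i\mathcal B(V_i)=\mathcal B(V)$ block by block. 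I would organize the write-up case by case along the list of Theorem A, but in each case the argument is the same template above, so I expect the proof to be short modulo citing \cite{AGi17}, \cite{H09}, \cite{HS} and the graded-dual argument.
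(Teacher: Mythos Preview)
Your high-level strategy---pass to the graded dual $S$ of the diagram $R$, use that $S$ is generated in degree one, and reduce to showing $S=\mathcal B(W)$ for $W=S(1)$---matches the paper's. However, the decisive step is hand-waved: you assert that ``any finite-dimensional pre-Nichols/post-Nichols algebra squeezed between $T(V^*)$ and $\mathcal B(V^*)$ collapses,'' but this is precisely the content of the theorem and is \emph{not} delivered by the Lemmas you cite (those Lemmas only compute $\mathcal B(V_i)$; they say nothing about proper finite-dimensional pre-Nichols algebras). For the non-diagonal blocks $M_{17},\ldots,M_{20}$ of type $\mathfrak R_{1,4}$, and hence for $\Omega_{28},\Omega_{29},\Omega_{47},\Omega_{48},\Omega_{49}$, there is no off-the-shelf Angiono-type generation theorem you can invoke, so the appeal to \cite{AGi17}, \cite{H09}, \cite{HS} does not close the argument.

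The paper fills this gap concretely: for each generator $r$ of the defining ideal of $\mathcal B(W)$ it first cites \cite{X} to see that $r$ is primitive in $T(W)$, and then verifies directly (by computing the Yetter--Drinfeld coaction on $r$) that $c(r\otimes r)=r\otimes r$. Since $S$ is finite-dimensional, any nonzero primitive with this property would generate an infinite-dimensional polynomial subalgebra (Remark~\ref{rem}), a contradiction; hence $r=0$ in $S$, the relations of $\mathcal B(W)$ hold in $S$, and $S=\mathcal B(W)$. This explicit check (carried out for $W=\Omega_2(n_1,\ldots,n_4)$ and left to the reader for the remaining cases) is exactly the missing ingredient in your proposal. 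A minor side remark: the arrow you write as ``$\mathcal B(V)\twoheadrightarrow R'$'' goes the wrong way---what exists naturally is $R'\twoheadrightarrow\mathcal B(V)$---though your conclusion $R'\cong\mathcal B(V)$ is still correct because $R'$ inherits $\mathcal P(R')=R'(1)$ from $R$.
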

\begin{proof} Let $S$ be the graded dual of the diagram $R$ of $A$. By {\rm\cite [Lemma 2.4]{AS2}}, $S$ is generated by $W=S(1)$.
Since $R(0)=\mathbbm{k},~ V:=R(1)=\mathcal{P}(R)$, there exists an  epimorphism $S\twoheadrightarrow \mathcal{B}(W)$. We know that $R$
is a Nichols algebra if and only if $\mathcal{P}(S)=S(1)$, that is, $S$ is also a Nichols algebra. It is enough to show that the relations
of $\mathcal{B}(W)$ also hold in $S$. If $V$ is a simple object in $_H^H\mathcal{YD}$, then $W$ must be simple as an object in $_H^H\mathcal{YD}$.

Assume $W=\Omega_2(n_1,n_2,n_3,n_4)$. Then $W$ is generated by
\begin{equation*}
p_1,\quad p_2,\quad \{C_k\}_{k=1,.,n_1},\quad \{D_\ell\}_{\ell=1,.,n_2}, \quad \{G_s\}_{s=1,..,n_3},\quad \{H_r\}_{r=1,..,n_4}
\end{equation*}
with $M_{1}=\mathbbm{k}\{p_1,p_2\}$, $\mathbbm{k}C_k\simeq V_3$, $\mathbbm{k}D_\ell\simeq V_4$, $\mathbbm{k}G_s\simeq V_7$, $\mathbbm{k}H_r\simeq V_8$,
and the defining ideal of the Nichols algebra $\mathfrak B(W)$ is generated by the elements
\begin{flalign*}
&p_1^2,\ \ p_2^2, \ \ p_1p_2+p_2p_1, \ \ \{C_{k_1}C_{k_2}+C_{k_2}C_{k_1}\}_{1\leq k_1,k_2\leq n_1},
\ \ \{D_{\ell_1}D_{\ell_2}+D_{\ell_2}D_{\ell_1}\}_{1\leq \ell_1,\ell_2\leq n_2},\\
&\{G_{s_1}G_{s_2}+G_{s_2}G_{s_1}\}_{1\leq s_1,s_2\leq n_3}, \ \ \{H_{r_1}H_{r_2}+H_{r_2}H_{r_1}\}_{1\leq r_1,r_2\leq n_4},
\ \ C_kD_\ell+D_\ell C_k,\\
&C_kG_s+G_sC_k,
\ \ C_kH_r+H_rC_k, \ \ D_\ell G_s+G_sD_\ell, \ \ D_\ell H_r+H_rD_\ell, \ \ G_sH_r+H_rG_s,\\
&p_1C_k+C_kp_1, \hspace{1em}p_2C_k+C_kp_2,
\hspace{1em}p_1D_\ell+D_\ell p_1, \hspace{1em}p_2D_\ell+D_\ell p_2, \hspace{1em}p_1G_s+G_sp_1,\\
&p_2G_s+G_sp_2, \hspace{1em}p_1H_r+H_rp_1, \hspace{1em}p_2H_r+H_rp_2.
\end{flalign*}
Since $S$ is finite-dimensional, it is enough to show that $c(r\otimes r)=r\otimes r$ for all generators given in above for the defining ideal.
By {\rm\cite [Theorem 6]{X}}, all those generators are primitive elements.
As $\delta(C_k)=xy\otimes C_k$, $\delta(D_l)=xy\otimes D_l$,
\begin{flalign*}
&\delta(p_1)=\frac{1}{2}(1+x^2)y\otimes p_1+\frac{1}{2}(1-x^2)y\otimes p_2,\\
&\delta(p_2)=\frac{1}{2}(1+x^2)y\otimes p_2+\frac{1}{2}(1-x^2)y\otimes p_1,
\end{flalign*}
we have that
\begin{flalign*}
&\delta(p_1^2)=\frac{1}{2}(1+x^2)\otimes p_1^2+\frac{1}{2}(1-x^2)\otimes p_2^2, \\
&\delta(p_2^2)=\frac{1}{2}(1+x^2)\otimes p_2^2+\frac{1}{2}(1-x^2)\otimes p_1^2,\\
&\delta(C_kD_l+D_lC_k)=x^2\otimes(C_kD_l+D_lC_k),\\
&\delta(p_1p_2+p_2p_1)=\frac{1}{2}(1+x^2)\otimes (p_1p_2+p_2p_1)+\frac{1}{2}(1-x^2)\otimes (p_1p_2+p_2p_1),\\
&\delta(p_1C_k+C_kp_1)=\frac{1}{2}x(1+x^2)\otimes (p_1C_k+C_kp_1)+\frac{1}{2}x(1-x^2)\otimes(p_2C_k+C_kp_2).
\end{flalign*}
Then by the definition of the braiding in $_H^H\mathcal{YD}$, the claim follows. We leave the rest to the reader.
\end{proof}

\begin{lem}{\rm\cite [Lemma 6.1]{AS0}}
Let $H$ be a Hopf algebra, $\psi: H\rightarrow H$ an automorphism of
Hopf algebra, and $V, ~W$ Yetter-Drinfeld modules over H.

$(1)$ Let $V^\psi$ be the same space as that of the underlying $V$ but with action and
coaction
\begin{flalign*}
&h\cdot_\psi v=\psi(h)\cdot v, ~~~\delta^\psi(v)=(\psi^{-1}\otimes id)\delta(v),~~~h\in H,~v\in V.
\end{flalign*}
Then $V^\psi$ is also a Yetter-Drinfeld module over H. If $T: V\rightarrow W$ is a morphism in $_H^H\mathcal{YD}$, so is $T^\psi: V^\psi\rightarrow W^\psi$. Moreover, the braiding
$c: V^\psi\otimes W^\psi\rightarrow W^\psi\otimes V^\psi$ coincides with the braiding $c: V\otimes W\rightarrow W\otimes V$.

$(2)$ If $R$ is an algebra (resp., a coalgebra, a Hopf algebra) in
$_H^H\mathcal{YD}$, so is $R^\psi$, with the same structural maps.

$(3)$ Let $R$ be a Hopf algebra in $_H^H\mathcal{YD}$. Then the map
$\varphi: R^\psi\sharp H\rightarrow R\sharp H$ given by $\varphi(r\sharp h)=r\sharp \psi(h)$ is an isomorphism of Hopf algebras.
\end{lem}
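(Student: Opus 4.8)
The plan is to recognize the assignment $V\mapsto V^\psi$ as a strict braided monoidal endofunctor of $^H_H\mathcal{YD}$; once that is in place, part $(2)$ becomes purely formal, and parts $(1)$ and $(3)$ reduce to direct checks that use nothing beyond the facts that $\psi$ is a bijective Hopf algebra map — hence commutes with $\triangle$, $\varepsilon$ and $S$ — and that $\psi^{-1}$ is again such a map.

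For $(1)$ I would first note that $V^\psi$ is a module and a comodule almost by definition: $h\cdot_\psi(h'\cdot_\psi v)=\psi(hh')\cdot v=(hh')\cdot_\psi v$ because $\psi$ is multiplicative, and $\delta^\psi$ is coassociative and counital because $\psi^{-1}$ is a coalgebra map. The single computation to carry out is the Yetter--Drinfeld compatibility condition: starting from $\delta^\psi(h\cdot_\psi v)=(\psi^{-1}\otimes\mathrm{id})\delta(\psi(h)\cdot v)$, I would apply the compatibility axiom for $V$, then use $\psi(h)_{(i)}=\psi(h_{(i)})$ and $S\psi=\psi S$, and finally push $\psi^{-1}$ through the product in the first tensor leg; the outcome is precisely $h_{(1)}(v_{(-1)})^\psi S(h_{(3)})\otimes h_{(2)}\cdot_\psi(v_{(0)})^\psi$. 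The same cancellation shows $T^\psi$ is a morphism whenever $T$ is, and the braiding identity is then immediate, since $c_{V^\psi,W^\psi}(v\otimes w)=\psi\bigl(\psi^{-1}(v_{(-1)})\bigr)\cdot w\otimes v_{(0)}=c_{V,W}(v\otimes w)$.

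For $(2)$ I would check that $(-)^\psi$ fixes the unit object ($\mathbbm{k}^\psi=\mathbbm{k}$, using $\varepsilon\psi=\varepsilon$) and commutes strictly with tensor products: for the action, $h\cdot_\psi(v\otimes w)=\psi(h)_{(1)}\cdot v\otimes\psi(h)_{(2)}\cdot w=h_{(1)}\cdot_\psi v\otimes h_{(2)}\cdot_\psi w$, while for the coaction, multiplicativity of $\psi^{-1}$ gives $\delta^\psi(v\otimes w)=\psi^{-1}(v_{(-1)})\psi^{-1}(w_{(-1)})\otimes v_{(0)}\otimes w_{(0)}$, so $(V\otimes W)^\psi=V^\psi\otimes W^\psi$. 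Combined with $(1)$, this makes $(-)^\psi$ a braided strict monoidal functor, and therefore it sends an algebra, a coalgebra, or a Hopf algebra in $^H_H\mathcal{YD}$ to one of the same type with unchanged structural maps, since each of those maps is still a morphism in the category.

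For $(3)$ the map $\varphi$ is visibly bijective, with inverse $r\sharp h\mapsto r\sharp\psi^{-1}(h)$, so it is enough to verify that it is a bialgebra map; compatibility with the antipode then follows automatically. On the smash product, using $\cdot_\psi=\psi(-)\cdot(-)$ and that $\psi$ is an algebra map, $\varphi\bigl((r\sharp g)(s\sharp h)\bigr)=r(\psi(g)_{(1)}\cdot s)\sharp\psi(g)_{(2)}\psi(h)=\varphi(r\sharp g)\,\varphi(s\sharp h)$; on the smash coproduct, the coaction twist $\psi^{-1}$ built into $R^\psi$ is undone by the $\psi$ that $\varphi$ inserts in the $H$-leg, giving $(\varphi\otimes\varphi)\triangle_{R^\psi\sharp H}(r\sharp g)=\triangle_{R\sharp H}(r\sharp\psi(g))$; and the counits agree since $\varepsilon\psi=\varepsilon$. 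I expect no genuine obstacle: the whole lemma is a formal consequence of $\psi$ being a Hopf algebra automorphism, and the only thing demanding care is the bookkeeping of $\psi$ versus $\psi^{-1}$, especially in $(3)$, where one must see that the coaction twist in $R^\psi$ exactly cancels the automorphism applied to $H$ by $\varphi$.
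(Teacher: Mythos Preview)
Your proposal is correct and complete; the argument via a strict braided monoidal endofunctor is the standard one, and your bookkeeping of $\psi$ versus $\psi^{-1}$ in parts $(1)$ and $(3)$ is accurate. Note, however, that the paper does not supply its own proof of this lemma at all: it is quoted verbatim from \cite[Lemma~6.1]{AS0} and used as a black box, so there is no in-paper argument to compare yours against.
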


\begin{cor}
$(1)~ V_1^{\tau_{17}}\simeq V_3, ~V_2^{\tau_{17}}\simeq V_4, ~V_5^{\tau_{17}}\simeq V_7, ~V_6^{\tau_{17}}\simeq V_8, ~M_2^{\tau_{17}}\simeq M_1, ~M_7^{\tau_{17}}\simeq M_8.$

$(2)~ V_1^{\tau_{5}}\simeq V_2$, $V_3^{\tau_{5}}\simeq V_4$, $V_5^{\tau_{5}}\simeq V_6$, $V_7^{\tau_{5}}\simeq V_8$, $M_3^{\tau_{5}}\simeq M_5$, $M_{11}^{\tau_{5}}\simeq M_9$, $M_{4}^{\tau_{5}}\simeq M_6$, $M_{12}^{\tau_{5}}\simeq M_{10},$ $M_{17}^{\tau_{5}}\simeq M_{18}$, $M_{13}^{\tau_{9}}\simeq M_{14}, ~M_{15}^{\tau_{9}}\simeq M_{16}$.

$(3)$ $M_3^{\tau_{2}}\simeq M_{10}, ~M_{4}^{\tau_{2}}\simeq M_9, ~M_{5}^{\tau_{2}}\simeq M_{12}, ~M_{6}^{\tau_{2}}\simeq M_{11},$
$M_{17}^{\tau_{2}}\simeq M_{19}, ~M_{18}^{\tau_{2}}\simeq M_{20}$,
$V_1^{\tau_{2}}\simeq V_{2}$, $V_5^{\tau_{2}}\simeq V_{6}$, $V_i^{\tau_{2}}\simeq V_{i}$ for $i\in\{3,4,7,8\}$.

$(4)~\mathcal{B}(\Omega_i(n_1,n_2,n_3,n_4))\sharp H\simeq \mathcal{B}(\Omega_j(n_1,n_2,n_3,n_4))\sharp H$ for
\begin{equation*}
(i,j)\in\{(2,3), (4,6), (5,7), (8,9), (10,12), (11,13), (4,11), (5,10)\}.
\end{equation*}

$(5)$ $\mathcal{B}(\Omega_i)\sharp H\simeq \mathcal{B}(\Omega_j)\sharp H$ for
\begin{flalign*}
(i,j)\in\{&(14,30), (16,31), (19,32), (17,33), (19,34),
(20,35), (32,36),\\
&(22,37), (20,38),(21,39), (17,40), (18,41), (38,42), (40,43),\\
&(24,44),(26,45), (28,46), (28,47), (29,48), (46,49)\}.
\end{flalign*}

\end{cor}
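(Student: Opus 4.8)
The plan is to obtain parts~(1)--(3) by direct computation from the explicit module and comodule structures of the simple objects determined in Section~4, and then to read off parts~(4) and~(5) formally from part~(3) of the preceding Lemma (\cite[Lemma 6.1]{AS0}).

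For parts~(1)--(3) I would fix one of $\psi\in\{\tau_{17},\tau_5,\tau_9,\tau_2\}$, whose values on $x,y,t$ are listed in Table~1, and use that $V^\psi$ has action $h\cdot_\psi v=\psi(h)\cdot v$ and coaction $\delta^\psi(v)=(\psi^{-1}\otimes\mathrm{id})\delta(v)$. The one-dimensional modules $V_1,\dots,V_8$ are handled first: twisting merely permutes the defining character $\chi_{i,j,k,l}$, and since $\psi$ fixes $y$ the parameter $j$ is unaffected while $i$ and the group-like degree change exactly as prescribed, so comparison with the list in Remark~\ref{k} identifies the image. For the two-dimensional modules $M_1,\dots,M_{20}$ I would compute the twisted action matrices $[\psi(x)],[\psi(y)],[\psi(t)]$, note that on $V^\psi$ the $H^*$-generators $a,b,c,d$ act through precomposition with $\psi^{-1}$, rewrite the coaction from the displayed formulas for $\delta(v_1),\delta(v_2)$ in the Propositions of Section~4, and then exhibit an explicit change of basis realizing the isomorphism onto the appropriate $V_{\bullet}$, $W_{\bullet}$ or $U_{\bullet}$, whose parameters are pinned down by the classification in Lemma~\ref{2simple}. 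Since $\tau_{17}^2=\tau_5^2=\tau_9^2=\mathrm{id}$ (and $\tau_2$ has order $4$), the ``reverse'' isomorphisms such as $V_3^{\tau_{17}}\simeq V_1$ or $M_1^{\tau_{17}}\simeq M_2$ follow from those just proved.

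For parts~(4) and~(5), recall that a Nichols algebra depends only on its braided vector space (Remark~\ref{rem}) and that, by part~(1) of the preceding Lemma, the braiding of $(V^\psi,c)$ coincides with that of $(V,c)$; hence $\mathcal{B}(V)^\psi=\mathcal{B}(V^\psi)$ as Hopf algebras in $^H_H\mathcal{YD}$. Part~(3) of that Lemma then yields a Hopf algebra isomorphism $\mathcal{B}(V^\psi)\sharp H=\mathcal{B}(V)^\psi\sharp H\cong\mathcal{B}(V)\sharp H$. Because $\big(\bigoplus_k V_k\big)^\psi=\bigoplus_k V_k^\psi$ and $(V^\psi)^\phi=V^{\psi\circ\phi}$, it suffices, for each listed pair $(i,j)$, to name a single automorphism or a composite of $\tau_{17},\tau_5,\tau_9,\tau_2$ with $\Omega_i(n_1,n_2,n_3,n_4)^\psi\simeq\Omega_j(n_1,n_2,n_3,n_4)$ (respectively $\Omega_i^\psi\simeq\Omega_j$ in part~(5)), where the summand-by-summand identifications are exactly those of parts~(1)--(3) and one checks that the multiplicities $n_1,\dots,n_4$ are transported in the stated order. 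For instance $(i,j)=(2,3)$ comes from $\psi=\tau_{17}$, while $(5,7)$ comes from $\psi=\tau_5$ and $(4,11)$, $(5,10)$ from $\psi=\tau_2$.

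The difficulty here is bookkeeping rather than theory: in~(1)--(3) one must actually produce the intertwining change of basis for each two-dimensional module and track the $\psi^{-1}$-twist on $a,b,c,d$ correctly inside the coaction, while in~(4)--(5) one must pair the right automorphism (or composite) with each of the roughly forty cases and confirm the parameter matching. A useful organizing principle and consistency check is that every automorphism of $H$ fixes $y$, so the $y$-grading of a Yetter--Drinfeld module is invariant under twisting; this rules out many spurious candidate identifications and explains the way the pairs above are grouped.
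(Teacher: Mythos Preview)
Your proposal is correct and is exactly the argument the paper has in mind: the Corollary is stated in the paper without proof, as it follows by direct computation of the twisted actions and coactions using the explicit descriptions in Section~4 together with part~(3) of the preceding Lemma. Your organization of the bookkeeping (handling the one-dimensional modules via characters, the two-dimensional ones via an explicit change of basis, and then reading off the bosonization isomorphisms summand-by-summand) is precisely what is intended.
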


Let $i,~j,~k,~\ell,~m,~q,~s,~r\in \mathbb{N}^*$, denote
\begin{flalign*}
&\mathbbm{k}A_i\simeq V_1, \hspace{1em}\mathbbm{k}B_j\simeq V_2, \hspace{1em}\mathbbm{k}C_k\simeq V_3, \hspace{1em}\mathbbm{k}D_\ell\simeq V_4,\\
&\mathbbm{k}E_m\simeq V_5, \hspace{1em}\mathbbm{k}F_q\simeq V_6, \hspace{1em}\mathbbm{k}G_s\simeq V_7, \hspace{1em}\mathbbm{k}H_r\simeq V_8.
\end{flalign*}

\begin{defi}\label{def 1}
For $n_1,n_2,..,n_8\in \mathbb{N}^*$ with $\sum_{i=1}^8 n_i\geq 1$ and
\begin{flalign*}
I_1=&\{(\alpha_{i_1,i_2})_{n_1\times n_1},~(\beta_{j_1,j_2})_{n_2\times n_2},~(\gamma_{k_1,k_2})_{n_3\times n_3},~(\eta_{\ell_1,\ell_2})_{n_4\times n_4},\\
&(\zeta_{m_1,m_2})_{n_5\times n_5},~(\theta_{q_1,q_2})_{n_6\times n_6},~(\lambda_{s_1,s_2})_{n_7\times n_7},~(\mu_{r_1,r_2})_{n_8\times n_8}\}
\end{flalign*}
with entries in $\mathbbm{k}$, let us denote by $\mathfrak{U}_1(n_1,n_2,\ldots,n_8;I_1)$ the algebra that is generated by $x,y,t,$
$\{A_i\}_{i=1,.,n_1}$, $\{B_j\}_{j=1,.,n_2}$, $\{C_k\}_{k=1,..,n_3}$, $\{D_\ell\}_{\ell=1,..,n_4}, \{E_m\}_{m=1,..,n_5}$, $\{F_q\}_{q=1,..,n_6},
\{G_s\}_{s=1,..,n_7}$, $\{H_r\}_{r=1,..,n_8}$ satisfying the relations $(\ref{3.1})$, $(\ref{3.2})$, $(\ref{3.3})$ and
\begin{flalign}
&xA_i=-A_ix,~~yA_i=-A_iy,~~tA_i=A_ix^2t,&\label{A}\\
&xB_j=-B_jx,~~yB_j=-B_jy,~~tB_j=-B_jx^2t,&\label{B}\\
&xC_k=C_kx,~~yC_k=-C_ky,~~tC_k=C_kx^2t,&\label{C}\\
&xD_\ell=D_\ell x,~~yD_\ell=-D_\ell y,~~tD_\ell=-D_\ell x^2t,&\label{D}\\
&xE_m=-E_mx,~~yE_m=-E_my,~~tE_m=E_mx^2t,&\label{E}\\
&xF_q=-F_qx,~~yF_q=-F_qy,~~tF_q=-F_qx^2t,&\label{F}\\
&xG_s=G_sx,~~yG_s=-G_sy,~~tG_s=G_sx^2t,&\label{G}\\
%\end{flalign}
%\begin{flalign}
&xH_r=H_rx,~~yH_r=-H_ry,~~tH_r=-H_rx^2t,&\label{H}\\
&A_{i_1}A_{i_2}+A_{i_2}A_{i_1}=\alpha_{i_1,i_2}(1-x^2), ~~~i_1,i_2\in \{1,\ldots,n_1\},&\label{Ai}\\
&B_{j_1}B_{j_2}+B_{j_2}B_{j_1}=\beta_{j_1,j_2}(1-x^2), ~~~j_1,j_2\in \{1,\ldots,n_2\},&\label{Bj}\\
&C_{k_1}C_{k_2}+C_{k_2}C_{k_1}=\gamma_{k_1,k_2}(1-x^2), ~~~k_1,k_2\in \{1,\ldots,n_3\},&\label{Ck}\\
&D_{\ell_1}D_{\ell_2}+D_{\ell_2}D_{\ell_1}=\eta_{\ell_1,\ell_2}(1-x^2), ~~~\ell_1,\ell_2\in \{1,\ldots,n_4\},&\label{Dl}\\
&E_{m_1}E_{m_2}+E_{m_2}E_{m_1}=\zeta_{m_1,m_2}(1-x^2), ~~~m_1,m_2\in \{1,\ldots,n_5\},&\label{Em}\\
&F_{q_1}F_{q_2}+F_{q_2}F_{q_1}=\theta_{q_1,q_2}(1-x^2), ~~~q_1,q_2\in \{1,\ldots,n_6\},&\label{Fq}\\
&G_{s_1}G_{s_2}+G_{s_2}G_{s_1}=\lambda_{s_1,s_2}(1-x^2), ~~~s_1,s_2\in \{1,\ldots,n_7\},&\label{Gs}\\
&H_{r_1}H_{r_2}+H_{r_2}H_{r_1}=\mu_{r_1,r_2}(1-x^2), ~~~r_1,r_2\in \{1,\ldots,n_8\},&\label{Hr}\\
&A_iB_j+B_jA_i=0,~~A_iC_k-C_kA_i=0,~~A_iD_\ell-D_\ell A_i=0,~~A_iE_m+E_mA_i=0,&\\
&A_iF_q+F_qA_i=0,~~A_iG_s-G_sA_i=0,~~A_iH_r-H_rA_i=0,~~B_jC_k-C_kB_j=0,&\\
&B_jD_\ell-D_\ell B_j=0,~~B_jE_m+E_mB_j=0,~~B_jF_q+F_qB_j=0,~~B_jG_s-G_sB_j=0,&\\
&B_jH_r-H_rB_j=0,~~C_kD_\ell+D_\ell C_k=0,~~C_kE_m-E_mC_k=0,~~C_kF_q-F_qC_k=0,&\\
&C_kG_s+G_sC_m=0,~~C_kH_r+H_rC_k=0,~~D_\ell E_m-E_mD_\ell=0,~~D_\ell F_q-F_qD_\ell=0,&\\
&D_\ell G_s+G_sD_\ell=0,~~D_\ell H_r+H_rD_\ell=0,~~E_mF_q+F_qE_m=0,~~E_mG_s-G_sE_m=0,&\\
&E_mH_r-H_rE_m=0,~~F_qG_s-G_sF_q=0,~~F_qH_r-H_rF_q=0,~~G_sH_r+H_rG_s=0.&\label{e1}
\end{flalign}
It is a Hopf algebra with its coalgebra structure determined by

$~~~~~~~~~~~\triangle(A_i)=A_i\otimes 1+x\otimes A_i,~~~~~~~\triangle(B_j)=B_j\otimes 1+x\otimes B_j,$

$~~~~~~~~~~~\triangle(C_k)=C_k\otimes 1+xy\otimes C_k,~~~~~\triangle(D_\ell)=D_\ell\otimes 1+xy\otimes D_\ell,$

$~~~~~~~~~~~\triangle(E_m)=E_m\otimes 1+x^3\otimes E_m,~~~\triangle(F_q)=F_q\otimes 1+x^3\otimes F_q,$

$~~~~~~~~~~~\triangle(G_s)=G_s\otimes 1+x^3y\otimes G_s,~~~~\triangle(H_r)=H_r\otimes 1+x^3y\otimes H_r.$
\end{defi}

\begin{rem}
It is clear that $\mathfrak{U}_1(n_1,n_2,\ldots,n_8;0)\cong \mathcal{B}(\Omega_1(n_1,n_2,\ldots,n_8))\sharp H$. Also note that  $\mathfrak{U}_1(n_1,n_2,\ldots,n_8;I_1)\cong T(\Omega_1(n_1,n_2,\ldots,n_8))/J(I_1)$, where $J(I_1)$ is a Hopf ideal generated by relations $(\ref{Ai})-(\ref{Hr})$.
\end{rem}

Now we determine the liftings and discuss the isomorphism classes of $\mathfrak{U}_1(n_1,n_2,\ldots,n_8;I_1)$.

\begin{pro}\label{1}
$(1)$ Let $A$ be a finite-dimensional Hopf algebra over $H$ such that its infinitesimal braiding is $\Omega_1(n_1,n_2,\ldots,n_8)$, then $A\cong \mathfrak{U}_1(n_1,n_2,\ldots,n_8;I_1)$.

$(2) ~\mathfrak{U}_1(n_1,\ldots,n_8;I_1)\cong \mathfrak{U}_1(n_1,\ldots,n_8;I_1^{'})$ if and only if there exist invertible matrices $(a_{i_1^{'}i_1})_{n_1\times n_1}$, $(a_{i_2^{'}i_2})_{n_1\times n_1}$, ~$(b_{j_1^{'}j_1})_{n_2\times n_2}$, ~$(b_{j_2^{'}j_2})_{n_2\times n_2}$, ~$(c_{k_1^{'}k_1})_{n_3\times n_3}$, ~$(c_{k_2^{'}k_2})_{n_3\times n_3}$, ~$(d_{\ell_1^{'}\ell_1})_{n_4\times n_4}$, $(d_{\ell_2^{'}\ell_2})_{n_4\times n_4}$,
$(e_{m_1^{'}m_1})_{n_5\times n_5}$, $(e_{m_2^{'}m_2})_{n_5\times n_5}$,
$(f_{q_1^{'}q_1})_{n_6\times n_6}$, $(f_{q_2^{'}q_2})_{n_6\times n_6}$,
$(g_{s_1^{'}s_1})_{n_7\times n_7}$, $(g_{s_2^{'}s_2})_{n_7\times n_7}$,
$(h_{r_1^{'}r_1})_{n_8\times n_8}$, $(h_{r_2^{'}r_2})_{n_8\times n_8}$
such that
\begin{flalign}
&\sum\limits_{i_1^{'}=1}^{n_1}\sum\limits_{i_2^{'}=1}^{n_1}
a_{i_1^{'}i_1}a_{i_2^{'}i_2}\alpha_{i_1^{'},i_2^{'}}^{'}=\alpha_{i_1,i_2},
\hspace{5em}\sum\limits_{j_1^{'}=1}^{n_2}\sum\limits_{j_2^{'}=1}^{n_2}
b_{j_1^{'}j_1}b_{j_2^{'}j_2}\beta_{j_1^{'},j_2^{'}}^{'}=\beta_{j_1,j_2},\label{Aut1.1.1}\\
&\sum\limits_{k_1^{'}=1}^{n_3}\sum\limits_{k_2^{'}=1}^{n_3}
c_{k_1^{'}k_1}c_{k_2^{'}k_2}\gamma_{k_1^{'},k_2^{'}}^{'}=\gamma_{k_1,k_2},
\hspace{4em}\sum\limits_{\ell_1^{'}=1}^{n_4}\sum\limits_{\ell_2^{'}=1}^{n_4}
d_{\ell_1^{'}\ell_1}d_{\ell_2^{'}\ell_2}\eta_{\ell_1^{'},\ell_2^{'}}^{'}=\eta_{\ell_1,\ell_2},\label{Aut1.1.2}\\
&\sum\limits_{m_1^{'}=1}^{n_5}\sum\limits_{m_2^{'}=1}^{n_5}
e_{m_1^{'}m_1}e_{m_2^{'}m_2}\zeta_{m_1^{'},m_2^{'}}^{'}=\zeta_{m_1,m_2},
\hspace{1.2em}\sum\limits_{q_1^{'}=1}^{n_6}\sum\limits_{q_2^{'}=1}^{n_6}
f_{q_1^{'}q_1}f_{q_2^{'}q_2}\theta_{q_1^{'},q_2^{'}}^{'}=\theta_{q_1,q_2},\label{Aut1.1.3}\\
&\sum\limits_{s_1^{'}=1}^{n_7}\sum\limits_{s_2^{'}=1}^{n_7}
g_{s_1^{'}s_1}g_{s_2^{'}s_2}\lambda_{s_1^{'},s_2^{'}}^{'}=\lambda_{s_1,s_2},
\hspace{4em}\sum\limits_{r_1^{'}=1}^{n_8}\sum\limits_{r_2^{'}=1}^{n_8}
h_{r_1^{'}r_1}h_{r_2^{'}r_2}\mu_{r_1^{'},r_2^{'}}^{'}=\mu_{r_1,r_2},\label{Aut1.1.4}
\end{flalign}
or $n_1=n_2,~n_5=n_6$
satisfying $(\ref{Aut1.1.2})$, $(\ref{Aut1.1.4})$ and
\begin{align}
\begin{split}
&\sum\limits_{j_1^{'}=1}^{n_1}\sum\limits_{j_2^{'}=1}^{n_1}
a_{j_1^{'}i_1}a_{j_2^{'}i_2}\beta_{j_1^{'},j_2^{'}}^{'}=\alpha_{i_1,i_2},
\hspace{2.8em}\sum\limits_{i_1^{'}=1}^{n_1}\sum\limits_{i_2^{'}=1}^{n_1}
b_{i_1^{'}j_1}b_{i_2^{'}j_2}\alpha_{i_1^{'},i_2^{'}}^{'}=\beta_{j_1,j_2},\\
&\sum\limits_{q_1^{'}=1}^{n_5}\sum\limits_{q_2^{'}=1}^{n_5}
e_{q_1^{'}m_1}e_{q_2^{'}m_2}\theta_{q_1^{'},q_2^{'}}^{'}=\zeta_{m_1,m_2},
\hspace{1em}\sum\limits_{m_1^{'}=1}^{n_5}\sum\limits_{m_2^{'}=1}^{n_5}
f_{m_1^{'}q_1}f_{m_2^{'}q_2}\zeta_{m_1^{'},m_2^{'}}^{'}=\theta_{q_1,q_2},\label{Aut1.2}
\end{split}
\end{align}
or $n_1=n_2, ~n_3=n_4,~ n_5=n_6, ~n_7=n_8$
satisfying $(\ref{Aut1.2})$ and
\begin{align}
\begin{split}
&\sum\limits_{\ell_1^{'}=1}^{n_3}\sum\limits_{\ell_2^{'}=1}^{n_3}
c_{\ell_1^{'}k_1}c_{\ell_2^{'}k_2}\eta_{\ell_1^{'},\ell_2^{'}}^{'}=\gamma_{k_1,k_2},
\hspace{2.5em}\sum\limits_{k_1^{'}=1}^{n_3}\sum\limits_{k_2^{'}=1}^{n_3}
d_{k_1^{'}\ell_1}d_{k_2^{'}\ell_2}\gamma_{k_1^{'},k_2^{'}}^{'}=\eta_{\ell_1,\ell_2},\\
&\sum\limits_{r_1^{'}=1}^{n_7}\sum\limits_{r_2^{'}=1}^{n_7}
g_{r_1^{'}s_1}g_{r_2^{'}s_2}\mu_{r_1^{'},r_2^{'}}^{'}=\lambda_{s_1,s_2},
\hspace{2em}\sum\limits_{s_1^{'}=1}^{n_7}\sum\limits_{s_2^{'}=1}^{n_7}
h_{s_1^{'}r_1}h_{s_2^{'}r_2}\lambda_{s_1^{'},s_2^{'}}^{'}=\mu_{r_1,r_2},\label{Aut1.3}
\end{split}
\end{align}
\\or $n_3=n_4, ~ n_7=n_8$
satisfying $(\ref{Aut1.1.1})$, $(\ref{Aut1.1.3})$ and $(\ref{Aut1.3})$
or $n_1=n_5,~ n_2=n_6,~n_3=n_7, ~n_4=n_8$ and there exist invertible matrices
$(a_{m_1^{'}i_1})_{n_1\times n_1}$, $(a_{m_2^{'}i_2})_{n_1\times n_1}$, $(b_{q_1^{'}j_1})_{n_2\times n_2}$, $(b_{q_2^{'}j_2})_{n_2\times n_2}$, $(c_{s_1^{'}k_1})_{n_3\times n_3}$, $(c_{s_2^{'}k_2})_{n_3\times n_3}$,
$(d_{r_1^{'}\ell_1})_{n_4\times n_4}$, $(d_{r_2^{'}\ell_2})_{n_4\times n_4}$,
$(e_{i_1^{'}m_1})_{n_1\times n_1}$, $(e_{i_2^{'}m_2})_{n_1\times n_1}$,
$(f_{j_1^{'}q_1})_{n_2\times n_2}$, $(f_{j_2^{'}q_2})_{n_2\times n_2}$,
$(g_{k_1^{'}s_1})_{n_3\times n_3}$, $(g_{k_2^{'}s_2})_{n_3\times n_3}$,
$(h_{\ell_1^{'}r_1})_{n_4\times n_4}$, $(h_{\ell_2^{'}r_2})_{n_4\times n_4}$
such that
\begin{flalign}
&\sum\limits_{m_1^{'}=1}^{n_1}\sum\limits_{m_2^{'}=1}^{n_1}
a_{m_1^{'}i_1}a_{m_2^{'}i_2}\zeta_{m_1^{'},m_2^{'}}^{'}=\alpha_{i_1,i_2},
\hspace{1.5em}\sum\limits_{q_1^{'}=1}^{n_2}\sum\limits_{q_2^{'}=1}^{n_2}
b_{q_1^{'}j_1}b_{q_2^{'}j_2}\theta_{q_1^{'},q_2^{'}}^{'}=\beta_{j_1,j_2},\label{Aut1.5.1}\\
&\sum\limits_{s_1^{'}=1}^{n_3}\sum\limits_{s_2^{'}=1}^{n_3}
c_{s_1^{'}k_1}c_{s_2^{'}k_2}\lambda_{s_1^{'},s_2^{'}}^{'}=\gamma_{k_1,k_2},
\hspace{3em}\sum\limits_{r_1^{'}=1}^{n_4}\sum\limits_{r_2^{'}=1}^{n_4}
d_{r_1^{'}\ell_1}d_{r_2^{'}\ell_2}\mu_{r_1^{'},r_2^{'}}^{'}=\eta_{\ell_1,\ell_2},\label{Aut1.5.2}\\
&\sum\limits_{i_1^{'}=1}^{n_1}\sum\limits_{i_2^{'}=1}^{n_1}
e_{i_1^{'}m_1}e_{i_2^{'}m_2}\alpha_{i_1^{'},i_2^{'}}^{'}=\zeta_{m_1,m_2},
\hspace{2.5em}\sum\limits_{j_1^{'}=1}^{n_2}\sum\limits_{j_2^{'}=1}^{n_2}
f_{j_1^{'}q_1}f_{j_2^{'}q_2}\beta_{j_1^{'},j_2^{'}}^{'}=\theta_{q_1,q_2},\label{Aut1.5.3}\\
&\sum\limits_{k_1^{'}=1}^{n_3}\sum\limits_{k_2^{'}=1}^{n_3}
g_{k_1^{'}s_1}g_{k_2^{'}s_2}\gamma_{k_1^{'},k_2^{'}}^{'}=\lambda_{s_1,s_2},
\hspace{3em}\sum\limits_{\ell_1^{'}=1}^{n_4}\sum\limits_{\ell_2^{'}=1}^{n_4}
h_{\ell_1^{'}r_1}h_{\ell_2^{'}r_2}\eta_{\ell_1^{'},\ell_2^{'}}^{'}=\mu_{r_1,r_2},\label{Aut1.5.4}
\end{flalign}
or $n_1=n_6, ~n_2=n_5,~n_3=n_7, ~n_4=n_8$
satisfying $(\ref{Aut1.5.2})$, $(\ref{Aut1.5.4})$ and
\begin{align}
\begin{split}
&\sum\limits_{q_1^{'}=1}^{n_1}\sum\limits_{q_2^{'}=1}^{n_1}
a_{q_1^{'}i_1}a_{q_2^{'}i_2}\theta_{q_1^{'},q_2^{'}}^{'}=\alpha_{i_1,i_2},
\hspace{3em}\sum\limits_{m_1^{'}=1}^{n_2}\sum\limits_{m_2^{'}=1}^{n_2}
b_{m_1^{'}j_1}b_{m_2^{'}j_2}\zeta_{m_1^{'},m_2^{'}}^{'}=\beta_{j_1,j_2},\\
&\sum\limits_{j_1^{'}=1}^{n_2}\sum\limits_{j_2^{'}=1}^{n_2}
e_{j_1^{'}m_1}e_{j_2^{'}m_2}\beta_{j_1^{'},j_2^{'}}^{'}=\zeta_{m_1,m_2},
\hspace{2em}\sum\limits_{i_1^{'}=1}^{n_1}\sum\limits_{i_2^{'}=1}^{n_1}
f_{i_1^{'}q_1}f_{i_2^{'}q_2}\alpha_{i_1^{'},i_2^{'}}^{'}=\theta_{q_1,q_2},\label{Aut1.6}
\end{split}
\end{align}
or $n_1=n_6,~ n_2=n_5,~n_3=n_8, ~n_4=n_7$
satisfying $(\ref{Aut1.6})$ and
\begin{align}
\begin{split}
&\sum\limits_{r_1^{'}=1}^{n_3}\sum\limits_{r_2^{'}=1}^{n_3}
c_{r_1^{'}k_1}c_{r_2^{'}k_2}\mu_{r_1^{'},r_2^{'}}^{'}=\gamma_{k_1,k_2},
\hspace{2em}\sum\limits_{s_1^{'}=1}^{n_4}\sum\limits_{s_2^{'}=1}^{n_4}
d_{s_1^{'}\ell_1}d_{s_2^{'}\ell_2}\lambda_{s_1^{'},s_2^{'}}^{'}=\eta_{\ell_1,\ell_2},\\
&\sum\limits_{\ell_1^{'}=1}^{n_4}\sum\limits_{\ell_2^{'}=1}^{n_4}
g_{\ell_1^{'}s_1}g_{\ell_2^{'}s_2}\eta_{\ell_1^{'},\ell_2^{'}}^{'}=\lambda_{s_1,s_2},
\hspace{2.5em}\sum\limits_{k_1^{'}=1}^{n_3}\sum\limits_{k_2^{'}=1}^{n_3}
h_{k_1^{'}r_1}h_{k_2^{'}r_2}\gamma_{k_1^{'},k_2^{'}}^{'}=\mu_{r_1,r_2},\label{Aut1.7}
\end{split}
\end{align}
\\or $n_1=n_5, ~n_2=n_6,~n_3=n_8,~ n_4=n_7$
satisfying $(\ref{Aut1.5.1})$, $(\ref{Aut1.5.3})$ and $(\ref{Aut1.7})$
or $n_1=n_3,~ n_2=n_4,~n_5=n_7, ~n_6=n_8$ and there exist invertible matrices
$(a_{k_1^{'}i_1})_{n_1\times n_1}$, $(a_{k_2^{'}i_2})_{n_1\times n_1}$,
$(b_{\ell_1^{'}j_1})_{n_2\times n_2}$, $(b_{\ell_2^{'}j_2})_{n_2\times n_2}$,
 $(c_{i_1^{'}k_1})_{n_1\times n_1}$, $(c_{i_2^{'}k_2})_{n_1\times n_1}$,
$(d_{j_1^{'}\ell_1})_{n_2\times n_2}$, $(d_{j_2^{'}\ell_2})_{n_2\times n_2}$,
$(e_{s_1^{'}m_1})_{n_5\times n_5}$, $(e_{s_2^{'}m_2})_{n_5\times n_5}$,
$(f_{r_1^{'}q_1})_{n_6\times n_6}$,$(f_{r_2^{'}q_2})_{n_6\times n_6}$,
$(g_{m_1^{'}s_1})_{n_5\times n_5}$,$(g_{m_2^{'}s_2})_{n_5\times n_5}$,
$(h_{q_1^{'}r_1})_{n_6\times n_6}$, $(h_{q_2^{'}r_2})_{n_6\times n_6}$
such that
\begin{flalign}
&\sum\limits_{k_1^{'}=1}^{n_1}\sum\limits_{k_2^{'}=1}^{n_1}
a_{k_1^{'}i_1}a_{k_2^{'}i_2}\gamma_{k_1^{'},k_2^{'}}^{'}=\alpha_{i_1,i_2},
\hspace{2em}\sum\limits_{\ell_1^{'}=1}^{n_2}\sum\limits_{\ell_2^{'}=1}^{n_2}
b_{\ell_1^{'}j_1}b_{\ell_2^{'}j_2}\eta_{\ell_1^{'},\ell_2^{'}}^{'}=\beta_{j_1,j_2},\label{Aut1.9.1}\\
&\sum\limits_{i_1^{'}=1}^{n_1}\sum\limits_{i_2^{'}=1}^{n_1}
c_{i_1^{'}k_1}c_{i_2^{'}k_2}\alpha_{i_1^{'},i_2^{'}}^{'}=\gamma_{k_1,k_2},
\hspace{2.5em}\sum\limits_{j_1^{'}=1}^{n_2}\sum\limits_{j_2^{'}=1}^{n_2}
d_{j_1^{'}\ell_1}d_{j_2^{'}\ell_2}\beta_{j_1^{'},j_2^{'}}^{'}=\eta_{\ell_1,\ell_2},\label{Aut1.9.2}\\
&\sum\limits_{s_1^{'}=1}^{n_5}\sum\limits_{s_2^{'}=1}^{n_5}
e_{s_1^{'}m_1}e_{s_2^{'}m_2}\lambda_{s_1^{'},s_2^{'}}^{'}=\zeta_{m_1,m_2},
\hspace{1em}\sum\limits_{r_1^{'}=1}^{n_6}\sum\limits_{r_2^{'}=1}^{n_6}
f_{r_1^{'}q_1}f_{r_2^{'}q_2}\mu_{r_1^{'},r_2^{'}}^{'}=\theta_{q_1,q_2},\label{Aut1.9.3}\\
&\sum\limits_{m_1^{'}=1}^{n_5}\sum\limits_{m_2^{'}=1}^{n_5}
g_{m_1^{'}s_1}g_{m_2^{'}s_2}\zeta_{m_1^{'},m_2^{'}}^{'}=\lambda_{s_1,s_2},
\hspace{.5em}\sum\limits_{q_1^{'}=1}^{n_6}\sum\limits_{q_2^{'}=1}^{n_6}
h_{q_1^{'}r_1}h_{q_2^{'}r_2}\theta_{q_1^{'},q_2^{'}}^{'}=\mu_{r_1,r_2},\label{Aut1.9.4}
\end{flalign}
or $n_1=n_2=n_3=n_4,~n_5=n_6=n_7=n_8$
satisfying $(\ref{Aut1.9.1})$, $(\ref{Aut1.9.3})$ and
\begin{align}
\begin{split}
&\sum\limits_{j_1^{'}=1}^{n_1}\sum\limits_{j_2^{'}=1}^{n_1}
c_{j_1^{'}k_1}c_{j_2^{'}k_2}\beta_{j_1^{'},j_2^{'}}^{'}=\gamma_{k_1,k_2},
\hspace{2em}\sum\limits_{i_1^{'}=1}^{n_1}\sum\limits_{i_2^{'}=1}^{n_1}
d_{i_1^{'}\ell_1}d_{i_2^{'}\ell_2}\alpha_{i_1^{'},i_2^{'}}^{'}=\eta_{\ell_1,\ell_2},\\
&\sum\limits_{q_1^{'}=1}^{n_5}\sum\limits_{q_2^{'}=1}^{n_5}
g_{q_1^{'}s_1}g_{q_2^{'}s_2}\theta_{q_1^{'},q_2^{'}}^{'}=\lambda_{s_1,s_2},
\hspace{2em}\sum\limits_{m_1^{'}=1}^{n_5}\sum\limits_{m_2^{'}=1}^{n_5}
h_{m_1^{'}r_1}h_{m_2^{'}r_2}\zeta_{m_1^{'},m_2^{'}}^{'}=\mu_{r_1,r_2},\label{Aut1.10}
\end{split}
\end{align}
or $n_1=n_4,n_2=n_3,~n_5=n_8,n_6=n_7$
satisfying $(\ref{Aut1.10})$ and
\begin{align}
\begin{split}
&\sum\limits_{\ell_1^{'}=1}^{n_1}\sum\limits_{\ell_2^{'}=1}^{n_1}
a_{\ell_1^{'}i_1}a_{\ell_2^{'}i_2}\eta_{\ell_1^{'},\ell_2^{'}}^{'}=\alpha_{i_1,i_2},
\hspace{2.5em}\sum\limits_{k_1^{'}=1}^{n_2}\sum\limits_{k_2^{'}=1}^{n_2}
b_{k_1^{'}j_1}b_{k_2^{'}j_2}\gamma_{k_1^{'},k_2^{'}}^{'}=\beta_{j_1,j_2},\\
&\sum\limits_{r_1^{'}=1}^{n_5}\sum\limits_{r_2^{'}=1}^{n_5}
e_{r_1^{'}m_1}e_{r_2^{'}m_2}\mu_{r_1^{'},r_2^{'}}^{'}=\zeta_{m_1,m_2},
\hspace{1em}\sum\limits_{s_1^{'}=1}^{n_6}\sum\limits_{s_2^{'}=1}^{n_6}
f_{s_1^{'}q_1}f_{s_2^{'}q_2}\lambda_{s_1^{'},s_2^{'}}^{'}=\theta_{q_1,q_2},\label{Aut1.11}
\end{split}
\end{align}
or $n_1=n_2=n_3=n_4,~n_5=n_6=n_7=n_8$
satisfying $(\ref{Aut1.9.2})$, $(\ref{Aut1.9.4})$ and $(\ref{Aut1.11})$
or $n_1=n_7,~n_2=n_8,~n_3=n_5,~n_4=n_6$ and there exist invertible matrices
$(a_{s_1^{'}i_1})_{n_1\times n_1}$, $(a_{s_2^{'}i_2})_{n_1\times n_1}$, $(b_{r_1^{'}j_1})_{n_2\times n_2}$, $(b_{r_2^{'}j_2})_{n_2\times n_2}$, $(c_{m_1^{'}k_1})_{n_3\times n_3}$, $(c_{m_2^{'}k_2})_{n_3\times n_3}$,
$(d_{q_1^{'}\ell_1})_{n_4\times n_4}$, $(d_{q_2^{'}\ell_2})_{n_4\times n_4}$,
$(e_{k_1^{'}m_1})_{n_3\times n_3}$, $(e_{k_2^{'}m_2})_{n_3\times n_3}$,
$(f_{\ell_1^{'}q_1})_{n_4\times n_4}$, $(f_{\ell_2^{'}q_2})_{n_4\times n_4}$,
$(g_{i_1^{'}s_1})_{n_1\times n_1}$, $(g_{i_2^{'}s_2})_{n_1\times n_1}$,
$(h_{j_1^{'}r_1})_{n_2\times n_2}$, $(h_{j_2^{'}r_2})_{n_2\times n_2}$
such that
\begin{flalign}
&\sum\limits_{s_1^{'}=1}^{n_1}\sum\limits_{s_2^{'}=1}^{n_1}
a_{s_1^{'}i_1}a_{s_2^{'}i_2}\lambda_{s_1^{'},s_2^{'}}^{'}=\alpha_{i_1,i_2},
\hspace{2.5em}\sum\limits_{r_1^{'}=1}^{n_2}\sum\limits_{r_2^{'}=1}^{n_2}
b_{r_1^{'}j_1}b_{r_2^{'}j_2}\mu_{r_1^{'},r_2^{'}}^{'}=\beta_{j_1,j_2},\label{Aut1.13.1}\\
&\sum\limits_{m_1^{'}=2}^{n_3}\sum\limits_{m_2^{'}=1}^{n_3}
c_{m_1^{'}k_1}c_{m_2^{'}k_2}\zeta_{m_1^{'},m_2^{'}}^{'}=\gamma_{k_1,k_2},
\hspace{.5em}\sum\limits_{q_1^{'}=1}^{n_4}\sum\limits_{q_2^{'}=1}^{n_4}
d_{q_1^{'}\ell_1}d_{q_2^{'}\ell_2}\theta_{q_1^{'},q_2^{'}}^{'}=\eta_{\ell_1,\ell_2},\label{Aut1.13.2}\\
&\sum\limits_{k_1^{'}=1}^{n_3}\sum\limits_{k_2^{'}=1}^{n_3}
e_{k_1^{'}m_1}e_{k_2^{'}m_2}\gamma_{k_1^{'},k_2^{'}}^{'}=\zeta_{m_1,m_2},
\hspace{1em}\sum\limits_{\ell_1^{'}=1}^{n_4}\sum\limits_{\ell_2^{'}=1}^{n_4}
f_{\ell_1^{'}q_1}f_{\ell_2^{'}q_2}\eta_{\ell_1^{'},\ell_2^{'}}^{'}=\theta_{q_1,q_2},\label{Aut1.13.3}\\
&\sum\limits_{i_1^{'}=1}^{n_1}\sum\limits_{i_2^{'}=1}^{n_1}
g_{i_1^{'}s_1}g_{i_2^{'}s_2}\alpha_{i_1^{'},i_2^{'}}^{'}=\lambda_{s_1,s_2},
\hspace{3em}\sum\limits_{j_1^{'}=1}^{n_2}\sum\limits_{j_2^{'}=1}^{n_2}
h_{j_1^{'}r_1}h_{j_2^{'}r_2}\beta_{j_1^{'},j_2^{'}}^{'}=\mu_{r_1,r_2},\label{Aut1.13.4}
\end{flalign}
or $n_1=n_2=n_7=n_8,~n_3=n_4=n_5=n_6$
satisfying $(\ref{Aut1.13.1})$, $(\ref{Aut1.13.3})$ and
\begin{align}
\begin{split}
&\sum\limits_{q_1^{'}=1}^{n_3}\sum\limits_{q_2^{'}=1}^{n_3}
c_{q_1^{'}k_1}c_{q_2^{'}k_2}\theta_{q_1^{'},q_2^{'}}^{'}=\gamma_{k_1,k_2},
\hspace{2em}\sum\limits_{m_1^{'}=1}^{n_3}\sum\limits_{m_2^{'}=1}^{n_3}
d_{m_1^{'}\ell_1}d_{m_2^{'}\ell_2}\zeta_{m_1^{'},m_2^{'}}^{'}=\eta_{\ell_1,\ell_2},\\
&\sum\limits_{j_1^{'}=1}^{n_1}\sum\limits_{j_2^{'}=1}^{n_1}
g_{j_1^{'}s_1}g_{j_2^{'}s_2}\beta_{j_1^{'},j_2^{'}}^{'}=\lambda_{s_1,s_2},
\hspace{2.5em}\sum\limits_{i_1^{'}=1}^{n_1}\sum\limits_{i_2^{'}=1}^{n_1}
h_{i_1^{'}r_1}h_{i_2^{'}r_2}\alpha_{i_1^{'},i_2^{'}}^{'}=\mu_{r_1,r_2},\label{Aut1.14}
\end{split}
\end{align}
or $n_1=n_8,~n_2=n_7,~n_3=n_6,~n_4=n_5$
satisfying $(\ref{Aut1.14})$ and
\begin{align}
\begin{split}
&\sum\limits_{r_1^{'}=1}^{n_1}\sum\limits_{r_2^{'}=1}^{n_1}
a_{r_1^{'}i_1}a_{r_2^{'}i_2}\mu_{r_1^{'},r_2^{'}}^{'}=\alpha_{i_1,i_2},
\hspace{2em}\sum\limits_{s_1^{'}=1}^{n_2}\sum\limits_{s_2^{'}=1}^{n_2}
b_{s_1^{'}j_1}b_{s_2^{'}j_2}\lambda_{s_1^{'},s_2^{'}}^{'}=\beta_{j_1,j_2},\\
&\sum\limits_{\ell_1^{'}=1}^{n_4}\sum\limits_{\ell_2^{'}=1}^{n_4}
e_{\ell_1^{'}m_1}e_{\ell_2^{'}m_2}\eta_{\ell_1^{'},\ell_2^{'}}^{'}=\zeta_{m_1,m_2},
\hspace{1.5em}\sum\limits_{k_1^{'}=1}^{n_3}\sum\limits_{k_2^{'}=1}^{n_3}
f_{k_1^{'}q_1}f_{k_2^{'}q_2}\gamma_{k_1^{'},k_2^{'}}^{'}=\theta_{q_1,q_2},\label{Aut1.15}
\end{split}
\end{align}
or $n_1=n_2=n_7=n_8,~n_3=n_4=n_5=n_6$
satisfying $(\ref{Aut1.13.2})$, $(\ref{Aut1.13.4})$ and $(\ref{Aut1.15})$.
\end{pro}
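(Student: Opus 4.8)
The plan is to handle the two halves of the statement separately: part (1) is a concrete instance of the lifting procedure, and part (2) is the (more substantial) determination of the isomorphism classes.

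\emph{Part (1).} By Theorem~\ref{gr} we already know $\mathrm{gr}(A)\cong\mathcal B(\Omega_1(n_1,\dots,n_8))\sharp H$, so $A_0=H$, $\dim A=2^{4+\sum n_i}$, and the infinitesimal braiding $A_1/A_0$ is the Yetter--Drinfeld module $\Omega_1$. First I would choose skew-primitive lifts in $A$ of a homogeneous basis of $\Omega_1$, one family for each of $V_1,\dots,V_8$, with $\Delta(A_i)=A_i\otimes 1+x\otimes A_i$, $\Delta(C_k)=C_k\otimes 1+xy\otimes C_k$, and so on, matching the coaction grouplikes recorded in Remark~\ref{k}. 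Since the only $(g,1)$-skew primitives of $H$ are the multiples of $1-g$ (Remark after Definition~\ref{def3.1}), each lift is pinned down up to one scalar, and a short consistency check—each of $xA_i+A_ix$, $yA_i+A_iy$, $tA_i-A_ix^2t$, etc.\ lies in $A_0$ because it vanishes in $\mathrm{gr}(A)$, hence is a multiple of the relevant $g-g'$, and the scalars attached to the three relations of a single generator are forced to agree—shows that the lifts can be normalized so that the linear relations \eqref{A}--\eqref{H} hold exactly.

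Next, for each defining relation $r$ of $\mathcal B(\Omega_1)$ I would compute, using \eqref{A}--\eqref{H}, the comultiplication of the corresponding element $\tilde r\in A$ built from the chosen lifts. One finds $\tilde r\in\mathcal P_{g,1}(A)$ with $g$ the product of the two coaction grouplikes: $g=x^2$ for the homogeneous quadratic relations \eqref{Ai}--\eqref{Hr}, and $g\in\{1,y,x^2,x^2y\}$ for the cross relations \eqref{e1}. As no simple summand of $\Omega_1$ has coaction along any such $g$, we get $\tilde r\in\mathcal P_{g,1}(A)\cap A_1=\mathcal P_{g,1}(H)=\mathbbm{k}(1-g)$. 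Conjugating $\tilde r$ by a suitable grouplike or by $t$ and comparing with the invariance of $1-g$ under that conjugation: for every cross relation $\tilde r$ is anti-invariant, so $\tilde r=0$; for each homogeneous quadratic relation $\tilde r$ is invariant, so $\tilde r=\lambda(1-x^2)$ with $\lambda$ unconstrained, and symmetry of $\tilde r$ in its two indices gives symmetric matrices $\alpha,\dots,\mu$, i.e.\ a datum $I_1$. This produces a Hopf algebra surjection $\mathfrak U_1(n_1,\dots,n_8;I_1)\twoheadrightarrow A$; since $\mathrm{gr}\,\mathfrak U_1(\cdots;I_1)$ is a quotient of $\mathcal B(\Omega_1)\sharp H$ we have $\dim\mathfrak U_1(\cdots;I_1)\le 2^{4+\sum n_i}=\dim A$, so the surjection is an isomorphism.

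\emph{Part (2).} An isomorphism $\phi\colon\mathfrak U_1(\cdots;I_1)\to\mathfrak U_1(\cdots;I_1')$ restricts to an automorphism $\psi=\phi|_H$ of $H$, hence to one of the $\tau_p$ of Table~$1$, and carries each $(g,1)$-skew primitive generator to a $(\psi(g),1)$-skew primitive. Arguing as in the normalization step of part (1), one may adjust $\phi$ so that it maps the span of the degree-one generators to itself; then $\phi$ sends the $V_1$-family to a linear combination (with an invertible coefficient matrix) of the primed families whose Yetter--Drinfeld type is $\psi$-equivalent to $V_1$, and likewise for the other seven types. Matching the coalgebra structures forces $V_s^\psi\simeq V_{\sigma(s)}$ for a permutation $\sigma$ of $\{1,\dots,8\}$, hence $n_s=n_{\sigma(s)}$; the admissible $\sigma$ are exactly those produced by the automorphisms in the Corollary following the cited lemma of \cite{AS0} together with $\tau_9$ (one checks $V_1^{\tau_9}\simeq V_5$, $V_2^{\tau_9}\simeq V_6$, and so on), and enumerating the subgroup of permutations they generate yields precisely the list of multiplicity patterns in the statement. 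Finally, applying $\phi$ to \eqref{Ai}--\eqref{Hr} and using that $\psi(x)^2=x^2$ for every $\psi$ in Table~$1$, so that $\phi(1-x^2)=1-x^2$, turns each of these relations into the corresponding bilinear identity among \eqref{Aut1.1.1}--\eqref{Aut1.15}. For the converse, given $\psi$ and matrices as in any one of the listed cases one defines $\phi$ on generators by $\psi$ and those matrices, extends multiplicatively, and checks it is a well-defined bijective Hopf algebra map: \eqref{A}--\eqref{H} are preserved since $\psi$ respects the relevant Yetter--Drinfeld structures, \eqref{Ai}--\eqref{Hr} by the bilinear identities, the relations \eqref{e1} because their right-hand sides vanish and $\psi$ permutes the coaction grouplikes compatibly, and $\phi$ is a coalgebra map by construction.

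\emph{The main obstacle} is entirely in part (2): although each individual verification is routine, one must enumerate all automorphisms of $H$ that permute $\{V_1,\dots,V_8\}$, determine the induced permutation and orbit structure in each case, and for each case write out the exact system of bilinear equations together with the multiplicity coincidences $n_s=n_{\sigma(s)}$ it imposes, matching the long alternative list \eqref{Aut1.1.1}--\eqref{Aut1.15}. A secondary technical point, used in the ``only if'' directions of both parts, is the normalization argument that lets one choose the lifts (respectively the isomorphism) to respect the degree-one grading up to absorbable $\mathbbm{k}(1-g)$ terms; it rests on the one-dimensionality of the pertinent skew-primitive spaces of $H$.
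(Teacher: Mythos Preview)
Your approach is essentially the same as the paper's: both compute the comultiplication of the quadratic relations to see they lie in $\mathcal P_{1,x^2}(H)=\mathbbm{k}(1-x^2)$, use conjugation by $t$ to force the cross-relations to zero while leaving the homogeneous ones unconstrained, and for part~(2) analyze how a Hopf isomorphism restricts to one of the automorphisms $\tau_p$ of $H$ and permutes the families $V_s$, yielding the bilinear congruence conditions on the parameter matrices. The only cosmetic difference is that the paper invokes the Diamond Lemma to exhibit an explicit basis of $\mathfrak U_1$, whereas you bound $\dim\mathfrak U_1$ from above via $\mathrm{gr}\,\mathfrak U_1$; both arguments close the surjection $\mathfrak U_1\twoheadrightarrow A$ to an isomorphism in the same way.
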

\begin{proof} $(1)$ By Theorem $\ref{gr}$, we have that
gr$(A)\cong \mathcal{B}(\Omega_1(n_1,n_2,\ldots,n_8))\sharp H$. We
can check the relations listed in Definition \ref{def 1}, except
$(\ref{Ai})-(\ref{e1})$, hold in $A$ from the bosonization
$\mathcal{B}(\Omega_1(n_1,n_2,\ldots,n_8))\sharp H$. As
$\triangle(A_i)=A_i\otimes 1+x\otimes A_i,
\triangle(B_j)=B_j\otimes 1+x\otimes B_j,$ a direct computation
shows that
\begin{flalign*}
&\triangle(A_{i_1}A_{i_2}+A_{i_2}A_{i_1})=(A_{i_1}A_{i_2}+A_{i_2}A_{i_1})\otimes 1+x^2\otimes (A_{i_1}A_{i_2}+A_{i_2}A_{i_1}),\\
&\triangle(A_iB_j+B_jA_i)=(A_iB_j+B_jA_i)\otimes 1+x^2\otimes (A_iB_j+B_jA_i).
\end{flalign*}
Then $A_{i_1}A_{i_2}+A_{i_2}A_{i_1}, A_iB_j+B_jA_i\in \mathcal{P}_{1,x^2}(\mathcal{B}(\Omega_1(n_1,n_2,\ldots,n_8))\sharp H)
=\mathcal{P}_{1,x^2}(H)=\mathbbm{k}\{1-x^2\}$. Since $t(A_iB_j+B_jA_i)=-(A_iB_j+B_jA_i)t$, it follows that
\begin{equation*}
A_{i_1}A_{i_2}+A_{i_2}A_{i_1}=\alpha_{i_1,i_2}(1-x^2),\hspace{1em}A_iB_j+B_jA_i=0
\end{equation*}
for some $\alpha_{i_1,i_2}\in\mathbbm{k}$. Similarly, the relations
$(\ref{Bj})-(\ref{e1})$ hold in $A$. Then there is a surjective Hopf
morphism from $\mathfrak{U}_1(n_1,n_2,\ldots,n_8;I_1)$ to $A$. We
can observe that each element of
$\mathfrak{U}_1(n_1,n_2,\ldots,n_8;I_1)$ can be expressed by a
linear combination of
\begin{equation*}
\{A_i^{\alpha_i}B_j^{\beta_j}C_k^{\gamma_k}
D_\ell^{\eta_\ell}E_m^{\zeta_m}F_q^{\theta_q}G_s^{\lambda_s}H_r^{\mu_r}x^ey^ft^g;
~\alpha_i,\beta_j,\gamma_k,\eta_\ell,\zeta_m,\theta_q,\lambda_s,\mu_r,f,g\in\mathbb{I}_{0,1},
e\in\mathbb{I}_{0,3}\}.
\end{equation*}
In fact, according to the Diamond Lemma \cite{G}, the set is a basis of
$\mathfrak{U}_1(n_1,n_2,\ldots,n_8;I_1)$.
Then $\dim A$ = $\dim \mathfrak{U}_1(n_1,n_2,\ldots,n_8;I_1)$ and
whence $A\cong \mathfrak{U}_1(n_1,n_2,\ldots,n_8;I_1)$.

$(2)$ Suppose that $\Phi : \mathfrak{U}_1(n_1,\ldots,n_8;I_1)\rightarrow \mathfrak{U}_1(n_1,\ldots,n_8;I_1^{'})$ is an isomorphism of
Hopf algebras, where
\begin{flalign*}
I_1^{'}=\{&(\alpha_{i_1,i_2}^{'})_{n_1\times n_1}, ~(\beta_{j_1,j_2}^{'})_{n_2\times n_2},~ (\gamma_{k_1,k_2}^{'})_{n_3\times n_3},~ (\eta_{\ell_1,\ell_2}^{'})_{n_4\times n_4}, \\
&(\zeta_{m_1,m_2}^{'})_{n_5\times n_5},  ~(\theta_{q_1,q_2}^{'})_{n_6\times n_6}, ~(\lambda_{s_1,s_2}^{'})_{n_7\times n_7}, ~(\mu_{r_1,r_2}^{'})_{n_8\times n_8} \}
\end{flalign*}
and $\mathfrak{U}_1(n_1,n_2,\ldots,n_8;I_1^{'})$ is generated by $x, ~y, ~t,$ $A_i^{'}$, $B_j^{'}$, $C_k^{'}$, $D_\ell^{'}$, $E_m^{'}$, $F_q^{'}$, $G_s^{'}$, $H_r^{'}$.

When $\Phi|_H=\tau_1$ or $\tau_3$, then $\Phi(A_i)$ is $(1,x)$-skew primitive, so
\begin{equation*}
\Phi(A_i)\in \bigoplus_{i^{'}=1}^{n_1}\mathbbm{k}A_{i^{'}}^{'} \bigoplus_{j^{'}=1}^{n_2}\mathbbm{k}B_{j^{'}}^{'} \bigoplus \mathbbm{k}(1-x).
\end{equation*}
$tA_i=A_ix^2t, ~tB_{j^{'}}^{'}=-B_{j^{'}}^{'}x^2t$ imply that $\Phi(A_i)$ doesn't contain the term of $\bigoplus_{j^{'}=1}^{n_2}\mathbbm{k}B_{j^{'}}^{'}$. $xA_i = -A_ix$ implies that $\Phi(A_i)$ doesn't contain the term of $1-x$. So there exists an invertible matrix $(a_{i^{'}i})_{n_1\times n_1}$ such that
$\Phi(A_i)=\sum\limits_{i^{'}=1}^{n_1}a_{i^{'}i}A_{i^{'}}^{'}$. Similarly, $\Phi(B_j)=\sum\limits_{j^{'}=1}^{n_2}b_{j^{'}j}B_{j^{'}}^{'}$,
\begin{flalign*}
&\Phi(C_k)=\sum\limits_{k^{'}=1}^{n_3}c_{k^{'}k}C_{k^{'}}^{'},
\hspace{1em}\Phi(D_\ell)=\sum\limits_{\ell^{'}=1}^{n_4}d_{\ell^{'}\ell}D_{\ell^{'}}^{'},
\hspace{1em}\Phi(E_m)=\sum\limits_{m^{'}=1}^{n_5}e_{m^{'}m}E_{m^{'}}^{'},\\
&\Phi(F_q)=\sum\limits_{q^{'}=1}^{n_6}f_{q^{'}q}F_{q^{'}}^{'},
\hspace{1em}\Phi(G_s)=\sum\limits_{s^{'}=1}^{n_7}g_{s^{'}s}G_{s^{'}}^{'},
\hspace{1em}\Phi(H_r)=\sum\limits_{r^{'}=1}^{n_8}h_{r^{'}r}H_{r^{'}}^{'}.
\end{flalign*}
In this case, $\Phi$ is an isomorphism of Hopf algebras if and
only if the relations $(\ref{Aut1.1.1})-(\ref{Aut1.1.4})$ hold.
Similarly, $\Phi$ is an isomorphism of Hopf algebras if and only if

when $\Phi|_H=\tau_2$ or $\tau_4$, the relations $(\ref{Aut1.1.2})$, $(\ref{Aut1.1.4})$, $(\ref{Aut1.2})$ hold;

when $\Phi|_H=\tau_5$ or $\tau_7$, the relations $(\ref{Aut1.3})$, $(\ref{Aut1.2})$ hold;

when $\Phi|_H=\tau_6$ or $\tau_8$, the relations $(\ref{Aut1.1.1})$, $(\ref{Aut1.1.3})$, $(\ref{Aut1.3})$ hold;

when $\Phi|_H=\tau_9$ or $\tau_{11}$, the relations $(\ref{Aut1.5.1})-(\ref{Aut1.5.4})$ hold;

when $\Phi|_H=\tau_{10}$ or $\tau_{12}$, the relations $(\ref{Aut1.5.2})$, $(\ref{Aut1.5.4})$, $(\ref{Aut1.6})$ hold;

when $\Phi|_H=\tau_{13}$ or $\tau_{15}$, the relations $(\ref{Aut1.6})$ , $(\ref{Aut1.7})$ hold;

when $\Phi|_H=\tau_{14}$ or $\tau_{16}$, the relations $(\ref{Aut1.5.1})$, $(\ref{Aut1.5.3})$, $(\ref{Aut1.7})$ hold;

when $\Phi|_H=\tau_{17}$ or $\tau_{19}$, the relations $(\ref{Aut1.9.1})-(\ref{Aut1.9.4})$ hold;

when $\Phi|_H=\tau_{18}$ or $\tau_{20}$, the relations $(\ref{Aut1.9.1})$, $(\ref{Aut1.9.3})$, $(\ref{Aut1.10})$ hold;

when $\Phi|_H=\tau_{21}$ or $\tau_{23}$, the relations $(\ref{Aut1.10})$, $(\ref{Aut1.11})$ hold;

when $\Phi|_H=\tau_{22}$ or $\tau_{24}$, the relations $(\ref{Aut1.9.2})$, $(\ref{Aut1.9.4})$, $(\ref{Aut1.11})$ hold;

when $\Phi|_H=\tau_{25}$ or $\tau_{27}$, the relations $(\ref{Aut1.13.1})-(\ref{Aut1.13.4})$ hold;

when $\Phi|_H=\tau_{26}$ or $\tau_{28}$, the relations $(\ref{Aut1.13.1})$, $(\ref{Aut1.13.3})$, $(\ref{Aut1.14})$ hold;

when $\Phi|_H=\tau_{29}$ or $\tau_{31}$, the relations $(\ref{Aut1.14})$, $(\ref{Aut1.15})$ hold;

when $\Phi|_H=\tau_{30}$ or $\tau_{32}$, the relations $(\ref{Aut1.13.2})$, $(\ref{Aut1.13.4})$, $(\ref{Aut1.15})$ hold.
\end{proof}

\begin{defi}\label{def 2}
For $n_1,~n_2,~n_3~n_4\in \mathbb{N}^*$ with $\sum_{i=1}^4 n_i\geq 0$ and
\begin{equation*}
I_2=\{\gamma_{k_1,k_2},~ \eta_{\ell_1,\ell_2}, ~\lambda_{s_1,s_2}, ~\mu_{r_1,r_2}, ~\lambda_k, ~\zeta_\ell, ~\iota_s, ~\theta_r, ~\nu\},
\end{equation*}
where $k,~k_1,~k_2=1,..,n_1,$ $\ell,~\ell_1,~\ell_2=1,..,n_2, ~s,~s_1,~s_2=1,..,n_3, ~ r,~r_1,~r_2 =1,..,n_4$. Let us denote by $\mathfrak{U}_2(n_1,n_2,n_3,n_4;I_2)$ the algebra that is generated by $x,~y,~t,~p_1,~p_2$,
$\{C_k\},$ $\{D_\ell\},$
$\{G_s\}, \{H_r\}$
%$\{C_k\}_{k=1,..,n_1},$ $\{D_l\}_{l=1,..,n_2},$
%$\{G_s\}_{s=1,\ldots,n_3}, \{H_r\}_{r=1,\ldots,n_4}$
satisfying the relations $(\ref{3.1})-(\ref{3.3})$, $(\ref{C})$, $(\ref{D})$, $(\ref{G})$, $(\ref{H})$, $(\ref{Ck})$, $(\ref{Dl})$, $(\ref{Gs})$,  $(\ref{Hr})$ and
\begin{flalign}
&xp_1=p_1x,~xp_2=p_2x,~yp_1=-p_1y,~yp_2=-p_2y,~tp_1=p_1x^2t,~tp_2=-p_2x^2t,&\nonumber\\
&C_kD_\ell+D_\ell C_k=0,~~C_kG_s+G_sC_m=0,~~C_kH_r+H_rC_k=0,&\nonumber\\
&D_\ell G_s+G_sD_\ell=0,~~D_\ell H_r+H_rD_\ell=0,~~G_sH_r+H_rG_s=0,&\nonumber\\
&p_1^2=\nu(1-x^2),~~~~p_2^2=-\nu(1-x^2),~~~~p_1p_2+p_2p_1=0,&\label{e2.1}\\
&p_1C_k+C_kp_1=\lambda_k(x+x^3-2), ~~p_2C_k+C_kp_2=\lambda_k(x-x^3),&\label{e2.2}\\
&p_1D_\ell+D_\ell p_1=\zeta_\ell(x-x^3), ~~~~~~~~p_2D_\ell+D_\ell p_2=\zeta_\ell(x+x^3-2),&\label{e2.3}\\
&p_1G_s+G_sp_1=\iota_s(x+x^3-2), ~~~p_2G_s+G_sp_2=\iota_s(x^3-x),&\label{e2.4}\\
&p_1H_r+H_rp_1=\theta_r(x^3-x), ~~~~~~~p_2H_r+H_rp_2=\theta_r(x+x^3-2).&\label{e2.5}
\end{flalign}
It is a Hopf algebra with its coalgebra structure determined by

$~~~~~~~~~~~~\triangle(p_1)=p_1\otimes 1+\frac{1}{2}(1+x^2)y\otimes p_1+\frac{1}{2}(1-x^2)y\otimes p_2$,

$~~~~~~~~~~~~\triangle(p_2)=p_2\otimes 1+\frac{1}{2}(1+x^2)y\otimes p_2+\frac{1}{2}(1-x^2)y\otimes p_1$,

$~~~~~~~~~~~~\triangle(C_k)=C_k\otimes 1+xy\otimes C_k,~~~~~\triangle(D_\ell)=D_\ell\otimes 1+xy\otimes D_\ell,$

$~~~~~~~~~~~~\triangle(G_s)=G_s\otimes 1+x^3y\otimes G_s,~~~~\triangle(H_r)=H_r\otimes 1+x^3y\otimes H_r.$
\end{defi}

\begin{rem}
 It is clear that $\mathfrak{U}_2(n_1,n_2,n_3,n_4;0)\cong \mathcal{B}(\Omega_2(n_1,n_2,n_3,n_4))\sharp H$. Also note that  $\mathfrak{U}_2(n_1,n_2,n_3,n_4;I_2)\cong T(\Omega_2(n_1,n_2,n_3,n_4))/J(I_2)$, where $J(I_2)$ is a Hopf ideal generated by relations $(\ref{Ck})$, $(\ref{Dl})$, $(\ref{Gs})$,  $(\ref{Hr})$, $(\ref{e2.1})-(\ref{e2.5})$.

%$(2)$ $\dim \mathfrak{U}_2(n_1,n_2,n_3,n_4;I_2)=2^{6+\sum\limits _{i=1}^4 n_i}$.
\end{rem}

\begin{defi}\label{def 8}
For $n_1,~n_2,~n_3,~n_4\in \mathbb{N}^*$ with $\sum_{i=1}^4 n_i\geq 0$ and a set
\begin{equation*}
I_{8}=\{(\alpha_{i_1,i_2})_{n_1\times n_1}, ~(\beta_{j_1,j_2})_{n_2\times n_2},~(\zeta_{m_1,m_2})_{n_3\times n_3}, ~(\theta_{q_1,q_2})_{n_4\times n_4},~\nu\},
\end{equation*}
we denote by $\mathfrak{U}_{8}(n_1,n_2,n_3,n_4;I_{8})$ the algebra that is generated by $x,y,t,p_1,p_2$, $\{A_i\}_{i=1,.,n_1}$, $\{B_j\}_{j=1,.,n_2}$, $\{E_m\}_{m=1,\ldots,n_3},$
$\{F_q\}_{q=1,\ldots,n_4},$
satisfying the relations $(\ref{3.1})-(\ref{3.3})$, $(\ref{A})$, $(\ref{B})$, $(\ref{E})$, $(\ref{F})$, $(\ref{Ai})$, $(\ref{Bj})$, $(\ref{Em})$, $(\ref{Fq})$, $(\ref{e2.1})$ and
\begin{flalign}
&xp_1=-p_1x,~xp_2=-p_2x,~yp_1=p_1y,~yp_2=p_2y,~tp_1=p_1t,~tp_2=-p_2t,&\nonumber\\
&A_iB_j+B_jA_i=0,~~A_iE_m+E_mA_i=0,~~A_iF_q+F_qA_i=0,&\nonumber\\
&B_jE_m+E_mB_j=0,~~B_jF_q+F_qB_j=0,~~E_mF_q+F_qE_m=0,&\nonumber\\
&p_1A_i+A_ip_1=0,~~p_2A_i+A_ip_2=0,~~p_1B_j+B_jp_1=0,~~p_2B_j+B_jp_2=0,&\label{e8.2}\\
&p_1E_m+E_mp_1=0,~~p_2E_m+E_mp_2=0,~~p_1F_q+F_qp_1=0,~~p_2F_q+F_qp_2=0.&\label{e8.3}
\end{flalign}
It is a Hopf algebra with its coalgebra structure determined by

$~~~~~~~~~~~\triangle(p_1)=p_1\otimes 1+\frac{1}{2}(1+x^2)x\otimes p_1+\frac{\xi}{2}(1-x^2)x\otimes p_2$,

$~~~~~~~~~~~\triangle(p_2)=p_2\otimes 1+\frac{1}{2}(1+x^2)x\otimes p_2-\frac{\xi}{2}(1-x^2)x\otimes p_1$,

$~~~~~~~~~~~\triangle(A_i)=A_i\otimes 1+x\otimes A_i,~~~~~~~~\triangle(B_j)=B_j\otimes 1+x\otimes B_j,$

$~~~~~~~~~~~\triangle(E_m)=E_m\otimes 1+x^3\otimes E_m,~~~~\triangle(F_q)=F_q\otimes 1+x^3\otimes F_q.$
\end{defi}

\begin{pro}\label{2}
$(1)$ Let $A$ be a finite-dimensional Hopf algebra with coradical $H$ such that its infinitesimal braiding is $\Omega_i(n_1,n_2,n_3,n_4)$ for $i\in\{2,8\}$, then $A\cong \mathfrak{U}_i(n_1,n_2,n_3,n_4;I_i)$.

$(2) ~\mathfrak{U}_2(n_1,n_2,n_3,n_4;I_2)\cong \mathfrak{U}_2(n_1,n_2,n_3,n_4;I_2^{'})$ if and only if there exist nonzero parameter $z$ and invertible matrices
$(c_{k^{'}k})_{n_1\times n_1}$, $(c_{k_1^{'}k_1})_{n_1\times n_1}$, $(c_{k_2^{'}k_2})_{n_1\times n_1}$, $(d_{\ell^{'}\ell})_{n_2\times n_2}$,
$(d_{\ell_1^{'}\ell_1})_{n_2\times n_2}$, $(d_{\ell_2^{'}\ell_2})_{n_2\times n_2}$,
$(g_{s^{'}s})_{n_3\times n_3}$, $(g_{s_1^{'}s_1})_{n_3\times n_3}$, $(g_{s_2^{'}s_2})_{n_3\times n_3}$, $(h_{r^{'}r})_{n_4\times n_4}$,
$(h_{r_1^{'}r_1})_{n_4\times n_4}$, $(h_{r_2^{'}r_2})_{n_4\times n_4}$
satisfying $z^2v^{'}=v$, $(\ref{Aut1.1.2})$, $(\ref{Aut1.1.4})$ and
\begin{align}
\begin{split}
&z\sum\limits_{k^{'}=1}^{n_1}c_{k^{'}k}\lambda_{k^{'}}^{'}=\lambda_{k},
~~z\sum\limits_{\ell^{'}=1}^{n_2}d_{\ell^{'}\ell}\zeta_{\ell^{'}}^{'}=\zeta_{\ell},
~~z\sum\limits_{s^{'}=1}^{n_3}g_{s^{'}s}\iota_{s^{'}}^{'}=\iota_{s},
~~z\sum\limits_{r^{'}=1}^{n_4}h_{r^{'}r}\theta_{r^{'}}^{'}=\theta_{r}.\label{Aut2.1}
\end{split}
\end{align}
or $n_1=n_2,~n_3=n_4$
satisfying $z^2v^{'}=-v$, $(\ref{Aut1.3})$ and
\begin{align}
\begin{split}
&z\sum\limits_{\ell^{'}=1}^{n_1}c_{\ell^{'}k}\zeta_{\ell^{'}}^{'}=\lambda_{k},
~~z\sum\limits_{k^{'}=1}^{n_1}d_{k^{'}\ell}\lambda_{k^{'}}^{'}=\zeta_{\ell},
~~z\sum\limits_{r^{'}=1}^{n_3}g_{r^{'}s}\theta_{r^{'}}^{'}=\iota_{s},
~~z\sum\limits_{s^{'}=1}^{n_3}h_{s^{'}r}\iota_{s^{'}}^{'}=\theta_{r}.\label{Aut2.2}
\end{split}
\end{align}
or $n_1=n_3,~n_2=n_4$
satisfying $z^2v^{'}=v$, $(\ref{Aut1.5.2})$, $(\ref{Aut1.5.4})$ and
\begin{align}
\begin{split}
&z\sum\limits_{s^{'}=1}^{n_1}c_{s^{'}k}\iota_{s^{'}}^{'}=\lambda_{k},
~~z\sum\limits_{r^{'}=1}^{n_2}d_{r^{'}\ell}\theta_{r^{'}}^{'}=\zeta_{\ell},
~~z\sum\limits_{k^{'}=1}^{n_1}g_{k^{'}s}\lambda_{k^{'}}^{'}=\iota_{s},
~~z\sum\limits_{\ell^{'}=1}^{n_2}h_{\ell^{'}r}\zeta_{\ell^{'}}^{'}=\theta_{r}.\label{Aut2.3}
\end{split}
\end{align}
or $n_1=n_4,~n_2=n_3$
satisfying $z^2v^{'}=-v$, $(\ref{Aut1.7})$ and
\begin{align}
\begin{split}
&z\sum\limits_{r^{'}=1}^{n_1}c_{r^{'}k}\theta_{r^{'}}^{'}=\lambda_{k},
~~z\sum\limits_{s^{'}=1}^{n_2}d_{s^{'}l}\iota_{s^{'}}^{'}=\zeta_{\ell},
~~z\sum\limits_{\ell^{'}=1}^{n_2}g_{\ell^{'}s}\zeta_{\ell^{'}}^{'}=\iota_{s},
~~z\sum\limits_{k^{'}=1}^{n_1}h_{k^{'}r}\lambda_{k^{'}}^{'}=\theta_{r}.\label{Aut2.4}
\end{split}
\end{align}

$(3)$ $\mathfrak{U}_8(n_1,n_2,n_3,n_4;I_8)\cong \mathfrak{U}_8(n_1,n_2,n_3,n_4;I_8^{'})$ if and only if there exist nonzero parameter $z$ and invertible matrices
$(a_{i_1^{'}i_1})_{n_1\times n_1}$, $(a_{i_2^{'}i_2})_{n_1\times n_1}$, $(b_{j_1^{'}j_1})_{n_2\times n_2}$, $(b_{j_2^{'}j_2})_{n_2\times n_2}$,
$(e_{m_1^{'}m_1})_{n_3\times n_3}$, $(e_{m_2^{'}m_2})_{n_3\times n_3}$,
$(f_{q_1^{'}q_1})_{n_4\times n_4}$, $(f_{q_2^{'}q_2})_{n_4\times n_4}$
satisfying $z^2v^{'}=v$, $(\ref{Aut1.1.1})$, $(\ref{Aut1.1.3})$
or $z^2v^{'}=-v$ or $n_1=n_2,~ n_3=n_4$ satisfying $z^2v^{'}=v$,
$(\ref{Aut1.2})$ or $z^2v^{'}=-v$
or $n_1=n_3, ~n_2=n_4$ satisfying $z^2v^{'}=v$, $(\ref{Aut1.5.1})$, $(\ref{Aut1.5.3})$ or $z^2v^{'}=-v$
or $n_1=n_4, ~n_2=n_3$ satisfying $z^2v^{'}=v$,
$(\ref{Aut1.6})$ or $z^2v^{'}=-v$.
\end{pro}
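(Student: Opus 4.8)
Both assertions are obtained by running, for $i=2$ and $i=8$, the same machine as in the proof of Proposition \ref{1}.

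For part $(1)$, Theorem \ref{gr} gives $\mathrm{gr}(A)\cong\mathcal{B}(\Omega_i(n_1,n_2,n_3,n_4))\sharp H$, so every relation of Definition \ref{def 2} (resp.\ Definition \ref{def 8}) that does not involve a parameter of $I_i$ already holds in $A$, being inherited from the bosonization; it remains to lift the relations $(\ref{Ck})$, $(\ref{Dl})$, $(\ref{Gs})$, $(\ref{Hr})$, $(\ref{e2.1})$--$(\ref{e2.5})$ (resp.\ $(\ref{Ai})$, $(\ref{Bj})$, $(\ref{Em})$, $(\ref{Fq})$, $(\ref{e2.1})$, $(\ref{e8.2})$, $(\ref{e8.3})$). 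For each such relation the plan is to compute the coproduct of the left-hand side, working in the basis $p_1\pm p_2$ of the $M_1$- (resp.\ $M_7$-) block — in which $p_1+p_2$ is $(1,y)$-skew primitive and $p_1-p_2$ is $(1,x^2y)$-skew primitive — and to recognise the resulting element as skew primitive for some group-like $g\in G(H)$; since $\mathcal{P}_{1,g}\big(\mathcal{B}(\Omega_i)\sharp H\big)=\mathcal{P}_{1,g}(H)=\mathbbm{k}\{1-g\}$ by the Remark following Definition \ref{def3.1}, the element lies in $\mathbbm{k}(1-g)$, and the relation $tx=x^{-1}t$ together with the $t$-action on the generators then pins down the admissible scalar multiple. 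It is this last step that forces the two relations in each of the pairs $(\ref{e2.2})$--$(\ref{e2.5})$ to carry the \emph{same} parameter, and that separates ``anticommutator $=0$'' from ``anticommutator $\in\mathbbm{k}(1-x^2)$''. The outcome is a surjective Hopf algebra map $\mathfrak{U}_i(n_1,n_2,n_3,n_4;I_i)\twoheadrightarrow A$; I would finish by exhibiting the obvious PBW-type spanning set of $\mathfrak{U}_i$ (the generators $p_1,p_2$ and $\{C_k\},\{D_\ell\},\{G_s\},\{H_r\}$ — resp.\ $\{A_i\},\{B_j\},\{E_m\},\{F_q\}$ — each raised to an exponent in $\mathbb{I}_{0,1}$, times $x^ey^ft^g$ with $e\in\mathbb{I}_{0,3}$, $f,g\in\mathbb{I}_{0,1}$), checking via the Diamond Lemma \cite{G} that it is a basis, and comparing $\dim\mathfrak{U}_i$ with $\dim A=\dim\mathcal{B}(\Omega_i)\sharp H$.

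For parts $(2)$ and $(3)$, let $\Phi\colon\mathfrak{U}_i(n_1,n_2,n_3,n_4;I_i)\to\mathfrak{U}_i(n_1,n_2,n_3,n_4;I_i')$ be an isomorphism of Hopf algebras. Since $H$ is the coradical of both, $\Phi|_H$ is a Hopf algebra automorphism of $H$, hence one of $\tau_1,\dots,\tau_{32}$ of Table $1$; recall that each $\tau_j$ fixes $y$ and $x^2$. Fixing $\tau_j$, I would first locate $\Phi$ on generators: each one-dimensional generator is $(1,g)$-skew primitive, its image is $(1,\tau_j(g))$-skew primitive, and — after discarding the genuine $H$-skew-primitive $1-\tau_j(g)$ (ruled out by the $x$- or $t$-action exactly as in Proposition \ref{1}) and the generators of the wrong $(x,t)$-eigentype — the image must be a linear combination of the generators lying in the same $\tau_j$-orbit, so $\Phi$ acts invertibly on each such span. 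For the block $\mathbbm{k}\{p_1,p_2\}$ I would argue with $p_1\pm p_2$: their images lie in $\mathbbm{k}(p_1'\pm p_2')\oplus\mathbbm{k}(1-y)$, resp.\ $\mathbbm{k}(p_1'\mp p_2')\oplus\mathbbm{k}(1-x^2y)$ (no one-dimensional generator of $\mathfrak{U}_i'$ being of type $(1,y)$ or $(1,x^2y)$), and the $H$-components vanish because $t$ interchanges $p_1'+p_2'$ and $p_1'-p_2'$ up to a factor $x^2t$ while it commutes with $1-y$ and $1-x^2y$; combined with the relations $tp_1=\pm p_1x^2t$ this collapses the $2\times 2$ transition matrix to $z\cdot\mathrm{Id}$, or $z$ times the flip, for a single nonzero $z\in\mathbbm{k}$, according as $\tau_j(t)$ is free of or contains the factor $y$. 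Substituting these data into the relations $(\ref{Ck})$--$(\ref{Hr})$, $(\ref{e2.1})$--$(\ref{e2.5})$ (resp.\ $(\ref{Ai})$--$(\ref{Fq})$, $(\ref{e2.1})$, $(\ref{e8.2})$, $(\ref{e8.3})$) and comparing coefficients turns ``$\Phi$ is an isomorphism'' into the asserted system of equations for that $\tau_j$; running over the four orbits of automorphisms permuting the types $\{V_3,V_4,V_7,V_8\}$ (resp.\ $\{V_1,V_2,V_5,V_6\}$) — together with the corresponding action on $M_1$ (resp.\ $M_7$), the flip $p_1'\leftrightarrow p_2'$ being exactly what records the sign in $z^2\nu'=\pm\nu$ and $z^2v'=\pm v$ — yields the four displayed alternatives. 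The converse is the routine verification that, given matrices and a scalar $z$ solving one of the systems, the assignment $x\mapsto\tau_j(x)$, $y\mapsto y$, $t\mapsto\tau_j(t)$ together with the corresponding linear substitution of the generators and of $p_1,p_2$ respects every defining relation and hence extends to a Hopf algebra isomorphism.

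The bulk of the labour, and the main obstacle, is this case analysis over the $32$ automorphisms in parts $(2)$ and $(3)$: one must simultaneously track how every block of one-dimensional generators and the two-dimensional block $M_1$ (resp.\ $M_7$) is transported, and confirm that no $\tau_j$ contributes an isomorphism class outside the four alternatives. What makes it tractable is the reduction — carried out already in part $(1)$ — of the $M_1$- (resp.\ $M_7$-) contribution to the single scalar $z$ together with the relation $z^2\nu'=\pm\nu$; all the remaining blocks reduce verbatim to the computations already done for $\mathfrak{U}_1$ in Proposition \ref{1}. I expect the subtlest single point to be fixing the exact shape of the cross relations in part $(1)$ — for instance that $p_1C_k+C_kp_1=\lambda_k(x+x^3-2)$ and $p_2C_k+C_kp_2=\lambda_k(x-x^3)$ share the one parameter $\lambda_k$ — which comes down to the identity $t(1-x)=(1-x^3)t$ and the fact that $t$ interchanges $p_1+p_2$ with $p_1-p_2$.
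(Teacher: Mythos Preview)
Your plan is essentially the paper's own proof: for part~(1) the paper likewise invokes Theorem~\ref{gr}, computes the coproducts of $p_1^2\pm p_2^2$, $p_1p_2+p_2p_1$ and the coupled pairs $p_iC_k+C_kp_i$ (etc.), identifies them as $(1,g)$-skew primitive, and uses the $t$-action to force the two members of each pair to share a single parameter, then finishes with the Diamond Lemma and a dimension count; for parts~(2)--(3) it restricts $\Phi|_H$ to the sixteen automorphisms $\tau_1,\dots,\tau_{16}$ that preserve the relevant set of simple types, observes that $\Phi(p_i)=zp_i'$ or $\Phi(p_i)=zp_{3-i}'$ according to the $\tau_j$-class, and reads off the displayed equations block by block.

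One small slip: your diagonalising basis $p_1\pm p_2$ and the group-likes $y,\,x^2y$ are correct for $M_1$ but not for $M_7$; from Definition~\ref{def 8} the combinations $p_1\pm\xi p_2$ are the skew-primitive ones, and they are $(1,x)$- and $(1,x^3)$-skew primitive respectively (correspondingly the relation you quote as $tp_1=\pm p_1x^2t$ is $tp_1=p_1t$, $tp_2=-p_2t$ in the $i=8$ case). This does not affect the strategy, only the bookkeeping for the $\Omega_8$ half.
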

\begin{proof} $(1)$ We prove the claim only for $\Omega_2(n_1,n_2,n_3,n_4)$, since the proof for $\Omega_8(n_1,n_2,n_3,n_4)$ follows the same lines. By Theorem $\ref{gr}$, we have that gr$(A)\cong \mathcal{B}(\Omega_2(n_1,n_2,n_3,n_4))\sharp H$. Let $M_1=\mathbbm{k}\{ p_1,p_2\}$. From Proposition \ref{1} and a direct computation, we get that the relations except $(\ref{e2.1})-(\ref{e2.5})$ hold in $A$. As
\begin{flalign*}
&\triangle(p_1)=p_1\otimes 1+\frac{1}{2}(1+x^2)y\otimes p_1+\frac{1}{2}(1-x^2)y\otimes p_2,\\
&\triangle(p_2)=p_2\otimes 1+\frac{1}{2}(1+x^2)y\otimes p_2+\frac{1}{2}(1-x^2)y\otimes p_1,
\end{flalign*}
it follows that
\begin{flalign*}
&\triangle(p_1^2+p_2^2)=(p_1^2+p_2^2)\otimes 1+1\otimes (p_1^2+p_2^2),\\
&\triangle(p_1^2-p_2^2)=(p_1^2-p_2^2)\otimes 1+x^2\otimes (p_1^2-p_2^2),\\
&\triangle(p_1p_2+p_2p_1)=(p_1p_2+p_2p_1)\otimes 1+1\otimes (p_1p_2+p_2p_1).
\end{flalign*}
So we have the relation $(\ref{e2.1})$ holds in $A$. Since
$\triangle(C_k)=C_k\otimes 1+xy\otimes C_k$,
we get that
\begin{flalign*}
\triangle(p_1C_k+C_kp_1)=&(p_1C_k+C_kp_1)\otimes 1+\frac{1}{2}(x+x^3)\otimes(p_1C_k+C_kp_1)\\
&+\frac{1}{2}(x-x^3)\otimes(p_2C_k+C_kp_2),\\
\triangle(p_2C_k+C_kp_2)=&(p_2C_k+C_kp_2)\otimes 1+\frac{1}{2}(x+x^3)\otimes(p_2C_k+C_kp_2)\\
&+\frac{1}{2}(x-x^3)\otimes(p_1C_k+C_kp_1).
\end{flalign*}
Since
\begin{flalign*}
&t(p_1C_k+C_kp_1)=(p_1C_k+C_kp_1)t, ~~~~~~~t(p_2C_k+C_kp_2)=-(p_2C_k+C_kp_2)t,
\end{flalign*}
we have the relation $(\ref{e2.2})$ holds in $A$.

Similarly, the relations $(\ref{e2.3}),~(\ref{e2.4}),~(\ref{e2.5})$ hold in $A$. Then there is a surjective Hopf homomorphism
from $\mathfrak{U}_2(n_1,n_2,n_3,n_4;I_2)$ to $A$.
By the Diamond Lemma, we have that the linear basis of
$\mathfrak{U}_2(n_1,n_2,n_3,n_4;I_2)$ is
\begin{equation*}
\{C_k^{\gamma_k}
D_\ell^{\eta_\ell}G_s^{\lambda_s}H_r^{\mu_r}p_1^ip_2^jx^ey^ft^g;~\gamma_k,\eta_\ell,\lambda_s,\mu_r,i,j,f,g\in\mathbb{I}_{0,1},
e\in\mathbb{I}_{0,3}\}.
\end{equation*}
Then $\dim A$ = $\dim \mathfrak{U}_2(n_1,n_2,n_3,n_4;I_2)$ and
whence $A\cong \mathfrak{U}_2(n_1,n_2,n_3,n_4;I_2)$.

$(2)$ Suppose that $\Phi: \mathfrak{U}_2(n_1,n_2,n_3,n_4;I_2)\rightarrow \mathfrak{U}_2(n_1,n_2,n_3,n_4;I_2^{'})$ is an isomorphism of
Hopf algebras. By Proposition \ref{1}, we know that $\Phi|_H\in \{\tau_1,\cdots,\tau_{16}\}$. If $\Phi|_H\in\{\tau_1,\tau_2,\tau_3,\tau_4\}$, then
\begin{flalign*}
&\Phi(p_1)=zp_1^{'},
\hspace{1em}\Phi(p_2)=zp_2^{'}, \hspace{1em}\Phi(C_k)=\sum\limits_{k^{'}=1}^{n_1}c_{k^{'}k}C_{k^{'}}^{'},\\
&\Phi(D_\ell)=\sum\limits_{\ell^{'}=1}^{n_2}d_{\ell^{'}\ell}D_{\ell^{'}}^{'},
\hspace{1em}\Phi(G_s)=\sum\limits_{s^{'}=1}^{n_3}g_{s^{'}s}G_{s^{'}}^{'},
\hspace{1em}\Phi(H_r)=\sum\limits_{r^{'}=1}^{n_4}h_{r^{'}r}H_{r^{'}}^{'}.
\end{flalign*}
Whence the relations $z^2v^{'}=v$, (\ref{Aut1.1.2}), (\ref{Aut1.1.4}), (\ref{Aut2.1}) hold.
Similarly, $\Phi$ is an isomorphism of Hopf algebras if and only if

\hspace{1em}when $\Phi|_H\in\{\tau_5,\tau_6,\tau_7,\tau_8\}$, the relations $z^2v^{'}=-v$, $(\ref{Aut1.3})$, $(\ref{Aut2.2})$ hold;

\hspace{1em}when $\Phi|_H\in\{\tau_9,\tau_{10},\tau_{11},\tau_{12}\}$, the relations $z^2v^{'}=v$, $(\ref{Aut1.5.2})$, $(\ref{Aut1.5.4})$, $(\ref{Aut2.3})$ hold;

\hspace{1em}when $\Phi|_H\in\{\tau_{13},\tau_{14},\tau_{15},\tau_{16}\}$, the relations $z^2v^{'}=-v$, $(\ref{Aut1.7})$, $(\ref{Aut2.4})$ hold.

The proof of $(3)$ is completely analogous.
\end{proof}

\begin{defi}\label{def 4}
For $n_1,~n_2,~n_3,~n_4\in \mathbb{N}^*$ with $\sum_{i=1}^4 n_i\geq 0$ and a set
\begin{equation*}
I_4=\{(\alpha_{i_1,i_2})_{n_1\times n_1},~ (\gamma_{k_1,k_2})_{n_2\times n_2}, ~(\zeta_{m_1,m_2})_{n_3\times n_3}, ~ (\lambda_{s_1,s_2})_{n_4\times n_4},~ \nu\},
\end{equation*}
we denote by $\mathfrak{U}_4(n_1,n_2,n_3,n_4;I_4)$ the algebra that is generated by $x,y,t,$
$p_1,p_2,$ $\{A_i\}_{i=1,.,n_1},$ $\{C_k\}_{k=1,.,n_2},$  $\{E_m\}_{m=1,..,n_3},$
$\{G_s\}_{s=1,..,n_4}$ satisfying the relations $(\ref{3.1})-(\ref{3.3})$, $(\ref{A})$, $(\ref{C})$, $(\ref{E})$, $(\ref{G})$, $(\ref{Ai})$, $(\ref{Ck})$, $(\ref{Em})$, $(\ref{Gs})$, $(\ref{e2.1})$ and
\begin{flalign}
&xp_1=p_1x,~xp_2=-p_2x,~yp_1=p_1y,~yp_2=p_2y,~tp_1=-p_1t,~tp_2=-p_2t,&\nonumber\\
&A_iC_k-C_kA_i=0,~~A_iE_m+E_mA_i=0,~~A_iG_s-G_sA_i=0,&\nonumber\\
&C_kE_m-E_mC_m=0,~~C_kG_s+G_sC_m=0,~~E_mG_s-G_sE_m=0,&\nonumber\\
&p_1A_i-A_ip_1=0, ~~p_2A_i+A_ip_2=0,~~p_1C_k-C_kp_1=0, ~~p_2C_k+C_kp_2=0,&\label{e4.2}\\
&p_1E_m-E_mp_1=0, ~~p_2E_m+E_mp_2=0,~~p_1G_s-G_sp_1=0, ~~p_2G_s+G_sp_2=0.&\label{e4.3}
\end{flalign}
It is a Hopf algebra with its coalgebra structure determined by

$~~~~~~~~~~~\triangle(p_1)=p_1\otimes 1+\frac{1}{2}(1+x^2)t\otimes p_1+\frac{1}{2}(1-x^2)yt\otimes p_2$,

$~~~~~~~~~~~\triangle(p_2)=p_2\otimes 1+\frac{1}{2}(1+x^2)yt\otimes p_2+\frac{1}{2}(1-x^2)t\otimes p_1$,

$~~~~~~~~~~~\triangle(A_i)=A_i\otimes 1+x\otimes A_i,~~~~~~~~\triangle(C_k)=C_k\otimes 1+xy\otimes C_k,$

$~~~~~~~~~~~\triangle(E_m)=E_m\otimes 1+x^3\otimes E_m,~~~~\triangle(G_s)=G_s\otimes 1+x^3y\otimes G_s.$
\end{defi}

\begin{defi}\label{def 5}
For $n_1,~n_2,~n_3,~n_4\in \mathbb{N}^*$ with $\sum_{i=1}^4 n_i\geq 0$ and a set
\begin{equation*}
I_5=\{\alpha_{i_1,i_2}, ~\eta_{\ell_1,\ell_2}, ~\zeta_{m_1,m_2},\mu_{r_1,r_2},~\nu,~\lambda_i, ~\kappa_\ell,~ \iota_m, ~\theta_r\}
\end{equation*}
with $i,~i_1,~i_2\in \{1,..,n_1\},$ $\ell,~\ell_1,~\ell_2\in \{1,..,n_2\}, ~m,~m_1,~m_2\in \{1,..,n_3\}, ~ r,~r_1,~r_2\in \{1,..,n_4\},$ denote by $\mathfrak{U}_5(n_1,n_2,n_3,n_4;I_5)$ the algebra that is generated by $x,~y,~t,~p_1,~p_2$, $\{A_i\},$ $\{D_\ell\},$ $\{E_m\},$
$\{H_r\}$ satisfying the relations $(\ref{3.1})-(\ref{3.3})$,
$(\ref{A})$, $(\ref{D})$, $(\ref{E})$, $(\ref{H})$, $(\ref{Ai})$, $(\ref{Dl})$, $(\ref{Em})$, $(\ref{Hr})$ and
\begin{flalign}
&xp_1=p_1x,~xp_2=-p_2x,~yp_1=-p_1y,~yp_2=-p_2y,~tp_1=-p_1x^2t,~tp_2=p_2x^2t,&\nonumber\\
&A_iD_\ell-D_\ell A_i=0,~~A_iE_m+E_mA_i=0,~~A_iH_r-H_rA_i=0,&\nonumber\\
&D_\ell E_m-E_mD_\ell=0,~~D_\ell H_r+H_rD_\ell=0,~~E_mH_r-H_rE_m=0,&\nonumber\\
&p_1^2=\nu(1-x^2),~~~~p_2^2=-\nu(1-x^2),~~~~p_1p_2-p_2p_1=0,&\label{e5.1}\\
&p_1A_i-A_ip_1=\lambda_i(xyt-x^3yt), ~~p_2A_i+A_ip_2=\lambda_i(2-xyt-x^3yt),&\label{e5.2}\\
&p_1D_\ell+D_\ell p_1=\kappa_\ell(2-xyt-x^3yt), ~~p_2D_\ell-D_\ell p_2=\kappa_\ell(xyt-x^3yt),&\label{e5.3}\\
&p_1E_m-E_mp_1=\iota_m(xyt-x^3yt), ~~p_2E_m+E_mp_2=\iota_m(-2+xyt+x^3yt),&\label{e5.4}\\
&p_1H_r+H_rp_1=\theta_r(-2+xyt+x^3yt), ~~p_2H_r-H_rp_2=\theta_r(xyt-x^3yt).&\label{e5.5}
\end{flalign}
It is a Hopf algebra with its coalgebra structure determined by

$~~~~~~~~~~~\triangle(p_1)=p_1\otimes 1+\frac{1}{2}(1+x^2)t\otimes p_1+\frac{1}{2}(1-x^2)yt\otimes p_2$,

$~~~~~~~~~~~\triangle(p_2)=p_2\otimes 1+\frac{1}{2}(1+x^2)yt\otimes p_2+\frac{1}{2}(1-x^2)t\otimes p_1$,

$~~~~~~~~~~~\triangle(A_i)=A_i\otimes 1+x\otimes A_i,~~~~~~~~\triangle(D_\ell)=D_\ell\otimes 1+xy\otimes D_\ell,$

$~~~~~~~~~~~\triangle(E_m)=E_m\otimes 1+x^3\otimes E_m,~~~~\triangle(H_r)=H_r\otimes 1+x^3y\otimes H_r.$
\end{defi}

\begin{pro}\label{4}
$(1)$ Let $A$ be a finite-dimensional Hopf algebra with coradical $H$ such that its infinitesimal braiding is $\Omega_i(n_1,n_2,n_3,n_4)$ for $i\in\{4,5\}$, then $A\cong \mathfrak{U}_i(n_1,n_2,n_3,n_4;I_i)$.

$(2)$ $\mathfrak{U}_4(n_1,n_2,n_3,n_4;I_4)\cong \mathfrak{U}_4(n_1,n_2,n_3,n_4;I_4^{'})$ if and only if there exist nonzero parameter $z$ such that $z^2v^{'}=v$ and invertible matrices
$(a_{i_1^{'}i_1})_{n_1\times n_1}$, $(a_{i_2^{'}i_2})_{n_1\times n_1}$,
$(c_{k_1^{'}k_1})_{n_2\times n_2}$, $(c_{k_2^{'}k_2})_{n_2\times n_2}$,
$(e_{m_1^{'}m_1})_{n_3\times n_3}$, $(e_{m_2^{'}m_2})_{n_3\times n_3}$,
$(g_{s_1^{'}s_1})_{n_4\times n_4}$, $(g_{s_2^{'}s_2})_{n_4\times n_4}$
satisfying the first equations of $(\ref{Aut1.1.1})-(\ref{Aut1.1.4})$
or $n_1=n_3,~n_2=n_4$ satisfying the first equations of $(\ref{Aut1.5.1})-(\ref{Aut1.5.4})$
or $n_1=n_2, ~n_3=n_4$ satisfying the first equations of $(\ref{Aut1.9.1})-(\ref{Aut1.9.4})$
or $n_1=n_4,~n_2=n_3$ satisfying the first equations of $(\ref{Aut1.13.1})-(\ref{Aut1.13.4})$.

$(3)$ $\mathfrak{U}_5(n_1,n_2,n_3,n_4;I_5)\cong \mathfrak{U}_5(n_1,n_2,n_3,n_4;I_5^{'})$ if and only if there exist nonzero parameter $z$ and invertible matrices
$(a_{i^{'}i})_{n_1\times n_1}$, $(a_{i_1^{'}i_1})_{n_1\times n_1}$, $(a_{i_2^{'}i_2})_{n_1\times n_1}$, $(d_{\ell^{'}\ell})_{n_2\times n_2}$,
$(d_{\ell_1^{'}\ell_1})_{n_2\times n_2}$, $(d_{\ell_2^{'}\ell_2})_{n_2\times n_2}$,
$(e_{m^{'}m})_{n_3\times n_3}$, ~$(e_{m_1^{'}m_1})_{n_3\times n_3}$,~$(e_{m_2^{'}m_2})_{n_3\times n_3}$, $(h_{r^{'}r})_{n_4\times n_4}$,
$(h_{r_1^{'}r_1})_{n_4\times n_4}$, $(h_{r_2^{'}r_2})_{n_4\times n_4}$
satisfying $z^2v^{'}=v$, the first equations of $(\ref{Aut1.1.1}), ~(\ref{Aut1.1.3})$, the second equations of $(\ref{Aut1.1.2}), ~(\ref{Aut1.1.4})$ and
\begin{align}\label{Aut5.1}
\begin{split}
&z\sum\limits_{i^{'}=1}^{n_1}a_{i^{'}i}\lambda_{i^{'}}^{'}=\lambda_{i},
~~z\sum\limits_{\ell^{'}=1}^{n_2}d_{\ell^{'}\ell}\kappa_{\ell^{'}}^{'}=\kappa_{\ell},
~~z\sum\limits_{m^{'}=1}^{n_3}e_{m^{'}m}\iota_{m^{'}}^{'}=\iota_{m},
~~z\sum\limits_{r^{'}=1}^{n_4}h_{r^{'}r}\theta_{r^{'}}^{'}=\theta_{r}.
\end{split}
\end{align}
or
\begin{align}
\begin{split}
&z\sum\limits_{i^{'}=1}^{n_1}a_{i^{'}i}\lambda_{i^{'}}^{'}=-\lambda_{i},
~~z\sum\limits_{\ell^{'}=1}^{n_2}d_{\ell^{'}\ell}\kappa_{\ell^{'}}^{'}=\kappa_{\ell},
~~z\sum\limits_{m^{'}=1}^{n_3}e_{m^{'}m}\iota_{m^{'}}^{'}=-\iota_{m},
~~z\sum\limits_{r^{'}=1}^{n_4}h_{r^{'}r}\theta_{r^{'}}^{'}=\theta_{r}.
\end{split}
\end{align}
or $n_1=n_3,~ n_2=n_4$ satisfying $z^2v^{'}=v$, the first equations of $(\ref{Aut1.5.1}), ~(\ref{Aut1.5.3})$, the second equations of $(\ref{Aut1.5.2}), ~(\ref{Aut1.5.4})$ and
\begin{align}
\begin{split}
&z\sum\limits_{m^{'}=1}^{n_1}a_{m^{'}i}\iota_{m^{'}}^{'}=-\lambda_{i},
~z\sum\limits_{r^{'}=1}^{n_2}d_{r^{'}\ell}\theta_{r^{'}}^{'}=-\kappa_{\ell},
~z\sum\limits_{i^{'}=1}^{n_1}e_{i^{'}m}\lambda_{i^{'}}^{'}=-\iota_{m},
~z\sum\limits_{\ell^{'}=1}^{n_2}h_{\ell^{'}r}\kappa_{\ell^{'}}^{'}=-\theta_{r}.\label{Aut5.2}
\end{split}
\end{align}
or
\begin{align}
\begin{split}
&z\sum\limits_{m^{'}=1}^{n_1}a_{m^{'}i}\iota_{m^{'}}^{'}=\lambda_{i},
~z\sum\limits_{r^{'}=1}^{n_2}d_{r^{'}\ell}\theta_{r^{'}}^{'}=-\kappa_{\ell},
~z\sum\limits_{i^{'}=1}^{n_1}e_{i^{'}m}\lambda_{i^{'}}^{'}=\iota_{m},
~z\sum\limits_{\ell^{'}=1}^{n_2}h_{\ell^{'}r}\kappa_{\ell^{'}}^{'}=-\theta_{r}.
\end{split}
\end{align}
or $n_1=n_2,~n_3=n_4$ satisfying $z^2v^{'}=-v$, the first equation of $(\ref{Aut1.11})$, the second equation of $(\ref{Aut1.10})$ and
\begin{align}
\begin{split}
&z\sum\limits_{\ell^{'}=1}^{n_1}a_{\ell^{'}i}\kappa_{\ell^{'}}^{'}=\lambda_{i},
~z\sum\limits_{i^{'}=1}^{n_1}d_{i^{'}\ell}\lambda_{i^{'}}^{'}=\kappa_{\ell},
~z\sum\limits_{r^{'}=1}^{n_3}e_{r^{'}m}\theta_{r^{'}}^{'}=\iota_{m},
~z\sum\limits_{m^{'}=1}^{n_3}h_{m^{'}r}\iota_{m^{'}}^{'}=\theta_{r}.\label{Aut5.3}
\end{split}
\end{align}
or
\begin{align}
\begin{split}
&z\sum\limits_{\ell^{'}=1}^{n_1}a_{\ell^{'}i}\kappa_{\ell^{'}}^{'}=-\lambda_{i},
~z\sum\limits_{i^{'}=1}^{n_1}d_{i^{'}\ell}\lambda_{i^{'}}^{'}=\kappa_{\ell},
~z\sum\limits_{r^{'}=1}^{n_3}e_{r^{'}m}\theta_{r^{'}}^{'}=-\iota_{m},
~z\sum\limits_{m^{'}=1}^{n_3}h_{m^{'}r}\iota_{m^{'}}^{'}=\theta_{r}.
\end{split}
\end{align}
or $n_1=n_4,~n_2=n_3$ satisfying $z^2v^{'}=-v$, the first equation of $(\ref{Aut1.15})$, the second equation of $(\ref{Aut1.14})$ and
\begin{align}
\begin{split}
&z\sum\limits_{r^{'}=1}^{n_1}a_{r^{'}i}\theta_{r^{'}}^{'}=-\lambda_{i},
~z\sum\limits_{m^{'}=1}^{n_2}d_{m^{'}\ell}\iota_{m^{'}}^{'}=-\kappa_{\ell},
~z\sum\limits_{\ell^{'}=1}^{n_2}e_{\ell^{'}m}\kappa_{\ell^{'}}^{'}=-\iota_{m},
~z\sum\limits_{i^{'}=1}^{n_1}h_{i^{'}r}\lambda_{i^{'}}^{'}=-\theta_{r}.\label{Aut5.4}
\end{split}
\end{align}
or
\begin{align}
\begin{split}
&z\sum\limits_{r^{'}=1}^{n_1}a_{r^{'}i}\theta_{r^{'}}^{'}=\lambda_{i},
~z\sum\limits_{m^{'}=1}^{n_2}d_{m^{'}\ell}\iota_{m^{'}}^{'}=-\kappa_{\ell},
~z\sum\limits_{\ell^{'}=1}^{n_2}e_{\ell^{'}m}\kappa_{\ell^{'}}^{'}=\iota_{m},
~z\sum\limits_{i^{'}=1}^{n_1}h_{i^{'}r}\lambda_{i^{'}}^{'}=-\theta_{r}.
\end{split}
\end{align}
\end{pro}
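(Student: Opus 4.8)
The plan is to run, for the two Yetter--Drinfeld modules $\Omega_4(n_1,n_2,n_3,n_4)$ and $\Omega_5(n_1,n_2,n_3,n_4)$, exactly the scheme already used for $\mathfrak U_1$ and $\mathfrak U_2$ in Propositions \ref{1} and \ref{2}: each of these modules is a direct sum of four of the one-dimensional simples and one copy of a two-dimensional simple ($M_3$, resp.\ $M_4$), spanned by $p_1,p_2$. For part $(1)$ I would first apply Theorem \ref{gr} to get $\mathrm{gr}(A)\cong\mathcal B(\Omega_i)\sharp H$. This forces $A$ to be generated by $x,y,t$ and by skew-primitive lifts of the chosen basis of $\Omega_i$; the relations of Definition \ref{def 4} (resp.\ \ref{def 5}) recording the $H$-action on the generators hold already in $\mathrm{gr}(A)$, and the ``square'' relations $(\ref{Ai}),(\ref{Ck}),(\ref{Em}),(\ref{Gs}),(\ref{e2.1})$ (resp.\ those with $(\ref{e5.1})$) are obtained exactly as in Propositions \ref{1}--\ref{2}, by computing $\triangle$ of the relevant element, noting it is $(1,x^2)$-skew primitive, hence a scalar multiple of $1-x^2$, and using the $t$-action to fix the scalar. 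The genuinely new relations are the cross-relations between $p_1,p_2$ and the $A_i,C_k,E_m,G_s$ (resp.\ $A_i,D_\ell,E_m,H_r$): here I would compute $\triangle$ of each element $p_1A_i\mp A_ip_1$, $p_1^2\pm p_2^2$, etc., using the coproducts in Definition \ref{def 4} (resp.\ \ref{def 5}); each turns out $(1,g)$-skew primitive for a $g\in G(H)=\langle x\rangle\times\langle y\rangle$, hence lies in $\mathcal P_{1,g}(\mathcal B(\Omega_i)\sharp H)=\mathcal P_{1,g}(H)=\mathbbm{k}\{1-g\}$, and comparing the $y$- and $t$-actions on both sides pins down the remaining scalar. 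For $\mathfrak U_4$ it must vanish (giving $(\ref{e4.2})$ and $(\ref{e4.3})$), whereas for $\mathfrak U_5$ it survives as the free parameters $\lambda_i,\kappa_\ell,\iota_m,\theta_r$ of $(\ref{e5.2})$--$(\ref{e5.5})$, and $p_1^2$ contributes the parameter $\nu$. This produces a surjective Hopf map $\mathfrak U_i(\dots;I_i)\twoheadrightarrow A$; a Diamond Lemma argument \cite{G} as in Proposition \ref{1} exhibits the PBW-type basis $\{A_i^{\alpha_i}C_k^{\gamma_k}E_m^{\zeta_m}G_s^{\lambda_s}p_1^up_2^vx^ey^ft^g\}$ (all exponents in $\mathbb{I}_{0,1}$ except $e\in\mathbb{I}_{0,3}$) of $\mathfrak U_i(\dots;I_i)$, whence $\dim\mathfrak U_i(\dots;I_i)=\dim A$ and the surjection is an isomorphism.

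For parts $(2)$ and $(3)$ I would take a Hopf isomorphism $\Phi:\mathfrak U_i(\dots;I_i)\to\mathfrak U_i(\dots;I_i')$ and argue, as in Proposition \ref{1}, that it restricts to an automorphism $\Phi|_H$ of $H$; the requirement that $\Phi$ send each skew-primitive generator to a skew-primitive element with the same group-like datum and the same $H$-module structure confines $\Phi|_H$ to a short list of $\tau_k$ that splits into four families according to which permutation of the four one-dimensional blocks it realizes (and, for $\mathfrak U_5$, a further binary sign choice). Working family by family, one writes $\Phi(p_1)=zp_1'$ (or $zp_2'$, possibly with a sign or a swap) for a nonzero scalar $z$, and $\Phi(A_i)=\sum a_{i'i}A_{i'}'$, $\Phi(C_k)=\sum c_{k'k}C_{k'}'$, and so on, with invertible change-of-basis matrices, and substitutes into the defining relations. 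The ``square'' relations translate into the matrix identities (the ``first equations'' of the $(\ref{Aut1.1.1})$-type systems, reindexed according to the family), the relation $p_1^2=\nu(1-x^2)$ yields $z^2\nu'=\pm\nu$, and for $\mathfrak U_5$ the cross-relations $(\ref{e5.2})$--$(\ref{e5.5})$ yield the linear systems $(\ref{Aut5.1})$, $(\ref{Aut5.2})$, $(\ref{Aut5.3})$, $(\ref{Aut5.4})$ together with their sign variants. The converse, that any solution of such a system defines an isomorphism, is a routine check on generators.

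The difficulty is organizational rather than conceptual: one must go through every admissible $\Phi|_H=\tau_k$ and, in each case, track precisely how conjugation by $x$, $y$, $t$ constrains the images of $p_1,p_2$ — in particular whether those images are rescaled by $\pm1$ or interchanged — and then verify that the lifting relations are preserved \emph{exactly} when the displayed systems hold. The point specific to $\Omega_5$, where the $M_4$-block carries $v_1v_2-v_2v_1=0$ and the relations $(\ref{e5.2})$--$(\ref{e5.5})$ come with mixed signs, is that for each matching of the $n_i$ there are two admissible sign patterns, which is why each case in part $(3)$ splits into two alternative systems; I would handle this by first recording, for each product $p_sX$ among the generators, the $t$-conjugation rule — hence the correct target space $\mathbbm{k}\{1-g\}$ — before matching scalars, so that the sign ambiguity is isolated cleanly.
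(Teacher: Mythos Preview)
Your outline for parts $(2)$ and $(3)$ matches the paper's approach, but there is a genuine gap in part $(1)$ concerning the cross-relations between $p_1,p_2$ and the one-dimensional generators. You assert that each such element ``turns out $(1,g)$-skew primitive for a $g\in G(H)=\langle x\rangle\times\langle y\rangle$, hence lies in $\mathcal P_{1,g}(H)=\mathbbm{k}\{1-g\}$''. This is false here: for $\Omega_4$ and $\Omega_5$ the two-dimensional block is $M_3$ (resp.\ $M_4$), whose coaction involves $t$ and $yt$, not group-likes. Concretely (see Definition~\ref{def 4}),
\[
\triangle(p_1)=p_1\otimes 1+\tfrac12(1+x^2)t\otimes p_1+\tfrac12(1-x^2)yt\otimes p_2,
\]
so that
\[
\triangle(p_1A_i-A_ip_1)=(p_1A_i-A_ip_1)\otimes 1+\tfrac12(xt+x^3t)\otimes(p_1A_i-A_ip_1)+\tfrac12(x^3yt-xyt)\otimes(p_2A_i+A_ip_2),
\]
which is \emph{not} $(1,g)$-skew primitive for any $g\in G(H)$. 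The same issue arises for $\mathfrak U_5$, where the actual deformations $(\ref{e5.2})$--$(\ref{e5.5})$ land in $\mathbbm k\{xyt-x^3yt\}$ and $\mathbbm k\{2-xyt-x^3yt\}$, neither of the form $\mathbbm k\{1-g\}$ with $g$ group-like.

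The paper's fix is to argue instead via the coradical filtration: since $p_1A_i-A_ip_1$ is zero in $\mathrm{gr}(A)$ in degree $2$, it lies in $A_{[1]}$, hence is a linear combination of the elements $x^jy^lt^r$ and $Xx^jy^lt^r$ for $X$ ranging over the degree-one generators $A_i,C_k,E_m,G_s,p_1,p_2$. One then uses the full $H$-module structure---conjugation by $y$ and by $t$ separately---to cut this space down. For $\mathfrak U_4$ this forces the element to be zero (giving $(\ref{e4.2})$, $(\ref{e4.3})$); for $\mathfrak U_5$ a one-parameter family survives inside $H$, and that parameter is $\lambda_i$ (resp.\ $\kappa_\ell,\iota_m,\theta_r$) in $(\ref{e5.2})$--$(\ref{e5.5})$. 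Once you replace the ``$(1,g)$-skew primitive'' step by this coradical-filtration argument, the rest of your plan goes through as written.
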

\begin{proof} $(1)$ We prove the claim only for $\Omega_4(n_1,n_2,n_3,n_4)$, since the proof for $\Omega_5(n_1,n_2,n_3,n_4)$
follows the same lines. By Theorem $\ref{gr}$, we have that gr$(A)\cong \mathcal{B}(\Omega_4(n_1,n_2,n_3,n_4))\sharp H$.
Let $M_3=\mathbbm{k}\{ p_1,p_2\}$. Similar to Proposition \ref{2}, we only need to prove that the relations $(\ref{e4.2})$, $(\ref{e4.3})$ hold in $A$.
Since $\triangle(A_i)=A_i\otimes 1+x\otimes A_i,$ we have that
\begin{flalign*}
\triangle(p_1A_i-A_ip_1)=&(p_1A_i-A_ip_1)\otimes1+
\frac{1}{2}(xt+x^3t)\otimes(p_1A_i-A_ip_1)\\
&+\frac{1}{2}(x^3yt-xyt)\otimes(p_2A_i+A_ip_2),\\
\triangle(p_2A_i+A_ip_2)=&(p_2A_i+A_ip_2)\otimes1+
\frac{1}{2}(xyt+x^3yt)\otimes(p_2A_i+A_ip_2)\\
&+\frac{1}{2}(x^3t-xt)\otimes(p_1A_i-A_ip_1).
\end{flalign*}
If the relations $p_1A_i-A_ip_1=0$ admit non-trivial deformations, then $p_1A_i-A_ip_1\in A_{[1]}$ is a linear combination of

$\{x^jy^lt^r$, $A_ix^jy^lt^r,$ $C_kx^jy^lt^r,$ $E_mx^jy^lt^r,$ $G_sx^jy^lt^r$, $p_1x^jy^lt^r$, $p_2x^jy^lt^r,j\in \mathbb{I}_{0,3},l,r\in \mathbb{I}_{0,1}\}$.
\\Since
\begin{equation*}
y(p_1A_i-A_ip_1)=-(p_1A_i-A_ip_1)y,\hspace{1em} t(p_1A_i-A_ip_1)=-(p_1A_i-A_ip_1)x^2t,
\end{equation*}
then $p_1A_i-A_ip_1=0$ hold in $A$. Whence by the  above two equations we have that the relations $p_2A_i+A_ip_2=0$ hold in $A$.

Similarly, other relations  hold in $A$.
Then there is a surjective Hopf morphism from $\mathfrak{U}_4(n_1,n_2,n_3,n_4;I_4)$ to $A$.
By the Diamond Lemma, the linear basis of
$\mathfrak{U}_4(n_1,n_2,n_3,n_4;I_4)$ is $\{A_i^{\alpha_i}C_k^{\gamma_k}
E_m^{\zeta_m}G_s^{\lambda_s}p_1^jp_2^lx^ey^ft^g\}$
for all parameters $\alpha_i,~\gamma_k,~\zeta_m,~\lambda_s,~j,~l,~f,~g\in\mathbb{I}_{0,1},
~e\in\mathbb{I}_{0,3}$. Then $\dim A$ = $\dim \mathfrak{U}_4(n_1,n_2,n_3,n_4;I_4)$ and
whence $A\cong \mathfrak{U}_4(n_1,n_2,n_3,n_4;I_4)$.

$(2)$ Suppose that $\Phi: \mathfrak{U}_4(n_1,n_2,n_3,n_4;I_4)\rightarrow \mathfrak{U}_4(n_1,n_2,n_3,n_4;I_4^{'})$ is an isomorphism of
Hopf algebras. Similar to the proof of Proposition~\ref{2}, $\Phi|_H\in \{\tau_1,\tau_3,\tau_9,\tau_{11},\tau_{17},\tau_{19},\tau_{25},\tau_{27}\}$. If $\Phi|_H=\tau_1$, then
\begin{flalign*}
&\Phi(p_1)=zp_1^{'},\hspace{1em}\Phi(p_2)=zp_2^{'}, \hspace{1em}\Phi(A_i)=\sum\limits_{i^{'}=1}^{n_1}a_{i^{'}i}A_{i^{'}}^{'},\\
&\Phi(C_k)=\sum\limits_{k^{'}=1}^{n_2}c_{k^{'}k}C_{k^{'}}^{'},
~\Phi(E_m)=\sum\limits_{m^{'}=1}^{n_3}e_{m^{'}m}E_{m^{'}}^{'},
~\Phi(G_s)=\sum\limits_{s^{'}=1}^{n_4}g_{s^{'}s}G_{s^{'}}^{'}.
\end{flalign*}
Whence the first equations of $(\ref{Aut1.1.1})-(\ref{Aut1.1.4})$ hold.
Similarly, $\Phi$ is an isomorphism of Hopf algebras if and only if

\hspace{2em}when $\Phi|_H=\tau_3$, the first equations of $(\ref{Aut1.1.1})-(\ref{Aut1.1.4})$ hold;

\hspace{2em}when $\Phi|_H\in\{\tau_9,\tau_{11}\}$, the first equations of $(\ref{Aut1.5.1})-(\ref{Aut1.5.4})$ hold;

\hspace{2em}when $\Phi|_H\in\{\tau_{17},\tau_{19}\}$, the first equations of $(\ref{Aut1.9.1})-(\ref{Aut1.9.4})$ hold;

\hspace{2em}when $\Phi|_H\in\{\tau_{25},\tau_{27}\}$, the first equations of $(\ref{Aut1.13.1})-(\ref{Aut1.13.4})$ hold.

The proof of $(3)$ is completely analogous.
\end{proof}

\begin{defi}\label{def 14}
For a set of parameters $I_{14}=\{\lambda,\mu,\alpha \}$, denote by $\mathfrak{U}_{14}(I_{14})$ the algebra that is generated by $x,~y,~t$, $p_1,~p_2$, $q_1,~q_2$ satisfying the relations $(\ref{3.1})-(\ref{3.3})$ and
\begin{flalign}
&xp_1=p_1x,~~yp_1=-p_1y,~~tp_1=p_1x^2t,
~~xp_2=p_2x,~~yp_2=-p_2y,~~tp_2=-p_2x^2t,&\label{e14.1}\\
&xq_1=q_1x,~~yq_1=-q_1y,~~tq_1=q_1x^2t,
~~xq_2=q_2x, ~~yq_2=-q_2y,~~tq_2=-q_2x^2t,&\\
%\end{flalign}
%\begin{flalign}
&p_1^2=\lambda(1-x^2),\hspace{2em}p_2^2=-\lambda(1-x^2),\hspace{2em}p_1p_2+p_2p_1=0,&\label{e14.3}\\
&q_1^2=\mu(1-x^2),\hspace{2em}q_2^2=-\mu(1-x^2),\hspace{2em}q_1q_2+q_2q_1=0,&\label{e14.4}\\
&p_1q_1+q_1p_1=\alpha(1-x^2),~p_2q_2+q_2p_2=-\alpha(1-x^2),~p_1q_2+q_2p_1=p_2q_1+q_1p_2=0.&\label{e14.5}
\end{flalign}
It is a Hopf algebra with its coalgebra structure determined by

$~~~~~~~~~~~~\triangle(p_1)=p_1\otimes 1+\frac{1}{2}(1+x^2)y\otimes p_1+\frac{1}{2}(1-x^2)y\otimes p_2$,

$~~~~~~~~~~~~\triangle(p_2)=p_2\otimes 1+\frac{1}{2}(1+x^2)y\otimes p_2+\frac{1}{2}(1-x^2)y\otimes p_1$,

$~~~~~~~~~~~~\triangle(q_1)=q_1\otimes 1+\frac{1}{2}(1+x^2)y\otimes q_1+\frac{1}{2}(1-x^2)y\otimes q_2$,

$~~~~~~~~~~~~\triangle(q_2)=q_2\otimes 1+\frac{1}{2}(1+x^2)y\otimes q_2+\frac{1}{2}(1-x^2)y\otimes q_1$.

\end{defi}

\begin{defi}\label{def 15}
Denote by $\mathfrak{U}_{15}(\lambda,\mu)$ the algebra that is generated by $x,~y,~t$, $p_1,~p_2$, $q_1,~q_2$ satisfying the relations $(\ref{3.1})-(\ref{3.3})$, $(\ref{e14.1})$, $(\ref{e14.3})$, $(\ref{e14.4})$ and
\begin{flalign}
&xq_1=-q_1x,~yq_1=-q_1y,~tq_1=q_1x^2t,
~xq_2=-q_2x,~yq_2=-q_2y,~tq_2=-q_2x^2t,&\\
&p_1q_1+q_1p_1=0,\hspace{1em}p_2q_2+q_2p_2=0,\hspace{1em}p_1q_2+q_2p_1=0,\hspace{1em}p_2q_1+q_1p_2=0.&
\end{flalign}
It is a Hopf algebra with the same coalgebra structure as $\mathfrak{U}_{14}(I_{14})$.
\end{defi}

\begin{defi}\label{def 16}
Denote by $\mathfrak{U}_{16}(\lambda,\mu)$ the algebra that is generated by $x,~y,~t$, $p_1,~p_2$, $q_1,~q_2$ satisfying the relations $(\ref{3.1})-(\ref{3.3})$, $(\ref{e14.1})$, $(\ref{e14.3})$, $(\ref{e14.4})$ and
\begin{flalign}
&xq_1=-q_1x, \hspace{1em}yq_1=q_1y,\hspace{1em}tq_1=q_1t,
\hspace{1em}xq_2=-q_2x, \hspace{1em}yq_2=q_2y,\hspace{1em}tq_2=-q_2t,&\label{e16.1}\\
&p_1q_1-q_1p_1=0,\hspace{1em}p_2q_2-q_2p_2=0,\hspace{1em}p_1q_2-q_2p_1=0,\hspace{1em}p_2q_1-q_1p_2=0.&
\end{flalign}
It is a Hopf algebra with its coalgebra structure determined by

$~~~~~~~~~~~~\triangle(q_1)=q_1\otimes 1+\frac{1}{2}(1+x^2)x\otimes q_1+\frac{1}{2}(1-x^2)x\otimes q_2$,

$~~~~~~~~~~~~\triangle(q_2)=q_2\otimes 1+\frac{1}{2}(1+x^2)x\otimes q_2+\frac{1}{2}(1-x^2)x\otimes q_1$.

\end{defi}

\begin{defi}\label{def 17}
For a set of parameters $I_{17}=\{\lambda,\mu,\alpha \}$, denote by $\mathfrak{U}_{17}(I_{17})$ the algebra that is generated by $x,~y,~t$, $p_1,~p_2$, $q_1,~q_2$ satisfying the relations $(\ref{3.1})-(\ref{3.3})$, $(\ref{e14.3})-(\ref{e14.5})$ and
\begin{flalign}
&xp_1=p_1x, \hspace{1em}yp_1=p_1y,\hspace{1em}tp_1=-p_1t,
\hspace{1em}xp_2=-p_2x,\hspace{1em}yp_2=p_2y,\hspace{1em}tp_2=-p_2t,&\label{e17.1}\\
&xq_1=q_1x, \hspace{1em}yq_1=q_1y,\hspace{1em}tq_1=-q_1t,
\hspace{1em}xq_2=-q_2x, \hspace{1em}yq_2=q_2y,\hspace{1em}tq_2=-q_2t.&\label{e17.2}
\end{flalign}
It is a Hopf algebra with its coalgebra structure determined by

$~~~~~~~~~~~~\triangle(p_1)=p_1\otimes 1+\frac{1}{2}(1+x^2)t\otimes p_1+\frac{1}{2}(1-x^2)yt\otimes p_2$,

$~~~~~~~~~~~~\triangle(p_2)=p_2\otimes 1+\frac{1}{2}(1+x^2)yt\otimes p_2+\frac{1}{2}(1-x^2)t\otimes p_1$,

$~~~~~~~~~~~~\triangle(q_1)=q_1\otimes 1+\frac{1}{2}(1+x^2)t\otimes q_1+\frac{1}{2}(1-x^2)yt\otimes q_2$,

$~~~~~~~~~~~~\triangle(q_2)=q_2\otimes 1+\frac{1}{2}(1+x^2)yt\otimes q_2+\frac{1}{2}(1-x^2)t\otimes q_1$.

\end{defi}

\begin{defi}\label{def 18}
For a set of parameters $I_{18}=\{\lambda,\mu,\alpha \}$, denote by $\mathfrak{U}_{18}(I_{18})$ the algebra that is  generated by $x,~y,~t$, $p_1,~p_2$, $q_1,~q_2$ satisfying the relations $(\ref{3.1})-(\ref{3.3})$, $(\ref{e14.3})$, $(\ref{e14.4})$, $(\ref{e17.1})$, $(\ref{e17.2})$ and
\begin{flalign}
&p_1q_1+q_1p_1=\alpha(y+x^2y-2),\hspace{2em}p_2q_2+q_2p_2=\alpha(y-x^2y),\\
&p_1q_2+q_2p_1=0,\hspace{2em}p_2q_1+q_1p_2=0.&
\end{flalign}
It is a Hopf algebra with its coalgebra structure determined by

$~~~~~~~~~~~~\triangle(q_1)=q_1\otimes 1+\frac{1}{2}(1+x^2)yt\otimes q_1+\frac{1}{2}(1-x^2)t\otimes q_2$,

$~~~~~~~~~~~~\triangle(q_2)=q_2\otimes 1+\frac{1}{2}(1+x^2)t\otimes q_2+\frac{1}{2}(1-x^2)yt\otimes q_1$.
\end{defi}

\begin{defi}\label{def 19}
Denote by $\mathfrak{U}_{19}(\lambda,\mu)$ the algebra that is generated by $x,~y,~t$, $p_1,~p_2$, $q_1,~q_2$ satisfying the relations $(\ref{3.1})-(\ref{3.3})$, $(\ref{e14.3})$, $(\ref{e17.1})$ and
\begin{flalign}
&xq_1=q_1x, ~yq_1=-q_1y, ~tq_1=-q_1x^2t,
xq_2=-q_2x, ~yq_2=-q_2y, ~tq_2=-q_2x^2t,&\\
&q_1^2=\mu(1-x^2),\hspace{2em}q_2^2=-\mu(1-x^2),\hspace{2em}q_1q_2-q_2q_1=0,&\\
&p_1q_1+q_1p_1=0,\hspace{1em}p_2q_2-q_2p_2=0,
\hspace{1em}p_1q_2+q_2p_1=0,\hspace{1em}p_2q_1-q_1p_2=0.&
\end{flalign}
It is a Hopf algebra with its coalgebra structure determined by

$~~~~~~~~~~~~\triangle(q_1)=q_1\otimes 1+\frac{1}{2}(1+x^2)xt\otimes q_1+\frac{1}{2}(1-x^2)xyt\otimes q_2$,

$~~~~~~~~~~~~\triangle(q_2)=q_2\otimes 1+\frac{1}{2}(1+x^2)xyt\otimes q_2+\frac{1}{2}(1-x^2)xt\otimes q_1$.
\end{defi}

\begin{pro}\label{14}
$(1)$ Suppose $A$ is a finite-dimensional Hopf algebra with coradical $H$ such
that its infinitesimal braiding is isomorphic to $\Omega_{i}$ for $i\in\{14,15,16,17,18,19\}$, then $A\cong \mathfrak{U}_{14}(I_{14})$ or $\mathfrak{U}_{15}(\lambda,\mu)$ or $\mathfrak{U}_{16}(\lambda,\mu)$ or $\mathfrak{U}_{17}(I_{17})$, $\mathfrak{U}_{18}(I_{18})$, $\mathfrak{U}_{19}(\lambda,\mu)$.

$(2)$ $\mathfrak{U}_{17}(I_{17})\cong \mathfrak{U}_{17}(I_{17}^{'})$ if and only if there exist nonzero parameters $a_1,~a_2$, $b_1,~b_2$
satisfying $(\ref{Aut14.1})$. $\mathfrak{U}_{14}(I_{14})\cong \mathfrak{U}_{14}(I_{14}^{'})$ if and only if there exist nonzero parameters $a_1,~a_2$, $b_1,~b_2$
such that
\begin{align}
\begin{split}
&a_1^2\lambda^{'}+a_1a_2\alpha^{'}+a_2^2\mu^{'}=\lambda,
\hspace{2em}b_1^2\lambda^{'}+b_1b_2\alpha^{'}+b_2^2\mu^{'}=\mu,\\
&2a_1b_1\lambda^{'}+2a_2b_2\mu^{'}+(a_1b_2+a_2b_1)\alpha^{'}=\alpha,
\label{Aut14.1}
\end{split}
\end{align}
or satisfying
\begin{align}
\begin{split}
&a_1^2\lambda^{'}+a_1a_2\alpha^{'}+a_2^2\mu^{'}=-\lambda,
\hspace{2em}b_1^2\lambda^{'}+b_1b_2\alpha^{'}+b_2^2\mu^{'}=-\mu,\\
&2a_1b_1\lambda^{'}+2a_2b_2\mu^{'}+(a_1b_2+a_2b_1)\alpha^{'}=-\alpha.
\label{Aut14.2}
\end{split}
\end{align}

$(3) ~\mathfrak{U}_{18}(I_{18})\cong \mathfrak{U}_{18}(I_{18}^{'})$ if and only if there exist nonzero parameters $a_1$, $a_2$ such that
\begin{align}
\begin{split}
&{a_1}^2\lambda^{'}=\lambda,
\hspace{2em}{a_2}^2\mu^{'}=\mu,
\hspace{2em}a_1a_2\alpha^{'}=\alpha,
\label{Aut18.1}
\end{split}
\end{align}
or satisfying
\vspace{-2em}
\begin{align}
\begin{split}
&{a_1}^2\mu^{'}=\lambda,
\hspace{2em}{a_2}^2\lambda^{'}=\mu,
\hspace{2em}a_1a_2\alpha^{'}=\alpha.
\label{Aut18.2}
\end{split}
\end{align}

$(4)$ Let $i\in\{15,16,19\}$. $\mathfrak{U}_{i}(\lambda,\mu)\cong \mathfrak{U}_{i}(1,1)$ for $\lambda,\mu\neq 0$,  and $\mathfrak{U}_{i}(1,1)\ncong \mathfrak{U}_{i}(0,0)$.
\end{pro}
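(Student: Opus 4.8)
The plan is to follow the pattern of Propositions~\ref{1}, \ref{2} and~\ref{4}. Throughout, write $\Omega_i=M\oplus M'$, where $\{M,M'\}$ is $\{M_1,M_1\}$, $\{M_1,M_2\}$, $\{M_1,M_7\}$, $\{M_3,M_3\}$, $\{M_3,M_5\}$, $\{M_3,M_9\}$ for $i=14,\dots,19$ respectively; by Lemmas~\ref{V} and~\ref{direct sum} each $\mathcal B(\Omega_i)$ has dimension $16$, so $\dim(\mathcal B(\Omega_i)\sharp H)=256$. For Part~(1), Theorem~\ref{gr} gives $\mathrm{gr}(A)\cong\mathcal B(\Omega_i)\sharp H$. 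Choosing lifts $p_1,p_2,q_1,q_2\in A$ of a basis of $\Omega_i$ adapted to the $H$-action, the relations involving only $x,y,t$ are shown to hold in $A$ exactly as in Proposition~\ref{1}: conjugating a $(1,g)$- or matrix-type skew-primitive by a group-like stays of that kind, $\mathcal P_{1,g}(H)=\mathbbm k(1-g)$, and combining the $x,y,t$-relations kills all group-like correction terms. By~\cite{X} the quadratic elements $p_1^2$, $p_1p_2+p_2p_1$, $q_1^2$, $q_1q_2\pm q_2q_1$ and the cross products $p_iq_j\pm q_jp_i$ are again skew-primitive; computing their coproducts (as in Propositions~\ref{1} and~\ref{2}) places each in some $\mathcal P_{1,h}(H)$, and inspecting the $t$- and $y$-actions then fixes the coefficient of $1-x^2$ (resp.\ of $y+x^2y-2$ for $\mathfrak U_{18}$) --- e.g.\ $p_1^2+p_2^2$ is primitive hence $0$, $p_1^2-p_2^2\in\mathbbm k(1-x^2)$, and anticommutation with $t$ kills $p_1q_2+q_2p_1$ while leaving $p_1q_1+q_1p_1$ free. (For $i=19$ one uses that $\mathcal B(M_9)$ is of ``commutative'' type by Lemma~\ref{V}, which explains the relation $q_1q_2-q_2q_1=0$ in Definition~\ref{def 19}.) This yields the defining relations of $\mathfrak U_i(I_i)$, hence a surjection $\mathfrak U_i(I_i)\twoheadrightarrow A$; a PBW basis of $\mathfrak U_i(I_i)$ of dimension $256$ (Diamond Lemma, \cite{G}) forces it to be an isomorphism.

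For Parts~(2)--(3), a Hopf isomorphism $\Phi\colon\mathfrak U_i(I_i)\to\mathfrak U_i(I_i')$ restricts to $\tau=\Phi|_H\in\mathrm{Aut}(H)$ with $(\Omega_i)^\tau\cong\Omega_i$; using the isomorphisms $M^\tau\cong M'$ recorded in the Corollary above, one lists the admissible $\tau$ (for $\mathfrak U_{17}$: those fixing $M_3$; for $\mathfrak U_{18}$: those preserving $\{M_3,M_5\}$, some of which swap the two summands; similarly for $\mathfrak U_{14}$). For each admissible $\tau$ the $x,y,t$-relations force $\Phi$ to send $p_1,p_2,q_1,q_2$ to linear combinations of the primed generators of the matching Yetter-Drinfeld type --- with no group-like corrections, as in Proposition~\ref{1}, and with the partner coefficients fixed by the ``swapped slot'' of the coproduct; inserting these into $p_1^2$, $q_1^2$, $p_1q_1+q_1p_1$ and comparing coefficients of $1-x^2$ yields precisely the systems~(\ref{Aut14.1})--(\ref{Aut18.2}). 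The alternative~(\ref{Aut14.2}) occurs for $\mathfrak U_{14}$ but not for $\mathfrak U_{17}$ for a structural reason: in $M_1$ the two basis vectors have equal $x$- and $y$-actions, so an automorphism twisting $t$ by $y$ interchanges them and carries $\Phi(p_1)$ into the span of the ``$p_2$-type'' generators, whose squares are the negatives of those of the ``$p_1$-type'' ones; in $M_3$ the two basis vectors have distinct $x$-actions, so no such interchange is possible. For $\mathfrak U_{18}$ the summand-fixing $\tau$ give~(\ref{Aut18.1}) and the summand-swapping $\tau$ give~(\ref{Aut18.2}) (with $\lambda',\mu'$ interchanged).

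For Part~(4), where $i\in\{15,16,19\}$ and there is no deformation parameter on the cross relations $p_iq_j\pm q_jp_i=0$: when $\lambda,\mu\neq0$ the map fixing $H$ pointwise and sending $p_s\mapsto\sqrt\lambda\,p_s$, $q_s\mapsto\sqrt\mu\,q_s$ ($s=1,2$) is readily checked to be a Hopf isomorphism $\mathfrak U_i(\lambda,\mu)\to\mathfrak U_i(1,1)$ ($\mathbbm k$ being algebraically closed). For $\mathfrak U_i(1,1)\ncong\mathfrak U_i(0,0)$: the algebra $\mathfrak U_i(0,0)=\mathcal B(\Omega_i)\sharp H$ is coradically graded, so a putative isomorphism $\Phi$ would carry $p_1$ into $(\mathcal B(\Omega_i)\sharp H)_1=H\oplus(\text{degree }1)$, say $\Phi(p_1)=h+w$; combining the coproduct of $p_1$ with its $x,y,t$-relations forces the $H$-part $h$ to vanish (as in Proposition~\ref{1}), and since $\mathcal B(M_1)$ and $\mathcal B(M_3)$ are of ``anticommutative'' type every degree-$1$ element squares to $0$, whence $\Phi(p_1)^2=w^2=0$, contradicting $\Phi(p_1)^2=\Phi(1-x^2)=1-\tau(x)^2\neq0$. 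I expect the main obstacle to be the bookkeeping in Part~(1) --- verifying that the deformations of Definitions~\ref{def 14}--\ref{def 19} are exactly the admissible ones, which requires matching the coproduct computations against every $x,y,t$-relation --- together with, in Parts~(2)--(3), the complete determination of which automorphisms of $H$ preserve each $\Omega_i$ and the resulting multi-case analysis of the induced maps on skew-primitive generators.
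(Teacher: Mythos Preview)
Your proposal is correct and follows essentially the same approach as the paper's own proof: Theorem~\ref{gr} plus coproduct computations on the quadratic generators and the $x,y,t$-relations to pin down the lifting parameters, then the Diamond Lemma for the dimension count in Part~(1); restriction to $\mathrm{Aut}(H)$ and a case analysis of the admissible $\tau$ for Parts~(2)--(3); and rescaling by $\sqrt{\lambda},\sqrt{\mu}$ for Part~(4). Your structural explanation of why $\mathfrak U_{17}$ admits only~(\ref{Aut14.1}) while $\mathfrak U_{14}$ also admits~(\ref{Aut14.2}) is accurate and makes explicit what the paper leaves to ``the proof is completely analogous''. One small caution in Part~(4): the claim ``every degree-$1$ element squares to $0$'' is not literally true in $\mathcal B(\Omega_i)\sharp H$ for all $i$ (for instance in $\Omega_{16}$ the $p$'s and $q$'s commute, and in $\Omega_{19}$ the $q$'s commute), so you should say instead that the $x,y,t$-relations and the coproduct force $\Phi(p_1)$ into the $M$-isotypic piece (span of $p_1',p_2'$), where the anticommutation relations of $\mathcal B(M)$ do give $\Phi(p_1)^2=0$; with that adjustment your non-isomorphism argument goes through, and in fact supplies a justification the paper itself omits.
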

\begin{proof} $(1)$ We prove the claim for $\Omega_{14}$. The proofs for others follow the same lines. By Theorem $\ref{gr}$, we have that gr$(A)\cong \mathcal{B}(\Omega_{14})\sharp H$. Let $M_1=\mathbbm{k}\{ p_1,p_2\}=\mathbbm{k}\{ q_1,q_2\}$. By Proposition \ref{2}, we only need to prove that $(\ref{e14.5})$ holds in $A$. After a direct computation, we have that
\begin{flalign*}
\triangle(p_1q_1+q_1p_1)=&(p_1q_1+q_1p_1)\otimes 1+\frac{1}{2}(1+x^2)\otimes(p_1q_1+q_1p_1)\\
&+\frac{1}{2}(1-x^2)\otimes(p_2q_2+q_2p_2),\\
\triangle(p_2q_2+q_2p_2)=&(p_2q_2+q_2p_2)\otimes 1+\frac{1}{2}(1+x^2)\otimes(p_2q_2+q_2p_2)\\
&+\frac{1}{2}(1-x^2)\otimes(p_1q_1+q_1p_1).
\end{flalign*}
So $p_1q_1+q_1p_1=\alpha(1-x^2), ~p_2q_2+q_2p_2=-\alpha(1-x^2)$ for
some $\alpha\in\mathbbm{k}$. Similarly, other relations hold in $A$.
Then there is a surjective Hopf homomorphism from
$\mathfrak{U}_{14}(I_{14})$ to $A$. We can observe that all elements
of $\mathfrak{U}_{14}(I_{14})$ can be expressed by linear
combinations of
\begin{equation*}
\{p_1^ip_2^jq_1^kq_2^l x^ey^ft^g;~ i,j,k,l,f,g\in\mathbb{I}_{0,1}, ~e\in\mathbb{I}_{0,3}\}.
\end{equation*}
In fact, according to the Diamond Lemma, the set is a basis of
$\mathfrak{U}_{14}(I_{14})$. Then $\dim A$ =
$\dim \mathfrak{U}_{14}(I_{14})$,
whence $A\cong \mathfrak{U}_{14}(I_{14})$.

$(2)$ Suppose $\Phi : \mathfrak{U}_{14}(I_{14})\rightarrow \mathfrak{U}_{14}(I_{14}^{'})$ is an isomorphism of
Hopf algebras. Similar to the proof of Proposition \ref{2}, $\Phi|_H\in \{\tau_1-\tau_{16}\}$: when $\Phi|_H\in\{\tau_1-\tau_4,\tau_{9}-\tau_{12}\}$,
\begin{equation*}
\Phi(p_1)=a_1p_1^{'}+a_2q_1^{'},~ \Phi(p_2)=a_1p_2^{'}+a_2q_2^{'}, ~\Phi(q_1)=b_1p_1^{'}+b_2q_1^{'},~ \Phi(q_2)=b_1p_2^{'}+b_2q_2^{'},
\end{equation*}
then the relation $(\ref{Aut14.1})$ holds;  when $\Phi|_H\in\{\tau_5-\tau_8,\tau_{13}-\tau_{16}\}$,
\begin{equation*}
\Phi(p_1)=a_1p_2^{'}+a_2q_2^{'}, ~\Phi(p_2)=a_1p_1^{'}+a_2q_1^{'}, ~\Phi(q_1)=b_1p_2^{'}+b_2q_2^{'},~ \Phi(q_2)=b_1p_1^{'}+b_2q_1^{'},
\end{equation*}
the relation $(\ref{Aut14.2})$ holds.

For $\mathfrak{U}_{17}(I_{17})$, the proof is completely analogous.

$(3)$ Suppose $\Phi : \mathfrak{U}_{18}(I_{18})\rightarrow \mathfrak{U}_{18}(I_{18}^{'})$ is an isomorphism of
Hopf algebras.

When
$\Phi|_H\in\{\tau_1,\tau_{9},\tau_{17},\tau_{25}\}$,
\begin{equation*}
\Phi(p_1)=a_1p_1^{'}, \hspace{1em}\Phi(p_2)=a_1p_2^{'}, \hspace{1em}\Phi(q_1)=a_2q_1^{'}, \hspace{1em}\Phi(q_2)=a_2q_2^{'};
\end{equation*}
when $\Phi|_H\in\{\tau_3,\tau_{11},\tau_{19},\tau_{27},\}$,
\begin{equation*}
\Phi(p_1)=a_1p_1^{'}, \hspace{1em}\Phi(p_2)=-a_1p_2^{'}, \hspace{1em}\Phi(q_1)=a_2q_1^{'},\hspace{1em} \Phi(q_2)=-a_2q_2^{'},
\end{equation*}
then the relation $(\ref{Aut18.1})$ holds. When $\Phi|_H\in\{\tau_5,\tau_{13},\tau_{21},\tau_{29}\}$,
\begin{equation*}
\Phi(p_1)=a_1q_1^{'},\hspace{1em} \Phi(p_2)=a_1q_2^{'}, \hspace{1em}\Phi(q_1)=a_2p_1^{'}, \hspace{1em}\Phi(q_2)=a_2p_2^{'};
\end{equation*}
when $\Phi|_H\in\{\tau_7,\tau_{15},\tau_{23},\tau_{31},\}$,
\begin{equation*}
\Phi(p_1)=a_1q_1^{'}, \hspace{1em}\Phi(p_2)=-a_1q_2^{'}, \hspace{1em}\Phi(q_1)=a_2p_1^{'},\hspace{1em} \Phi(q_2)=-a_2p_2^{'},
\end{equation*}
then the relation $(\ref{Aut18.2})$ holds.

$(4)$ When $\lambda,~\mu\neq 0$, $\Phi:\mathfrak{U}_{15}(1,1)\rightarrow \mathfrak{U}_{15}(\lambda,\mu)$
by $\Phi|_{H}=id$, $p_i\mapsto \sqrt{\lambda}p_i$, $q_i\mapsto \sqrt{\mu}q_i$ for $i=1,2$.
For $\mathfrak{U}_{16}(\lambda,\mu)$, $\mathfrak{U}_{19}(\lambda,\mu)$, the proofs are completely analogous.
\end{proof}

\begin{defi}\label{def 20}
For a set of parameters $I_{20}=\{\lambda,\mu,\alpha \}$, denote by $\mathfrak{U}_{20}(I_{20})$ the algebra that is
generated by $x,~y,~t$, $p_1,~p_2$, $q_1,~q_2$ satisfying the relations $(\ref{3.1})-(\ref{3.3})$ and
\begin{flalign}
&xp_1=p_1x,~yp_1=-p_1y,~tp_1=-p_1x^2t,
~xp_2=-p_2x,~yp_2=-p_2y,~tp_2=p_2x^2t,&\label{e20.1}\\
&xq_1=q_1x,~yq_1=-q_1y,~tq_1=-q_1x^2t,
~xq_2=-q_2x,~yq_2=-q_2y,~tq_2=q_2x^2t,&\\
&p_1^2=\lambda(1-x^2),\hspace{2em}p_2^2=-\lambda(1-x^2),\hspace{2em}p_1p_2-p_2p_1=0,&\label{e20.2}\\
&q_1^2=\mu(1-x^2),\hspace{2em}q_2^2=-\mu(1-x^2),\hspace{2em}q_1q_2-q_2q_1=0,&\label{e20.3}\\
&p_1q_1+q_1p_1=\alpha(1-x^2),\hspace{2em}p_2q_2+q_2p_2=-\alpha(1-x^2),\label{e20.5}\\
&p_1q_2-q_2p_1=0,\hspace{2em}p_2q_1-q_1p_2=0.&\label{e20.4}
\end{flalign}
It is a Hopf algebra with the same coalgebra structure as $\mathfrak{U}_{17}(I_{17})$.
\end{defi}

\begin{defi}\label{def 21}
Denote by $\mathfrak{U}_{21}(\lambda,\mu)$ the algebra that is generated by $x,~y,~t$, $p_1,~p_2$, $q_1,~q_2$
satisfying the relations $(\ref{3.1})-(\ref{3.3})$, $(\ref{e20.1})$, $(\ref{e20.2})$, $(\ref{e20.3})$ and
\begin{flalign}
&xq_1=q_1x,~yq_1=-q_1y,~tq_1=q_1x^2t,
~xq_2=-q_2x,~yq_2=-q_2y,~tq_2=-q_2x^2t,&\\
&p_1q_1-q_1p_1=0,\hspace{1em}p_2q_2-q_2p_2=0,
\hspace{1em}p_1q_2+q_2p_1=0,\hspace{1em}p_2q_1+q_1p_2=0.&
\end{flalign}
It is a Hopf algebra with the same coalgebra structure as $\mathfrak{U}_{18}(I_{18})$.
\end{defi}

\begin{defi}\label{def 22}
For a set of parameters $I_{22}=\{\lambda,\mu,\alpha \}$, denote by $\mathfrak{U}_{22}(I_{22})$ the algebra that
is generated by $x,~y,~t$, $p_1,~p_2$, $q_1,~q_2$ satisfying the relations $(\ref{3.1})-(\ref{3.3})$, $(\ref{e14.3})$, $(\ref{e14.4})$, $(\ref{e16.1})$ and
\begin{flalign}
&xp_1=-p_1x, \hspace{1em}yp_1=p_1y,\hspace{1em}tp_1=p_1t,
\hspace{1em}xp_2=-p_2x,\hspace{1em}yp_2=p_2y,\hspace{1em}tp_2=-p_2t,&\label{e22.1}\\
&p_1q_1+q_1p_1=p_2q_2+q_2p_2=\alpha(1-x^2),~p_1q_2+q_2p_1=0,~p_2q_1+q_1p_2=0.&
\end{flalign}
It is a Hopf algebra with its coalgebra structure determined by

$~~~~~~~~~~~~\triangle(p_1)=p_1\otimes 1+\frac{1}{2}(1+x^2)x\otimes p_1+\frac{1}{2}(1-x^2)x\otimes p_2$,

$~~~~~~~~~~~~\triangle(p_2)=p_2\otimes 1+\frac{1}{2}(1+x^2)x\otimes p_2+\frac{1}{2}(1-x^2)x\otimes p_1$,

$~~~~~~~~~~~~\triangle(q_1)=q_1\otimes 1+\frac{1}{2}(1+x^2)x\otimes q_1+\frac{1}{2}(1-x^2)x\otimes q_2$,

$~~~~~~~~~~~~\triangle(q_2)=q_2\otimes 1+\frac{1}{2}(1+x^2)x\otimes q_2+\frac{1}{2}(1-x^2)x\otimes q_1$.

\end{defi}

\begin{defi}\label{def 23}
For a set of parameters $I_{23}=\{\lambda,\mu,\alpha \}$, denote by $\mathfrak{U}_{23}(I_{23})$ the algebra
that is generated by $x,~y,~t$, $p_1,~p_2$, $q_1,~q_2$ satisfying the relations $(\ref{3.1})-(\ref{3.3})$,
$(\ref{e14.3})$, $(\ref{e14.4})$, $(\ref{e16.1})$, $(\ref{e22.1})$ and
\begin{flalign}
&p_1q_1+q_1p_1=\alpha(y+x^2y-2),\hspace{2em}p_2q_2+q_2p_2=\alpha(x^2y-y),\\
&p_1q_2+q_2p_1=0,\hspace{2em}p_2q_1+q_1p_2=0.&
\end{flalign}
It is a Hopf algebra with its coalgebra structure determined by

$~~~~~~~~~~~~\triangle(p_1)=p_1\otimes 1+\frac{1}{2}(1+x^2)x\otimes p_1+\frac{1}{2}(1-x^2)x\otimes p_2$,

$~~~~~~~~~~~~\triangle(p_2)=p_2\otimes 1+\frac{1}{2}(1+x^2)x\otimes p_2+\frac{1}{2}(1-x^2)x\otimes p_1$,

$~~~~~~~~~~~~\triangle(q_1)=q_1\otimes 1+\frac{1}{2}(1+x^2)xy\otimes q_1+\frac{1}{2}(1-x^2)xy\otimes q_2$,

$~~~~~~~~~~~~\triangle(q_2)=q_2\otimes 1+\frac{1}{2}(1+x^2)xy\otimes q_2+\frac{1}{2}(1-x^2)xy\otimes q_1$.

\end{defi}

Similar to Proposition \ref{14}, we have the following
\begin{pro}\label{20}
$(1)$ Suppose $A$ is a finite-dimensional Hopf algebra with coradical $H$ such
that its infinitesimal braiding is isomorphic to $\Omega_{i}$ for $i\in\{20,21,22,23\}$, then $A\cong \mathfrak{U}_{i}(I_{i})$.

$(2) ~\mathfrak{U}_{20}(I_{20})\cong \mathfrak{U}_{20}(I_{20}^{'})$, $\mathfrak{U}_{22}(I_{22})\cong \mathfrak{U}_{22}(I_{22}^{'})$ if and only if there
exist nonzero parameters $a_1,~a_2$, $b_1,~ b_2$
satisfying $(\ref{Aut14.1})$ or $(\ref{Aut14.2})$.

$(3) ~\mathfrak{U}_{23}(I_{23})\cong \mathfrak{U}_{23}(I_{23}^{'})$ if and only if there exist nonzero parameters $a_1$, $a_2$ satisfying
$(\ref{Aut18.1})$ or $(\ref{Aut18.2})$ or satisfying
\begin{align}
\begin{split}
&{a_1}^2\lambda^{'}=-\lambda,
\hspace{2em}{a_2}^2\mu^{'}=-\mu,
\hspace{2em}\alpha=0,
\label{Aut23.1}
\end{split}
\end{align}
or satisfying
\begin{align}
\begin{split}
&{a_1}^2\mu^{'}=-\lambda,
\hspace{2em}{a_2}^2\lambda^{'}=-\mu,
\hspace{2em}\alpha=0.
\label{Aut23.2}
\end{split}
\end{align}

$(4)$ $\mathfrak{U}_{21}(\lambda,\mu)\cong \mathfrak{U}_{21}(1,1)$ for $\lambda,\mu\neq 0$,  and $\mathfrak{U}_{21}(1,1)\ncong \mathfrak{U}_{21}(0,0)$.
\end{pro}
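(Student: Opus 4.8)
The plan is to mimic the proofs of Propositions \ref{2}, \ref{4} and \ref{14} step by step; the only genuinely new inputs are the explicit shapes of the relations in Definitions \ref{def 20}--\ref{def 23} and the list of automorphisms of $H$ compatible with each $\Omega_i$.

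For $(1)$, fix $i\in\{20,21,22,23\}$; I would carry out $\Omega_{20}$ in full and just indicate the changes for the other three. By Theorem \ref{gr} we have $\mathrm{gr}(A)\cong\mathcal{B}(\Omega_{20})\sharp H$, and since $\Omega_{20}=M_4\oplus M_4$ with $\dim\mathcal{B}(M_4)=4$ (Lemma \ref{V}) and $\mathcal{B}(\Omega_{20})\cong\mathcal{B}(M_4)\otimes\mathcal{B}(M_4)$ (Proposition \ref{sum}), one gets $\dim A=\dim\bigl(\mathcal{B}(\Omega_{20})\sharp H\bigr)=16\cdot 16=256$. Writing $\mathbbm{k}\{p_1,p_2\}$ and $\mathbbm{k}\{q_1,q_2\}$ for the two isotypic copies, all the ``linear'' relations of $\mathfrak{U}_{20}$ (those among $(\ref{3.1})$--$(\ref{3.3})$, $(\ref{e20.1})$ and the $x,y,t$-actions) already hold in $\mathcal{B}(\Omega_{20})\sharp H$, hence in $A$. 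For each quadratic generator $w$ of the defining ideal ($p_1^2$, $p_1p_2-p_2p_1$, their $q$-analogues, and the mixed elements $p_1q_1+q_1p_1$, $p_1q_2-q_2p_1$, $p_2q_1-q_1p_2$ together with their subscript-swapped partners) I would compute $\triangle(w)$ from the coproducts in Definition \ref{def 20}; this exhibits $w$ as an element of $\mathcal{P}_{1,g}(\mathcal{B}(\Omega_{20})\sharp H)=\mathcal{P}_{1,g}(H)=\mathbbm{k}\{1-g\}$ for the appropriate $g\in G(H)$, and then the $t$- and $y$-actions on $w$ pin down the scalar, reproducing exactly the relations of Definition \ref{def 20}. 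For $\Omega_{21}=M_4\oplus M_6$ the two summands are non-isomorphic, so the cross terms are forced to vanish and only $\lambda,\mu$ survive; for $\Omega_{22}$ and $\Omega_{23}$ one mixed parameter $\alpha$ remains, attached to $1-x^2$ in the first case and to $y+x^2y-2$ in the second. This produces a surjective Hopf map $\mathfrak{U}_i(I_i)\twoheadrightarrow A$, and the Diamond Lemma \cite{G} shows that $\{p_1^{a}p_2^{b}q_1^{c}q_2^{d}x^{e}y^{f}t^{g}\ :\ a,b,c,d,f,g\in\mathbb{I}_{0,1},\ e\in\mathbb{I}_{0,3}\}$ is a $\mathbbm{k}$-basis of $\mathfrak{U}_i(I_i)$; hence $\dim\mathfrak{U}_i(I_i)=256=\dim A$ and the surjection is an isomorphism.

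For $(2)$--$(4)$, given a Hopf isomorphism $\Phi$ between two members of one family, I would first note that $\Phi$ restricts to an automorphism of $H$, hence to some $\tau_k$ of Table 1; compatibility of $\Phi$ with the infinitesimal braiding (it must carry the isotypic blocks of $\Omega_i$ to the corresponding ones, permuting the two copies only when they are isomorphic) restricts $\Phi|_H$ to the admissible $\tau_k$'s, exactly as in the case-lists of Proposition \ref{14}$(2)$--$(3)$. On $\mathbbm{k}\{p_1,p_2,q_1,q_2\}$, $\Phi$ then has the form $\Phi(p_1)=a_1p_1'+a_2q_1'$, $\Phi(q_1)=b_1p_1'+b_2q_1'$ (with the parallel formulae for the index $2$), or the same expressions with $p_i',q_i'$ replaced by $p_{3-i}',q_{3-i}'$ when $\tau_k$ swaps the two basis vectors of the module; inserting these into $(\ref{e14.3})$, $(\ref{e14.4})$ and $(\ref{e20.5})$ (resp.\ into the corresponding relations of $\mathfrak{U}_{22}$) yields the systems $(\ref{Aut14.1})$ or $(\ref{Aut14.2})$, and conversely each such matrix datum defines an isomorphism, which proves $(2)$. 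For $\mathfrak{U}_{23}$ the summands $M_7,M_8$ are non-isomorphic but are interchanged by $\tau_{17}$, so $\Phi$ either fixes the two blocks, giving $\Phi(p_i)=\pm a_1p_i'$, $\Phi(q_i)=\pm a_2q_i'$ and the system $(\ref{Aut18.1})$, or swaps them, giving $(\ref{Aut18.2})$; the $\tau_k$'s with $x\mapsto x^3$ (or $t\mapsto x^2t$) sign-flip the diagonal relations $(\ref{e14.3})$, $(\ref{e14.4})$ while fixing the mixed right-hand side $y+x^2y-2$, which is consistent only if $\alpha=0$, and these account for the extra families $(\ref{Aut23.1})$, $(\ref{Aut23.2})$; this proves $(3)$. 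For $(4)$: when $\lambda,\mu\neq 0$ the map fixing $H$ pointwise and sending $p_i\mapsto\sqrt{\lambda}\,p_i$, $q_i\mapsto\sqrt{\mu}\,q_i$ is a Hopf isomorphism $\mathfrak{U}_{21}(1,1)\to\mathfrak{U}_{21}(\lambda,\mu)$; and $\mathfrak{U}_{21}(1,1)\not\cong\mathfrak{U}_{21}(0,0)$ since any isomorphism would, by the argument above, force $\Phi(p_1)\in\mathbbm{k}\{p_1',p_2'\}$ (the $t$-action excludes a $1-g$ term), whence $\Phi(p_1)^2=0$ in $\mathfrak{U}_{21}(0,0)$, contradicting $\Phi(p_1^2)=\Phi(1-x^2)=1-x^2\neq 0$.

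The coproduct computations in $(1)$ and the substitutions in $(2)$--$(4)$ are routine. The main obstacle is the organisational one: determining, for each $\Omega_i$, exactly which $\tau_k$ of Table 1 preserve the Yetter--Drinfeld structure and how each acts on the ordered pair of isotypic components (trivially, by a sign on the ``$2$''-components, by a transposition of the two copies, or by a composite). It is this combinatorics --- together with the fact that the mixed relation of $\mathfrak{U}_{23}$ has a right-hand side ($y+x^2y-2$) of a different nature from the diagonal one ($1-x^2$) --- that forces the split of the isomorphism conditions into $(\ref{Aut14.1})/(\ref{Aut14.2})$ for $\mathfrak{U}_{20},\mathfrak{U}_{22}$ and $(\ref{Aut18.1})/(\ref{Aut18.2})/(\ref{Aut23.1})/(\ref{Aut23.2})$ for $\mathfrak{U}_{23}$.
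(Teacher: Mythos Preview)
Your proposal is correct and follows exactly the strategy the paper intends; indeed the paper gives no independent argument for this proposition, merely saying it is ``similar to Proposition~\ref{14}'', and your outline fills in precisely those details.

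One small correction in part~(3): the extra families $(\ref{Aut23.1})$ and $(\ref{Aut23.2})$ do not come from the $\tau_k$ with $x\mapsto x^3$ or $t\mapsto x^2t$.  A direct check shows that, for instance, $\tau_9$ (with $x\mapsto x^3$, $t\mapsto t$) forces $\Phi(p_1)=a_1p_1'$, $\Phi(p_2)=-a_1p_2'$, which still yields $a_1^2\lambda'=\lambda$ with no sign flip.  The sign $a_1^2\lambda'=-\lambda$ arises instead from those $\tau_k$ with $\tau_k(t)=x^{a}y^{b}t$ and $a$ \emph{odd} (e.g.\ $\tau_2\colon t\mapsto xt$), since these force $\Phi(p_1)=a_1p_2'$, $\Phi(p_2)=\pm a_1p_1'$; then $\Phi(p_1q_1+q_1p_1)$ lands in $a_1a_2(p_2'q_2'+q_2'p_2')=a_1a_2\alpha'(x^2y-y)$, whose comparison with $\Phi(\alpha(y+x^2y-2))=\alpha(y+x^2y-2)$ forces $\alpha=\alpha'=0$ by linear independence --- exactly the mechanism you describe.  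This is a bookkeeping slip only; once you actually run through Table~1 case by case, the correct $\tau_k$'s emerge and the rest of your argument goes through unchanged.
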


\begin{defi}\label{def 24}
Denote by $\mathfrak{U}_{24}(\lambda)$ the algebra that is generated by $x,~y,~t$, $p_1,~p_2$, $q_1,~q_2$ satisfying the relations $(\ref{3.1})-(\ref{3.3})$ and
\begin{flalign}
&xp_1=\xi p_1x,~yp_1=-p_1y,~tp_1=p_2x^2t,
~xp_2=-\xi p_2x,~yp_2=-p_2y,~tp_2=p_1x^2t,&\\
&xq_1=\xi q_1x,~yq_1=-q_1y,~tq_1=q_2x^2t,
~xq_2=-\xi q_2x,~yq_2=-q_2y,~tq_2=q_1x^2t,&\\
&p_1^2=0,\hspace{1em}p_2^2=0,\hspace{1em}p_1p_2+p_2p_1=0,
\hspace{1em}q_1^2=0,\hspace{1em}q_2^2=0,\hspace{1em}q_1q_2+q_2q_1=0,&\label{e24.1}\\
&p_1q_1+q_1p_1=0,\hspace{2em}p_2q_2+q_2p_2=0,&\label{e24.2}\\
&p_1q_2+q_2p_1=\lambda(1-x^2y),\hspace{2em}p_2q_1+q_1p_2=-\lambda(1-x^2y).&\label{e24.3}
\end{flalign}
It is a Hopf algebra with its coalgebra structure determined by

$~~~~~~~~~~~~\triangle(p_1)=p_1\otimes 1+y\otimes p_1$,
\hspace{2em}$\triangle(p_2)=p_2\otimes 1+x^2\otimes p_2$,

$~~~~~~~~~~~~\triangle(q_1)=q_1\otimes 1+y\otimes q_1$,
\hspace{2em}$\triangle(q_2)=q_2\otimes 1+x^2\otimes q_2$.

\end{defi}

\begin{defi}\label{def 26}
For a set of parameters $I_{26}=\{\lambda,\mu,\alpha \}$, denote by $\mathfrak{U}_{26}(I_{26})$ the algebra that is generated by
$x,~y,~t$, $p_1,~p_2$, $q_1,~q_2$ satisfying the relations $(\ref{3.1})-(\ref{3.3})$ and
\begin{flalign}
&xp_1=\xi p_1x,~yp_1=p_1y,~tp_1=p_2t,
~xp_2=-\xi p_2x,~yp_2=p_2y,~tp_2=p_1t,&\label{e26.1}\\
&xq_1=\xi q_1x,~yq_1=q_1y,~tq_1=q_2t,
~xq_2=-\xi q_2x,~yq_2=q_2y,~tq_2=q_1t,&\label{e26.4}\\
&p_1^2=0,\hspace{2em}p_2^2=0,\hspace{2em}p_1p_2+p_2p_1=\lambda(1-y),&\label{e26.2}\\
&q_1^2=0,\hspace{2em}q_2^2=0,\hspace{2em}q_1q_2+q_2q_1=\mu(1-y),&\label{e26.3}\\
&p_1q_1+q_1p_1=0,\hspace{1em}p_2q_2+q_2p_2=0,
\hspace{1em}p_1q_2+q_2p_1=p_2q_1+q_1p_2=\alpha(1-y).&
\end{flalign}
It is a Hopf algebra with its coalgebra structure determined by

$~~~~~~~~~~~~\triangle(p_1)=p_1\otimes 1+x^2\otimes p_1$,
\hspace{2em}$\triangle(p_2)=p_2\otimes 1+x^2y\otimes p_2$,

$~~~~~~~~~~~~\triangle(q_1)=q_1\otimes 1+x^2\otimes q_1$,
\hspace{2em}$\triangle(q_2)=q_2\otimes 1+x^2y\otimes q_2$.

\end{defi}

\begin{defi}\label{def 27}
Denote by $\mathfrak{U}_{27}(\lambda,\mu)$ the algebra that is generated by $x,~y,~t$, $p_1,~p_2$, $q_1,~q_2$ satisfying the
relations $(\ref{3.1})-(\ref{3.3})$, $(\ref{e26.1})$, $(\ref{e26.2})$, $(\ref{e26.3})$ and
\begin{flalign}
&xq_1=-\xi q_1x,~yq_1=q_1y,~tq_1=q_2t,
~xq_2=\xi q_2x,~yq_2=q_2y,~tq_2=q_1t,&\label{e27.1}\\
&p_1q_1+q_1p_1=0,\hspace{1em}p_2q_2+q_2p_2=0,
\hspace{1em}p_1q_2+q_2p_1=0,\hspace{1em}p_2q_1+q_1p_2=0.&
\end{flalign}
It is a Hopf algebra with the same coalgebra structure as $\mathfrak{U}_{26}(I_{26})$.

\end{defi}

\begin{pro}\label{24}
$(1)$ Suppose $A$ is a finite-dimensional Hopf algebra with coradical $H$ such
that its infinitesimal braiding is isomorphic to $\Omega_{i}$ for $i\in\{24,26,27\}$, then $A\cong \mathfrak{U}_{24}(\lambda)$ or
$\mathfrak{U}_{26}(I_{26})$ or $\mathfrak{U}_{27}(\lambda,\mu)$.

$(2)$ $\mathfrak{U}_{24}(\lambda)\cong \mathfrak{U}_{24}(1)$, $\mathfrak{U}_{27}(\lambda,\mu)\cong \mathfrak{U}_{27}(1,1)$ for
$\lambda,~\mu\neq 0$, $\mathfrak{U}_{24}(1)\ncong \mathfrak{U}_{24}(0)$ and $\mathfrak{U}_{27}(1,1)\ncong \mathfrak{U}_{27}(0,0)$.

$(3) ~\mathfrak{U}_{26}(I_{26})\cong \mathfrak{U}_{26}(I_{26}^{'})$ if and only if there exist nonzero parameters $a_1,~a_2$, $b_1,~b_2$
such that
\begin{align}
\begin{split}
&a_1^2\lambda^{'}+2a_1a_2\alpha^{'}+a_2^2\mu^{'}=\lambda,
\hspace{1em}b_1^2\lambda^{'}+2b_1b_2\alpha^{'}+b_2^2\mu^{'}=\mu,\\
&a_1b_1\lambda^{'}+(a_1b_2+a_2b_1)\alpha^{'}+a_2b_2\mu^{'}=\alpha,
\label{Aut26.1}
\end{split}
\end{align}
or satisfying
\begin{align}
\begin{split}
&a_1^2\lambda^{'}+2a_1a_2\alpha^{'}+a_2^2\mu^{'}=-\xi\lambda,
\hspace{1em}b_1^2\lambda^{'}+2b_1b_2\alpha^{'}+b_2^2\mu^{'}=-\xi\mu,\\
&a_1b_1\lambda^{'}+(a_1b_2+a_2b_1)\alpha^{'}+a_2b_2\mu^{'}=-\xi\alpha,
\label{Aut26.2}
\end{split}
\end{align}
or satisfying
\begin{align}
\begin{split}
&a_1^2\lambda^{'}+2a_1a_2\alpha^{'}+a_2^2\mu^{'}=-\lambda,
\hspace{1em}b_1^2\lambda^{'}+2b_1b_2\alpha^{'}+b_2^2\mu^{'}=-\mu,\\
&a_1b_1\lambda^{'}+(a_1b_2+a_2b_1)\alpha^{'}+a_2b_2\mu^{'}=-\alpha,
\label{Aut26.3}
\end{split}
\end{align}
or satisfying
\begin{align}
\begin{split}
&a_1^2\lambda^{'}+2a_1a_2\alpha^{'}+a_2^2\mu^{'}=\xi\lambda,
\hspace{1em}b_1^2\lambda^{'}+2b_1b_2\alpha^{'}+b_2^2\mu^{'}=\xi\mu,\\
&a_1b_1\lambda^{'}+(a_1b_2+a_2b_1)\alpha^{'}+a_2b_2\mu^{'}=\xi\alpha.
\label{Aut26.4}
\end{split}
\end{align}

\end{pro}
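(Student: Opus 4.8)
The plan is to run the same three-step argument used for Propositions \ref{2}, \ref{14} and \ref{20}, adapted to the three infinitesimal braidings $\Omega_{24}=M_{13}\oplus M_{13}$, $\Omega_{26}=M_{15}\oplus M_{15}$ and $\Omega_{27}=M_{15}\oplus M_{16}$. For part $(1)$, I would start from Theorem \ref{gr}, which gives $\mathrm{gr}(A)\cong\mathcal B(\Omega_i)\sharp H$, so that $A$ is a lifting of $\mathcal B(\Omega_i)$ over $H$; fix bases $p_1,p_2$ and $q_1,q_2$ of the two simple summands and lift them to skew-primitive elements of $A$ lying above the degree-one part of the bosonization. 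The relations of Definition \ref{def 24} (resp. \ref{def 26}, \ref{def 27}) that only involve $x,y,t$ and the $H$-action on the $p$'s and $q$'s hold automatically, being forced by the Yetter--Drinfeld structure on $\mathrm{gr}(A)$ (if necessary after correcting the lift by an element of $\mathbbm{k}(1-x^2)$), exactly as in the proof of Proposition \ref{14}. The real work is the quadratic relations: for each one — say $p_1^2$, $p_1p_2+p_2p_1$, $p_1q_2+q_2p_1$ in the case of $\mathfrak U_{26}$ — I would compute the coproduct of the left-hand side and observe that it lies in $\mathcal P_{1,g}(A)$ for the appropriate group-like $g$ (one of $1,x^2,y,x^2y$), then use $\mathcal P_{1,g}(A)=\mathcal P_{1,g}(H)=\mathbbm{k}(1-g)$ (valid since $A$ has coradical $H$) together with the $x$-, $y$- and $t$-actions on that left-hand side to kill all but one scalar; this is how each $\lambda,\mu,\alpha$ is produced, and it accounts for the signs on the right-hand sides of \eqref{e26.2}--\eqref{e24.1}. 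This yields a surjective Hopf map $\mathfrak U_i(\dots)\twoheadrightarrow A$, and a Diamond Lemma computation (as in the proof of Proposition \ref{1}) shows that the monomials $p_1^ip_2^jq_1^kq_2^\ell x^ey^ft^g$ with $i,j,k,\ell,f,g\in\{0,1\}$ and $e\in\{0,1,2,3\}$ span $\mathfrak U_i$, so $\dim\mathfrak U_i=256=\dim A$ and the map is an isomorphism.

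For part $(2)$, when $\lambda,\mu\neq0$ I would write down the explicit map $\Phi|_H=\mathrm{id}$, $p_i\mapsto\sqrt\lambda\,p_i$, $q_i\mapsto\sqrt\mu\,q_i$ and check directly that it preserves the defining relations, as in Proposition \ref{14}$(4)$; this gives $\mathfrak U_{24}(\lambda)\cong\mathfrak U_{24}(1)$ and $\mathfrak U_{27}(\lambda,\mu)\cong\mathfrak U_{27}(1,1)$. For the non-isomorphisms $\mathfrak U_{24}(1)\ncong\mathfrak U_{24}(0)$ and $\mathfrak U_{27}(1,1)\ncong\mathfrak U_{27}(0,0)$ I would note that $\lambda=0$ (resp. $\lambda=\mu=0$) is exactly the bosonization $\mathcal B(\Omega_i)\sharp H$, whose diagram is a genuine Nichols algebra, so all its relations are homogeneous for the coradical filtration; whereas for $\lambda\neq0$ the relation $p_1q_2+q_2p_1=\lambda(1-x^2y)$ of Definition \ref{def 24} (resp. of Definition \ref{def 27}) is a non-trivial lifting relation, and no Hopf isomorphism can match a filtered-but-not-graded algebra with a graded one. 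Hence the two are not isomorphic.

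Part $(3)$ is the heart of the matter, and I would treat $\mathfrak U_{26}$ in detail, the arguments for $\mathfrak U_{24}$ and $\mathfrak U_{27}$ being analogous but shorter. Given a Hopf isomorphism $\Phi:\mathfrak U_{26}(I_{26})\to\mathfrak U_{26}(I_{26}')$, it preserves the coradical, so $\Phi|_H=\tau$ for some $\tau$ from Table $1$. Since $\tau(x^2)=x^2$ for every such $\tau$, one has $\Phi(p_1),\Phi(q_1)\in\mathcal P_{1,x^2}(\mathfrak U_{26}')=\mathbbm{k}\{p_1',q_1',1-x^2\}$ and likewise $\Phi(p_2),\Phi(q_2)\in\mathcal P_{1,x^2y}(\mathfrak U_{26}')$; the relations $xp_1=\xi p_1x$, $yp_1=p_1y$, $tp_1=p_2t$ and their companions then force $\tau(x)\in\{x,xy\}$, eliminate the group-like terms, and tie the coefficients of $\Phi(p_2)$ to those of $\Phi(p_1)$ (and similarly on the $q$'s), so that $\Phi(p_1)=a_1p_1'+a_2q_1'$, $\Phi(q_1)=b_1p_1'+b_2q_1'$, with $\Phi(p_2),\Phi(q_2)$ determined up to the reindexing $p'\leftrightarrow q'$ permitted by which of $t,x^2t,yt,x^2yt$ equals $\tau(t)$. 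Substituting into the three quadratic relations $p_1p_2+p_2p_1=\lambda(1-y)$, $q_1q_2+q_2q_1=\mu(1-y)$, $p_1q_2+q_2p_1=p_2q_1+q_1p_2=\alpha(1-y)$ and using the primed relations then yields, orbit by orbit over the admissible $\tau$, precisely the four systems \eqref{Aut26.1}--\eqref{Aut26.4}; the factors $\pm1$ and $\pm\xi$ on their right-hand sides come from $\tau(x)=xy$ versus $\tau(x)=x$ and from the value of $\tau(t)$. Conversely, each solution of one of these systems visibly assembles into such an isomorphism. The delicate point I expect to wrestle with is the bookkeeping: enumerating the admissible $\tau$, tracking how each acts on the pairs $(p_i',q_i')$ and on the scalar $\xi$, and confirming that the four parameter systems so obtained are exactly \eqref{Aut26.1}--\eqref{Aut26.4} and exhaust all cases; the corresponding check for $\mathfrak U_{27}$ will be similar, and for $\mathfrak U_{24}$ it collapses to a single scalar equation, reproving part $(2)$.
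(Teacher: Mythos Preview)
Your plan matches the paper's proof essentially line for line: the paper also argues part $(1)$ for $\Omega_{24}$ via Theorem~\ref{gr}, coproduct computations showing each quadratic expression is $(1,g)$-skew primitive, the $t$-action to tie $p_1q_2+q_2p_1$ and $p_2q_1+q_1p_2$ together, and a Diamond Lemma count; part $(2)$ by the rescaling $p_i\mapsto\sqrt\lambda\,p_i$; and part $(3)$ by restricting $\Phi$ to $H$, finding $\Phi|_H\in\{\tau_1,\dots,\tau_8,\tau_{17},\dots,\tau_{24}\}$, and splitting into four groups according to the power of $x$ in $\tau(t)$, each giving one of \eqref{Aut26.1}--\eqref{Aut26.4}.

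Two small corrections. First, the equality $\mathcal P_{1,g}(A)=\mathcal P_{1,g}(H)=\mathbbm{k}(1-g)$ is false as stated: in $\mathfrak U_{26}$, for instance, $p_1\in\mathcal P_{1,x^2}(A)\setminus H$. What you actually need is that the quadratic element lies in $A_{[1]}$ and is $(1,g)$-skew primitive, hence is a linear combination of $1-g$ and those degree-one generators whose comodule grouplike is $g$; the $x$-, $y$-, $t$-actions then eliminate the degree-one contributions. Your subsequent sentence already says this correctly, so just drop the false equality. Second, in $\mathfrak U_{27}$ the nontrivial lifting relations are $p_1p_2+p_2p_1=\lambda(1-y)$ and $q_1q_2+q_2q_1=\mu(1-y)$ from \eqref{e26.2}--\eqref{e26.3}, not a mixed $p_iq_j$ relation; the mixed relations in Definition~\ref{def 27} are all zero. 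Adjust your reference accordingly when you argue $\mathfrak U_{27}(1,1)\ncong\mathfrak U_{27}(0,0)$.
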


\begin{proof} $(1)$ We prove the claim for $\Omega_{24}$. The proofs for $\Omega_{26}$, $\Omega_{27}$ follow the same lines.
By Theorem $\ref{gr}$, we have that gr$(A)\cong \mathcal{B}(\Omega_{24})\sharp H$. Let $M_{13}=\mathbbm{k}\{ p_1,p_2\}=\mathbbm{k}\{ q_1,q_2\}$.
Similar to Proposition \ref{14}, we only need to prove that $(\ref{e24.1})-(\ref{e24.3})$ in Definition \ref{def 24} hold in $A$. Since
\begin{flalign*}
&\triangle(p_1)=p_1\otimes 1+y\otimes p_1,
\hspace{1em}\triangle(p_2)=p_2\otimes 1+x^2\otimes p_2,\\
&\triangle(q_1)=q_1\otimes 1+y\otimes q_1,
\hspace{1em}\triangle(q_2)=q_2\otimes 1+x^2\otimes q_2,
\end{flalign*}
then $p_1^2$,
$p_2^2$, $p_1q_1+q_1p_1$, $p_2q_2+q_2p_2$ are primitive elements and
\begin{flalign*}
&\triangle(p_1p_2+p_2p_1)=(p_1p_2+p_2p_1)\otimes 1+x^2y\otimes (p_1p_2+p_2p_1),\\
&\triangle(p_1q_2+q_2p_1)=(p_1q_2+q_2p_1)\otimes 1+x^2y\otimes (p_1q_2+q_2p_1),\\
&\triangle(p_2q_1+q_1p_2)=(p_2q_1+q_1p_2)\otimes 1+x^2y\otimes (p_2q_1+q_1p_2).
\end{flalign*}
As
\begin{equation*}
t(p_1p_2+p_2p_1)=-(p_1p_2+p_2p_1)t,\hspace{1em} t(p_1q_2+q_2p_1)=-(p_2q_1+q_1p_2)t,
\end{equation*}
the relations $(\ref{e24.1})-(\ref{e24.3})$ hold in $A$. Then there
is a surjective Hopf morphism from $\mathfrak{U}_{24}(I_{24})$ to
$A$. We can observe that each element of $\mathfrak{U}_{24}(I_{24})$
can be expressed by a linear combination of $\{p_1^ip_2^jq_1^kq_2^l
x^ey^ft^g\}$ for all parameters $i,~j,~k,~l$,
$f,~g\in\mathbb{I}_{0,1}, ~e\in\mathbb{I}_{0,3}$. In fact, according
to the Diamond Lemma, the set is a basis of
$\mathfrak{U}_{24}(I_{24})$. Then $\dim A$ = $\dim \mathfrak{U}_{24}(I_{24})$, whence $A\cong
\mathfrak{U}_{24}(I_{24})$.

$(2)$ The proof is the same as that of Proposition $\ref{14}~(4)$.

$(3)$ Suppose $\Phi : \mathfrak{U}_{26}(I_{26})\rightarrow \mathfrak{U}_{26}(I_{26}^{'})$ is an isomorphism of
Hopf algebras. Similar to Proposition $\ref{14}$,  $\Phi|_H\in\{\tau_1-\tau_8,\tau_{17}-\tau_{24}\}$.
When $\Phi|_H\in\{\tau_1,\tau_5,\tau_{17},\tau_{21}\}$,
\begin{equation*}
\Phi(p_1)=a_1p_1^{'}+a_2q_1^{'},\ \ \Phi(p_2)=a_1p_2^{'}+a_2q_2^{'}, \ \ \Phi(q_1)=b_1p_1^{'}+b_2q_1^{'},\ \ \Phi(q_2)=b_1p_2^{'}+b_2q_2^{'},
\end{equation*}
then the relation $(\ref{Aut26.1})$ holds. When
$\Phi|_H\in\{\tau_2,\tau_6,\tau_{18},\tau_{22}\}$,
\begin{equation*}
\Phi(p_1)=a_1p_1^{'}+a_2q_1^{'},~ \Phi(p_2)=\xi(a_1p_2^{'}+a_2q_2^{'}), ~\Phi(q_1)=b_1p_1^{'}+b_2q_1^{'}, ~\Phi(q_2)=\xi(b_1p_2^{'}+b_2q_2^{'}),
\end{equation*}
then the relation $(\ref{Aut26.2})$ holds. When $\Phi|_H\in\{\tau_3,\tau_7,\tau_{19},\tau_{23}\}$,
\begin{equation*}
\Phi(p_1)=a_1p_1^{'}+a_2q_1^{'}, ~\Phi(p_2)=-(a_1p_2^{'}+a_2q_2^{'}), ~\Phi(q_1)=b_1p_1^{'}+b_2q_1^{'},~ \Phi(q_2)=-(b_1p_2^{'}+b_2q_2^{'}),
\end{equation*}
then the relation $(\ref{Aut26.3})$ holds. When $\Phi|_H\in\{\tau_4,\tau_8,\tau_{20},\tau_{24}\}$,
\begin{equation*}
\Phi(p_1)=a_1p_1^{'}+a_2q_1^{'},~ \Phi(p_2)=-\xi(a_1p_2^{'}+a_2q_2^{'}), ~\Phi(q_1)=b_1p_1^{'}+b_2q_1^{'}, ~\Phi(q_2)=-\xi(b_1p_2^{'}+b_2q_2^{'}),
\end{equation*}
then the relation $(\ref{Aut26.4})$ holds.

\end{proof}

\begin{pro}\label{25}
Suppose $A$ is a finite-dimensional Hopf algebra with coradical $H$ such
that its infinitesimal braiding is $\Omega_{25}$, then $A\cong \mathcal{B}(\Omega_{25})\sharp H$.
\end{pro}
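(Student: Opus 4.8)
The plan is to adapt the argument of Proposition~\ref{24}, the point being that for $\Omega_{25}$ no lifting parameter survives. By Theorem~\ref{gr} we have $\mathrm{gr}(A)\cong\mathcal B(\Omega_{25})\sharp H$. Write $\Omega_{25}=M_{13}\oplus M_{14}=\mathbbm{k}\{p_1,p_2\}\oplus\mathbbm{k}\{q_1,q_2\}$ with $M_{13}=W_{1,1,0,1}$, $M_{14}=W_{1,1,2,0}$; reading off the coactions, $\delta(p_1)=y\otimes p_1$, $\delta(p_2)=x^2\otimes p_2$, $\delta(q_1)=x^2\otimes q_1$, $\delta(q_2)=y\otimes q_2$, so $p_1,q_2$ are $(1,y)$-skew-primitive and $p_2,q_1$ are $(1,x^2)$-skew-primitive in $A$. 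By Proposition~\ref{sum}\,(1)(j) we have $\mathcal B(\Omega_{25})\cong\mathcal B(M_{13})\otimes\mathcal B(M_{14})$, so by Lemma~\ref{W} the defining ideal of $\mathcal B(\Omega_{25})$ is generated by the quadratic elements $p_1^2$, $p_2^2$, $q_1^2$, $q_2^2$, $p_1p_2+p_2p_1$, $q_1q_2+q_2q_1$ and the cross-relations $p_iq_j+q_jp_i$ for $i,j\in\{1,2\}$; it suffices to check that all of these already vanish in $A$.

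First I would treat the relations supported on the unit of $G(H)$. Since $y\cdot p_1=-p_1$, $x^2\cdot p_2=-p_2$, $y\cdot q_2=-q_2$, $x^2\cdot q_1=-q_1$ and $x^4=y^2=1$, a direct computation of coproducts shows that $p_1^2$, $p_2^2$, $q_1^2$, $q_2^2$ together with $p_1q_2+q_2p_1$ (supported on $y^2$) and $p_2q_1+q_1p_2$ (supported on $x^4$) are all primitive, hence lie in $\mathcal P_{1,1}(H)=\mathbbm{k}\{1-1\}=0$ and so vanish in $A$. The remaining four elements $p_1p_2+p_2p_1$, $q_1q_2+q_2q_1$, $p_1q_1+q_1p_1$, $p_2q_2+q_2p_2$ are each $(1,x^2y)$-skew-primitive. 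Since $\Omega_{25}$ has no Yetter--Drinfeld component supported on $x^2y$, the usual description of the skew-primitive elements of $A$ (cf.~\cite[Theorem~6]{X}) gives $\mathcal P_{1,x^2y}(A)=\mathbbm{k}\{1-x^2y\}$, so each of the four equals a scalar multiple of $1-x^2y$.

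The final --- and only delicate --- step is to show these four scalars vanish, and here the case split is the crux. For the ``equal-slot'' cross-relations $p_1q_1+q_1p_1=\alpha(1-x^2y)$ and $p_2q_2+q_2p_2=\alpha'(1-x^2y)$ one uses conjugation by $x$: from $x\cdot p_1=\xi p_1$ and $x\cdot q_1=\xi q_1$ one gets $x(p_1q_1+q_1p_1)=\xi^2(p_1q_1+q_1p_1)x=-(p_1q_1+q_1p_1)x$, while $x$ commutes with $1-x^2y$, forcing $\alpha=0$; similarly $\alpha'=0$. For the ``mixed-slot'' relations $p_1p_2+p_2p_1=\beta(1-x^2y)$ and $q_1q_2+q_2q_1=\beta'(1-x^2y)$ the $x$-eigenvalues cancel, so one instead uses $t$: from the Yetter--Drinfeld action one computes $tp_1=p_2x^2t$, $tp_2=p_1x^2t$ in $\mathcal B(\Omega_{25})\sharp H$, and combining with $x^2p_i=-p_ix^2$ gives $t(p_1p_2+p_2p_1)=-(p_1p_2+p_2p_1)t$, while $t$ commutes with $1-x^2y$, so $\beta=0$; similarly $\beta'=0$. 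Hence all defining relations of $\mathcal B(\Omega_{25})\sharp H$ hold in $A$, giving a surjective Hopf map $\mathcal B(\Omega_{25})\sharp H\twoheadrightarrow A$; a Diamond-Lemma argument \cite{G} producing the linear basis $\{p_1^ip_2^jq_1^kq_2^\ell x^ey^ft^g\mid i,j,k,\ell,f,g\in\mathbb{I}_{0,1},\ e\in\mathbb{I}_{0,3}\}$ shows $\dim A=256=\dim\mathcal B(\Omega_{25})\sharp H$, so the map is an isomorphism. The main obstacle is precisely this dichotomy: both detections are needed, and the fact that every quadratic relation is caught by one of them is what makes the lifting rigid, in contrast with $\Omega_{24}=M_{13}\oplus M_{13}$, where the deformation $\lambda(1-x^2y)$ of Definition~\ref{def 24} persists.
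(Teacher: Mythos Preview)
Your proof is correct and follows essentially the same route as the paper's: compute which quadratic generators of the Nichols ideal are primitive (hence zero) and which are $(1,x^2y)$-skew-primitive, then kill the latter scalars via conjugation by $x$ or $t$. The only cosmetic difference is that the paper uses the swapped basis $q_1\leftrightarrow q_2$ for $M_{14}$ (so that its $p_1q_2+q_2p_1$, $p_2q_1+q_1p_2$ are the $(1,x^2y)$-skew-primitives handled by $x$, while $p_1p_2+p_2p_1=0$ is deferred to Proposition~\ref{24}); your Diamond-Lemma invocation at the end is also unnecessary, since $\dim A=\dim\mathrm{gr}(A)$ already forces the surjection to be an isomorphism.
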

\begin{proof} Let $M_{13}=\mathbbm{k}\{ p_1,p_2\}$, $M_{14}=\mathbbm{k}\{ q_1,q_2\}$. By Theorem $\ref{gr}$, we have that gr$(A)\cong \mathcal{B}(\Omega_{25})\sharp H$. Recall that $\mathcal{B}(\Omega_{25})\sharp H$ is the algebra generated by $p_1,~p_2$, $q_1,~q_2$, $x,~y,~t$ with
$p_1,~p_2$, $q_1,~q_2$ satisfying the relation of $\mathcal{B}(\Omega_{25})$, $x,~y,~t$ satisfying the relations of $H$, and all together satisfying the cross relations:
\begin{flalign*}
&xp_1=\xi p_1x, \hspace{1em}yp_1=-p_1y,\hspace{1em}tp_1=p_2x^2t
\hspace{1em}xp_2=-\xi p_2x, \hspace{1em}yp_2=-p_2y,\hspace{1em}tp_2=p_1x^2t,\\
&xq_1=-\xi q_1x, \hspace{1em}yq_1=-q_1y,\hspace{1em}tq_1=q_2x^2t
\hspace{1em}xq_2=\xi q_2x, \hspace{1em}yq_2=-q_2y,\hspace{1em}tq_2=q_1x^2t.
\end{flalign*}

By Proposition $\ref{24}~(1)$, we know that the relations $p_1^2=0$, $p_2^2=0$, $p_1p_2+p_2p_1=0$
hold in $A$. Similarly, the relations $q_1^2=0$, $q_2^2=0$, $q_1q_2+q_2q_1=0$ hold in $A$. After a direct computation,
$p_1q_1+q_1p_1$, $p_2q_2+q_2p_2$ are primitive elements and
$p_1q_2+q_2p_1$, $p_2q_1+q_1p_2$ are $(1,x^2y)$-skew primitive. As
\begin{equation*}
x(p_1q_2+q_2p_1)=-(p_1q_2+q_2p_1)x,\hspace{1em} x(p_2q_1+q_1p_2)=-(p_2q_1+q_1p_2)x,
\end{equation*}
then the relations
\begin{equation*}
p_1q_1+q_1p_1=0, \hspace{1em}p_2q_2+q_2p_2=0,\hspace{1em} p_1q_2+q_2p_1=0, \hspace{1em}p_2q_1+q_1p_2=0
\end{equation*}
hold in $A$.
Therefore, $A\cong gr(A)$.
\end{proof}

\begin{defi}\label{def 28}
For a set of parameters $I_{28}=\{\lambda,\mu,\alpha \}$, denote by $\mathfrak{U}_{28}(I_{28})$ the algebra that is generated by $x,~y,~t$, $p_1,~p_2$, $q_1,~q_2$ satisfying the relations $(\ref{3.1})-(\ref{3.3})$, $(\ref{e26.1})$, $(\ref{e26.4})$ and
\begin{flalign}
&p_1^2+p_2^2=0,\hspace{1em}p_1p_2=p_2p_1=\lambda(1-y),
\hspace{1em}q_1^2+q_2^2=0,\hspace{1em}q_1q_2=q_2q_1=\mu(1-y),&\label{e28.1}\\
&p_1q_1+q_1p_1+p_2q_2+q_2p_2=0,
\hspace{1em}p_1q_2+q_2p_1+p_2q_1+q_1p_2=\alpha(1-y),\label{e28.2}&\\
&p_1q_1-q_1p_1-p_2q_2+q_2p_2=0,
\hspace{1em}p_1q_2-q_2p_1-p_2q_1+q_1p_2=0.\label{e28.3}&
\end{flalign}
It is a Hopf algebra with its coalgebra structure determined by

$~~~~~~~~~~~~\triangle(p_1)=p_1\otimes 1+\frac{1}{2}(1+x^2)t\otimes p_1-\frac{1}{2}(1-x^2)yt\otimes p_2$,

$~~~~~~~~~~~~\triangle(p_2)=p_2\otimes 1+\frac{1}{2}(1+x^2)yt\otimes p_2-\frac{1}{2}(1-x^2)t\otimes p_1$,

$~~~~~~~~~~~~\triangle(q_1)=q_1\otimes 1+\frac{1}{2}(1+x^2)t\otimes q_1-\frac{1}{2}(1-x^2)yt\otimes q_2$,

$~~~~~~~~~~~~\triangle(q_2)=q_2\otimes 1+\frac{1}{2}(1+x^2)yt\otimes q_2-\frac{1}{2}(1-x^2)t\otimes q_1$.
\end{defi}

\begin{defi}\label{def 29}
Denote by $\mathfrak{U}_{29}(\lambda,\mu)$ the algebra that is generated by $x,~y,~t$, $p_1,~p_2$, $q_1,~q_2$ satisfying the relations $(\ref{3.1})-(\ref{3.3})$, $(\ref{e26.1})$, $(\ref{e27.1})$, $(\ref{e28.1})$, $(\ref{e28.3})$ and
\begin{flalign}
&p_1q_1+q_1p_1+p_2q_2+q_2p_2=0,
\hspace{1em}p_1q_2+q_2p_1+p_2q_1+q_1p_2=0.\label{e29.1}&
\end{flalign}
It is a Hopf algebra with the same coalgebra structure as $\mathfrak{U}_{28}(I_{28})$.

\end{defi}

\begin{pro}\label{28}
$(1)$ Suppose $A$ is a finite-dimensional Hopf algebra with coradical $H$ such
that its infinitesimal braiding is isomorphic to $\Omega_{28}$ or $\Omega_{29}$, then $A\cong \mathfrak{U}_{28}(I_{28})$ or $\mathfrak{U}_{29}(\lambda,\mu)$.

$(2) ~\mathfrak{U}_{28}(I_{28})\cong \mathfrak{U}_{28}(I_{28}^{'})$ if and only if there exist nonzero parameters $a_1,~a_2$, $b_1,~b_2$
such that
\begin{align}
\begin{split}
&a_1^2\lambda^{'}+\frac{1}{2}a_1a_2\alpha^{'}+a_2^2\mu^{'}=\lambda,
\hspace{1em}b_1^2\lambda^{'}+\frac{1}{2}b_1b_2\alpha^{'}+b_2^2\mu^{'}=\mu,\\
&4a_1b_1\lambda^{'}+(a_1b_2+a_2b_1)\alpha^{'}+4a_2b_2\mu^{'}=\alpha,
\label{Aut28.1}
\end{split}
\end{align}
or satisfying
\begin{align}
\begin{split}
&a_1^2\lambda^{'}+\frac{1}{2}a_1a_2\alpha^{'}+a_2^2\mu^{'}=-\lambda,
\hspace{1em}b_1^2\lambda^{'}+\frac{1}{2}b_1b_2\alpha^{'}+b_2^2\mu^{'}=-\mu,\\
&4a_1b_1\lambda^{'}+(a_1b_2+a_2b_1)\alpha^{'}+4a_2b_2\mu^{'}=-\alpha.
\label{Aut28.2}
\end{split}
\end{align}

$(3)$  $\mathfrak{U}_{29}(\lambda,\mu)\cong \mathfrak{U}_{29}(1,1)$ for $\lambda,\mu\neq 0$, and $\mathfrak{U}_{29}(1,1)\ncong \mathfrak{U}_{29}(0,0)$.

\end{pro}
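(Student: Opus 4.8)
The plan is to follow the template already used for Propositions~\ref{14}, \ref{20} and \ref{24}, viewing $\Omega_{28}=M_{17}\oplus M_{17}$ and $\Omega_{29}=M_{17}\oplus M_{18}$ as two diagonal Nichols blocks of type $\mathfrak R_{1,4}$ (the case treated in Lemma~\ref{U}) linked by a single off-diagonal relation. For part $(1)$, I would start from Theorem~\ref{gr}, which gives $\mathrm{gr}(A)\cong\mathcal B(\Omega_{28})\sharp H$ (resp.\ $\mathcal B(\Omega_{29})\sharp H$); by Proposition~\ref{sum} the diagram is $\mathcal B(M_{17})\otimes\mathcal B(M_{17})$ (resp.\ $\mathcal B(M_{17})\otimes\mathcal B(M_{18})$), each tensor factor of dimension $4$. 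Choosing skew-primitive lifts $p_1,p_2$ of a homogeneous basis of the first block and $q_1,q_2$ of the second, I would first check that the cross relations between $x,y,t$ and the $p$'s, $q$'s, namely $(\ref{e26.1})$ and $(\ref{e26.4})$ (resp.\ $(\ref{e26.1})$ and $(\ref{e27.1})$) together with $(\ref{3.1})$--$(\ref{3.3})$, already hold in $A$: this is inherited from the bosonization exactly as in Proposition~\ref{24}\,(1), each block being a copy of $M_{17}$ (resp.\ $M_{18}$) treated there.

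The heart of $(1)$ is the determination of the lifting relations $(\ref{e28.1})$--$(\ref{e28.3})$ (resp.\ $(\ref{e28.1})$, $(\ref{e28.3})$, $(\ref{e29.1})$). Using the coproducts displayed in Definition~\ref{def 28}, I would compute $\triangle$ of the degree-two elements $p_1^2+p_2^2$, $p_1p_2$, $p_2p_1$, $p_1p_2-p_2p_1$, their $q$-analogues, and of the cross elements $p_1q_1+q_1p_1+p_2q_2+q_2p_2$, $p_1q_2+q_2p_1+p_2q_1+q_1p_2$, $p_1q_1-q_1p_1-p_2q_2+q_2p_2$, $p_1q_2-q_2p_1-p_2q_1+q_1p_2$. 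Each of these turns out to be $(1,1)$-primitive, $(1,y)$-skew primitive, or $(1,x^2y)$-skew primitive; since $H$ is semisimple with $\mathcal P_{1,g}(H)=\mathbbm k\{1-g\}$ and $\mathcal P(H)=0$, and since the coactions of $p_i,q_j$ are through the non-grouplike elements $\tfrac12(1\pm x^2)t$, $\tfrac12(1\pm x^2)yt$, no additional skew-primitives of these degrees arise in $A$, so the $(1,1)$-primitive combinations vanish and the remaining ones are scalar multiples of $1-y$ or $1-x^2y$. Imposing the weights under conjugation by $x$ (which acts by $\pm\xi$, hence by $\pm1$ on squares), by $y$ and by $t$ then kills the $x^2y$-terms, forces $p_1p_2=p_2p_1$ (they are swapped by conjugation by $t$), and --- crucially --- separates the two cases: for $\Omega_{28}$ the off-diagonal element has $x$-weight $+1$, matching $1-y$, so a parameter $\alpha$ survives, whereas for $\Omega_{29}$ the $M_{17}$--$M_{18}$ cross element has $x$-weight $-1$, which forces its scalar to be $0$. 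This yields a surjection $\mathfrak U_{28}(I_{28})\twoheadrightarrow A$ (resp.\ $\mathfrak U_{29}(\lambda,\mu)\twoheadrightarrow A$); a Diamond-Lemma argument shows $\{p_1^ip_2^jq_1^kq_2^\ell x^ey^ft^g\}$ is a $\mathbbm k$-basis of the source, so both algebras have dimension $256$ and the map is an isomorphism.

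For $(2)$, let $\Phi\colon\mathfrak U_{28}(I_{28})\to\mathfrak U_{28}(I_{28}')$ be a Hopf isomorphism. Then $\Phi(H)=H$, so $\Phi|_H$ is one of the $\tau_i$; compatibility with the coalgebra structure of $p_1,p_2$ restricts $\Phi|_H$ to the stabiliser of the isoclass of $M_{17}$ under $\mathrm{Aut}(H)$, and since every $\tau_i$ fixes $y$, for each admissible $\tau_i$ one has $\Phi(p_1)=a_1p_1'+a_2q_1'$, $\Phi(p_2)=\varepsilon(a_1p_2'+a_2q_2')$, $\Phi(q_1)=b_1p_1'+b_2q_1'$, $\Phi(q_2)=\varepsilon(b_1p_2'+b_2q_2')$ with $\varepsilon=\pm1$ determined by $\tau_i$ and the matrices $(a_i),(b_i)$ invertible. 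Applying $\Phi$ to $(\ref{e28.1})$ and $(\ref{e28.2})$, using the primed relations to evaluate $p_1'q_2'+q_1'p_2'=\tfrac12\alpha'(1-y)$, and comparing coefficients of $1-y$ gives exactly $(\ref{Aut28.1})$ when $\varepsilon=1$ and $(\ref{Aut28.2})$ when $\varepsilon=-1$ (the factors $\tfrac12$ and $4$ reflecting that $p_1p_2=p_2p_1$ is a doubled relation while the $\alpha$-relation has four terms); conversely such data define an isomorphism. For $(3)$, the rescaling $\Phi|_H=\mathrm{id}$, $p_i\mapsto\sqrt\lambda\,p_i$, $q_i\mapsto\sqrt\mu\,q_i$ gives $\mathfrak U_{29}(\lambda,\mu)\cong\mathfrak U_{29}(1,1)$ for $\lambda,\mu\neq0$, exactly as in Proposition~\ref{14}\,(4), since the diagonal relations $(\ref{e28.1})$ rescale by $\lambda$ resp.\ $\mu$ while the off-diagonal relations $(\ref{e28.3})$, $(\ref{e29.1})$ have vanishing right-hand side; and $\mathfrak U_{29}(1,1)\not\cong\mathfrak U_{29}(0,0)$, the bosonization $\mathcal B(\Omega_{29})\sharp H$, because any such isomorphism would, by the analysis of parts $(1)$--$(2)$, restrict to $H$ and act linearly on generators, transporting $p_1p_2=0$ to a nonzero multiple of $1-y$, a contradiction.

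The main obstacle is the second paragraph: the explicit coproduct computations run through the non-grouplike coactions $\tfrac12(1\pm x^2)t$ and $\tfrac12(1\pm x^2)yt$, so $\triangle$ of a single $p_ip_j$ or $p_iq_j$ mixes the two basis vectors of each block and one cannot simply read off a skew-primitive type; the delicate point is then to prove that no deformation parameter beyond $\lambda,\mu,\alpha$ (resp.\ $\lambda,\mu$) can occur, and in particular that the $M_{17}$--$M_{18}$ block in $\Omega_{29}$ admits only the trivial lifting --- this is precisely what the $x$-weight bookkeeping sketched above is designed to settle. The Diamond-Lemma confluence check and the enumeration of the admissible $\tau_i$ in part $(2)$ are routine but lengthy, and can be organised in parallel with Propositions~\ref{14}, \ref{20} and \ref{24}.
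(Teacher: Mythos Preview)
Your approach is essentially the same as the paper's, and the overall structure is correct. There is, however, one concrete error in part~$(1)$.

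The PBW basis you propose for the Diamond Lemma, $\{p_1^ip_2^jq_1^kq_2^\ell x^ey^ft^g\}$ with $i,j,k,\ell\in\mathbb I_{0,1}$, does \emph{not} work here. Recall from Lemma~\ref{U} that the Nichols algebra $\mathcal B(M_{17})$ is of type $\mathfrak R_{1,4}$, with relations $p_1p_2=p_2p_1=0$ and $p_1^2+p_2^2=0$ (rather than $p_1^2=p_2^2=0$, $p_1p_2+p_2p_1=0$ as in the diagonal cases $M_1,\dots,M_{16}$). Consequently, in the lifting $\mathfrak U_{28}(I_{28})$ the relation $p_1p_2=\lambda(1-y)$ rewrites $p_1p_2$ as an element of $H$, so $\{1,p_1,p_2,p_1p_2\}\cdot H$ only spans $H+p_1H+p_2H$ and your $256$ proposed elements are linearly dependent. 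The paper instead uses the rotated generators $p_1\pm p_2$, $q_1\pm q_2$: in the associated graded one has $(p_1+p_2)(p_1-p_2)=p_1^2-p_2^2=2p_1^2\neq0$, so
\[
\{(p_1+p_2)^i(p_1-p_2)^j(q_1+q_2)^k(q_1-q_2)^\ell\,x^ey^ft^g\}
\]
is a genuine linear basis of dimension $256$, and the surjection onto $A$ becomes an isomorphism. This is the only substantive correction needed.

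A minor remark on part~$(2)$: your parametrisation $\Phi(p_1)=a_1p_1'+a_2q_1'$, $\Phi(p_2)=\varepsilon(a_1p_2'+a_2q_2')$ does not capture all admissible $\tau_i$. For $\tau_{13},\tau_{15},\tau_{29},\tau_{31}$ the isomorphism swaps the two basis vectors of $M_{17}$, giving $\Phi(p_1)=a_1p_2'+a_2q_2'$, $\Phi(p_2)=\pm(a_1p_1'+a_2q_1')$. Fortunately these swap cases produce the same constraints $(\ref{Aut28.1})$ or $(\ref{Aut28.2})$, so your conclusion stands; but the case analysis should list them. The paper finds $\Phi|_H\in\{\tau_1,\tau_3,\tau_{13},\tau_{15},\tau_{17},\tau_{19},\tau_{29},\tau_{31}\}$, grouped into four pairs according to the two possible outcomes.
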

\begin{proof} $(1)$ We prove the claim for $\Omega_{28}$. The proof for $\Omega_{29}$ follows the same lines. By Theorem $\ref{gr}$, we have that gr$(A)\cong \mathcal{B}(\Omega_{28})\sharp H$. Let $M_{17}=\mathbbm{k}\{ p_1,p_2\}=\mathbbm{k}\{ q_1,q_2\}$. Similar to Proposition \ref{14}, we only need to prove that  $(\ref{e28.1})-(\ref{e28.3})$ in Definition \ref{def 28} hold in $A$.
Let $u=p_1q_1-q_1p_1-p_2q_2+q_2p_2$, $v=p_1q_2+q_2p_1+p_2q_1+q_1p_2$,
$w=p_1q_2-q_2p_1-p_2q_1+q_1p_2$.
After a direct computation we have that $p_1^2+p_2^2$, $q_1^2+q_2^2$, $p_1q_1+q_1p_1+p_2q_2+q_2p_2$ are primitive elements and
\begin{flalign*}
&\triangle(p_1p_2+p_2p_1)=(p_1p_2+p_2p_1)\otimes 1+y\otimes (p_1p_2+p_2p_1),\\
&\triangle(p_1p_2-p_2p_1)=(p_1p_2-p_2p_1)\otimes 1+x^2y\otimes (p_1p_2-p_2p_1),\\
&\triangle(u)=u\otimes 1+x^2\otimes u,~\triangle(v)=v\otimes 1+y\otimes v,
~\triangle(w)=w\otimes 1+x^2y\otimes w.
\end{flalign*}
As
\begin{equation*}
t(p_1p_2-p_2p_1)=-(p_1p_2-p_2p_1)t,
\hspace{1em}tu=-ut,\hspace{1em}tw=-wt,
\end{equation*}
the relations $(\ref{e28.1})-(\ref{e28.3})$ hold in $A$.
Then there is a surjective Hopf homomorphism from $\mathfrak{U}_{28}(I_{28})$ to $A$. We can observe that all elements of $\mathfrak{U}_{28}(I_{28})$ can be expressed by linear
combinations of
\begin{equation*}
\{(p_1+p_2)^i(p_1-p_2)^j(q_1+q_2)^k(q_1-q_2)^l x^ey^ft^g;~i,j,k,l,f,g\in\mathbb{I}_{0,1},~ e\in\mathbb{I}_{0,3}\}.
\end{equation*}
 In fact, according to the Diamond Lemma, the set is a basis of
$\mathfrak{U}_{28}(I_{28})$. Then $\dim A =\dim \mathfrak{U}_{28}(I_{28})$,
whence $A\cong \mathfrak{U}_{28}(I_{28})$.

$(2)$ Suppose $\Phi : \mathfrak{U}_{28}(I_{28})\rightarrow \mathfrak{U}_{28}(I_{28}^{'})$ is an isomorphism of
Hopf algebras. Similar to Proposition $\ref{14}$,  $\Phi|_H\in\{\tau_1,\tau_3,\tau_{13},\tau_{15},\tau_{17},\tau_{19},\tau_{29},\tau_{31}\}$.
When $\Phi|_H\in\{\tau_1,\tau_{17}\}$, then
\begin{equation*}
\Phi(p_1)=a_1p_1^{'}+a_2q_1^{'}, \ \ \Phi(p_2)=a_1p_2^{'}+a_2q_2^{'}, \ \ \Phi(q_1)=b_1p_1^{'}+b_2q_1^{'}, \ \ \Phi(q_2)=b_1p_2^{'}+b_2q_2^{'};
\end{equation*}
when $\Phi|_H\in\{\tau_{13},\tau_{29}\}$, then
\begin{equation*}
\Phi(p_1)=a_1p_2^{'}+a_2q_2^{'},\ \Phi(p_2)=a_1p_1^{'}+a_2q_1^{'}, \ \Phi(q_1)=b_1p_2^{'}+b_2q_2^{'},\ \Phi(q_2)=b_1p_1^{'}+b_2q_1^{'},
\end{equation*}
thus the relation $(\ref{Aut28.1})$ holds.
When $\Phi|_H\in\{\tau_3,\tau_{19}\}$, then
\begin{equation*}
\Phi(p_1)=a_1p_1^{'}+a_2q_1^{'},\  \Phi(p_2)=-a_1p_2^{'}-a_2q_2^{'}, \ \Phi(q_1)=b_1p_1^{'}+b_2q_1^{'}, \ \Phi(q_2)=-b_1p_2^{'}-b_2q_2^{'};
\end{equation*}
when $\Phi|_H\in\{\tau_{15},\tau_{31}\}$, then
\begin{equation*}
\Phi(p_1)=a_1p_2^{'}+a_2q_2^{'}, \ \Phi(p_2)=-a_1p_1^{'}-a_2q_1^{'}, \ \Phi(q_1)=b_1p_2^{'}+b_2q_2^{'}, \ \Phi(q_2)=-b_1p_1^{'}-b_2q_1^{'},
\end{equation*}
thus the relation $(\ref{Aut28.2})$ holds.

$(3)$ The proof is the same as that of Proposition $\ref{14}~(4)$.
\end{proof}

\noindent
$\mathbf{Proof~ of ~Theorem ~B}.$ Let $M$ be one of the Yetter-Drinfeld modules listed in Theorem A. Let $A$ be a finite-dimensional Hopf algebra over $H$ such that its infinitesimal braiding is isomorphic to $M$.
By Theorem \ref{gr}, gr$A\cong \mathcal{B}(M)\sharp H$. Propositions \ref{1}, ~\ref{2}, ~\ref{4}, ~\ref{14}, ~\ref{20}, ~\ref{24}, ~\ref{25}, ~\ref{28} finish the proof.

\newpage

\begin{center}
$\mathbf{ACKNOWLEDGMENT}$
\end{center}

The authors would like to thank Dr. Yuxing Shi and Dr. Rongchuan Xiong for their helpful discussions.
The first author is indebted to Dr. Jiao Zhang for her seminar.
The second author is supported by NSERC of Canada. The third author is supported by the NSFC (Grant No. 11771142) and in part by the Science and Technology Commission of Shanghai Municipality (No. 18dz2271000).


\begin{thebibliography}{100}\setlength{\itemsep}{-2mm}
%
\bibitem{AAI} N. Andruskiewitsch, I. Angiono and A.G. Iglesias, \textit {Liftings of Nichols algebras of diagonal type I. Cartan
    type $A$}, Int. Math. Res. Not. IMRN (9) (2017), 2793---2884.


\bibitem{AI} N. Andruskiewitsch, I. Angiono, A.G. Iglesias, A. Masuoka and C. Vay, \textit {Lifting via cocycle deformation},
    J. Pure Appl. Algebra 218 (4) (2014), 684---703.

\bibitem{ACG15a} N. Andruskiewitsch, G. Carnovale and G.A. Garc\'{i}a, \textit {Finite-dimensional pointed Hopf algebras over finite
    simple groups of Lie type I. Non-semisimple classes in $\text{PSL}_n(q)$}, J. Algebra 442 (2015), 36---65.

\bibitem{ACG15b} ---, \textit {Finite-dimensional pointed Hopf algebras over finite
    simple groups of Lie type II: unipotent classes in symplectic groups}, Commun. Contemp. Math.
    18 (4) (2016), 1550053, 35.

\bibitem{AFG10} N. Andruskiewitsch, F. Fantino, M. Gra\~{n}a and L. Vendramin, \textit {Pointed Hopf algebras over some sporadic
    simple groups}, C. R. Math. Acad. Sci. Paris 348 (2010), 605---608.

\bibitem{AFG11} ---, \textit {Finite-dimensional pointed Hopf algebras
    with alternating groups are trivial}, Ann. Mat. Pura Appl. (4) 190 (2) (2011), 225---245.


\bibitem{AGM15} N. Andruskiewitsch, C. Galindo and M. M\"{u}ller, \textit {Examples of finite-dimensional Hopf algebras with the
    dual Chevalley property}, Publ. Mat., 61 (2) (2017): 445---474.

\bibitem{AGi17} N. Andruskiewitsch and J.M.J. Giraldi, \textit {Nichols algebras that are quantum planes}, Linear and Multilinear Algebra 66 (5) (2018), 961---991.

\bibitem{AHS} N. Andruskiewitsch, I. Heckenberger and H.-J. Schneider, \textit {The Nichols algebra of a semisimple Yetter-Drinfeld module}, Amer. J. Math. 132 (6) (2010), 1493---1547.

\bibitem{AS} N. Andruskiewitsch and H.-J. Schneider, \textit {Lifting of quantum linear spaces and pointed Hopf algebras of
    order $p^3$}, J. Algebra 209 (2) (1998), 658---691.

\bibitem{AS0} ---, \textit {Finite quantum groups and Cartan matrices}, Adv. Math., 154 (1) (2000): 1---45.

\bibitem{AS2} ---, \textit {Pointed Hopf algebras}, New directions in Hopf algebras, 1$-$68, Math. Sci. Res. Inst. Publ., 43, Cambridge Univ. Press, Cambridge, 2002.

\bibitem{AS3} ---, \textit {On the classification of finite-dimensional pointed Hopf algebras}, Ann. of Math. 171 (1) (2010), 375---417.

\bibitem{AV} N. Andruskiewitsch and C. Vay, \textit {Finite dimensional Hopf algebras over the dual group algebra of the symmetric
    group in three letters}, Comm. Algebra 39 (12) (2011), 4507---4517.

\bibitem{G} G.M. Bergman, \textit {The diamond lemma for ring theory}, Adv. in Math. 29 (2) (1978), 178---218.


\bibitem{CS} C. C\v{a}linescu, S. D\v{a}sc\v{a}lescu, A. Masuoka and C. Menini, \textit {Quantum lines over non-cocommutative
    cosemisimple Hopf algebras}, J. Algebra 273 (2) (2004), 53---779.


\bibitem{FG} F. Fantino and G.A. Garc\'{i}a, \textit {On pointed Hopf algebras over dihedral groups}, Pacific J. Math. 252 (1) (2011), 69---91.

\bibitem{X} X. Fang, \textit {On defining ideals and differential algebras of Nichols algebras}, J. Algebra 346 (2011), 299---331.

\bibitem{GG16} G.A. Garc\'{i}a and J.M.J. Giraldi, \textit {On Hopf algebras over quantum subgroups}, J. Pure Appl. Algebra 223 (2) (2019), 738---768.



\bibitem{GGI} G.A. Garc\'{i}a and A.G. Iglesias, \textit {Finite-dimensional pointed Hopf algebras over $\mathbb{S}_4$},
     Israel J. Math. 183 (2011), 417---444.

\bibitem{Gn00} M. Gra\~{n}a, \textit {A freeness theorem for Nichols algebras}, J. Algebra 231 (1) (2000), 235---257.

\bibitem{H09} I. Heckenberger, \textit {Classification of arithmetic root systems}, Adv. Math. 220 (1) (2009), 59---124.

\bibitem{HS}   I. Heckenberger and H.-J. Schneider, \textit {Nichols algebras over groups with finite root system of rank two I}, J. Algebra, 324 (11) (2010): 3090---3114.
%    Math. Soc. (3), 101(3):623¨C654, 2010.

\bibitem{HX16} N.H. Hu and R.C. Xiong, \textit {Some Hopf algebras of dimension $72$ without the Chevalley property}, arXiv:1612.0498.

\bibitem{HX18} ---, \textit {On families of Hopf algebras without the dual Chevalley property}, Rev. Un. Mat. Argentina 59 (2) (2018), 443---469.



\bibitem{GIV} A.G. Iglesias and C. Vay, \textit {Finite-dimensional pointed or copointed Hopf algebras over affine racks}, J.
    Algebra 397 (2014), 379---406.

\bibitem{K} I. Kaplansky, \textit {Bialgebras}, Lecture Notes in Mathematics. Department of Mathematics, University of
    Chicago, Chicago, Ill., 1975.

\bibitem{K00} Y. Kashina, \textit {Classification of semisimple Hopf algebras of dimension $16$}, J. Algebra 232 (2) (2000), 617---663.


\bibitem{CK} C. Kassel, \textit{Quantum Groups}, Graduate Texts in Mathematics,
Vol. 155, Springer-Verlag, New York/Berlin, 1995.


\bibitem{M} S. Montgomery, \textit {Hopf Algebras and their Actions on Rings}, CMBS Reg. Conf. Ser. in Math. 82, Amer. Math. Soc., 1993.


\bibitem{R85} D.E. Radford, \textit {The structure of Hopf algebras with a projection}, J. Algebra 92 (2) (1985), 322---347.

\bibitem{R} ---, \textit {Hopf Algebras}, Knots and Everything 49, World Scientific, 2012.

\bibitem{S16} Y.X. Shi, \textit {Finite-dimensional Hopf algebras over the Kac-Paljutkin algebra $H_8$}, Rev. Un. Mat. Argentina 60 (1) (2019), 265---298.

\bibitem{X19} R.C. Xiong, \textit{Some classification results on finite-dimensional Hopf algebras}, Ph. D. Dissertation of East China Normal University (pp. 347), May, 2019.

\bibitem{SW} M. Sweedler, \textit {Hopf Algebras}, Benjamin, New York, 1969.



\end{thebibliography}
\end{document}